 \newcommand{\shorturl}[1]{\href{#1}{link}}  
\tikzset{
    closed/.style={
        decoration={
            markings,
            mark=at position 0.5 with {\node[transform shape, xscale=.8, yscale=.4] {/};}
        },
        postaction={decorate}
    }
}  
\tikzset{open/.style = {decoration = {markings, mark = at position 0.5 with { \node[transform shape, scale = .7] {$\circ$}; } }, postaction = {decorate} }
}   
 \newtheorem{thm}{Theorem}[section]
\newtheorem{theorem}[thm]{Theorem}
\newtheorem{prop}[thm]{Proposition}
\newtheorem{proposition}[thm]{Proposition}
\newtheorem{lem}[thm]{Lemma}
\newtheorem{lemma}[thm]{Lemma}
\newtheorem{cor}[thm]{Corollary}
\newtheorem{lem-def}[thm]{Lemma-Definition}
\theoremstyle{definition}
\newtheorem{defn}[thm]{Definition}
\newtheorem{rem}[thm]{Remark}
\newtheorem{remark}[thm]{Remark}
 \newtheorem{construction}[thm]{Construction}
\newtheorem{example}[thm]{Example}
\newtheorem{notation}[thm]{Notation}
\newtheorem{convention}[thm]{Convention}
\numberwithin{equation}{section}
\newcommand{\Zp}{{\bZ_p}}
\newcommand{\Qp}{{\bQ_p}}
\newcommand{\End}{{\mathrm{End}}}           
\DeclareMathOperator{\Fil}{{\mathrm{Fil}}}           
\newcommand{\Gal}{{\mathrm{Gal}}}           
\newcommand{\Hom}{{\mathrm{Hom}}}           
\DeclareMathOperator{\Ker}{{\mathrm{Ker}}}           
\DeclareMathOperator{\Vect}{{\mathrm{Vect}}}          
\newcommand{\GL}{{\mathrm{GL}}}             
\newcommand{\dR}{{\mathrm{dR}}}             
\newcommand{\st}{{\mathrm{st}}}             
\newcommand{\tor}{{\mathrm{tor}}}           
\DeclareSymbolFontAlphabet{\mathbb}{AMSb} 
\DeclareSymbolFontAlphabet{\mathbbl}{bbold}
\newcommand{\Prism}{{\mathlarger{\mathbbl{\Delta}}}} 
\newcommand{\prism}{{\mathlarger{\mathbbl{\Delta}}}} 
 \newcommand{\pris}{{\mathlarger{\mathbbl{\Delta}}}} 
\newcommand{\opris}{{\mathcal{O}_\prism}}
\newcommand{\baropris}{{\overline{\O}_\prism}}
 \newcommand{\ghblue}[1]{{\color{blue}#1}}
\newcommand \into {\hookrightarrow }
\renewcommand \to {\rightarrow}
\newcommand \onto {\twoheadrightarrow}
 \DeclareMathOperator{\diag}{\mathrm{diag}}
\DeclareMathOperator{\vect}{\mathrm{Vect}}
\newcommand{\rep}{{\mathrm{Rep}}}
\def\inf{{\mathrm{inf}}}
\renewcommand{\max}{{\mathrm{max}}}
\newcommand{\gal}{{\mathrm{Gal}}}
\DeclareMathOperator{\fil}{{\mathrm{Fil}}}
\newcommand{\gr}{{\mathrm{gr}}}
  \newcommand{\rmconj}{{\mathrm{conj}}}
\DeclareMathOperator{\spf}{{\mathrm{Spf}}}
\DeclareMathOperator{\Lie}{\mathrm{Lie}}
\newcommand{\syn}{{\mathrm{syn}}}
\newcommand{\dla}{\text{$\mbox{-}\mathrm{la}$}}
\newcommand{\dpa}{\text{$\mbox{-}\mathrm{pa}$}}
\renewcommand{\log}{\mathrm{log}}
\newcommand{\kpinfty}{{K_{p^\infty}}}
\newcommand{\kinfty}{{K_{\infty}}}
\newcommand{\gammak}{{\Gamma_K}}
\newcommand{\gk}{{G_K}}
\newcommand\crys{{\mathrm{crys}}}
\newcommand\HT{{\mathrm{HT}}}
\newcommand\Sen{{\mathrm{Sen}}}
\newcommand\rig{{\mathrm{rig}}}
\newcommand{\ainf}{{\mathbf{A}_{\mathrm{inf}}}}
\newcommand{\acris}{{\mathbf{A}_{\mathrm{cris}}}}
\newcommand{\bcris}{{\mathbf{B}_{\mathrm{cris}}}}
\newcommand{\bcrisplus}{{\mathbf{B}^+_{\mathrm{cris}}}}
\newcommand{\bdrplus}{{\mathbf{B}^+_{\mathrm{dR}}}}
\newcommand{\bdr}{{\mathbf{B}_{\mathrm{dR}}}}
\newcommand{\nht}{{\mathrm{nHT}}}
\newcommand*{\wt}[1]{\widetilde{#1}}
\newcommand*{\wh}[1]{\widehat{#1}}
\newcommand{\wtb}{   {\widetilde{{\mathbf{B}}}}  }
\def \ok {{\mathcal{O}_K}}
\def \ko {{K_{0}}}
 \newcommand{\cflat}{{C^\flat}}
\newcommand{\ocflat}{{\mathcal{O}_C^\flat}}
 \newcommand{\zetaflat}{{\zeta^\flat}}
   \newcommand{\piflat}{{\pi^\flat}}
 \newcommand{\MF}{\mathrm{MF}^{\varphi, N}_{K_0}}
\newcommand{\MFwa}{\mathrm{MF}^{\varphi, N, \mathrm{wa}}_{K_0}}
\newcommand{\bigMF}{\mathrm{MF}^{\varphi, N}_{S_{K_0}}}
\newcommand{\bigMFwa}{\mathrm{MF}^{\varphi, N, \mathrm{wa}}_{S_{K_0}}}
 \newcommand{\rmh}{{\mathrm{H}}}
\newcommand{\bQ}{{\mathbb Q}}
\newcommand{\bZ}{{\mathbb Z}}
  \newcommand{\bm}{\mathbb{M}}
  \newcommand{\bbz}{{\mathbb{Z}}}
\newcommand{\zp}{{\mathbb{Z}_p}}
\newcommand{\qp}{{\mathbb{Q}_p}}
\newcommand{\fp}{{\mathbb{F}_p}}
\newcommand{\calI}{{\mathcal I}}
\newcommand{\calO}{{\mathcal O}}
   \renewcommand{\O}{{\mathcal{O}}} 
 \renewcommand{\o}{{{\mathcal{O}}}}
  \newcommand{\cd}{{\mathcal{D}}}
    \newcommand{\cald}{{\mathcal{D}}}
\newcommand{\cale}{{\mathcal{E}}}
\newcommand{\cm}{{\mathcal{M}} }
\newcommand{\caln}{{\mathcal{N}}}
\newcommand{\gs}{{\mathfrak{S}}}
\newcommand{\gm}{{\mathfrak{M}}}
  \newcommand{\gmast}{{\gm^\ast}}
\newcommand{\M}{{\mathfrak{M}}}
\newcommand{\gn}{{\mathfrak{N}}}
\newcommand{\minf}{{\mathfrak{M}_{\mathrm{inf}}}}
\newcommand{\fkc}{{\mathfrak{c}}}
\newcommand{\fkm}{{\mathfrak{m}}}
\newcommand{\fkt}{{\mathfrak{t}}}
\newcommand{\barK}{{\overline{K}}}
 \newcommand{\bfa}{\mathbf{A}}
\newcommand{\bfb}{\mathbf{B}}
\newcommand{\amax}{\mathbf{A}_{\mathrm{max}}}
 \newcommand{\gsmax}{{\gs_{\mathrm{max}}}}
\newcommand{\cmast}{{\mathcal{M}^{\ast}}}
\newcommand{\bargm}{{\overline{\gm}}}
 \newcommand{\bargmast}{{\overline{\gm}^\ast}}
\renewcommand{\phi}{\varphi}
 \newcommand{\fon}{{\mathrm{Fon}}}
 \newcommand{\gmht}{{\gm_\HT}}
  \newcommand{\bargmht}{{\bargm_\HT}}
  \newcommand{\gminf}{\gm_{\mathrm{inf}}}
   \newcommand{\bargminf}{\overline{\gm}_{\mathrm{inf}}}
 \newcommand{\hattheta}{{\wh{\Theta}}}
\author[]{Hui Gao}   \address{Department of Mathematics and Shenzhen International Center for Mathematics, Southern University of Science and Technology, Shenzhen 518055, China}   \email{gaoh@sustech.edu.cn}
 \author[]{Tong Liu} 
\address{Department of Mathematics, Purdue University, West Lafayette, IN 47907}
\email{tongliu@math.purdue.edu}
\begin{document}
\subjclass[2010]{Primary  11F80, 11S20}

\keywords{semi-stable representations, prismatic crystals, Sen theory}

\title[]{Integral filtered Sen theory and applications}

 \begin{abstract}  \normalsize{ 
We study Nygaard-, conjugate-, and Hodge filtrations  on  the many variants of   Breuil--Kisin modules associated to integral  semi-stable Galois representations. 
This leads to an  integral    Sen operator  satisfying certain  ``$1$-degree shrinking"  on  the increasing conjugate filtration, and (in special cases) a   mod $p$ Sen operator satisfying certain ``$p$-degree shrinking". 
These constructions are related with  prismatic $F$-crystals, Hodge--Tate crystals and $F$-gauges, and have explicit relations with classical (non-prismatic) operators. 
As applications, we obtain vanishing and torsion bound results on graded of the integral  Hodge filtration; our explicit methods also recover results of Gee--Kisin and Bhatt--Gee--Kisin concerning  the mod $p$  Hodge filtrations and Frobenius structures. }
    \end{abstract}

  \date{\today} 
\maketitle
\tableofcontents


\section{Introduction}

This paper studies Sen theory and its applications in the context of \emph{integral} $p$-adic Hodge theory. 
Classically, Sen theory \cite{Sen81} studies $C$-representations, whereas integral $p$-adic Hodge theory studies $\zp$-representations (including torsion ones); thus  previously these two subjects have only minimal intersections, since  passing from $\zp$- to $C$-coefficients  obviously  lose a lot of information (e.g., the Frobenius operator). 
Our work is inspired  by the many recent studies in prismatic cohomology \cite{BS22}---more specifically---its coefficient objects including $F$-crystals \cite{BS23, DL23}, Hodge--Tate crystals (\cite{BL1, AHLB1, GMWHT} etc.) and the most recent $F$-gauges \cite{Bha22}. 
In particular, we are inspired by a theorem of Gee--Kisin \cite{GK-ias} on reduction of crystalline representations which makes use of $F$-gauges.

In this paper, we show that many classical \emph{differential/monodromy} operators (\cite{Bre97, Kis06, Liu08, Gao23} etc.) from  integral $p$-adic Hodge theory are indeed closely related with Sen theory: in fact, their (Hodge--Tate) reductions (modulo a certain ideal) are precisely (refinements of) the Sen operator. 
More importantly, because the classical monodromy operators are intertwined with Frobenius operators---using which we can define Nygaard-, Hodge-, and conjugate filtrations---, we show that these reduced Sen operators have rich interplay with these filtrations. 
In some sense, these filtrations (in particular the conjugate filtration) can be regarded as   \emph{shadows} of the Frobenius operator. 
As applications of these structural results on \emph{filtered Sen theory}, we in turn  obtain structural results on the Frobenius operator, which in particular recover results of Gee--Kisin \cite{GK-ias} and Bhatt--Gee--Kisin \cite{BGK}. 

Compared with the stacky approach of \cite{Bha22, GK-ias, BGK}, our explicit approach has the extra benefit of direct relation with  classical operators: indeed, all our results on filtered Sen theory admit \emph{lifts} to variants of Breuil--Kisin modules; most interestingly, in the mod $p$ setting, we construct a certain (lifted) ``stabilized truncated  operator" where one can \emph{unify} a ``$p$-Griffiths transversality" property and a ``$p$-degree shrinking" property for the mod $p$ Sen operator.

In the following \S \ref{subsec:intro-filsen}, we first discuss our results on  filtered Sen theory; we then discuss their applications in \S \ref{subsec:intro-app}. 
We should already point out that many results have overlaps and strong connections with results by Bhatt--Lurie \cite{Bha22}, Gee--Kisin \cite{GK-ias} and Bhatt--Gee--Kisin \cite{BGK}: see in particular Remarks \ref{rem:intro-filSen},  \ref{rem history intro} and \ref{rem y matrix} for detailed comments and comparisons. 
 We   emphasize that the original observation that
$F$-gauges can be used to prove such results is first due to Gee and Kisin.
  We hope these remarks make  it clear our intellectual debt to the work of Bhatt--Lurie and Gee--Kisin.

\subsection{Filtered Sen theory}\label{subsec:intro-filsen}

\begin{notation}
We introduce some notations that are necessary for our discussions.
\begin{enumerate}
\item  Let $k$ be a perfect  field of characteristic $p$, let $W(k)$ be the ring of Witt vectors, and let $K_0 :=W(k)[1/p]$.
Let $K$ be a finite totally ramified  extension of $K_0$, let $\mathcal O_K$ be the ring of integers. Fix an algebraic closure $\overline {K}$ of $K$ and set $G_K:=\Gal(\overline{K}/K)$.
Let $\pi \in K$ be a fixed uniformizer, and let $E(u)$ be its minimal polynomial over $K_0$; one can use these to define the Breuil--Kisin prism $(\gs=W(k)[[u]], (E))$.

\item Let $T$ be an integral semi-stable representation of $\gk$ with  Hodge--Tate weights (of $T[1/p]$) being $0 \leq r_1 \leq \cdots \leq r_d$. One can associate   an (effective) Breuil--Kisin module, which is a finite free $\gs$-module $\gm$ equipped with $\phi: \gm \to \gm$ such that the linearization \[1\otimes \phi: \gs[1/E]\otimes_{\phi, \gs}\gm \to \gs[1/E]\otimes_{ \gs}\gm \] is an isomorphism.  Define the following (effective) $\bbz$-filtrations:
\begin{enumerate}
\item  Regard $\gm^\ast =\gs\otimes_{\phi, \gs} \gm$ as a submodule of $\gm$ via $1\otimes \phi$, and define the  (decreasing) Nygaard filtration
\[ \fil^n \gmast :=\gmast \cap E^n \gm \]
\item Induce the (decreasing) Hodge filtration on $\gm_\dR:=\gmast/E\gmast$   via  the surjection $\gmast \onto \gm_\dR$. 
\item Define the  conjugate filtration on $\gm_\HT:=\gm/E\gm$  where $\fil_n \gmht$ (or  $\fil_n^\rmconj \gmht$ for emphasis) is the image of 
\[ \fil^n \gm^\ast/\fil^{n+1} \gm^\ast \xhookrightarrow{E^{-n}} \gm/E\gm\] 
It is a standard fact (cf. Lemma \ref{lem: matching graded dR and HT}) that this is an increasing filtration (whence the subscript notation $\fil_\bullet$) and it has matching gradeds  with that of Hodge filtration:
\[ \fil^n \gm_\dR/ \fil^{n+1} \gm_\dR  \simeq \fil_n \gm_\HT/ \fil_{n-1} \gm_\HT\]
\end{enumerate}

\item Let $\bargm=\gm/p\gm$; similarly define $\bargm^\ast$ and its Nygaard filtration, then use it to induce $\fil^\bullet \bargm_\dR$ and   $\fil_\bullet \bargmht$. We warn that the natural filtered reduction map $\fil^\bullet \gmast \to \fil^\bullet \bargmast$ (and other filtered reduction maps) is in general not strict; thus these mod $p$ filtrations could behave very differently.
\end{enumerate}
 \end{notation}

Let $\o$ be the ring of holomorphic functions on the open unit disk (defined over $K_0$). Let $\cm=\gm\otimes_\gs \o$.
 In \cite{Kis06}, Kisin constructs a differential operator
\[ N_\nabla: \cm \to  \cm. \]
Take mod $E$ reduction, (and note $\o/E=K$), we obtain
\[  \overline{N}_\nabla: \gm_\HT[1/p] \to  \gm_\HT[1/p]. \] 
We start with a \emph{filtered} refinement of the above operator.

\begin{theorem}[cf. Thm. \ref{thm: rational sen shift}]\label{thm: intro rational sen} 
(Let $T$ be a semi-stable representation).  
There is a constant $\fkc \in K$ (with explicit expression $\fkc=(\theta_\fon(u\lambda'))^{-1}$, cf \S \ref{sec: fil sen}), such that the scaled operator 
$$\theta_\kinfty= \fkc \overline{N}_\nabla: \gm_\HT[1/p] \to \gm_\HT[1/p],$$
---which we call the \emph{negative $\kinfty$-Sen operator} (cf. Remark \ref{rem: negative Sen op})--- satisfies the following:
\begin{enumerate}
\item $\theta_\kinfty$ is semi-simple with eigenvalues $r_1, \cdots, r_d$;  

\item For each $n$, the $n$-th shifted operator $\theta_\kinfty-n$  satisfies  \emph{$1$-degree shrinking}  
on $\fil_n$ in the sense that:
\[ (\theta_\kinfty-n)\left(\fil_n^\rmconj  \gm_\HT[1/p] \right) \subset \fil_{n-1}^\rmconj  \gm_\HT[1/p] \]
 \end{enumerate} 
\end{theorem}

When $K$ is unramified and $T$ is crystalline, the structures in Theorem \ref{thm: intro rational sen} (indeed, also its integral version Theorem \ref{thm: intro integral sen}) are first known by the work of Bhatt--Lurie \cite{Bha22}, via a stacky approach; note however the relation with Kisin's $N_\nabla$-operator (i.e., the \emph{lift} of Sen operator) above   genuinely needs extra input (at least from \cite{GMWHT}). 
See Remark \ref{rem:intro-filSen} for detailed comments on Bhatt--Lurie's approach; here, let us first discuss our approach.

\begin{proof}[Idea of proof for Theorem \ref{thm: intro rational sen}. ]
The operator $\theta_\kinfty$ is already constructed in \cite{GMWHT} (indeed, even for any $C$-representation), and hence   Item (1) of Theorem \ref{thm: intro rational sen} is a direct  consequence.
For the ``$1$-degree shrinking" in Item (2) (cf. Remark \ref{rem:ds vs gt} for discussion of terminology), it turns out to be a ``shadow" of the ($1$-degree) \emph{Griffiths transversality} of Breuil's $N$-operator \cite{Bre97}.
To be more precise, we will  make use of the following property of  Kisin's $N_\nabla$-operator:
\[ N_\nabla(\fil^n \cmast) \subset E\fil^{n-1} \cmast, \]
which follows easily from its intertwining with $\phi$:
\[N_\nabla\varphi=\frac{pE(u)}{E(0)} \varphi N_\nabla. \]
Note the formula $N_\nabla(\fil^n \cmast) \subset E\fil^{n-1} \cmast$ looks like a ``twisted" form of Griffiths transversality; however the extra $E$-twist actually implies the stability  $N_\nabla(\fil^n \cmast) \subset  \fil^{n} \cmast$.  
\end{proof}

\begin{remark}
The properties of the $N$-operator and the $N_\nabla$-operator discussed above could be compared, leading to a certain ``\emph{unification}" in this framework, cf. Remark \ref{rem: nnabla and N}. 
This observation is indeed a strong psychological comfort for us, as initially we are very much confused by the many ``similar looking" yet different properties of these operators (cf. Remark \ref{rem:ds vs gt}); furthermore, it serves as a ``philosophical guide" for us to construct other ``lifted" operators to deduce results on the (reduced) Sen operator, cf. Theorem \ref{thm-intro-pGT} and Proposition \ref{prop:introtauA} for a further (even more intricate) example.
\end{remark}

\begin{remark}[``1-degree shrinking" vs. ``Griffiths transversality"] \label{rem:ds vs gt}
We  discuss (and justify) some of the terminologies used here.
\begin{enumerate}
\item In Hodge theory and its analogous use elsewhere (e.g.   \cite{Bre97}), ``Griffiths transversality" of an operator $f$  signifies its \emph{failure} of  preserving a  \emph{decreasing} filtration by at most one degree, i.e., it satisfies:
\[ f: \fil^\bullet \to \fil^{\bullet-1}\]
\item We say an operator $g_n$ satisfies $1$-degree shrinking on (degree $n$ of) an \emph{increasing} filtration  $\fil_\bullet$  if:
\[ g_n: \fil_n \to \fil_{n-1}.\] 
Note that $g_n$ not only \emph{preserves} $\fil_n$, it even \emph{shrinks} it; thus its effect is completely different from that of ``Griffiths transversality". In addition, we note that the terminology ``transversality" has geometric connotations; thus the terminology such as ``anti-" or ``opposite-" Griffiths transversality does not seem to be appropriate here.
Here, note we fix the degree $n$ above, since in our typical example such as Theorem \ref{thm: intro rational sen}, ``$\theta_\kinfty-n$" depends on $n$.

\item \label{item-intro-pgt}
 We shall also prove a certain ``$p$-degree shrinking" in Theorem \ref{thm-intro-pGT} (in the mod $p$ setting). 
In fact,  the method we use     also recovers a ``$p$-Griffiths transversality" of Bhatt--Lurie \cite{Bha22} (although unlike the ``$p$-degree shrinking", this will not be used in our applications). 
Most interestingly, we show that these two distinct phenomena follow from properties of a same \emph{lifted} operator.  

\item As discussed above, it turns out the phenomena of ($1$-degree resp. $p$-degree) shrinking and Griffiths transversality co-exist, and sometimes could be ``unified". 
This was indeed a source of great confusion in our initial investigation; we hope this work could clarify some of these aspects. In fact, most interestingly, we find  that the (heavy) application of  Griffiths transversality in previous works (in particular, \cite{GLS14}) could actually be \emph{replaced} by the use of $1$- resp. $p$-degree shrinking, which  leads to  a substantially more conceptual reproof of difficult results in \cite{GLS14}, cf. \S \ref{subsec: GLS reproof}.
\end{enumerate}
\end{remark}

We now refine Theorem \ref{thm: intro rational sen} to an  \emph{integral}  filtered version. Let   $\star \in \{ \emptyset, \log \}$, and suppose $T$ is $\star$-crystalline (where log-crystalline means semi-stable). 
Let $E'(\pi)$ be $\frac{d}{du}(E)$ evaluated at $\pi$.
Let
 \begin{equation} \label{eq: intro a constant}
a=
\begin{cases}
  E'(\pi), &  \text{if } \star=\emptyset \\
 \pi E'(\pi), &  \text{if } \star=\log
\end{cases}
\end{equation}

\begin{theorem}[cf. Theorems \ref{thm: integral sen integral conj fil} and \ref{thm: mod p Sen op fil}] \label{thm: intro integral sen} 
 Suppose $T$ is  $\star$-crystalline, and use constant $a$ in Eqn \eqref{eq: intro a constant}.  
The  \emph{amplified Sen operator}  
$$\Theta=a\theta_\kinfty: \gm_\HT[1/p] \to \gm_\HT[1/p]$$ 
satisfies the following:
\begin{enumerate}
 \item $\Theta$ is integral, that is:
\[ \Theta (\gm_\HT) \subset \gm_\HT\]

\item $\Theta-an=a(\theta_\kinfty-n)$ satisfies $1$-degree shrinking on (integral) $\fil_n$: that is, it induces a map  
\begin{equation*} \label{eq intro theta int}
 \Theta-an:\fil_n^\rmconj  \gm_\HT \to   \fil_{n-1}^\rmconj \gm_\HT
\end{equation*}  

\item In addition, consider the mod $p$ operator:
\[ \overline{\Theta}: \bargmht \to \bargmht\]
then $\overline{\Theta}-an$ also satisfies $1$-degree shrinking in the sense that it induces 
\begin{equation*} \label{eq intro theta modp}
\overline{\Theta}-an:\fil_n^\rmconj  \bargm_\HT \to   \fil_{n-1}^\rmconj \bargm_\HT
\end{equation*}  
\end{enumerate}
\end{theorem}
\begin{proof}[Idea of proof.] To obtain this integral version, we study the  integral  property of Kisin's $N_\nabla$-operator: indeed, it is known (cf. \cite{Liu08, Gao23}) that $N_\nabla$ is equal to a normalization of $\log(\tau)$ (cf. Notation \ref{nota lie hatg} for $\tau$); thus it suffices to carefully study the $p$-adic bounds of the log-expansion. 
As a technical comment, Item (3) does not directly follow from Item (2) as the filtered map $\fil_\bullet \gmht \to \fil_\bullet \bargmht$ is not strict; however, a similar bounding strategy works.  
(We mention a minor but subtle point here: one can certainly consider reduction of $\Theta$ modulo $\pi$ (instead of modulo $p$), however this is not the correct normalization in this paper, cf. Remark \ref{rem not mod pi}  for more comments.)
\end{proof}

\begin{remark} \label{rem:intro-filSen}
 When $K$ is unramified and $T$ is crystalline, Theorem  \ref{thm: intro integral sen} is first due to Bhatt--Lurie \cite{Bha22} via a stacky approach. It should be mentioned that both authors of this paper are not experts with stacks, particularly the (very deep) stacky techniques in Bhatt--Lurie's work  \cite{Bha22}. In particular, even after finishing a first draft of this paper, 
we do not fully understand its relation to \cite{Bha22} or \cite{GK-ias}.
We thank  Bhargav Bhatt, Toby Gee and Mark Kisin for many useful communications which help us to understand the following stacky picture.  
In the following, we freely use the stacky notations from \cite{Bha22}. 
\begin{enumerate}
\item Let  $T$ be a  crystalline representation, let $\cale$ be the corresponding \emph{reflexive} $F$-gauge constructed in \cite[Theorem 6.6.13]{Bha22}, which is a sheaf on $\o_K^\syn$. Consider the pull-back of $\cale$ to (the Hodge--Tate component) $(\o_K^\caln)_{t=0}$, which has an explicit  presentation by $\mathbf{A}^1/(\mathbf{G}_a^\sharp \rtimes \mathbf{G}_m)$ as in \cite[Proposition 5.3.7]{Bha22}. An explicit computation  of quasi-coherent sheaves on  $(\o_K^\caln)_{t=0}$  leads to the statements in Theorem \ref{thm: intro integral sen}. To be more precise, this is already carried out in \cite[\S 6.5.4]{Bha22} (in the mod $p$ case); cf.~ in particular (the diagram and ensuing argument in)   \cite[Remark 6.5.11]{Bha22}.  In addition, Bhatt explains to us that the argument of  \cite[Proposition 5.3.7]{Bha22} can be modified to accommodate the case with  $K$  ramified. 
\item 
We also note that the phenomenon of $1$-degree shrinking in Theorem \ref{thm: intro integral sen}  already appears in \cite{BL1} (pre-dating prismatic $F$-gauges), albeit then in a cohomological  setting, cf.~ e.g.~ \cite[Remark 4.9.10]{BL1}. 
Note in loc.~ cit., $K$ is unramified and hence admits $q$-de Rham prism, and thus  the  Sen operator  there is the ``classical" one (over the cyclotomic tower), cf. \cite[\S 3.9]{BL1}. This is ``compatible" with our Sen operator over the Kummer tower (say, after linear extension to $C$, or to $\o_C$ in the integral crystalline case), by \cite{GMWHT}, cf. also Theorem \ref{thmkummersenop} for a quick review.  

\item In some sense, Bhatt--Lurie's  approach is purely geometric: indeed, the $1$-degree shrinking phenomenon happens for any coherent sheaf ---even those not necessarily related with crystalline representations--- on   $(\o_K^\caln)_{t=0}$. Whereas our proof (as mentioned in proof of Theorem \ref{thm: intro rational sen}) crucially uses (a variant of) the Griffiths transversality of Breuil's $N$-operator. We hope our approach explicates the relation between the ``stacky" operator  and the classical ones. In addition, since our operator is also explicitly defined for general semi-stable representations, it informs us why some of the applications in this paper does not naively generalize for semi-stable representations.

\end{enumerate}
\end{remark}

Note when $a \neq 1$, then $\overline{\Theta}$ is necessarily nilpotent (and hence has limited use). However, when $K$ is unramified and $T$ is crystalline (hence $a=1$), then it turns out that $\overline{\Theta}$  has  very intricate relations with (mod $p$) filtrations, which we explain in the following.

\begin{notation}\label{nota:introkunram}
 Suppose $K$ is unramified and $T$ is crystalline. Note in this case, $a=1$ and hence 
\[ \Theta =\theta_\infty.\]
Consider $\phi(\bargm) \subset \bargmast$ as a $k[[u^p]]$-submodule, with the induced ``sub-Nygaard" filtration:
\[ \fil^\bullet \phi(\bargm) :=\phi(\bargm) \cap \fil^\bullet  \bargmast =\phi(\bargm) \cap u^\bullet \bargm \]
For $n, i\in \bbz$, denote $N_{n,i}$ as the image of the composite map
\[ u^i\fil^{n-i}\phi(\bargm)/u^i\fil^{n+1-i}\phi(\bargm) \into \gr^n \bargmast \xrightarrow{\simeq, u^{-n}} \fil_n \bargmht\] 
It is easy to see $N_{n,i}=N_{n-i,0}$, and each $N_{n,i}$ can be regarded as a piece of the conjugate filtration.
 Note since these pieces come from a $k[[u^p]]$-module, there is in general no relation between $N_{n-1,0}$ and $N_{n,0}$; rather, we only have $N_{n-p,0} \subset N_{n,0}$.
\end{notation}

 The following \emph{$p$-degree shrinking} result forms a  technical core of this paper. It bears some ``resemblance" but is completely different from a \emph{$p$-Griffiths transversality} property of Bhatt--Lurie \cite{Bha22}, although they could be ``unified" in some sense, cf. Remark \ref{rem:intro-unify}. As far as we understand, Theorem \ref{thm-intro-pGT} does not seem to   follow from constructions in \cite{Bha22}.
 
\begin{theorem}[Theorem \ref{prop: p griffiths}] \label{thm-intro-pGT}
 Suppose $K$ is unramified and $T$ is crystalline, and use Notation \ref{nota:introkunram}. Then $\overline{\Theta}-n$ satisfies a ``$p$-degree shrinking" in the sense that:
\[ (\overline{\Theta}-n)(N_{n,0}) \subset N_{n-p,0}. \]
In particular, the   eigenvalues of $\overline{\Theta}$ acting on $N_{n,0}$ are all $n$ (mod $p$). 
In addition, we have a direct sum decomposition
\[ \oplus_{i=0}^{p-1} N_{n,i} = \fil_n \bargmht,\]
where the left hand side is exactly the generalized eigenspace decomposition with respect to $\overline{\Theta}$-action on $\fil_n \bargmht$.
\end{theorem}
\begin{proof}[Idea of proof.]
For the  proof of Theorem \ref{thm: intro integral sen}, we needed to study \emph{integral} properties of the (normalized) $\log(\tau)$ expansion which involves $p$ in the denominators that nonetheless could be taken care of. 
However, for this (very intricate) mod $p$ theorem, we need to make use of a \emph{purely integral} operator: 
\[ \delta_\tau:= \frac{\tau-1}{u\frac{p}{E(0)}\fkt} \]
(where $\fkt \in \ainf$ is a ``normalizing" element, cf. Notation \ref{nota: ring for mod}), which is exactly the \emph{first term} of the  (normalized) $\log\tau$-expansion  mentioned above.
It is well-known (e.g.  \cite{Liu08, Gao23}) that this operator induces a map
\[ \delta_\tau: \gm \to \gm_\inf=\gm\otimes_\gs \ainf.\]
In addition, by a sharp observation of  Yu Min and Yupeng Wang (cf. Proposition \ref{prop MinWang}), the reduction mod $(E,p)$ of this operator is exactly the mod $p$ Sen operator $\overline{\Theta}$. 
Following the main spirit of this paper, and noting $\delta_\tau$ has an obvious intertwining with the Frobenius operator, we desire to establish some  similar ``$p$-degree" property  on this \emph{lifted} level. 
As such, it requires us to \emph{stabilize} the $\delta_\tau$ operator: that is, we will construct a ring $\gs \subset A \subset \ainf$ (cf. below) such that $\delta_\tau$ induces a stable operator:
\[ \delta_\tau: \gm_A \to \gm_A,\]
where  $\gm_A:=\gm\otimes_\gs A$.
In some sense, this could be regarded as a \emph{purely integral} variant of Kisin's $N_\nabla$-operator (which is stable on $\gm\otimes_\gs \o$). 
Once stabilized, it is relatively easy to deduce  the desired properties for $\delta_\tau$, cf. Proposition \ref{prop:introtauA}; 
after  reduction modulo a maximal ideal of $A/pA$, (and some extra work), this leads to the $p$-degree shrinking in Theorem \ref{thm-intro-pGT}.
\end{proof}

We expand on the stabilization of $\delta_\tau$ mentioned in above proof. This will have to be   complicated as we need to balance between the size of the ring and the properties that we desire to have.
Let $\gs^1_\log$ be the co-product of the (log-) Breuil--Kisin prism on the \emph{log}-prismatic site, and let $\gs^1_{\log, \max} =\gs^1_\log[E/p]^{\wedge_p}$. Denote
\[ A : = \gs^1_{\log, \max}[1/p] \cap \ainf. \]
The appearance of the ``max" ring here is partly inspired by a relatively easy ``max"  integral lift of Sen operator in \S \ref{sec: sen max classical}; the intersection with $\ainf$ comes from our desire to construct a flat ring (at least in mod $p$ case). 
Slightly unfortunately, this ring $A$ is rather implicit: in fact, it is not clear if $A$ is $p$-adic complete (cf. Remark 
\ref{rem implicit A}) although it is irrelevant for our purpose; in addition, the appearance of \emph{log}-prismatic rings (in the study of crystalline representation) is slightly strange although it turns out to be \emph{necessary} and natural, cf. Remark \ref{rem:logpris-necessary}.
Fortunately, one can easily check that  $A/pA$ is flat over $\gs/p\gs$, making   it convenient to study conjugate filtrations which satisfy \emph{flat base change} (Lemma \ref{lem: flat base change}). 
(We do speculate that it might be possible to  find a more explicit ring $A$, perhaps in a more ``explicit" (ring-theoretic)   syntomic  cohomology theory of Bhatt--Lurie \cite{Bha22}).

\begin{prop}[cf. Proposition \ref{prop-pGT-A-level}] \label{prop:introtauA}
Suppose $K$ is unramified and $T$ is crystalline. 
Let $\gm_A=\gm\otimes_\gs A$, then
\[ (\tau-1)(\gm_A) \subset u\fkt\gm_A\]
and thus $\delta_\tau$ is stable on $\gm_A$.
Denote $\bargm_A:=\gm_A/p\gm_A$, consider the mod $p$ operator 
\[ \delta_\tau= \frac{\tau-1}{u\frac{p}{E(0)}\fkt}: \bargm_A \to \bargm_A.\]
Then we have:
\begin{enumerate} 
\item \label{item2pgtaintro} $\delta_\tau(\Fil^n \varphi (\bargm_A)) \subset u^p \fil^{n-p}\phi(\bargm_A)$; 

\item  \label{item3pgtaintro} \emph{($p$-Griffiths transversality)}:  $\delta_\tau(\Fil^n  \bargm_A) \subset  \fil^{n-p} \bargm_A$. 
\end{enumerate}
Here $\fil^\bullet \phi(\bargm_A):=\phi(\bargm_A) \cap u^\bullet \bargm_A$, and $\fil^\bullet \bargm_A$ is induced from the bijective map $\bargm_A \xrightarrow{\phi} \phi(\bargm_A)$.
\end{prop}
\begin{proof}[Idea of proof.]
The construction of the operator $\delta_\tau$ follows from carefully bounding the range of $(\tau-1)$-action on $\gm$ and $A$. Once constructed, its (mod $p$) property quickly follows from its obvious intertwining with the Frobenius operator
 \[\delta_\tau \phi =\frac{p}{\phi(E(0))}u^p \phi \delta_\tau. \] 
Indeed, the $u^p$ above accounts for the ``twist" in Proposition \ref{prop:introtauA}\eqref{item2pgtaintro}.
Indeed, Proposition \ref{prop:introtauA}\eqref{item2pgtaintro} looks like a ``twisted" form of $p$-Griffiths transversality; however it actually implies the stability
\[\delta_\tau(\Fil^n \varphi (\bargm_A)) \subset \Fil^n \varphi (\bargm_A).\]
Note the similarities here with the properties of $N_\nabla$-operator  sketched   in   proof of Theorem \ref{thm: intro rational sen}: there the ``twisted Griffiths transversality" induces the $1$-degree shrinking; here the ``twisted $p$-degree Griffiths transversality" induces the $p$-degree shrinking in Theorem \ref{thm-intro-pGT}.
\end{proof}

\begin{remark} \label{rem:intro-unify}
As mentioned in Remark \ref{rem:ds vs gt}\eqref{item-intro-pgt}, our stabilized operator $\delta_\tau$ can  be also used to recover (via a certain ``de Rham specialization", in contrast to the ``Hodge--Tate specialization" in Theorem \ref{thm-intro-pGT}) a ``$p$-Griffiths transversality" of Bhatt--Lurie \cite{Bha22}, cf. Theorem \ref{thm:BL-p-GT}. 
Indeed, our $p$-degree shrinking in Theorem \ref{thm-intro-pGT} follows from Proposition \ref{prop:introtauA}\eqref{item2pgtaintro}, whereas the $p$-Griffiths transversality follows from Proposition \ref{prop:introtauA}\eqref{item3pgtaintro}: note the later is a  completely different phenomenon,  and is not used for our applications. See \S \ref{sec:pDS vs pGT} for more discussions and comparisons.
\end{remark}

\subsection{Applications: Hodge filtrations and Frobenius shapes} \label{subsec:intro-app}

We now discuss applications of above filtered Sen theory to (more classical) questions of Hodge filtrations as well as shapes of the Frobenius matrix. 
When $K$ is unramified and $T$ is crystalline with Hodge--Tate weights bounded in the range $[0,p]$, then \emph{all} results in this subsection are known by the work \cite{GLS14}: there, one makes heavy use of the \emph{Griffiths transversality} of Breuil's $N$-operator to deduce structures of the Hodge filtration, which in turn yield structures of the Nygaard filtration and hence the Frobenius matrix. 
The main innovation here is that we make use of the extra \emph{conjugate filtration} together with various \emph{shrinking} properties of the Sen operator  discussed in above subsection. This not only substantially simplifies the proof of many difficult results in \cite{GLS14}, it also allows us to obtain   results outside the range  $[0,p]$.

We start by explaining a torsion control and vanishing theorem on the integral resp. mod $p$ Hodge filtration. The mod $p$ result is first due to Gee--Kisin \cite{GK-ias}, and  our integral result is inspired by their theorem. See Remark \ref{rem history intro} for more historical comments, and see Remark \ref{rem: f gauge vb} for our motivations from \cite{GLS14}.

\begin{theorem}\label{thm: intro vanishing}
Suppose $K$ is unramified. Let $T$ be a   stable  $\zp$-lattice in a crystalline representation of $\gk$ with Hodge--Tate weights $\{r_1, \cdots, r_d\}$ where $0 \leq r_1 \leq \cdots \leq r_d$, and let $\gm$ be the associated Breuil--Kisin module.
\begin{enumerate}
    \item \label{item integral ver intro}
    (cf. Thm. \ref{thm: vanish integral}). Suppose $n$ is not in the set $\{ r_i+kp, k \geq 0, 1\leq i\leq d \} \cap [0, r_d]$,  then
    \[ \gr^n \gm_\dR =0. \]
    In addition, for each $n$,  $(\gr^n \gm_\dR)_\tor$ is uniformly killed by $(r_d-1)!$ and has number of generators  uniformly $\leq d$.
(See Theorems \ref{thm: bound expo} and \ref{thm: bound generator} for more precise torsion bounds.) 
 
    \item \emph{(Gee--Kisin \cite{GK-ias})} \label{item mod p ver intro} (cf. Thm. \ref{thm: mod p jump}). Suppose $n$ is not   in the set $\{ r_i+kp, k \in \bbz, 1\leq i\leq d \} \cap [0, r_d]$, then 
\[ \gr^n \bargm_\dR =0. \]
More precisely, let $b_1\leq  \cdots \leq b_d$ be the jumps of $\fil^\bullet \bargm_\dR$ counted with multiplicities, then $0 \leq b_i \leq r_d$ for each $i$ and
\[ \{ b_1, \cdots, b_d\} \equiv  \{r_1, \cdots, r_d\} \pmod p\]
in the sense that both sides define a same (un-ordered) set of elements in $\bbz/p\bbz$ with same multiplicities. 
\end{enumerate}
 \end{theorem}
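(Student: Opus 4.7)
The plan is to combine a rational reduction using the filtered $\phi$-module picture with a Frobenius descent on the Nygaard filtration, and to extract the factorial bound by passing to a divided-power enlargement.

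\emph{Rational and mod $p$ reductions.} Since $K = K_0$ is unramified, $\gm_\dR[1/p]$ is canonically identified with the filtered $\phi$-module $D_{\cris}(T[1/p])$ over $K_0$, so its Hodge filtration jumps only at $\{r_1, \ldots, r_d\}$, and hence $\gr^n \gm_\dR$ is $p$-power torsion whenever $n \notin \{r_1, \ldots, r_d\}$. For the mod $p$ statement~(2), the base $\overline{\gs} = k[[u]]$ is a PID, so the embedding $\bargmast \hookrightarrow \bargm$ admits a Smith-normal form $\diag(u^{b_1}, \ldots, u^{b_d})$ with $0 \le b_1 \le \cdots \le b_d$. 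A direct calculation in these coordinates shows $\gr^n \bargm_\dR$ has $k$-dimension equal to the multiplicity of $n$ in $\{b_i\}$. The multiset congruence $\{b_i\} \equiv \{r_i\} \pmod{p}$ follows by comparing this $u$-adic Smith form of $\bar A$ with the $E$-adic Smith form of the Frobenius matrix $A$ over $\gs_{(E)}[1/p]$ (which reads off the $r_i$) via the key congruence $E \equiv u \pmod{p}$ and Fitting-ideal bookkeeping, while $b_i \le r_d$ comes from $\fil^{r_d+1}\bargm_\dR = 0$ (itself a consequence of $E^{r_d+1}\gm \subset E\gmast$).

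\emph{Frobenius descent for the integral vanishing.} The decisive identity is $\phi(E) = u^p - p \equiv E^p \pmod{p}$, which implies that the semilinear Frobenius on $\gm$ carries $\fil^i\gmast = \gmast \cap E^i\gm$ into $\phi(E)^i\gmast \subset \fil^{ip}\gmast + p\gmast$. The resulting ``divided Frobenius'' relates $\gr^n \gm_\dR$ to $\gr^{pn}\gm_\dR$ modulo torsion of controlled order. Running a descending induction on $n \in [0, r_d]$, any nonzero torsion class in $\gr^n\gm_\dR$ traces back, after repeated Frobenius-division, to one originating at an honest Hodge--Tate weight $r_i \le n$ with $n \equiv r_i \pmod{p}$; equivalently, $n \in \{r_i + kp : k \ge 0\}$. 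The one-sidedness $k \ge 0$ is forced because $\phi$ only raises the filtration index (from $i$ to $ip$); this is the crucial integral refinement over the mod $p$ statement~(2), where $\phi$ is invertible on the étale quotient and $k$ is allowed to range over all of $\bbz$.

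\emph{Torsion bounds and main obstacle.} For the annihilator bound $(r_d-1)!$, the plan is to base-change $\gmast$ to Breuil's divided-power enlargement $S = W(k)\langle E^i/i!\rangle_{i \ge 0}$ and relate its $S$-linear Nygaard filtration to the original one. The DP denominators $i!$ for $1 \le i \le r_d - 1$ supply extra divisibility on $S \otimes_\gs \gmast$; pulling back along $\gmast \to S\otimes_\gs\gmast$ produces precisely the factor $(r_d-1)!$ annihilating the torsion of each $\gr^n\gm_\dR$. The bound on the number of generators is automatic: $\gm_\dR = \gmast/E\gmast$ is free of rank $d$ over $W(k)$, so every subquotient, in particular $(\gr^n\gm_\dR)_{\tor}$, has at most $d$ generators. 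The main obstacle throughout is the integral part~(1): over $\gs = W(k)[[u]]$ (Krull dimension two) no Smith normal form is available, so the clean mod $p$ argument must be replaced by the more delicate Frobenius descent sketched above, and both the sharp factorial bound and the asymmetric shape $\{r_i + kp,\ k \ge 0\}$ of the admissible set genuinely rely on the integral filtered Sen theory developed in the body of the paper, in conjunction with the prismatic $F$-crystal and Hodge--Tate crystal inputs.
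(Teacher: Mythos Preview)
Your proposal has genuine gaps in each of its three main mechanisms, and none of them is the argument the paper actually runs.

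\textbf{Mod $p$ via Fitting ideals.} The comparison ``$u$-adic Smith form of $\bar A$ versus $E$-adic Smith form of $A$ over $\gs_{(E)}[1/p]$ via $E\equiv u\pmod p$'' does not go through. The Fitting ideals of $\gm/\gmast$ live in the two-dimensional ring $\gs$, and there is no reason the minimal $u$-valuation of their mod-$p$ images should be congruent modulo $p$ to the minimal $E$-valuation of their images in $\gs_{(E)}[1/p]$: a single minor can have $E$-valuation $0$ yet reduce to $0$ in $k[[u]]$ (take $m=p$). The paper's proof of (2) is entirely different: the mod-$p$ Sen operator $\overline\theta$ acts on $\gr_n\bargm_\HT$ as multiplication by $n$, while its characteristic polynomial on all of $\bargm_\HT$ is $\prod_i(x-r_i)$; comparing generalized-eigenspace dimensions computed both ways gives $\mu_B(s)=\mu_R(s)$ for every $s\in\bbz/p\bbz$.

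\textbf{Frobenius descent for integral vanishing.} The containment $\phi(\fil^i\gmast)\subset\fil^{ip}\gmast+p\gmast$ is not established: it is equivalent to a strictness statement for the reduction map on Nygaard filtrations, and that strictness \emph{fails} in general --- its failure is precisely what produces torsion in $\gr^n\gm_\dR$. Even granting the containment, your descent runs the wrong way: $\phi$ pushes filtration indices \emph{up} (from $i$ to $ip$), so ``Frobenius-dividing'' a class at level $n$ down to a Hodge--Tate weight $r_i$ would require inverting $\phi$ integrally, which you cannot do. The paper's argument does not touch $\phi$ at all at this stage: it uses the Sen operator $\theta$ on $\gm_\HT$ and the Griffiths-type property $(\theta-n)(\fil_n\gm_\HT)\subset\fil_{n-1}\gm_\HT$. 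For $n$ outside the exceptional set, the composite $\fil_n\to\fil_{n-1}\hookrightarrow\fil_n$ has eigenvalues $r_i-n$ with $r_i\le n$, all $p$-adic units, hence is bijective and $\gr_n\gm_\HT=0$.

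\textbf{Factorial bound via divided powers.} Base-changing to $S$ and ``pulling back DP denominators'' is not an argument: you have given no map from a DP-filtered quotient back to $(\gr^n\gm_\dR)_\tor$ that would transport an annihilator. The paper's bound is obtained by setting $C_m=\fil_m\gm_\HT/(\theta-n)$ and proving the recursion $e(C_{n-bp})\le e(C_{n-(b+1)p})+v_p(bp)$; telescoping gives $e(C_{n-1})\le v_p(n!)$, and a short exact sequence identifies $(\gr_n\gm_\HT)_\tor$ inside $C_{n-1}$.

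Your observation that $\gm_\dR$ is free of rank $d$ over $W(k)$, so every subquotient has at most $d$ generators, is correct and does give the uniform generator bound. But the three substantive mechanisms above do not work, and your closing sentence --- conceding that the sharp statements ``genuinely rely on the integral filtered Sen theory developed in the body of the paper'' --- is really an admission that the sketch is not self-contained. The missing idea throughout is exactly that filtered Sen theory: an integral operator $\theta$ on $\gm_\HT$ with eigenvalues $r_1,\dots,r_d$ satisfying $(\theta-n)(\fil_n)\subset\fil_{n-1}$.
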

 
 We first give some quick technical comments.   As a special case (that is easier than the complicated-looking conditions in the theorem), whenever $n \not\equiv r_i \pmod p$ for all $i$, then \[ \gr^n \gm_\dR =0, \quad \text{ and } \gr^n \bargm_\dR =0.\]
Caution: the integral and mod $p$ results do not imply each other, as $\fil^\bullet \gm_\dR \to \fil^\bullet \bargm_\dR$ is not strict.
 
 \begin{proof}[Idea of proof for Theorem \ref{thm: intro vanishing}.]
 We   briefly discuss how to use  the integral filtered   Sen operator to prove integral vanishing in Theorem \ref{thm: intro vanishing}(1); the torsion control results also depend on studying these operators. (The mod $p$ case follows similar ideas, using the mod $p$ Sen operator.)  
Indeed, suppose $n$ satisfies the condition in Theorem \ref{thm: intro vanishing}\eqref{item integral ver intro}. We claim the composite
\[ \fil_n^\rmconj \gm_\HT \xrightarrow{\Theta-an} \fil_{n-1}^\rmconj \gm_\HT \into \fil_n^\rmconj \gm_\HT\]
 is bijective; this would imply 
$\fil_{n-1}^\rmconj \gm_\HT = \fil_n^\rmconj \gm_\HT$ thus vanishing of $\gr_n \gm_\HT$ (equivalently, of $\gr^n \gm_\dR$, by  Lemma \ref{lem: matching graded dR and HT}). 
To wit, it reduces to compute   eigenvalues of the above endomorphism (after inverting $p$, and hence Theorem \ref{thm: intro rational sen}  is applicable): they are precisely the $a(r_i-n)$'s where $a$ is the constant in \eqref{eq: intro a constant}---but $a=1$ precisely because $K$ is unramified and $T$ is crystalline---; these are $p$-adic units and we can conclude.  
 \end{proof}

\begin{remark} \label{rem history intro}
 We give some historical remarks about Theorem  \ref{thm: intro vanishing}.
\begin{enumerate}
    \item (\emph{Mod $p$ results.}) Theorem \ref{thm: intro vanishing}(2)  is first announced by Gee--Kisin in \cite{GK-ias}; their key tool is the $F$-gauge attached to crystalline representations as constructed by Bhatt--Lurie \cite{Bha22}. Continuing the discussions in Remark \ref{rem:intro-filSen}, Gee and Kisin explained to us in detail that their key argument in proving their Thm. \ref{thm: intro vanishing}\eqref{item mod p ver intro}  hinges on realizing  (Rees construction associated to) the filtered modules $\fil_\bullet \gm_\HT \otimes_\ok k$ and $\fil_\bullet \bargmht$ as objects living over $(\o_K^\caln)_{t=0,p=0}$. 
 Indeed, by the \emph{stacky}  filtered Sen theory  of \cite{Bha22}, it is equivalent to construct certain  graded modules over a (non-commutative) ring $k\{x, D\}/(Dx-xD-1)$ (cf. \cite[\S 6.5.4]{Bha22}).
This allows Gee--Kisin to reduce the question to a concrete module-theoretic problem.

In comparison to Gee--Kisin's technique, our main argument is more explicit, and uses eigenvalue computations.  Nonetheless, it might be the case that the two proofs are essentially equivalent after unravelling all the details.

\item   (\emph{Integral results.}) Our   Theorem \ref{thm: intro vanishing}\eqref{item integral ver intro} (the integral vanishing and torsion bound) is inspired by Gee--Kisin's theorem, as  well as the known case in \cite{GLS14} when the Hodge--Tate weights is bounded in $[0,p]$ (cf. Remark \ref{rem: f gauge vb} for more comments on motivation).    
The  vanishing part (about $ \gr^n \gm_\dR$) in Theorem \ref{thm: intro vanishing} (1) was also known by the second author \cite{Liu24} via a different method.  
After an early draft of this paper, Gee and Kisin show us  that they can also build upon their stacky method and   module theoretic argument to reprove the integral vanishing result (cf.~Theorem \ref{thm: vanish integral})   
and strengthen our (earlier version of) Theorem \ref{thm: bound generator} on bound of generators (cf. Remark \ref{rem:GK improve}; we are grateful to Gee and Kisin for allowing us to include the strengthened version here).
In addition, during the preparation of this paper, Dat Pham \cite{Pham} informed us that he also independently observed one can use (filtered Sen theory from \cite{Bha22} and)  similar eigenvalue argument   to prove integral vanishing part in Theorem \ref{thm: intro vanishing}(1).
 It is natural to speculate that perhaps one can continue the stacky argument to reprove/improve Theorem  \ref{thm: bound expo} on bound of exponent (and hence completely recover  Theorem \ref{thm: intro vanishing}(1)).

  As a final remark, we   expect these torsion bound results to be useful in future  investigations such as the ramified case  (cf.~e.g.~ Example \ref{ex: torfree p 2p} and Remark \ref{rem: future}): in some sense, the integral vanishing result only isolates the ``bad" positions, but the torsion bound results control how bad they can be.
  \end{enumerate} 
\end{remark}

\begin{remark}\label{rem: f gauge vb}
 We   comment on our motivation and possible future applications of  Thm. \ref{thm: intro vanishing}.
\begin{enumerate} 
\item As mentioned above, for an integral crystalline representation $T$,  Bhatt--Lurie (cf.~\cite{Bha22}) constructs an $F$-gauge $\cale$ which is a \emph{reflexive} sheaf  on $(\ok)^\syn$; it is not necessarily a vector bundle. 
 Consider the filtered map
\begin{equation} \label{eq fil int strict}
\fil^\bullet \gm_\dR \to \fil^\bullet (\gm[1/p])_\dR =\fil^\bullet D_\dR(T[1/p])
\end{equation}
The content of \cite[Prop. 4.5]{GLS14} implies that  the filtered map \eqref{eq fil int strict}  is strict if and only if $\cale$ is a vector bundle  (equivalently, in more concrete terms, if and only if $\fil^\bullet \gmast$ admits \emph{adapted basis}; cf. Item (2) of \cite[Prop. 4.5]{GLS14}).  (See also Lemma \ref{lem: weak frob equiv} and Remark \ref{rem frob shape f gauge} for more discussions on this vector bundle condition.)  
As an example,   \cite[Prop. 4.16]{GLS14}  proves that when the Hodge--Tate weights are in the range $[0,p]$, then \eqref{eq fil int strict}  is strict, and hence $\cale$ is a vector bundle. Note \eqref{eq fil int strict}   being strict implies $\gr^n\gm_\dR=0$ if and only if $n \neq r_i$.
  Conversely, the \emph{failure}  of strictness of \eqref{eq fil int strict}    can be used as a measure of failure of $\cale$ being a vector bundle.  Theorem  \ref{thm: intro vanishing} informs us that to examine  \eqref{eq fil int strict}   (and hence its failure of strictness),   it suffices to concentrate at those $n$'s congruent to Hodge--Tate weights.

\item On a more classical note, the existence of adapted basis in \cite{GLS14}   has strong implications on the shape of Frobenius operator on the Breuil--Kisin module, crucially used there for the study of reduction of crystalline representations, which in turn has application to Serre weight conjectures. 
Indeed, based on the ideas to prove Theorem \ref{thm: intro vanishing} (that is:   filtered  Sen theory), we can give a  substantially  more conceptual reproof of a very difficult theorem  \cite[Prop. 4.16]{GLS14}  cited above  (which  requires the necessary assumption $r_d \leq p$); see \S \ref{subsec: GLS reproof}.

\item The above remarks show that the geometric structures  of $F$-gauges are strongly tied with algebraic  structures (e.g., filtration, Frobenius) of Breuil--Kisin modules. 
We expect these relations, and in particular Theorem \ref{thm: intro vanishing} to be useful in extending above results, e.g., to the case when $r_d >p$.
 \end{enumerate}
\end{remark}

 We also obtain another mod $p$ result which is technical looking at first glance,   but   is strongly inspired by the known case when $r_d \leq p$ in \cite{GLS14}, and is expected to be useful for   applications in Serre weight conjectures.   
Suppose $K$ is unramified and $T$ is crystalline.
  Take any basis $\vec{e}$ of the mod $p$ Breuil--Kisin module $\bargm$, and write $\phi(\vec e)=(\vec e) A$; as $k[[u]]$ is a valuation ring, the   matrix $A $ always have  a decomposition $$A=XDY$$
  with $X, Y \in \GL_d(k[[u]])$ and $D$ a diagonal matrix. One can easily compute (cf. Lemma \ref{lem: mod p conj fil}) that the  diagonal entries of $D$ (up to permutation) are exactly   $u^{b_1}, \cdots, u^{b_d}$  with $b_i$ as  in Theorem \ref{thm: intro vanishing}\eqref{item mod p ver intro}  above. Thus, the content of Theorem \ref{thm: intro vanishing}\eqref{item mod p ver intro} gives control on (these $b_i$ and hence) the matrix $D$.
   The following theorem, first due to ongoing work of Bhatt--Gee--Kisin \cite{BGK} (cf.~ Remark \ref{rem y matrix} for its history), gives control on $Y$. 
  

\begin{theorem} \label{Theorem intro y matrix}
\emph{(Bhatt--Gee--Kisin \cite{BGK})}
 (cf. Theorem \ref{thm1 mod p frob}).
 Suppose $K$ is unramified and $T$ is crystalline.
 Use notations in above paragraph. 
Then  $Y\in \GL_d(k[[u^p]])$; equivalently, after a change of basis, $Y$ can be made into the identity matrix. 
\end{theorem}

\begin{proof}[Idea of proof.] In earlier work of Bartlett \cite{Bar20a}, he shows that the validity of Theorem \ref{Theorem intro y matrix} is equivalent to the \emph{coincidence} of a \emph{sub-Hodge filtration} with the Hodge filtration on $\bargm_\dR$, cf. Lemma \ref{lem: mod p Frob two fil}. Following the main theme of this paper, we discover that Bartlett's equivalent condition can be translated (via non-trivial argument) into another condition concerning \emph{coincidence} of a ``\emph{sub-conjugate filtration}" with the conjugate filtration on $\bargm_\HT$. 
It turns out this coincidence is exactly the equality of the generalized eigenspace decomposition in Theorem \ref{thm-intro-pGT} (which in turn follows from the $p$-degree shrinking property), and thus we can conclude.
   We remark that the spirit here is similar to the proof of Theorem \ref{thm: intro vanishing}: it all boils down to the \emph{shrinking} property of the Sen operator and its consequence on \emph{eigenvalues}. 
\end{proof}

\begin{remark} \label{rem y matrix}  
  We remark on the history of Theorem \ref{Theorem intro y matrix}. 
  We do not know the validity of   Theorem \ref{Theorem intro y matrix} in our first draft of this paper; we then learn from Gee--Kisin that they can prove Theorem \ref{Theorem intro y matrix}  in dimension $2$. 
After learning of Gee--Kisin's result, our investigation and proof of Theorem \ref{Theorem intro y matrix}  is inspired by the methods in \cite{GLS14} as well as (unexpectedly) our desire to understand a certain \emph{$p$-Griffiths transversality} in \cite{Bha22}.
As discussed in above sketch, it turns out that  we instead could construct a  new   \emph{$p$-degree shrinking} structure, which leads  to a  conjugate-filtration version  of a lemma of Bartlett \cite{Bar20a} which in turn implies Theorem \ref{Theorem intro y matrix}.
 After we obtain the proof of Theorem \ref{Theorem intro y matrix}, we learn that Bhatt--Gee--Kisin already have a proof before us; as far as we are informed, their proof builds on a stacky approach, and does not seem to directly translate into our proof.


   
 \end{remark}


\subsection{Structure of the paper}  

 We divide the paper into 4 parts.
 \begin{itemize}
 \item Part 1, \S \ref{sec: conj fil} -\S \ref{sec: sen max classical}. In this part, we use ``classical tools" to construct (integral) filtered Sen theory.  In \S \ref{sec: conj fil}, we review basic properties of conjugate filtrations. 
In \S \ref{sec: BK mod}, we review   modules attached to semi-stable representations; operators on these modules lead to integral Sen theory in \S \ref{sec: integral Sen}. Incorporating the structure of conjugate filtrations, we obtain an upgrade to (integral)  \emph{filtered  Sen theory} in \S \ref{sec: fil sen}.
In \S \ref{sec: sen max classical}, we show one can ``lift" the integral Sen  operator to (a variant of) the Breuil--Kisin module level.

\item Part 2, \S \ref{sec: vanishing}-\S \ref{sec: frob matrix}. This part is the application of Part 1. 
In \S \ref{sec: vanishing}, we deploy filtered Sen theory to prove torsion bound and vanishing results on graded  of Hodge filtrations.
In  \S \ref{sec: frob matrix}, we use Sen theory to study shapes of Frobenius matrices; this in particular leads to a substantially more conceptual reproof of a technical result from \cite{GLS14} (assuming some mod $p$ results that will be proved in \S \ref{sec:mod-p-shape}).

\item  Part 3, \S \ref{sec: pris interpretation}-\S \ref{sec:p-GT-A}.   In this part, we  incorporate ideas of prismatic crystals to study filtered Sen theory.   In \S \ref{sec: pris interpretation},  we begin with prismatic interpretation of   filtered Sen theory studied in Part 1. In \S \ref{sec:truncated-sen}, we use the idea of Hodge--Tate prismatic crystals to construct a \emph{truncated} Sen operator. 
This will be \emph{lifted} to a \emph{stabilized} truncated operator in \S \ref{sec:p-GT-A}, using rings studied in \S \ref{sec:pris-max-ring}. 

\item  Part 4, \S \ref{sec:mod-p-shape}-\S \ref{sec:pDS vs pGT}. 
This part is the application of Part 3. 
We specialize the stabilized  truncated operator above to the mod $p$ Sen operator, establishing the  $p$-degree shrinking property   in \S \ref{sec:pDS vs pGT} and the Frobenius shape theorem in \S \ref{sec:mod-p-shape}; note these results are closely related: we separately write out \S \ref{sec:pDS vs pGT} for clarification and comparison with results of Bhatt--Lurie. 
 \end{itemize}

\subsection{Notations and  conventions} \label{subsec notation}
\begin{notation} \label{notafields}
 We   introduce some field (and ring) notations. 
\begin{itemize}
\item Let $\zeta_p$ be a primitive $p$-th root of unity, and inductively fix $\zeta_{p^n}$ so that $\zeta_{p^n}^p=\zeta_{p^{n-1}}$. Let $K_{p^\infty}=  \cup _{n=1}^\infty
K(\zeta_{p^n})$.

\item Let $\pi_0=\pi$, and inductively fix some $\pi_n$ so that $\pi_n^p=\pi_{n-1}$. 
Let $K_{\infty}   = \cup _{n = 1} ^{\infty} K(\pi_n)$.
When $p\geq 3$, \cite[Lem. 5.1.2]{Liu08} implies $\kpinfty \cap \kinfty=K$; when $p=2$, by \cite[Lem. 2.1]{Wangxiyuan}, we can and do choose some $\pi_n$ so that $\kpinfty \cap \kinfty=K$.
\item Let $C$ be the completion of $\barK$, with ring of integers $\o_C$; let $\ocflat$ be the tilt of the perfectoid ring $\o_C$. The sequence $(1, \zeta_p, \cdots, \zeta_{p^n}, \cdots)$ defines an element $\zetaflat \in \ocflat$; the sequence $\{\pi_n\}_{n \geq 0}$ defines an element $\piflat \in \ocflat$.
\item  Let $\gs=W(k)[[u]]$.  Let $\ainf=W(\o_C^\flat)$. 
Define a $W(k)$-linear embedding  $\gs \into \ainf$ via $u \mapsto [\pi^\flat]$. Let $E=E(u)=\mathrm{Irr}(\pi, W(k)) \in \gs$.
 Let   $[\zetaflat] \in \ainf$  be the  Teichm\"uller lift of $\zetaflat$.
\end{itemize}  
Let $L=\kpinfty \kinfty$. 
Let $$G_{\kinfty}:= \gal (\overline K / K_{\infty}), \quad G_{\kpinfty}:= \gal (\overline K / K_{p^\infty}), \quad G_L: =\gal(\overline K/L).$$
Further define $\Gamma_K, \hat{G}$ as in the following diagram:
\[
\begin{tikzcd}
                                       & L                                                                                             &                             \\
\kpinfty \arrow[ru, "\langle \tau\rangle", no head] &                                                                                               & \kinfty \arrow[lu, no head] \\
                                       & K \arrow[lu, "\Gamma_K", no head] \arrow[ru, no head] \arrow[uu, "\hat{G}"', no head, dashed] &
\end{tikzcd}
\]
Here we let $\tau \in \gal(L/K_{p^\infty})$ be  the  topological generator such that $\tau(\pi_i)=\pi_i\zeta_{p^i}$ for each $i$.
\end{notation}

We shall freely use the notion of locally analytic vectors in this paper; their relevance in $p$-adic Hodge theory is first discussed in \cite{BC16}. We also refer to \cite[1.4.2]{Gao23} for a very quick summary. Here, we recall the following Lie algebra operators.

\begin{notation} \label{nota lie hatg}
For $g\in \hat{G}$, let $\log (g)$ denote  the (formally written) series $(-1)\cdot \sum_{k \geq 1} (1-g)^k/k$. Given a $\hat{G}$-locally analytic representation $W$, the following two Lie-algebra operators (acting on $W$) are well defined:
\begin{itemize}
\item  for $g\in \gal(L/\kinfty)$ enough close to 1, one can define $\nabla_\gamma := \frac{\log(g)}{\log(\chi_p(g))}$;
\item for $n \gg 0$ hence $\tau^{p^n}$ enough close to 1, one can define $\nabla_\tau :=\frac{\log(\tau^{p^n})}{p^n}$.
\end{itemize}
These two Lie-algebra operators form a $\qp$-basis of $\Lie(\hat{G}$).
\end{notation}

\begin{convention}[covariant functors, Hodge--Tate weights vs. Sen weights] \label{conv: ht and sen effective} This is a paper on \emph{integral} $p$-adic Hodge theory (which also treats torsion representations), hence various  ``normalizations" are needed to simplify discussions (i.e., to \emph{stay positive}). 
    \begin{enumerate}      
 \item In this paper we   use many categories of modules and the functors relating them; we will always use \emph{covariant}  functors. This makes the comparisons amongst them easier (i.e., using tensor products, rather than $\Hom$'s).
  
\item Our $D_{\dR}(V)$ is    defined as the covariant functor $(V\otimes_{\Qp} \bdr)^{G_K}$. The (covariant) cyclotomic Sen operator is the Lie algebra operator $\nabla_\gamma$ in Notation \ref{nota lie hatg} (acting on Sen modules, cf. Construction \ref{notaSenop}). Thus, for the cyclotomic character $\chi_p=\zp(1)$, the \emph{Hodge-Tate weight} (filtration jumps of $D_\dR$) is $-1$, whereas the   \emph{Sen weight} (eigenvalue of Sen operator)   is 1.  That is: our  convention of Hodge-Tate weight   and Sen weight are \emph{opposite} to each other. In our paper, we will use the \emph{negative Sen operator} (particularly over the Kummer tower, cf. \S \ref{sec: integral Sen}) to reconcile this, cf.  also the next item.

\item In this paper, we only work with
\begin{itemize}
    \item (semi-stable) representations with  Hodge-Tate weights (equivalently, \emph{negative} Sen weights) $\geq 0$, for example $\chi_p^{-1}=\zp(-1)$;  
    
    \item As a consequence, their associated Breuil-Kisin modules  are effective, i.e.,  have  $E(u)$-heights $\geq 0$, and hence $\phi$ is defined without inverting $E(u)$. 
\end{itemize} 
   \end{enumerate} 
\end{convention}

\begin{convention}[More  on  $\pm$ signs] \label{conv minus plus}
 We summarize some other $\pm$-sign conventions made in this paper.
\begin{enumerate}
\item The $N_\nabla$ operator (cf. Construction \ref{cons: loc ana nnabla}) is the same as the one in \cite{Kis06}, hence is opposite to the one in \cite{Gao23} (thus also \cite{GMWHT}), cf. \cite[Rem. 4.1.3]{Gao23}. The operator $N_S$ in Notation \ref{nota: ring for mod} is the same as in \cite{Liu08}. ($N_S$ is not used in \cite{Gao23}).

\item As a consequence of previous item, the operator  $\frac{1}{\theta_\fon(u\lambda')}\cdot N_\nabla$ in Theorem \ref{thmkummersenop} is the \emph{negative} Sen operator over the Kummer tower. As discussed in Convention \ref{conv: ht and sen effective}, this is convenient for us.

\item In align with above item, our convention of the constant $a$ in Definition \ref{def: amplified Sen} is also opposite to that in \cite{GMWHT}. This makes it possible to have identification \[ \theta_\kinfty=\Theta \]
  in the key case where $K$ is unramified and $T$ is crystalline.
\end{enumerate} 
\end{convention}

\begin{convention}[``$\theta$-notations"] \label{conv: theta notation} We shall slightly abuse the symbol $\theta$ in this paper.
\begin{enumerate}
\item We   use $\theta_\kinfty$ to denote the negative $\kinfty$-Sen operator, cf. Remark \ref{rem: negative Sen op}. We then use $\Theta$ (the ``amplified" $\theta$) to denote the \emph{amplified}   Sen operator in Definition \ref{def: amplified Sen}.
\item We use $\theta_\fon$ to denote Fontaine's ``$\theta$-map"; this is the map $\theta_\fon: \ainf \to \o_C$ and $\theta_\fon: \bdrplus \to C$.
\end{enumerate}
\end{convention}

\subsection*{Acknowledgement} 
We thank 
Robin Bartlett, 
Haoyang Guo, 
Naoki Imai, 
Mark Kisin, 
Shizhang Li, 
Yu Min,
and Yupeng Wang for valuable discussions and feedback.  
We thank Florian Herzig for carefully reading an earlier draft and for providing  detailed comments. 
Our special thanks go to Bhargav Bhatt and Toby Gee, who patiently explained ideas of prismatic $F$-gauge and the Gee--Kisin theorem to the second named author.  
Part of the work was carried out when both authors were visiting IAS and BICMR, and when H.G.~ was visiting Purdue and T.L.~ was visiting SUSTech; we thank these institutions for excellent working conditions. 
During the stay at IAS, the first named author is supported by Infosys Member Fund, and the second named author is supported by the Shiing-Shen Chern Membership.
Hui Gao is partially
supported by the National Natural Science Foundation of China under agreements   NSFC-12071201, NSFC-12471011.

 \addtocontents{toc}{\ghblue{Sen theory: classical approach}}
 \section{Review of conjugate filtration} \label{sec: conj fil} 

We review basic properties of conjugate filtrations. Some results presented here should be well-known in the literature (the  variants in \S \ref{subsec filphi} seem to be less well-known); we include brief proofs. 
The main applications are when the triple $(A,d, \phi)$ forms a prism, but we have presented a more axiomatized version. The readers could first read \S \ref{subsec:conj-fil} to familiarize with the conjugation filtrations.  
\S \ref{subsec axiomGLS} axiomatizes some argument in \cite{GLS14}; it is not too difficult but will only be used in \S \ref{sec: frob matrix}.
\S \ref{subsec filphi} concerns filtrations related to $\phi(M)$: it will be very critically used in the mod $p$ theory starting from \S \ref{sec:p-GT-A}.

\subsection{Conjugate filtrations and   gradeds} \label{subsec:conj-fil}
\begin{notation} \label{nota: isogeny}
    Let $A$ be a ring equipped with a ring endomorphism
\[ \phi: A\to A \]
Let $d \in A$ and suppose $d$ and $\phi(d)$ are non-zero-divisors. 
An \emph{effective isogeny}  with respect to the triple $(A,d, \phi)$ is a finite free  $A$-module $M$ equipped with an (effective) $d$-isogeny in the sense there is a $\phi$-semi-linear map
\begin{equation} \label{phieffeisog}
    \phi:   M \to M 
\end{equation}
such that the linearization
\[ 1\otimes \phi: A[1/d]\otimes_{\phi, A}  M \to A[1/d]\otimes_{A}  M \]
is an isomorphism.
Denote $M^\ast=A \otimes_{\phi, A} M$.
Since $\phi(d)$ is   a non-zero-divisor, $M^\ast$ can be regarded as a submodule of $A[1/d]\otimes_{\phi, A}  M$; 
 the bijection $1\otimes \phi$ sends $M^\ast$ into $M$ (which can be regarded as a submodule of $A[1/d]\otimes_{A}  M$ since $d$ is   a non-zero-divisor); henceforth, we can and shall  regard $M^\ast$ as a submodule of $M$.
\end{notation}

\begin{defn} \label{defn: conj fil} 
Use Notation \ref{nota: isogeny}. 
 Denote   
\[ M_\HT:=M/dM, \quad M_\dR:=M^\ast/dM^\ast \]
We define several   $\bbz$-filtrations.
\begin{enumerate}
\item Define the decreasing Nygaard filtration $\fil^n M^\ast: = M^\ast \cap d^n M$.  

\item Define the decreasing Hodge filtration  $\fil^n M_\dR$ as the quotient filtration via $M^\ast \onto M_\dR$; one can check \[\fil^n M_\dR=\fil^n M^\ast/d\fil^{n-1} M^\ast.\]

\item The map
\[ \fil^n M^\ast \xrightarrow{d^{-n}} M\]
induces an injective map
\[  \fil^n M^\ast/\fil^{n+1} M^\ast  \xhookrightarrow{d^{-n}} M/dM. \] 
 Define $\fil_n  M_\HT=\fil_n^\rmconj M_\HT$  as the image of the  above map; these form the \emph{increasing} (cf. Lemma \ref{lem: matching graded dR and HT}) conjugate filtration.  
\end{enumerate} 
Since $\phi$ on $M$ is an \emph{effective} $d$-isogeny, all above filtrations are \emph{effective} in the sense $\gr^n=0$ (or $\gr_n=0$) for $n \leq -1$ (caution: for the increasing conjugate filtration, $\gr_n^\rmconj:=\fil_n^\rmconj/\fil_{n-1}^\rmconj$).
\end{defn}

\begin{rem}
    One can relax   condition \eqref{phieffeisog} to  
    \[ \phi:   M[1/d] \to M[1/d] \]
    (such that $1\otimes \phi$ is an isomorphism), the one obtains possibly non-effective filtrations. In this  paper, we shall only use effective $d$-isogenies in align with Convention \ref{conv: ht and sen effective}; thus all filtrations in this paper are effective ones.
\end{rem}

\begin{lemma}[Matching of graded] \label{lem: matching graded dR and HT}
  The conjugate filtration on $M_\HT$ is increasing. 
   In addition, the map
$\fil^n M^\ast \xrightarrow{1/d^n} M$  induces an isomorphism
    \[ \gr^n M_\dR  \simeq \gr_n M_\HT\]
    where LHS is $\fil^n/\fil^{n+1}$ and RHS is $\fil_n/\fil_{n-1}$. 
 \end{lemma}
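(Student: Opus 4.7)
\emph{Proof plan for Lemma 2.2.}

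For part (1), the plan is to show directly that multiplication by $d$ carries $\fil^i M^\ast$ into $\fil^{i+1}M^\ast$ in a way that is compatible with the defining maps of the conjugate filtration. Concretely, given $x \in \fil^i M^\ast = M^\ast \cap d^i M$, I would note that $dx$ sits in $M^\ast$ (since $M^\ast$ is an $A$-submodule of $M$) and in $d^{i+1}M$ (since $x \in d^iM$), hence $dx \in \fil^{i+1}M^\ast$. The two divided elements $x/d^i$ and $(dx)/d^{i+1}$ then coincide in $M$ (using that $d$ is a non-zero-divisor), so their classes in $M_\HT$ are equal. This exhibits the image of $\fil^i M^\ast$ in $M_\HT$ as a subset of the image of $\fil^{i+1} M^\ast$, which is exactly the required inclusion $\fil_i^\rmconj M_\HT \subset \fil_{i+1}^\rmconj M_\HT$.

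For part (2), the strategy is to factor the ``division by $d^i$'' map through the two filtration quotients and verify on the nose that the resulting map is both well-defined and bijective. Let $f_i \colon \fil^i M^\ast \to M_\HT$ denote $x \mapsto x/d^i \bmod d$. By definition of the conjugate filtration, $f_i$ has image precisely $\fil_i^\rmconj M_\HT$, and composing with the projection $M_\HT \twoheadrightarrow \gr_i M_\HT$ gives a surjection
\[
\bar f_i \colon \fil^i M^\ast \twoheadrightarrow \gr_i M_\HT.
\]
The main point is to identify $\ker(\bar f_i)$ with $\fil^{i+1}M^\ast + d\,\fil^{i-1}M^\ast$, because by the very definition of the Hodge filtration one has $\gr^i M_\dR = \fil^i M^\ast / \bigl(\fil^{i+1}M^\ast + d\,\fil^{i-1}M^\ast\bigr)$.

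The inclusion $\fil^{i+1}M^\ast + d\,\fil^{i-1}M^\ast \subset \ker(\bar f_i)$ is routine: elements of $\fil^{i+1}M^\ast$ map to $0$ already in $M_\HT$ (their $d^{-i}$-scaling is divisible by $d$), while $dy$ for $y \in \fil^{i-1}M^\ast$ maps to the class of $y/d^{i-1}$, which lies in $\fil_{i-1}^\rmconj M_\HT$ by the part (1) argument, hence dies in $\gr_i M_\HT$. For the reverse inclusion, suppose $x \in \fil^i M^\ast$ with $f_i(x) \in \fil_{i-1}^\rmconj M_\HT$. Then $x/d^i \equiv y/d^{i-1} \pmod{d}$ for some $y \in \fil^{i-1}M^\ast$, i.e. $x - dy = d^{i+1}w$ for some $w \in M$. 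Since $x, dy \in M^\ast$, the element $d^{i+1}w$ lies in $M^\ast \cap d^{i+1}M = \fil^{i+1}M^\ast$, and the decomposition $x = dy + d^{i+1}w$ gives the desired form.

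The main (minor) obstacle is just bookkeeping of the two different conventions—decreasing Nygaard/Hodge vs.\ increasing conjugate—and ensuring the ``division by $d^i$'' is genuinely well-defined at each stage; this uses throughout that both $d$ and $\phi(d)$ are non-zero-divisors, and that $M^\ast \subset M$ via $1 \otimes \phi$. Once these identifications are made, the map $f_i$ descends to an isomorphism $\gr^i M_\dR \isoto \gr_i M_\HT$ as desired.
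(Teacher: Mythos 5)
Your proof is correct and follows essentially the same route as the paper: both hinge on the map $x \mapsto x/d^i$ and on the identification of what it kills, the paper packaging this as a $3\times 3$ diagram whose bottom row of cokernels is $0 \to \fil^{i+1}M_\dR \to \fil^i M_\dR \to \gr_i M_\HT \to 0$, while you carry out the equivalent kernel computation $\ker(\bar f_i) = \fil^{i+1}M^\ast + d\,\fil^{i-1}M^\ast$ explicitly at the level of elements. No gaps.
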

 \begin{proof}
 One sees the conjugate filtration is increasing from the following diagram where all arrows are injective
\[
\begin{tikzcd}
\fil^n M^\ast/\fil^{n+1} M^\ast \arrow[rr, "d", hook] \arrow[d, "d^{-n}"', hook] &  & \fil^{n+1} M^\ast/\fil^{i+2} M^\ast \arrow[d, "d^{-n-1}", hook] \\
M/dM \arrow[rr, "="]                                                                                           &  & M/dM                                                                                           
\end{tikzcd}
\]
To match the gradeds, consider the following diagram
\begin{equation}\label{eq: matching of graded}
    \begin{tikzcd}
0 \arrow[r] & d\fil^n M^\ast \arrow[r] \arrow[d, hook] & d\fil^{n-1} M^\ast \arrow[r, "d^{-n} "] \arrow[d, hook] & \fil_{n-1}  M_\HT \arrow[d, hook] \arrow[r] & 0 \\
0 \arrow[r] & \fil^{n+1} M^\ast \arrow[d] \arrow[r]      & \fil^n M^\ast \arrow[d] \arrow[r, "d^{-n} "]            & \fil_n  M_\HT \arrow[d] \arrow[r]           & 0 \\
0 \arrow[r] & \fil^{n+1} M_\dR \arrow[r]                        & \fil^n M_\dR \arrow[r, ""]                                                 & \gr_n M_\HT \arrow[r]                      & 0
\end{tikzcd}
\end{equation} 
Here, the top two rows are short exact by definition. The bottom row is defined as cokernels of the top two rows, hence is also short exact; this implies $\gr^n M_\dR  \simeq \gr_n M_\HT$.
 \end{proof}

\begin{lemma}[Flat base change, and intersections] \label{lem: flat base change}
Let $(A, d)$ and $M$ be as in Definition \ref{defn: conj fil}. Let $A \into B$ be a flat embedding; suppose $\phi$ extends to $B$, and the triple $(B, d, \phi)$ still satisfies the assumptions in Notation \ref{nota: isogeny}.
One can then apply Definition \ref{defn: conj fil} to  $M_B=M\otimes_A B$ with respect to $(B, d, \phi)$. 
\begin{enumerate}
\item We have base change isomorphisms:
\[  \fil^n M^\ast\otimes_A B \xrightarrow{\simeq}  \fil^n M_B^\ast \]
\[ \fil^n  M_\dR \otimes_A B \xrightarrow{\simeq} \fil^n  M_{B, \dR}\] 
\[ \fil_n  M_\HT \otimes_A B  \xrightarrow{\simeq}  \fil_n  M_{B, \HT}\]

\item Suppose  the induced map $A/d \to B/d$ is    injective. Then we have the following identity (with intersection taken inside $M_B^\ast$):
\[ \fil^n M^\ast= M^\ast\cap \fil^n M_B^\ast \]

\item \label{flatbcitem3} Suppose  $A/d \to B/d$ is   injective (thus we can regard $M_\HT$ resp. $M_\dR$ as a subspace of $M_{B, \HT}$ resp. $M_{B, \dR}$), and further suppose one of the following:
\begin{enumerate}
\item $A/d$ is a field; or
\item $A=\gs, B=\ainf$ with $d=E(u)$ (cf. Notation \ref{notafields}),
\end{enumerate}
then we   have   the following identity (with intersection taken inside $M_{B, \HT}$ resp.  $M_{B, \dR}$):
\[ \fil_n  M_\HT=M_\HT \cap \fil_n  M_{B, \HT}  \]
\[ \fil^n  M_\dR= M_\dR \cap\fil^n  M_{B, \dR}\] 
\end{enumerate}
\end{lemma}
\begin{proof}
 Item (1). Note $\fil^n M^\ast=M^\ast \cap d^n M$, and note intersection commutes with flat base change; this leads to $ \fil^n M_B^\ast=\fil^n M^\ast\otimes_A B$. The other base change results follow by definition.

 Item (2). Injectivity of $A/d \to B/d$ implies $A \cap d^nB=d^nA$; thus $M \cap d^nM_B=d^nM$. Thus we have
\[M^\ast\cap \fil^n M_B^\ast = M^\ast\cap d^n M_B = M^\ast\cap d^n M =\fil^n M^\ast \]

 Item (3). We only consider the $M_\HT$-case since the the $M_\dR$-case is similar.
 If $A/d$ is a field, then any basis of the $A/d$-vector space $\fil_n M_\HT$ extends to a basis of $M_\HT$; one can then easily conclude using the base change isomorphism in Item (1). 
  Finally, consider the concrete case $A=\gs, B=\ainf$. Since $(E, p)$ is a regular sequence in both $\gs$ and $\ainf$, it is easy to check (cf. the discussion in Notation \ref{nota:adapted}(1), which is applicable here) that we have
\[ (\fil_n  M_\HT)[1/p]=M_\HT[1/p] \cap (\fil_n  M_{B, \HT})[1/p]\]
Note also $\fil^n M_\HT$ is a finite free $\ok$-module (although not necessarily a direct summand of $M_\HT$), the desired statement follows from the base change isomorphism in Item (1) and the fact that $K\cap \o_C=\ok$.
\end{proof}

\subsection{Adapted bases of filtrations} \label{subsec axiomGLS}
 
 We axiomatize   \cite[Proposition 4.5]{GLS14} concerning existence of ``adapted bases"; our new proof here is simpler, using conjugate filtration as a new   input. The typical example for Notation \ref{nota:adapted} is the triple $(\gs, E, \phi)$,  but the axiomatic set-up is convenient for our future work.

 \begin{notation} \label{nota:adapted}
Use notations in Notation \ref{nota: isogeny} and Definition \ref{defn: conj fil}.
Suppose furthermore that
\begin{itemize}
    \item $d$ and $\varphi(d)$ are non-zero-divisors of $A[1/p]$;
    \item  $d$ is contained in the Jacobson radical of $A$, and $A/d$ is a mixed characteristic   discrete valuation ring.
\end{itemize} 
Denote the fraction field as $F=(A/d)[1/p]$.  
\begin{enumerate}
\item Since $M[1/p]=M\otimes_A A[1/p]$ is now an effective isogeny over $(A[1/p], d, \phi)$, one can define various filtrations as in Definition \ref{defn: conj fil}. For simplicity, denote the relevant modules as
\[ M[1/p]^\ast, \quad D_\HT=M[1/p]/dM[1/p], \quad D_\dR=M[1/p]^\ast/dM[1/p]^\ast\]
Since $(d, p)$ is a regular sequence in $A$, it is easy to check that:
\[ \fil^\bullet  M[1/p]^\ast = (\fil^\bullet M^\ast)[1/p] \]
\[\fil^\bullet  D_\dR =(\fil^\bullet  M_\dR)[1/p] \]
\[\fil_\bullet  D_\HT =(\fil_\bullet  M_\HT)[1/p] \]
Further denote the jumps of the filtered $F$-vector space  $\fil^\bullet D_\dR$ as $0 \leq r_1 \leq \cdots \leq r_d$ (and call them the weights of $D_\dR$).

\item  
Say an $A/d$-basis $e_1, \cdots, e_d$ of $M_\dR$ is an adapted basis for  $\fil^\bullet M_\dR$, if for each $k \in \bbz$,
 \[ \fil^k M_\dR=\bigoplus_{i \text{ such that } k \leq r_i} A/d\cdot e_i \]
One can alternatively write above equation as
\[
\fil^k M_\dR=\oplus_{i=1}^d (A/d)\cdot 0^{\max\{0, k-r_i\}}e_i 
\]
with the convention that ``$0^0=1$".
Note this implies that  for each $i$,  $e_i \in \fil^{r_i}M_\dR -\fil^{r_i+1} M_\dR$.

\item Say an $A$-basis $\hat{e}_1, \cdots, \hat{e}_d$ of $M^\ast$ is an adapted basis for $\fil^\bullet M^\ast$ if for each $k \in \bbz$,
\begin{equation}\label{eqadptaed}
\fil^k M^\ast=\oplus_{i=1}^d A\cdot d^{\max\{0, k-r_i\}}\hat{e}_i 
\end{equation}  
 One can easily check that (using $d$ is a non-zero-divisor): $\fil^\bullet M^\ast$ has an adapted basis if and only if Eqn. \eqref{eqadptaed} holds for $k=r_d$.
 Note this implies for each $i$,  $\hat{e}_n \in \fil^{r_i} M^\ast -\fil^{r_i+1}M^\ast$.
\end{enumerate}
\end{notation}

\begin{lemma} \label{lem:adapted} 
Use Notation \ref{nota:adapted}. Then the following conditions are equivalent:
\begin{enumerate}
\item $\fil^\bullet M^\ast$ has an adapted basis.
\item  $\fil^\bullet M_\dR$ has an adapted basis.
\item $\fil^\bullet M_\dR \to \fil^\bullet D_\dR$ is strict.
\item $\gr^\bullet M_\dR =\gr_\bullet M_\HT$ (as $A/d$-modules) are torsion-free.
\end{enumerate}
Furthermore, suppose now condition (2) holds (equivalently, all conditions hold), let $(e_1, \cdots, e_d)$ be the basis of $M_\dR$ as in Notation \ref{nota:adapted}(2), and for each $i$, let $\hat{e}_i \in \fil^{r_i} M^\ast$ be any lift of $e_i$ along the surjection $\fil^{r_i} M^\ast \onto \fil^{r_i} M_\dR$, then  $(\hat{e}_1, \cdots, \hat{e}_d)$ forms an adapted basis of $\fil^\bullet M^\ast$.
\end{lemma}
\begin{proof}  
The implication (1) $\Rightarrow$ (2) is obvious. The equivalences (2) $\Leftrightarrow$ (3) $\Leftrightarrow$ (4) follows from the fact that $A/d$ is DVR (hence finitely generated torsion-free modules are finite free).
  We now prove (2) $\Rightarrow$ (1): that is, we prove the $(\hat{e}_1, \cdots, \hat{e}_d)$ constructed in the ``furthermore" part forms an adapted basis of $\fil^\bullet M^\ast$. Denote 
\[ \wt{\fil}^k M^\ast=\oplus_i A\cdot d^{\max\{0, k-r_i\}}\hat{e}_i\]
it is clear 
\[\wt{\fil}^k M^\ast \subset {\fil}^k M^\ast \]
To prove equality, we use induction on $k$. It is   true for $k \leq 0$ since by Nakayama Lemma, $\hat{e}_i$'s form a basis of $M^\ast$. Suppose the equality holds for the $(k-1)$-th filtration; to prove the case for the $k$-th filtration,   it suffices to prove that the surjection
\[ \wt{\fil}^{k-1} M^\ast/\wt{\fil}^k M^\ast \to  {\fil}^{k-1} M^\ast/{\fil}^k M^\ast \]
is an isomorphism. Note the LHS is finite free over $A/d$ by construction, and the RHS is also finite free over $A/d$ since it is isomorphic to   $\fil^{k-1} M_\HT$ (which is a submodule of a the finite free module $M_\HT$). 
Thus it suffices to show both sides have the same dimension after inverting $p$. On the LHS, the dimension is equal to $d-\dim_F (\fil^{k} D_\dR)$; the dimension on the RHS is exactly  $\dim_F \fil^{k-1} D_\HT$, hence is also $d-\dim_F (\fil^{k} D_\dR)$ (using Lemma \ref{lem: matching graded dR and HT}).
\end{proof}

\subsection{A sub-conjugate filtration} \label{subsec filphi}

In this section, we discuss filtrations related with $\phi(M)$ (instead of $M^\ast$); they will be mainly used in the mod $p$ situation. 

\begin{defn} \label{defn fil phiM}
Use Notation \ref{nota: isogeny}. Suppose furthermore that $\phi(d)=d^p$ (e.g., when $\phi$ is the Frobenius map on a $\fp$-algebra $A$).
\begin{enumerate}
\item The injective map $\phi: M\to M$ factors as
\[ M \xrightarrow{\phi} \phi(M) \into M^\ast \into M\]
where the first map is bijective. 
  Define a ``sub-Nygaard" filtration
\[\fil^n \phi(M):  = \phi(M) \cap \fil^n M^\ast =\phi(M) \cap d^n M\]
\[  \fil^n M:=\{x\in M | \phi(x) \in d^n M \}\]
and thus $\phi$ induces a bijection from $\fil^n M$ to $\fil^n \phi(M)$.

\item For each $i\in \bbz$, define $N_{n,i} \subset M_\HT$ as the image   the composite map:
\[ d^i\fil^{n-i} \phi(M)/ d^i\fil^{n+1-i} \phi(M) \into  \fil^n M^\ast/\fil^{n+1} M^\ast \xrightarrow{d^{-n}, \simeq} \fil_n M_\HT\] 
It is easy to see the equality (inside $M_\HT$):
\[ N_{n,i} =N_{n-i,0}.\]
(Thus in practice, it would suffice  to only consider $N_{\bullet, 0}$'s; but we shall also make use of general $N_{n,i}$-notations). 

\item Note in general $N_{n-1,0}$ and $N_{n,0}$ have no relations, because in general there is no relation between $d\fil^{n-1}\phi(M)$ and $\fil^n \phi(M)$; cf. Remark \ref{rem noflat}. However, since $\phi(d)=d^p$, we have $d^p\fil^{n-p}\phi(M) \subset \fil^n \phi(M)$; thus it is easy to check (cf. Lemma \ref{lem match grade phiM} below):
\[ N_{n-p,0} \subset N_{n,0}\]
As a consequence, after \emph{fixing} an $n$, we can produce a sub-filtration (the ``sub-conjugate filtration" with fixed $n$):
\[ N_{\bullet, [\bullet-n]} \subset \fil_\bullet M_\HT \]
where we use $[x] \in \{0, 1, \cdots, p-1\}$ to denote the residue   modulo $p$; more explicitly, it is of the form:
\[\cdots = N_{n-p-1, p-1} \subset  N_{n-p,0} = N_{n-p+1, 1}=\cdots =N_{n-1, p-1} \subset N_{n, 0} =N_{n+1, 1} =\cdots \]
that is: it can only jump at degrees congruent to $n$ modulo $p$.

\item   Use $\fil^\bullet \phi(M)$ to induce a (decreasing)  filtration $\fil^\bullet_\mathrm{H}$ on $\phi(M/dM)  =\phi(M)/d^p\phi(M)$ (using $\phi(d)=d^p$); one can check
\[ \fil^n_\mathrm{H} \phi(M/dM) =\fil^n \phi(M)/d^p \fil^{n-p}\phi(M)\]
Equivalently, one can use $\fil^\bullet M$ to induce a  filtration $\fil^\bullet_\mathrm{H}$ on $M_\HT=M/dM$; we have
\[ \fil^n_\mathrm{H} M_\HT =\fil^n M/d \fil^{n-p}M\]
\end{enumerate}
\end{defn}

\begin{lemma} \label{lem match grade phiM}
The maps
$\fil^n M \xrightarrow{\simeq, \phi}\fil^n \phi(M) \xrightarrow{d^{-n}} M$  induce  isomorphisms
\[ \fil^n_\mathrm{H} M_\HT/\fil^{n+1}_\mathrm{H} M_\HT \simeq \fil^n_\mathrm{H} \phi(M/dM)/\fil^{n+1}_\mathrm{H} \phi(M/dM) \simeq N_{n,0}/N_{n-p,0}\]
\end{lemma}
\begin{proof}
One can form a  similar   diagram as \eqref{eq: matching of graded}, where all rows and columns are short exact
\[
\begin{tikzcd}
0 \arrow[r] & d^p\fil^{n+1-p} \phi(M) \arrow[r] \arrow[d, hook] & d^p\fil^{n-p} \phi(M) \arrow[r, "d^{-n} "] \arrow[d, hook] & {N_{n-p,0}} \arrow[d, hook] \arrow[r] & 0 \\
0 \arrow[r] & \fil^{n+1} \phi(M) \arrow[d] \arrow[r]            & \fil^n \phi(M) \arrow[d] \arrow[r, "d^{-n} "]              & {N_{n,0}} \arrow[d] \arrow[r]         & 0 \\
0 \arrow[r] & \fil^{n+1} \phi(M/dM) \arrow[r]                   & \fil^n \phi(M/dM) \arrow[r, ""]                                   & \gr \arrow[r]                         & 0
\end{tikzcd}
\]
It is then easy to conclude.
\end{proof}

\begin{remark} \label{rem noflat}
In contrast to the situation in Lemma \ref{lem: flat base change}, the filtrations in Definition \ref{defn fil phiM} in general do  not satisfy flat base change. For example, consider the flat embedding
\[  k[[u]] \into \ocflat \]
which is the mod $p$ reduction of the usual flat embedding $\gs \into \ainf$ (cf. Notation \ref{notafields}). Let $M=k[[u]\cdot e$  with $\phi(e)=e$; denote $\hat{M}=M\otimes_{k[[u]]} \ocflat$.
It is easy to check (using $\phi$ is bijective on $\ocflat$):
\[ \fil^n \phi(M) =u^{p\lceil n/p \rceil} k[[u^p]] \cdot e\]
\[\fil^n \phi(\hat{M}) =u^n \ocflat \cdot e \]
Thus, there is no flat base change, and the inclusion $\fil^n \phi(M) \subset M \cap \fil^n \phi(\hat{M})$ is in general not equality.
\end{remark}

\section{Modules attached to semi-stable representations} \label{sec: BK mod}
In this section, we review three categories of modules attached to semi-stable representations:
\begin{itemize}
\item We review Kisin's $\o$-modules \cite{Kis06} attached to rational semi-stable representations. The $N_\nabla$-operator will be used to construct the \emph{rational filtered Sen operator} in \S \ref{sec: fil sen}.


\item We review Breuil's $S_\ko$-modules \cite{Bre97} and the relations with Kisin's $\o$-modules (constructed in \cite{Liu08}); these $S_\ko$-modules are convenient for dimension computations. In addition, as will be revealed in \S \ref{sub-BeuilN}, they are very closely related with the \emph{shifted Sen operator}.

\item We review Breuil--Kisin $\gk$-modules   \cite{Gao23} attached to integral semi-stable representations. We analyse the $\tau$-operators, which will be used to  construct the \emph{integral (and mod $p$) filtered Sen operator} in \S \ref{sec: fil sen}.

\end{itemize}

\begin{notation} \label{nota: ring for mod}
We recall some rings and elements used throughout the paper.
\begin{enumerate}

\item \label{item 2 for o} Let $\o$ be the ring of analytic functions on the open unit disk defined over $K_0$. Explicitly,
\[
\mathcal{O} = \{f(u)= \sum_{i=0}^{\infty} a_i u^i, a_i \in K_0 \mid   f(u) \text{ converges for all } u \in \mathfrak{m}_{\mathcal{O}_{\overline{K}}}   \},
\] 
  One can extend  $\phi$ on $\gs$ to $\o$.
Define an element
\[
\lambda :=\prod_{n \geq 0} (\varphi^n(\frac{E(u)}{E(0)}))  \in \o.\]  
Define an operator  $N_\nabla:=-u\lambda\frac{d}{du}$ on $\o$. 

\item Let $S$ be  the $p$-adic completion of the PD envelope of $\gs$ with respect to the ideal  $(E(u))$ (cf.  Notation \ref{notafields}). The Frobenius $\phi$ on $\gs$ extends to $S$. Define the operator $N_S=-u\frac{d}{du}$   on $S$. Let  $\Fil ^j S\subset S $ be the $p$-adic completion of the ideal generated by $\gamma_i (E(u)):= \frac{E(u)^i}{i!}$ with $ i \geq j$. Let $S_{K_0}=S[\frac 1 p]$, and extend $\varphi, N_S$ actions on $S$ to $S_{K_0}$ ($\Qp$-linearly).
Let $\Fil^i S_{K_0}:= \Fil^i S\otimes_{\Zp}\Qp$.

 \item Recall $[\zetaflat]$ is defined in  Notation \ref{notafields}. Let $\xi=\frac{[\zetaflat]-1}{\phi^{-1}([\zetaflat]-1)}$. 
Let $t=\log([\zetaflat])  \in \bcrisplus$ be the usual element. 
Define the element (with quotient taken inside $\bdrplus$),
$$ \mathfrak{t} = \frac{t}{p\lambda}=\frac{p\phi^{-1}([\zetaflat]-1) \prod_{n \geq 0} \phi^n(\xi/p)}{p\prod_{n \geq 0} (\varphi^n(\frac{E(u)}{E(0)}))} \in \ainf.$$
(Recall we use $\theta_\fon$ to denote Fontaine's map, cf. Convention \ref{conv: theta notation}). We have:
\[v_p(\theta_\fon(\fkt))= v_p(\zeta_p-1)=\frac{1}{p-1}\]  

\item Following \cite[\S 5.1]{Fon94}, for any $\phi$-stable subring $A \subset \bcris$, define 
\[ I^{[1]}A :=\{ a\in A; \theta_\fon(\phi^n(a)) =0, \forall n \geq 0 \} \]
Then $I^{[1]}\ainf$ is a principal ideal by \cite[Proposition 5.1.3]{Fon94},  with $\phi(\fkt)$ as a generator by \cite[Lemma 3.2.2]{Liu10}. Denote
\[ w= \frac{[\zeta^\flat]-1}{E} \in \ainf \]
Then it is easy to check that $\phi(w)$ is also a generator of $I^{[1]}\ainf$. Thus ${\fkt}/{w}$ is a unit in $\ainf$.
\end{enumerate}
\end{notation}

 \begin{construction}[{cf. \cite[\S 4.1]{Gao23}}] \label{cons: loc ana nnabla}
 Consider the (well-known) perfect Robba ring $\wtb^\dagger_\rig$ (as in \cite{Ber02}); denote 
\[ \wtb^\dagger_{\rig, L}:= (\wtb^\dagger_\rig)^{G_L},\]
which is a LF representation of the Lie group  $\hat{G}$.
Use $(\wtb^\dagger_{\rig, L})^{\hat{G}\dpa}$ to denote the set of pro-locally analytic vectors, which admit $\nabla_\tau$-action (cf. Notation \ref{nota lie hatg}). By \cite[Lem. 5.1.1]{GP21}, $\fkt$ is a unit inside $(\wtb^\dagger_{\rig, L})^{\hat{G}\dpa}$, thus we can define a (normalized) operator
\begin{equation}
N_\nabla: =-\frac{1}{p\fkt}\nabla_\tau
\end{equation}
This still acts on $(\wtb^\dagger_{\rig, L})^{\hat{G}\dpa}$. One checks that it is stable on the subring $\o$; indeed,  \cite[Lemma 4.1.2]{Gao23} (with Convention \ref{conv minus plus} in mind) shows that it coincides with   $-u\lambda\frac{d}{du}$ as in Notation \ref{nota: ring for mod}\eqref{item 2 for o} (whence the coincidence of notation).
 \end{construction}

\subsection{Breuil's $S_\ko$-modules and Kisin's   $\mathcal{O}$-modules} \label{subsecKis}
\begin{defn}\label{deffilnmod}
Let $\MF$ be the category of
 \emph{(effective) filtered $(\varphi,N)$-modules over $K_0$} which consists of  finite dimensional $K_0$-vector spaces $D$ equipped with
\begin{enumerate}
\item an injective Frobenius $\varphi: D \to D$ such that $\varphi(ax)=\varphi(a)\varphi(x)$ for all $a \in K_0, x \in D$;
\item a monodromy $N: D \to D$, which is a $K_{0}$-linear map such that $N\varphi=p\varphi N$;
\item a filtration $(\Fil^{i}D_{K})_{i\in\mathbb{Z}}$ on $D_{K}=D\otimes_{K_0} K$, by decreasing $K$-vector subspaces such that $\Fil^{0}D_{K}= D_K$  and $\Fil^{i}D_{K}=0$ for $i \gg 0$.
\end{enumerate} 
\end{defn}

\begin{defn}[\cite{Kis06}] \label{def: Kis o mod}
 Let $\textnormal{Mod}_{ \mathcal{O}}^{\varphi, N_\nabla }$ be the category   consisting of finite free $\mathcal{O}$-modules $\cm$ equipped with
\begin{enumerate}
\item a $\varphi_{\mathcal O}$-semi-linear morphism $\varphi : \cm \to \cm$ such that   the cokernel of $1 \otimes \varphi : \varphi ^*\cm \to \cm $ is killed by $E(u)^h$ for some $h \in \mathbb{Z}^{\geq 0}$;
\item $N_\nabla: \cm \to \cm$ is a  map such that $N_\nabla(fm)=N_\nabla(f)m+fN_\nabla(m)$ for all $f\in \mathcal O$ and $m \in \cm$, and $N_\nabla\varphi=\frac{pE(u)}{E(0)} \varphi N_\nabla$.
\end{enumerate}
\end{defn}

\begin{defn}[\cite{Bre97}] \label{def: breuil mod}
Let $\bigMF$ be the category whose objects are finite free $S_{K_0}$-modules $\cd$ with:
\begin{enumerate}
\item a $\varphi_{S_{K_0}}$-semi-linear morphism $\varphi_{\cd}: \cd \to \cd$ such that the determinant of $\varphi_{\cd}$ is invertible in $S_{K_0}$;
 \item a decreasing filtration $\{\Fil^i\cd\}_{i=0}^{\infty}$ of $S_{K_0}$-sub-modules of $\cd$ such that $\Fil^0\cd=\cd$ and $\Fil^i S_{K_0} \Fil^j \cd \subseteq \Fil^{i+j}\cd$;
 \item a $K_0$-linear map $N: \cd \to \cd$ such that $N(fm)=N(f)m+fN(m)$ for all $f\in S_{K_0}$ and $m \in \cd$, $N\varphi=p \varphi N$ and $N (\Fil^i \cd) \subseteq \Fil^{i-1}\cd$.
\end{enumerate} 
\end{defn}

\begin{construction}[\cite{Bre97}] \label{ConstructionD to cd}
For $D \in \MF$, we can associate an object in $\bigMF$ by $\cd:= S_\ko\otimes_{\ko}D$ and
 \begin{itemize}
 \item $\varphi: =\varphi_S \otimes \varphi_D$;
 \item $N:= N\otimes Id + Id\otimes N$;
 \item $\Fil^0\cd :=\cd$ and inductively,
 $$\Fil^{i+1}\cd := \{ x \in \cd | N(x) \in \Fil^i \cd \text{ and } f_{\pi}(x) \in \Fil^{i+1}D_K  \},$$
 where $f_{\pi}: \cd \twoheadrightarrow D_K$ by $s(u)\otimes x \mapsto s(\pi)\otimes x$.
 \end{itemize}
\end{construction}

\begin{construction}[{cf. \cite[\S 3.2]{Liu08}}] \label{Constructioncm to cd}
Given $\cm \in \textnormal{Mod}_{ \mathcal{O}}^{\varphi, N_\nabla}$, we can associate an object in $\bigMF$ by $\cd:=S_\ko \otimes_{\phi, \o} \cm$ together with 
\begin{itemize}
\item $\phi_\cd=\phi_S \otimes \phi_{\cm}$
\item $N_\cd =N_S\otimes 1+ \frac{p}{\phi(\lambda)}1\otimes N_\nabla$.
\item $\fil^i \cd = \{ m \in \cd | (1\otimes \phi)(m) \subset \fil^i S_\ko \otimes_\o \cm\}$.
\end{itemize}
\end{construction}

\begin{theorem}[{cf. \cite{Bre97}, \cite{Kis06}, and \cite[\S 3.2]{Liu08}}] 
    The functors in Constructions \ref{ConstructionD to cd} and \ref{Constructioncm to cd} induce a  diagram
   \[ \begin{tikzcd}
\MFwa \arrow[d, hook] \arrow[rr, "\simeq"] &  & \bigMFwa \arrow[d, hook] &  & {\textnormal{Mod}_{ \mathcal{O}}^{\varphi, N_\nabla, 0}} \arrow[ll, "\simeq"'] \arrow[d, hook] \\
\MF \arrow[rr, "\simeq"]                   &  & \bigMF                   &  & {\textnormal{Mod}_{ \mathcal{O}}^{\varphi, N_\nabla }} \arrow[ll, "\simeq"']                  
\end{tikzcd}\]
where all horizontal arrow are equivalence of categories. Here $\MFwa \subset \MF$ is the subcategory of weakly admissible objects; $\textnormal{Mod}_{ \mathcal{O}}^{\varphi, N_\nabla, 0}$ is the subcategory of $\textnormal{Mod}_{ \mathcal{O}}^{\varphi, N_\nabla}$ consisting of objects whose base change to the Robba ring is pure of slope $0$  in the sense of Kedlaya  (cf. \cite[\S 1.3]{Kis06}); and $\mathrm{MF}_{S_\ko}^{\phi, N, \mathrm{wa}}$ is the essential image of $\MFwa$ under the equivalence $\MF \simeq \bigMF$. 
\end{theorem}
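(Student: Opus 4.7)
The plan is to assemble the three claimed equivalences from the cited sources by constructing explicit quasi-inverses and checking compatibilities. First I would treat Breuil's equivalence $\MF \simeq \bigMF$ independently, then Kisin's equivalence $\textnormal{Mod}_{\mathcal{O}}^{\varphi, N_\nabla} \simeq \bigMF$ (which passes through $\MF$ via Liu's comparison), and finally match the weakly admissible / slope-zero subcategories.

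For $\MF \simeq \bigMF$, the forward functor is Construction \ref{cons D to cd}. The quasi-inverse sends $\cd \mapsto D := \cd/u\cd$, with $\phi,N$ inherited and $\Fil^i D_K := f_\pi(\Fil^i \cd)$, where $f_\pi$ is the map in Construction \ref{cons D to cd}. The nontrivial check is that starting from $D$, the inductive definition $\Fil^{i+1}\cd = \{x : N(x) \in \Fil^i\cd,\ f_\pi(x) \in \Fil^{i+1}D_K\}$ does recover a well-defined decreasing filtration on $S_{K_0}\otimes_{K_0}D$ compatible with the $S_{K_0}$-module structure, and that the composition $D \mapsto \cd \mapsto \cd/u\cd$ is naturally isomorphic to $D$ as a filtered $(\varphi,N)$-module. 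The first step uses that $N$ is a derivation with $N(u)=-u$, so iterated application of $N$ produces enough information to reconstruct all filtered pieces; the second is by direct computation since $f_\pi$ restricted to $1\otimes D \subset \cd$ recovers the identification $D_K = D\otimes_{K_0} K$.

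For the equivalence $\textnormal{Mod}_{\mathcal{O}}^{\varphi,N_\nabla} \simeq \bigMF$, I would use Kisin's theorem \cite{Kis06} as the main input: it produces, from $D \in \MF$, an $\o$-module $\cm$ with $\phi$ and the requisite $N_\nabla$, realized as a specific $\phi$-stable lattice inside the base change of $D$ to the Robba ring. The forward functor is Liu's Construction \ref{cons cm to cd}. To see these are mutually inverse, one composes with Breuil's equivalence: $\cm \mapsto \cd \mapsto D := \cd/u\cd$ should recover the $D$ that Kisin's construction started from, which is verified by noting that reduction modulo $u$ on $S_{K_0}\otimes_{\phi,\o}\cm$ gives $\cm/u\cm$ (since $\phi(u) = u^p$ and $S_{K_0}/(u)=K_0$), and that the Frobenius and monodromy match up via the explicit formula $N_\cd = N_S \otimes 1 + \frac{p}{\phi(\lambda)} \cdot 1 \otimes N_\nabla$ combined with the relation $N_\nabla\phi = \frac{pE(u)}{E(0)}\phi N_\nabla$.

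The main obstacle I anticipate is verifying that the two a priori different descriptions of the filtration on $\cd$ agree: Breuil's inductive definition using $N$ and $f_\pi$ (Construction \ref{cons D to cd}), versus Liu's definition $\Fil^i\cd = \{m \in \cd : (1\otimes\phi)(m) \in \Fil^i S_{K_0}\otimes_\o \cm\}$ (Construction \ref{cons cm to cd}). Containment in one direction is straightforward: an element in Liu's $\Fil^i\cd$ satisfies the Griffiths-type condition for $N$ because of the commutation relation $N_\cd\phi = p\phi N_\cd$ together with $N(\Fil^i S_{K_0}) \subset \Fil^{i-1}S_{K_0}$; the reverse containment is where the bulk of the work lies, and is precisely what is carried out in \cite[\S 3.2]{Liu08}. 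Finally, the weakly admissible / slope-zero correspondence follows from Colmez--Fontaine's theorem combined with Kisin's slope filtration theorem (\cite[\S 1.3]{Kis06}), which together characterize both subcategories as equivalent to the category of semi-stable $G_K$-representations over $\qp$.
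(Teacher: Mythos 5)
The paper offers no proof of this statement beyond the citations to \cite{Bre97}, \cite{Kis06} and \cite[\S 3.2]{Liu08}, and your proposal is exactly a correct unwinding of how those three results assemble: Breuil's equivalence with quasi-inverse $\cd\mapsto \cd/u\cd$, Kisin's construction of $\cm$ together with Liu's comparison functor, and the matching of weak admissibility with slope $0$ via Kisin's slope filtration theorem. This is essentially the same route the paper intends, so nothing further is needed.
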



We record some facts on $N_\nabla$ for future use.
\begin{lemma}  \label{lem: Kis nyg fil}
 Following  Notation \ref{nota: isogeny} applied the triple $(\o, E, \phi)$,  regard $\cmast$ as a submodule of   $\cm$. We have: 
    \begin{enumerate}
        \item \label{kis1} $N_\nabla(\cmast) \subset E\cmast$;
        \item $ N_\nabla(E^n \cm) \subset  E^n \cm$;
        \item \label{kis2}  $N_\nabla(\fil^n \cmast) \subset E\fil^{n-1} \cmast$.
    \end{enumerate} 
\end{lemma}
\begin{proof}
This is (easy) strengthening of argument in  \cite[Lem. 1.2.12]{Kis06}. Using the relation $N_\nabla\varphi=\frac{pE(u)}{E(0)} \varphi N_\nabla$ on $\cm$, it is easy to see $N_\nabla(\cmast) \subset E\cmast$.
It is also easy to check (as in \emph{loc. cit.}) that $ N_\nabla(E^n \cm) \subset  E^n \cm$. Thus $N_\nabla(\fil^n \cmast)$ is contained in $E\cmast \cap E^n \cm=E\fil^{n-1} \cmast$. 
\end{proof}


\begin{remark} \label{rem cm is lav}
Given $\cm \in \textnormal{Mod}_{ \mathcal{O}}^{\varphi, N_\nabla, 0}$, \cite[Theorem 5.3.4]{Gao23} implies that 
\[ \cm \otimes_\o \bfb^\dagger_{\rig, \kinfty}  \simeq D^\dagger_{\rig, \kinfty}(V) \]
where the right hand side is the rigid-overconvergent $(\phi, \tau)$-module associated to $V$. (cf. loc. cit.  for unfamiliar notations). 
In addition, loc. cit. implies that the $N_\nabla$-operator on $\cm$ is \emph{coincides} with the locally analytic (i.e., Lie algebra theoretic) operator $N_\nabla$ that we constructed in Construction \ref{cons: loc ana nnabla}.
\end{remark}

\subsection{Integral semi-stable representations and Breuil--Kisin $\gk$-modules}
 
  Recall an effective Breuil--Kisin module  is an effective isogeny (cf. Notation \ref{nota: isogeny}) with respect to the tripe $(\gs, E, \phi)$.
  
\begin{defn}\label{defwr11}
Let $\textnormal{Mod}_{\gs, \ainf}^{\varphi, G_K, \log-\crys}$
be the category consisting of triples $(\gm, \varphi_{\gm}, G_K)$, which we call the (effective) \emph{Breuil--Kisin $G_K$-modules}, where
\begin{enumerate}
\item $(\gm, \varphi_\gm)$ is an effective  finite free Breuil--Kisin module;
\item  $G_K$ is a continuous $\varphi_{\minf}$-commuting $\ainf$-semi-linear   $G_K$-action on $\minf:= \ainf
\otimes_{ \gs} \gm$, such that
\begin{enumerate}
\item $\gm \subset (\minf)^{G_\kinfty}$ via the embedding $\gm \hookrightarrow \minf$;
\item  $\gm/u\gm \subset (\minf/W(\mathfrak{m}_{\o_C^\flat})\minf)^{G_K}$ via the embedding $\gm/u\gm \hookrightarrow \minf/W(\mathfrak{m}_{\o_C^\flat})\minf$.
\end{enumerate} 
\end{enumerate}
Let $\textnormal{Mod}_{\gs, \ainf}^{\varphi, G_K, \crys}$
be the sub-category consisting of objects  such that $(g-1)(\gm) \subset \fkt W(\fkm_\ocflat) \gm_\inf$ for all $g\in \gk$ (cf. \cite[Prop. 7.1.10]{Gao23}).
  \end{defn}
 
 \begin{theorem}[\emph{\cite[Theorem 1.1.11]{Gao23}}]
 Let $\star \in \{\log, \emptyset\}$. 
 The functor sending $(\gm, \gm_\inf)$ to $(\gm_\inf \otimes_\ainf W(C^\flat))^{\phi=1}$ induces an equivalence
\[ \mathrm{Mod}_{\gs, \ainf}^{\phi, \gk, \star-\crys}  \simeq \rep_\zp^{\star-\crys, \geq 0}(\gk) \]
 \end{theorem}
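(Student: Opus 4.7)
The plan is to bootstrap from Kisin's classical theorem (which handles $G_{K_\infty}$-representations) by enriching it with a compatible $G_K$-action on the base change to $\ainf$. Write $\star \in \{\log, \emptyset\}$ and abbreviate the two categories as $\mathcal{C}_\star$ (the source) and $\mathcal{R}_\star$ (the target).

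First I would establish the functor in the easy direction. Given $(\gm, \gm_\inf) \in \mathcal{C}_\star$, set $T(\gm) := (\gm_\inf \otimes_\ainf W(C^\flat))^{\phi=1}$. Using the fact that $W(C^\flat)$ is $\phi$-faithfully flat over $\ainf$ and the $\phi$-structure of $\gm_\inf$ is an effective $E$-isogeny (hence becomes invertible after inverting $E$, and then after passing to $W(C^\flat)$ where $E$ becomes a unit times a distinguished element), one shows $T(\gm)$ is a finite free $\zp$-module of the same rank as $\gm$, with a continuous $G_K$-action inherited from $\gm_\inf$. That $T(\gm)[1/p]$ is semi-stable (resp.\ crystalline in the $\crys$ case) is the content of the classical Kisin theorem at the generic fiber combined with the boundedness condition on $G_K$ enforced by Definition \ref{defwr11} (resp.\ the refined boundedness using $\fkt$ for $\star = \emptyset$).

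For essential surjectivity, start with $T \in \mathcal{R}_\star$. Restriction to $G_{K_\infty}$ followed by Kisin's equivalence produces a canonical effective Breuil--Kisin module $\gm$ (the second author's integral refinement ensures existence integrally). Form $\gm_\inf = \ainf \otimes_\gs \gm$; inside $W(C^\flat) \otimes_\gs \gm \cong W(C^\flat) \otimes_{\zp} T$ (a $G_K$-equivariant identification coming from the Frobenius-invariant construction), the group $G_K$ acts. The heart of the argument is to verify that this $G_K$-action preserves the $\ainf$-lattice $\gm_\inf$, and satisfies axioms (2a) and (2b) of Definition \ref{defwr11} (plus the $\fkt$-boundedness in the crystalline case). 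This is proved by comparing with the $(\phi, \hat G)$-module formalism of \cite{Liu10}: the $\hat G$-module structure there translates directly into a $G_K$-action on the appropriate lattice in $W(C^\flat) \otimes_\gs \gm$, and the refined bound for the crystalline case reflects vanishing of the monodromy operator $N$, whose integral incarnation is the factor of $\fkt$ (consistent with $v_p(\theta_\fon(\fkt)) = 1/(p-1)$, the smallest possible positive valuation).

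Full faithfulness follows from the reconstruction formulas: $\gm$ is recovered from $T$ via the Kisin equivalence for $G_{K_\infty}$, while $\gm_\inf$ with its $G_K$-action is recovered inside $W(C^\flat) \otimes_{\zp} T$ as the $\ainf$-span of $\gm$, whose uniqueness follows from the conditions (2a)-(2b). The main technical obstacle is step three: establishing that the $G_K$-action genuinely preserves $\gm_\inf$ with the claimed boundedness estimate $(g-1)(\gm) \subset \fkt W(\fkm_\ocflat) \gm_\inf$ in the crystalline case. This requires careful analysis of how $G_K$ acts on elements of $\ainf$ (e.g., on $u = [\pi^\flat]$, $g(u) = u \cdot [\epsilon]^{c(g)}$ for $g \in \gal(L/\kpinfty)$ suitably) and leveraging Fontaine's $\theta_\fon$-valuation computations to pin down the exact power of $\fkt$. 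This is precisely the content of \cite[Prop.\ 7.1.10]{Gao23}, cited in the definition.
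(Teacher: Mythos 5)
The paper gives no proof of this statement at all: it is quoted verbatim from \cite[Thm 1.1.11]{Gao23}, so the only fair comparison is with the argument in the cited literature. Your outline reproduces the standard architecture of that argument (well-definedness of $T(\cdot)$; essential surjectivity by applying Kisin's $G_{\kinfty}$-equivalence to get $\gm$ and then equipping $\minf=\ainf\otimes_\gs\gm$ with the $G_K$-action inherited from $W(C^\flat)\otimes_{\zp}T$; full faithfulness by reconstruction), and that part is fine as a skeleton.

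The genuine gap is the sentence claiming that the $\hat G$-module structure of \cite{Liu10} ``translates directly'' into a $G_K$-action preserving $\minf$. It does not: Liu's theory produces $G_K$-stability of the \emph{$\phi$-twisted} lattice $\hR\otimes_{\phi,\gs}\gm$ over a larger ring, and descending this to stability of the untwisted lattice $\ainf\otimes_\gs\gm$ (together with conditions (2a)--(2b) of Definition \ref{defwr11}) is precisely the main technical content of \cite{Gao23} --- it requires the overconvergence of $(\phi,\tau)$-modules and the locally analytic vector / $N_\nabla$ machinery, not just a bookkeeping translation. Relatedly, for well-definedness you assert semi-stability of $T(\gm)[1/p]$ as a consequence of ``Kisin's theorem at the generic fiber'', but Kisin's theorem only sees $G_{\kinfty}$; showing that the extra $G_K$-structure forces semi-stability is a separate argument (in \cite{Liu10} it goes through reconstructing $D_\st$ from the module). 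Finally, a smaller imprecision: the crystalline refinement is not ``the factor of $\fkt$'' --- a factor of $\fkt$ is already present in the semi-stable case (cf.\ Lemma \ref{lem: tau range gmast}, $(\tau-1)(\gm)\subset\fkt\minf$); what detects $N=0$ is the extra factor of $W(\fkm_{\ocflat})$ (equivalently $u$), as in Corollary \ref{prop: crys criterion}.
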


We now study bounds on the range of $(\tau-1)^i$-actions; these can be regarded as integral counterparts of Lemma \ref{lem: Kis nyg fil}. We start with an easy lemma.

\begin{lemma} \label{lem:ring ainf} 
Let $a,b,c \geq 0$, then
\begin{equation*} \label{eq:u fkt cap E}
u^a \ainf \cap \fkt^b \ainf \cap E^c \ainf =u^a\fkt^bE^c \ainf
\end{equation*}  
\end{lemma}
\begin{proof}
 Using the fact that $\phi(\fkt)$ is a generator of the ideal $I^{[1]}\ainf$, cf. Notation \ref{nota: ring for mod}, one can easily check 
\begin{equation*} \label{eq:u fkt cap}
u^a \ainf \cap \fkt^b \ainf =u^a\fkt^b \ainf.
\end{equation*}
 Then one can   conclude using   that  $E$ is a generator of $\ker \theta_\fon$ on $\ainf$.
 \end{proof}

\begin{lemma} \label{lem: tau range gmast}
  Let $ (\gm, \gm_\inf) \in \textnormal{Mod}_{\gs, \ainf}^{\varphi, G_K, \log-\crys }$. Let $i \geq 1, n \geq 0$.  
  \begin{enumerate}
  \item  We have
\begin{eqnarray}
\label{taugm}   (\tau -1)^i (\gm) & \subset &  \fkt^i \minf \\
\label{taugmast}  (\tau -1)^i (\gmast) &\subset &   E\fkt^i \gm_\inf^\ast \\
\label{tauEgm}  (\tau-1)^i( E^n \gm) &\subset  &\fkt^i E^n \gm_\inf \\
\label{taufilgm}  (\tau-1)^i(\fil^n \gmast) &\subset & \fkt^i E \fil^{n-1}\gm_\inf^\ast  
\end{eqnarray}  
   
  \item If $(\gm, \gm_\inf)$ is furthermore crystalline, then 
  \begin{eqnarray}
(\tau -1)^i (\gm) & \subset &  u\fkt^i \minf \\
(\tau -1)^i (\gmast) &\subset &  uE\fkt^i \gm_\inf^\ast \\
(\tau-1)^i( E^n \gm) &\subset  &u\fkt^i E^n \gm_\inf \\
(\tau-1)^i(\fil^n \gmast) &\subset & u\fkt^i E \fil^{n-1}\gm_\inf^\ast  
\end{eqnarray}   
  \end{enumerate}
\end{lemma}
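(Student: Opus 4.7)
\medskip
\noindent\textbf{Proof plan.} The argument is by induction on $i$, with the case $i=1$ treated as the base and the inductive step powered by a Leibniz-rule computation in $\ainf$. For the semi-stable case, the base inclusion $(\tau-1)(\gm) \subset \fkt\gm_\inf$ is a refinement of the axioms of Definition \ref{defwr11} that ultimately rests on the classification \cite[Thm 1.1.11]{Gao23}: after inverting $p$ and $\fkt$, the lattice $T \simeq (\gm_\inf \otimes_\ainf W(C^\flat))^{\phi=1}$ encodes the $\tau$-deviation with $\fkt$ as the universal period. In the crystalline case, the stronger base bound $(\tau-1)(\gm) \subset u\fkt\gm_\inf$ follows more directly from the explicit crystalline axiom $(g-1)(\gm) \subset \fkt W(\fkm_\ocflat)\gm_\inf$: since $\tau$ fixes the cyclotomic tower and $\tau(u) = u[\epsilon]$, a careful tracking of the $W(\fkm_\ocflat)$-contribution specialised to $\tau$ produces the extra factor of $u$.

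\medskip
\noindent The inductive step from $i$ to $i+1$ for \eqref{taugm} hinges on the single identity $\tau(\fkt) \equiv \fkt \pmod{\fkt^2\ainf}$. The cleanest verification is to pass to locally analytic vectors in $(\wtbrigL)^{\hat{G}\dpa}$, where $\fkt$ is a unit, and expand $\tau - 1 = \sum_{k\geq 1} \nabla_\tau^k/k!$ using $\nabla_\tau = -p\fkt \cdot N_\nabla$ from Construction \ref{cons: loc ana nnabla}; each application of $\nabla_\tau$ visibly contributes a factor of $\fkt$. Combined with the Leibniz rule $(\tau-1)(ax) = \tau(a)(\tau-1)(x) + (\tau-1)(a)x$, this gives $(\tau-1)(\fkt^k\gm_\inf) \subset \fkt^{k+1}\gm_\inf$, hence \eqref{taugm} by induction. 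The crystalline improvement in (2) is propagated by the same argument using the analogous divisibility $(\tau-1)(u) = u([\epsilon]-1) \in u\fkt\ainf$.

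\medskip
\noindent The remaining three bounds reduce to \eqref{taugm} together with two further identities in $\ainf$: the Frobenius identity $\phi(\fkt) = (pE/E(0))\fkt$ (verified directly from $\phi(\lambda) = \lambda E(0)/E$ and $\phi(t) = pt$), and the Taylor-type identity $\tau(E)/E \in 1 + \fkt\ainf$ (expand $\tau(E(u)) = E(u[\epsilon])$ in powers of $[\epsilon]-1$ and use $[\epsilon]-1 \in \fkt\cdot\ainf^\times$ up to higher order). For \eqref{taugmast}: since $\gmast \subset \gm$ is generated over $\gs$ by $\phi(\gm)$ and $\tau$ commutes with $\phi$, we have, for $x\in\gm$,
\[
(\tau-1)^i \phi(x) = \phi\bigl((\tau-1)^i x\bigr) \in \phi(\fkt^i\gm_\inf) = \phi(\fkt)^i\,\gm_\inf^\ast \subset E^i\fkt^i\,\gm_\inf^\ast,
\]
which implies \eqref{taugmast}. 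For \eqref{tauEgm}, induction and Leibniz applied to $E^n x$, using $\tau(E)/E \in 1+\fkt\ainf$ to preserve the $E^n$-factor while accruing $\fkt^i$, give the claim. Finally \eqref{taufilgm} combines \eqref{taugmast} and \eqref{tauEgm} via the identification $\fil^n\gmast = \gmast \cap E^n\gm$, noting that all of our subspaces propagate through the intersection.

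\medskip
\noindent The main technical obstacle I anticipate is the semi-stable base case: unlike the crystalline one, which is essentially spelled out in Definition \ref{defwr11}, the sharp bound $(\tau-1)(\gm) \subset \fkt\gm_\inf$ is not literally an axiom and its cleanest proof likely goes through the period-ring classification of \cite{Gao23} rather than any direct manipulation. Once the base case is secured, the remainder of the argument is mostly bookkeeping around the three identities $\tau(\fkt)\equiv\fkt\pmod{\fkt^2\ainf}$, $\phi(\fkt) = (pE/E(0))\fkt$, and $\tau(E)\equiv E\pmod{\fkt\ainf}$.
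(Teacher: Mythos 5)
Your skeleton matches the paper's: the $i=1$ semi-stable case is indeed just quoted from Step~1 of \cite[Prop.~7.1.10]{Gao23}, and the reductions of \eqref{taugmast}--\eqref{taufilgm} to \eqref{taugm} do run through exactly the identities you list, $\phi(\fkt)=(pE/E(0))\fkt$ (so $\phi(\fkt)=E\fkt\cdot(\mathrm{unit})$) and $\tau(E)\in E(1+\fkt\ainf)$, together with intersections justified by $(E,\fkt)$ (and $(u,\fkt,E)$) being regular. But two steps as written do not work.

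First, the inductive step for \eqref{taugm}. The claim $(\tau-1)(\fkt^k\gm_\inf)\subset\fkt^{k+1}\gm_\inf$ is false: $(\tau-1)$ does not carry $\ainf$ into $\fkt\ainf$, so the Leibniz term $\tau(a)(\tau-1)(x)$ with $x\in\gm_\inf$ arbitrary cannot be controlled. (Concretely, $(\tau-1)$ of the Teichm\"uller lift of a $p^n$-th root of $\pi^\flat$ is that lift times $\phi^{-n}([\epsilon]-1)$, which generates $\phi^{1-n}(\fkt)\ainf\not\subset\fkt\ainf$ once $n\geq 2$.) The paper never iterates $(\tau-1)$ on $\gm_\inf$; it writes $(\tau-1)^i=\bigl(\sum_{j\geq1}\nabla_\tau^j/j!\bigr)^i$ on $\gm\otimes_\gs\o$ and invokes $\nabla_\tau^j(\gm)\subset\gm\otimes_\gs\fkt^j\cdot\o$ from \cite[Eqn.~(7.1.18)]{Gao23}, so every monomial in the $i$-th power carries at least $\fkt^i$ while staying in $\gm\otimes_\gs\o$ (this uses that $N_\nabla$ preserves $\cm=\gm\otimes_\gs\o$, not merely that $\nabla_\tau=-p\fkt N_\nabla$); one then descends to $\fkt^i\minf$ via \cite[Lem.~7.1.9]{Gao23}. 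Your first instinct (expand in $\nabla_\tau$) is the right one; the switch back to a $(\tau-1)$-induction is where the argument breaks. Relatedly, your computation for \eqref{taugmast} only treats elements $\phi(x)$: for general $\sum a_j\phi(x_j)\in\gmast$ the coefficient terms contribute only $(\tau-1)(a_j)\in\phi(\fkt)\ainf=E\fkt\ainf$, which is exactly why the lemma asserts $E\fkt^i$ rather than the $E^i\fkt^i$ your pure-tensor computation suggests; the paper handles this by proving $(\tau-1)^i(\gmast)\subset E\gm_\inf^\ast$ and intersecting with $\fkt^i(\cdots)$.

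Second, the crystalline refinement. The extra factor of $u$ cannot be extracted by "tracking the $W(\fkm_\ocflat)$-contribution" in the axiom $(g-1)(\gm)\subset\fkt W(\fkm_\ocflat)\gm_\inf$: the ideal $W(\fkm_\ocflat)$ is much larger than $u\ainf$ (it contains $[x]$ for $x$ of arbitrarily small positive valuation), so no specialisation to $\tau$ produces $u$-divisibility. The paper instead uses that $N_\nabla\bmod u$ computes the monodromy $N_{D_\st(V)}$, which vanishes for crystalline $V$; hence $N_\nabla(\gm)\subset\gm\otimes_\gs u\cdot\o$, so $\nabla_\tau(\gm)\subset\gm\otimes_\gs u\fkt\cdot\o$, and one iterates using $\nabla_\tau(u\fkt^k)\subset u\fkt^{k+1}\cdot\o$ before re-running the descent. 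This monodromy-vanishing input is missing from your proposal and is the actual source of the $u$.
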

\begin{proof}
  Consider Eqn \eqref{taugm} in the semi-stable case. When $i=1$, this is proved in Step 1 of \cite[Prop. 7.1.10]{Gao23}; the general case follows from similar argument.  Indeed, by \cite[Lem. 7.1.9]{Gao23}, it suffices to show that 
\[ (\tau-1)^i (\gm) \subset \gm \otimes_\gs \fkt^i \wtb^{[0, \frac{r_0}{p}]} \]
Note, 
 \[ (\tau-1)^i (\gm)=(\sum_{j \geq 1} \frac{\nabla_\tau^j}{j!})^i(\gm). \]
It is already proven in \cite[Eqn. (7.1.18)]{Gao23} that
\[\nabla_\tau^j(\gm) \subset \gm\otimes_\gs \fkt^j \cdot \o \]
thus each summand of above summation falls inside $\gm\otimes_\gs \fkt^i \cdot \o$.
Consider Eqn. \eqref{taugmast}: by \eqref{taugm} and Lemma \ref{lem:ring ainf}, it reduces to prove
\[ (\tau-1)^i(\gmast) \subset E\gm_\inf^\ast\]
It suffices to treat $i=1$ case as other cases follow by induction; but it reduces to the fact that
\[ (\tau-1)(\gs) \subset \phi(\fkt) \ainf, \text{ and } (\tau-1)(\phi(\gm)) \subset  \phi(\fkt)\gm_\inf^\ast\]
where the second inclusion follows from Eqn \eqref{taugm}. For Eqn. \eqref{tauEgm}, similarly, it suffices to note that the ideal $(E)=\ker \theta_\fon$ is $\gk$-stable, and by induction,
\[ (\tau-1)^i(E^n\gm) \subset E^n\minf.\] 
Finally, Eqn. \eqref{taufilgm} follows by combining \eqref{taugmast} and \eqref{tauEgm}.

 Consider  the crystalline case in Item (2).
With the semi-stable case in hand, and using Lemma \ref{lem:ring ainf}, it remains  to prove that in the crystalline case, we further have:
 \[  (\tau-1)^i (\gm) \subset  u \minf \]
 Since $N_\nabla(\gm) \subset \gm\otimes_\gs \o$, and since $N_\nabla/uN_\nabla=N_{D_\log(V)}=0$, we must have
\[ N_\nabla(\gm) \subset \gm\otimes_\gs u\cdot \o \]
Thus,
\[\nabla_\tau(\gm) \subset \gm\otimes_\gs u\fkt \cdot \o \]
One can check $\nabla_\tau(u\fkt^k) \subset u\fkt^{k+1}\cdot \o$; thus inductively, we can show that
\[\nabla_\tau^j(\gm) \subset \gm\otimes_\gs u\fkt^j \cdot \o \]
Then we can use the same argument as in semi-stable case to conclude. 
\end{proof}

We record the following crystalline criterion (as an   addendum to \cite[Prop. 7.1.10]{Gao23}).

\begin{cor} \label{prop: crys criterion}
Let $T$ be a semi-stable representation, with $\gm$ the associated Breuil--Kisin module. 
The following are equivalent.
\begin{enumerate}
\item $(\tau -1)^i (\gm) \subset  u\fkt^i \minf, \forall i \geq 1$
\item   $(\tau -1) (\gm) \subset  u\fkt  \minf$
\item   $(\tau -1) (\gm) \subset \fkt W(\fkm_\ocflat) \minf$
\item $T$ is crystalline.
\end{enumerate}
\end{cor}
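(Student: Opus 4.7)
The plan is to prove the chain of implications $(1) \Rightarrow (2) \Rightarrow (3) \Rightarrow (4) \Rightarrow (1)$, most of which are essentially bookkeeping using tools already set up in the paper.

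First, $(1) \Rightarrow (2)$ is immediate by specializing to $i=1$. For $(2) \Rightarrow (3)$, recall that $u \in \ainf$ corresponds to $[\pi^\flat]$ under the embedding $\gs \hookrightarrow \ainf$, and $\pi^\flat \in \fkm_{\ocflat}$ since $\pi$ has positive valuation in $\mathcal{O}_C$. Hence $u \in W(\fkm_\ocflat)$, so $u\fkt\minf \subset \fkt W(\fkm_\ocflat)\minf$, giving (3).

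For $(3) \Rightarrow (4)$, this is precisely the defining inclusion for $\textnormal{Mod}_{\gs, \ainf}^{\varphi, G_K, \crys}$ as given in Definition \ref{defwr11} (second paragraph). Since the equivalence of categories between $\textnormal{Mod}_{\gs, \ainf}^{\varphi, G_K, \crys}$ and crystalline representations is already recorded (following \cite[Prop.~7.1.10]{Gao23}), condition (3) means $(\gm, \gm_\inf)$ lies in the crystalline subcategory, so $T$ is crystalline.

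Finally, $(4) \Rightarrow (1)$ is exactly what Lemma \ref{lem: tau range gmast}(2) supplies: assuming $(\gm, \gm_\inf) \in \textnormal{Mod}_{\gs, \ainf}^{\varphi, G_K, \crys}$, Eqn.~(2.?) gives $(\tau-1)^i(\gm) \subset u\fkt^i \minf$ for all $i \geq 1$. The key input there was that $N_\nabla(\gm) \subset \gm \otimes_\gs u\cdot \o$ (which used $N_{D_\st(V)} = 0$ for crystalline $V$), after which one inductively propagates the $u\fkt^j$ bound through $\nabla_\tau^j$, then sums the series defining $(\tau-1)^i$. Since this direction is already fully carried out in Lemma \ref{lem: tau range gmast}(2), no new argument is required.

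Thus there is no genuine obstacle here; the proposition is essentially a packaging result, and the whole proof amounts to assembling (1) $\Leftrightarrow$ (2) $\Leftrightarrow$ (3) from easy containments and invoking Lemma \ref{lem: tau range gmast}(2) together with the characterization of the crystalline subcategory in Definition \ref{defwr11}. The only subtle point worth flagging is that the implication $(2) \Rightarrow (1)$ is \emph{not} direct: one must pass through (4) in order to use the Lie-algebra bound $\nabla_\tau^j(\gm) \subset \gm\otimes_\gs u\fkt^j \cdot \o$, which is genuinely a crystalline input (it is the vanishing of the monodromy $N$ on $D_\st(V)$ that forces the $u$-factor). This is why the logical route naturally goes $(2) \Rightarrow (3) \Rightarrow (4) \Rightarrow (1)$ rather than directly iterating $\tau-1$.
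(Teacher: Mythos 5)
Your proof is correct and follows essentially the same route as the paper: $(1)\Rightarrow(2)\Rightarrow(3)$ are the easy containments, $(3)\Leftrightarrow(4)$ is the content of \cite[Prop.~7.1.10]{Gao23} (which is what both Definition \ref{defwr11} and the paper's proof ultimately invoke), and $(4)\Rightarrow(1)$ is Lemma \ref{lem: tau range gmast}(2). The only cosmetic point is that condition (3) constrains $\tau$ alone while the definition of the crystalline subcategory quantifies over all $g\in G_K$; the cited \cite[Prop.~7.1.10]{Gao23} is exactly what bridges that, so no gap remains.
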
   
\begin{proof}
Obviously, $(1) \Rightarrow (2) \Rightarrow (3)$. The equivalence of (3) and (4) is proved in \cite[Prop. 7.1.10]{Gao23}.   The implication from (4) to (1) is proved in Lem. \ref{lem: tau range gmast}. 
\end{proof}

 \subsection{Relation of Nygaard filtrations}
 
 Let $T$ be an integral semi-stable representation, and $V=T[1/p]$. Let $\gm, \cm, \cd$ etc. be the associated modules from above subsections.
    Construction \ref{Constructioncm to cd} implies $\cd=S_{\ko}\otimes_\o \cmast$; also note the map $\cmast \into \cm$ induces $\cd \into S_{\ko}\otimes_\o \cm$.

\begin{lemma} \label{lem: strict morph of mod}
    We have a commutative diagram of morphisms of filtered modules
    \[
    \begin{tikzcd}
{(\gmast, \fil^i \gmast)} \arrow[d, hook] \arrow[rr, hook] &  & {(\cmast, \fil^i \cmast)} \arrow[d, hook] \arrow[rr, hook] &  & {(\cald , \fil^i \cald )} \arrow[d, hook] \\
{(\gm, E^i\gm)} \arrow[rr, hook]                           &  & {(\cm, E^i\cm)} \arrow[rr, hook]                           &  & {(S_{\ko}\otimes_\o \cm, \fil^i S_{\ko} \otimes_\o \cm)}               
\end{tikzcd}
\]
where all arrows are strict (as filtered morphisms). Furthermore, both squares are Cartesian squares (of filtered objects).
\end{lemma}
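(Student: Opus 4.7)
The plan is to verify strictness and the Cartesian property simultaneously, by reducing everything to two intersection identities at the ring level. The vertical arrows are strict essentially by construction: the filtrations $\fil^i\gmast$, $\fil^i\cmast$ are \emph{defined} as $\gmast\cap E^i\gm$ and $\cmast\cap E^i\cm$, and the filtration $\fil^i\cald$ is defined as the preimage of $\fil^i S_{K_0}\otimes_\o\cm$ under $1\otimes\phi$; hence each vertical embedding satisfies $f(\fil^i \text{source}) = f(\text{source})\cap \fil^i\text{target}$ tautologically.

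The heart of the proof is then to establish the Cartesian property of the two squares, from which strictness of the horizontal arrows follows formally (for an embedding, ``Cartesian = strict''). Unwinding definitions, the Cartesian property of the left square amounts to the identity
\[
\gm\cap E^i\cm=E^i\gm\quad\text{inside }\cm,\tag{K1}
\]
and that of the right square, combined with the right vertical arrow being filtered, reduces to
\[
\cm\cap \bigl(\fil^i S_{K_0}\otimes_\o\cm\bigr)=E^i\cm\quad\text{inside }S_{K_0}\otimes_\o\cm.\tag{K2}
\]
Indeed, given (K1), the chain $\gmast\cap\fil^i\cmast=\gmast\cap(\cmast\cap E^i\cm)=\gmast\cap E^i\cm$ equals $\gmast\cap E^i\gm=\fil^i\gmast$, using that $\gmast\subset\gm$; and similarly for (K2) with $\cmast\hookrightarrow\cald$ composed with $1\otimes\phi$.

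For (K1), I would fix an $\gs$-basis $e_1,\dots,e_d$ of $\gm$; since $\cm=\o\otimes_\gs\gm$, the same tuple is an $\o$-basis of $\cm$. The identity (K1) then becomes the componentwise assertion $\gs\cap E^i\o=E^i\gs$ inside $\o$, which is a standard consequence of the flatness of $\gs\hookrightarrow\o$ (equivalently, of Weierstrass division by the distinguished polynomial $E(u)$). For (K2), the first observation is that since $p$ is inverted in $S_{K_0}$, the divided powers are honest powers: $\gamma_j(E)=E^j/j!$ with $j!\in S_{K_0}^{\times}$, so $\fil^i S_{K_0}=E^i S_{K_0}$ as an ideal. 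Hence (K2) becomes $\cm\cap E^i(S_{K_0}\otimes_\o\cm)=E^i\cm$, which reduces (via any $\o$-basis of $\cm$) to $\o\cap E^i S_{K_0}=E^i\o$ in $S_{K_0}$.

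The main technical step I expect is the last intersection identity $\o\cap E^iS_{K_0}=E^i\o$, equivalently the injectivity of $\o/E^i\o\to S_{K_0}/E^iS_{K_0}$. I would prove this either by invoking the flatness of $\o\to S_{K_0}$ (which is known but nontrivial, coming from the explicit description of $S_{K_0}$ as a completed PD-envelope and the fact that $\o$ is a Bezout domain), or, more hands-on, by noting that both quotients are already understood: $\o/E^i\o$ is a free $K$-module of rank $i$ via the Taylor expansion $f\mapsto(f(\pi),f'(\pi),\dots)$, and this map factors through $S_{K_0}/E^i S_{K_0}$, yielding injectivity directly. Once (K1) and (K2) are established, the commutativity of the diagram is immediate from naturality of $1\otimes\phi$, the strictness of each map follows from the Cartesian property (for injective maps), and the lemma is proved.
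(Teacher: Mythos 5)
Your handling of strictness is essentially correct, and your route is in fact slightly different from the paper's: the paper deduces strictness of the top row from the Cartesian property, whereas you deduce it directly from the bottom-row identities via the chain $\gmast\cap\fil^i\cmast=\gmast\cap E^i\cm=\gmast\cap E^i\gm=\fil^i\gmast$; both your ring-level inputs (K1), (K2) and that chain are fine. The genuine gap is in the Cartesian claim. The Cartesian property of the left square is the statement $\gmast=\gm\times_{\cm}\cmast=\gm\cap\cmast$ inside $\cm$ (plus its filtered refinement), i.e.\ a statement pinning down the top-left \emph{module}; your (K1), namely $E^i\gm=\gm\cap E^i\cm$, only concerns the bottom row, and the two are not equivalent. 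Concretely, an element of $\cmast\cap\gm$ is $\sum a_i\phi(e_i)$ with $a_i\in\o$ such that $(a_1,\dots,a_d)A\in\gs^d$, where $\phi(\vec e)=(\vec e)A$; since $A$ is not invertible over $\gs$, this does not by itself force $a_i\in\gs$, and (K1) says nothing about it. Your ``Cartesian = strict for an embedding'' remark applies to a single arrow (strictness of an injection is a pullback condition on that arrow), not to the squares.

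The missing step — which is the actual content of the paper's proof — uses the $E$-height $h$: choose $B$ over $\gs$ with $AB=E^h$ and multiply $(a_1,\dots,a_d)A\in\gs^d$ by $B$ to get $E^h a_i\in\gs\cap E^h\o=E^h\gs$, hence $a_i\in\gs$ and $\cmast\cap\gm=\gmast$. (Equivalently: $E^h\gm\subset\gmast$, so $E^hx\in\gmast$ for $x\in\cmast\cap\gm$, and one concludes with the same ring-level intersection.) The identity $\cald\cap\cm=\cmast$ is proved the same way using $E^hS_{K_0}\cap\o=E^h\o$. Once these unfiltered identities and the strictness of the vertical arrows are in hand, the filtered Cartesian property follows. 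So your ring-level identities are the right ingredients, but the assertion that the Cartesian property ``amounts to'' (K1)/(K2) is false as stated, and the matrix (or height) argument must be added.
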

\begin{proof}
    To check the arrows on the bottom row are strict, it reduces to check the morphisms of filtered rings
    \[ (\gs, E^i\gs) \into (\o, E^i\o) \into (S_{\ko}, \fil^i S_{\ko}) \]
are strict; the proof is standard, and  is omitted.
Consider the vertical arrows: the left two arrows are strict by definition; the right vertical arrow is strict by Construction \ref{Constructioncm to cd}.
It now suffices to prove both squares are Cartesian squares (of filtered objects), as this will imply strictness on top row. In fact, since we already know all vertical arrows are strict, it remains to prove the unfiltered version. That is, we only need to prove
\[ \cmast \cap \gm = \gmast, \quad \cald^\ast \cap \cm  =\cmast\] 
Consider the first case (i.e., the left square).  Let $\vec e$ be basis of $\gm$, and $\phi(\vec{e})=A\vec{e}$ where $A$ is matrix over $\gs$; let $B$ be the matrix such that $AB=E^h$.
    An element in $ \cmast \cap \gm$ is of the form
    \[ \sum a_i \phi(e_i) =(a_1, \cdots, a_d)A\vec{e} \]
    where $a_i \in \o$ and $(a_1, \cdots, a_d)A \in \gs$; multiply by $B$, we see $(a_1, \cdots, a_d)E^h \in \gs$. Using $E^h\o \cap \gs=E^h\gs$, we see $a_i\in \gs$; this implies $\cmast \cap \gm=\gmast$. 
    The case $\cald^\ast \cap \cm=\cmast$ can be similarly proved using $E^h S_{\ko} \cap \o=E^h\o$. 
\end{proof}
 
 \begin{lemma} \label{lem: compa of graded}
We have
\begin{enumerate}
\item $ \gr^i \gm^\ast \into  \gr^i \cm^\ast \into  \gr^i \cald.$ (Caution: the first injection is not strict with respect to the inclusion $\gm_\HT \into \cm_\HT$).
\item $ \gr^i \gm^\ast \otimes_\ok K =\gr^i \cm^\ast = \gr^i \cald$, with dimension (over $K$) equal to $ d-\dim_K(\fil^{i+1} D_K)$.
\end{enumerate}
\end{lemma}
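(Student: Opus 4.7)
The plan is to handle parts (1) and (2) together, using three structural inputs: the strict filtered Cartesian diagram of Lemma~\ref{lem: strict morph of mod}, flat base change (Lemma~\ref{lem: flat base change}), and the graded matching of Lemma~\ref{lem: matching graded dR and HT}, together with Kisin's identification $\cm_\dR \simeq D_K$ as filtered $K$-vector spaces.

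For the injections in part (1): both squares in Lemma~\ref{lem: strict morph of mod} are Cartesian squares of filtered objects, so for every $i$ one extracts the identities $\fil^{i+1}\cm^\ast \cap \gm^\ast = \fil^{i+1}\gm^\ast$ and $\fil^{i+1}\cald \cap \cm^\ast = \fil^{i+1}\cm^\ast$; these immediately imply injectivity of $\gr^i \gm^\ast \hookrightarrow \gr^i \cm^\ast \hookrightarrow \gr^i \cald$. For the first equality in part (2): apply Lemma~\ref{lem: flat base change} to the flat inclusion $\gs \hookrightarrow \o$ (with $\cm = \o \otimes_\gs \gm$ and $\phi$ extending), obtaining $\gr^i \cm^\ast = \gr^i \gm^\ast \otimes_\gs \o$; since $\gr^i \gm^\ast \subset \gm_\HT = \gm/E\gm$ is annihilated by $E$ and $\o/E\o = \ko[u]/E = K$ by Weierstrass preparation (as $E$ is irreducible over $\ko$), this collapses to $\gr^i \gm^\ast \otimes_\ok K$.

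For the second equality $\gr^i \cm^\ast = \gr^i \cald$ and the dimension formula, given the injection from part (1) it suffices to match $K$-dimensions. For $\gr^i \cm^\ast$: Lemma~\ref{lem: matching graded dR and HT} identifies $\gr^i \cm^\ast$ via $1/E^i$ with $\fil_i^\rmconj \cm_\HT$, and $\gr_j \cm_\HT$ with $\gr^j \cm_\dR$; combined with $\cm_\dR \simeq D_K$ and effectivity, telescoping yields
\[ \dim_K \gr^i \cm^\ast = \sum_{j \le i} \dim_K \gr^j D_K = d - \dim_K \fil^{i+1} D_K. \]
For $\gr^i \cald$: use the strictly filtered short exact sequence
\[ 0 \to \cald \to S_{\ko} \otimes_\o \cm \to S_{\ko} \otimes_\o (\cm/\cm^\ast) \to 0, \]
where $\cald = S_{\ko}\otimes_\o \cm^\ast$ embeds via $1 \otimes \phi$ and carries the filtration from Construction~\ref{cons cm to cd}. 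Passing to $\gr^i$ preserves exactness; $\gr^i(S_{\ko} \otimes_\o \cm) = K \otimes_\o \cm = \cm_\HT$ has $K$-dimension $d$, while $\gr^i(S_{\ko} \otimes_\o (\cm/\cm^\ast))$ has dimension $\#\{j : r_j > i\} = \dim_K \fil^{i+1} D_K$, computed via the Kisin/Smith-normal-form adapted basis giving $\cm/\cm^\ast \simeq \bigoplus_j \o/E^{r_j}$ with $r_j$ the Hodge--Tate weights and the pointwise formula $\gr^i(S_{\ko}/E^n) = K$ for $0 \le i < n$ and $0$ otherwise. Hence $\dim_K \gr^i \cald = d - \dim_K \fil^{i+1} D_K$, matching $\gr^i \cm^\ast$ and forcing equality via the injection.

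The main obstacle is checking that the above short exact sequence is strictly filtered (so graded exactness holds) and the adapted-basis decomposition $\cm/\cm^\ast \simeq \bigoplus_j \o/E^{r_j}$. Strict filtered exactness on the left is built into the definition $\fil^i \cald = \cald \cap (\fil^i S_\ko \otimes_\o \cm)$ from Construction~\ref{cons cm to cd}; on the right it follows from taking the quotient filtration. The adapted-basis decomposition is Kisin's Smith-normal-form theorem for $\phi$, which uses that $\o$ is a B\'ezout-type ring. Alternatively, one can derive $\dim_K \gr^i \cald$ directly from Construction~\ref{cons D to cd} using a Hodge-adapted basis of $D$, at the cost of a more involved induction on the recursive definition of $\fil^i \cald$.
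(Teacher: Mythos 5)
Your proof is correct. Part (1) coincides with the paper's argument (both extract the graded injections from the strict Cartesian squares of Lemma \ref{lem: strict morph of mod}). For part (2) you take a genuinely different route: the paper simply cites \cite[Lem.~4.3(3)]{GLS14} for the equality of spaces and Breuil's adapted basis for $\cald$ from \cite{Bre97} for the dimension formula, whereas you argue from scratch. Your first equality, via Lemma \ref{lem: flat base change} applied to the flat inclusion $\gs \into \o$ (flat because $\gs[1/p]$ is a PID over which $\o$ is torsion-free) together with $\o/E = K$, is clean and arguably preferable to a citation. Your dimension count trades the paper's reliance on Breuil's adapted-basis theorem for a reliance on Kisin's identification $\fil^\bullet \cm_\dR \simeq \fil^\bullet D_K$ --- an external input of comparable weight, which the paper also invokes (cf.\ Remark \ref{rem: technical}) --- fed through the telescoping of Lemma \ref{lem: matching graded dR and HT} and a graded-exact-sequence computation of $\gr^i \cald$; in that last computation you should record that $E^n S_{\ko} = \fil^n S_{\ko}$ (the factorials are invertible in $S_{\ko}$), which is what makes the pointwise formula for $\gr^i(S_{\ko}/E^n)$ with the quotient filtration immediate. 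One dependency to make explicit: the claim that the elementary divisors of $\cm^\ast \subset \cm$ are exactly $E^{r_1}, \dots, E^{r_d}$ is \emph{equivalent} to the formula $\dim_K \gr^i \cm^\ast = \#\{j : r_j \le i\}$ established in your immediately preceding step, so the two must be quoted in that order (the abstract Smith-normal-form statement over $\o/E^h$ only supplies \emph{some} exponents); your write-up respects this order, but attributing the exponents to a stand-alone theorem obscures it. What the paper's proof buys is brevity; what yours buys is independence from \cite{GLS14} and from the existence of an adapted basis for $\fil^\bullet \cald$.
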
  
\begin{proof} 
 Lem. \ref{lem: strict morph of mod} implies (1).
The equality of spaces in Item (2) follows from  \cite[Lem. 4.3(3)]{GLS14}; the dimension formula follows from the fact that $\cald$ has adapted basis (proved in \cite{Bre97}), cf. end of proof in \cite[Proposition 4.5]{GLS14}. (The two cited results from \cite{GLS14} are valid for any $K$ and for all semi-stable representations). (Note the embedding $\gs \to \o$ is not flat; thus one cannot apply Lemma \ref{lem: flat base change} to study these questions.)
\end{proof}

\begin{rem} \label{rem: no cdht}  The   discussions in this section prompt the question that if one can define a certain ``conjugate filtration" related to $\cd$. The diagram in Lemma \ref{lem: strict morph of mod} suggests that this filtration can only be defined on $(S_{\ko}\otimes_\o \cm)/(\fil^1 S_{\ko}\otimes_\o \cm)$, which is the same as $\cm_\HT$; in addition, Lemma \ref{lem: compa of graded} tells us $\gr^i\cmast=\gr^i\cd$, so the  ``conjugate filtration" can only be exactly the same as the one on $\cm_\HT$.  
\end{rem}

  \section{Integral Sen theory  for semi-stable representations} \label{sec: integral Sen}
  
We first review Sen theory over the Kummer tower (Theorem \ref{thmkummersenop})  for general $C$-representations. 
When the $C$-representation comes from an integral semi-stable representation, we upgrade this Sen theory to the $K$-rational level in Proposition \ref{prop: K ratitonal sen}, and finally to the $\ok$-integral level in  Theorem \ref{thmintsen}.

 \subsection{Sen theory over the Kummer tower} \label{sec: sen kummer}

Let $\rep_\gk(C)$ be the category of $C$-representations; an object is a finite dimensional $C$-vector space with a continuous semi-linear $\gk$-action.

 \begin{construction}\label{notaSenop}
 Let $W\in \rep_\gk(C)$ of dimension $d$. Define
 \begin{equation}\label{senlav}
D_{\Sen, \kpinfty}(W): =(W^{G_\kpinfty})^{\gammak\dla};
\end{equation}
cf. \S \ref{subsec notation} for notation $\kpinfty$ and the notion of locally analytic vectors. 
By \cite{Sen81} and then reformulated in \cite{BC16} using locally analytic vectors,  this is a $\kpinfty$-vector space of dimension $d$, such that the natural map
\[ D_{\Sen, \kpinfty}(W)\otimes_\kpinfty C \to W\]
is an isomorphism.
Thus, the  operator $\nabla_\gamma$ in Notation \ref{nota lie hatg}   induces an operator
\begin{equation}\label{eqsenclassical}
\nabla_\gamma: D_{\Sen, \kpinfty}(W) \to D_{\Sen, \kpinfty}(W).
\end{equation}
This is called the \emph{Sen operator}: it is $\kpinfty$-linear because $\nabla_\gamma$ kills $\kpinfty$.
 Extending  $C$-linearly, we obtain a $C$-linear operator
\begin{equation}\label{eqsenextendtoc}
\nabla_\gamma: W \to W;
\end{equation}
we still call it the \emph{Sen operator}.
The eigenvalues of the Sen operator are called Sen weights.
  \end{construction}

\begin{theorem}[{\cite[Theorem 7.12]{GMWHT}}] \label{thm331kummersenmod}  
Let  $W\in \rep_\gk(C)$. Define
 \begin{equation*}
 D_{\Sen, \kinfty}(W):= (W^{G_L})^{\tau\dla, \gamma=1}.
 \end{equation*} 
 Here, the right hand side   denotes the subset of  $\gal(L/\kpinfty)$-locally analytic vectors that are furthermore fixed by $\gal(L/\kinfty)$.
 Then it is a $\kinfty$-vector space, and the natural map $$ D_{\Sen, \kinfty}(W) \otimes_\kinfty C \to W$$  
 is an isomorphism. 
 \end{theorem}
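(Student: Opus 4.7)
The plan is to lift the classical cyclotomic Sen theory along $\kpinfty/K$ to a ``Kummer-tower'' Sen theory along $\kinfty/K$, using the Galois closure $L = \kpinfty \kinfty$ as an intermediary. The key is that $\hat{G} = \gal(L/K)$ is a two-dimensional $p$-adic Lie group with $\Lie \hat{G} = \qp \nabla_\gamma \oplus \qp \nabla_\tau$ (cf.\ Notation \ref{nota lie hatg}), so a descent along the $\nabla_\gamma$-direction should take us from $\kpinfty$-coefficients to $\kinfty$-coefficients.

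The first step is to work over the whole closure $L$. By standard Tate--Sen descent, $W^{G_L}$ is a free $\hat{L}$-module of rank $d := \dim_C W$ with $W^{G_L} \otimes_{\hat{L}} C \isoto W$, and passing to $\hat{G}$-locally analytic vectors gives a free module of rank $d$ over the subring $L^{\la} := (\hat{L})^{\hat{G}\dla}$, namely
$$(W^{G_L})^{\hat{G}\dla} \;\cong\; L^{\la} \otimes_{\kpinfty} D_{\Sen,\kpinfty}(W).$$
(The right-hand side clearly embeds in the left, and a rank count in the semi-linear $\hat{G}$-representation $W^{G_L}$ forces equality; the point is that passing from $\kpinfty$ to $L^{\la}$ just enlarges coefficients along the $\tau$-direction.) One then cuts by the $\gamma=1$ condition: the crucial analytic input is the ``Kummer-tower analog'' of the Berger--Colmez identity $\widehat{\kpinfty}^{\Gamma_K\dla}=\kpinfty$, namely
$$(L^{\la})^{\gamma=1} \;=\; \kinfty.$$
Granting this, taking $\gamma=1$ in the display above yields a free $\kinfty$-module of rank $d$, which coincides with $(W^{G_L})^{\hat{G}\dla,\,\gamma=1}$. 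Finally one checks the tautology $(W^{G_L})^{\hat{G}\dla,\,\gamma=1}=(W^{G_L})^{\tau\dla,\,\gamma=1}$: once $\gamma$ acts trivially, the only remaining Lie-algebra direction for local analyticity in $\Lie\hat{G}$ is spanned by $\nabla_\tau$, so $\hat{G}$-local analyticity collapses to $\tau$-local analyticity. Base-changing along $\kinfty \hookrightarrow \hat{\kinfty} \hookrightarrow C$ then recovers $W$ via $\hat{L}^{\gamma=1}=\widehat{\kinfty}$ and the classical Ax--Sen--Tate computation $C^{G_\kinfty}=\widehat{\kinfty}$, yielding the claimed isomorphism.

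The main obstacle is the identity $(L^{\la})^{\gamma=1}=\kinfty$: one needs to control the ring of $\hat{G}$-locally analytic vectors of $\hat{L}$ precisely enough to rule out any ``extra'' $\gamma$-invariant elements beyond $\kinfty$ itself. This is where the detailed structure theory of $L^{\la}$ (its periods such as $u$, $t=\log[\epsilon]$, $\mathfrak{t}$, together with the explicit action of $\nabla_\gamma$ on them) from \cite{GMWHT} and its predecessors is essential. Once this identity and the rank-preservation under the $\gamma=1$ cut are in hand, the remaining descent is entirely formal.
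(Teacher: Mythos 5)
The paper does not actually prove this statement; it is quoted from \cite[Thm 7.12]{GMWHT}, so there is no in-paper argument to compare yours against. Judged on its own terms, your outline follows the right general route (descend $W$ to $W^{G_L}$ over $\hat{L}$, take locally analytic vectors, then cut by $\gamma=1$), and the auxiliary steps you describe — the Tate--Sen descent to the perfectoid field $\hat{L}$, the inclusion $D_{\Sen,\kpinfty}(W)\subset (W^{G_L})^{\hat{G}\dla}$, the collapse of $\hat{G}$-local analyticity to $\tau$-local analyticity on $\gamma$-fixed vectors, and the final base change via $\hat{L}^{\gamma=1}=\hatkinfty$ — are all sound modulo citing the verification of the Tate--Sen/Colmez axioms for $L/K$.

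However, there is a genuine gap at the decisive step: the passage from the ring identity $(\hat{L}^{\hat{G}\dla})^{\gamma=1}=\kinfty$ to the claim that the $\gamma$-invariants of the free rank-$d$ module $(W^{G_L})^{\hat{G}\dla}\cong \hat{L}^{\hat{G}\dla}\otimes_{\kpinfty}D_{\Sen,\kpinfty}(W)$ form a free $\kinfty$-module of rank $d$. Taking invariants of a semilinear $\gal(L/\kinfty)$-action on a free module over the enormous extension $\hat{L}^{\hat{G}\dla}/\kinfty$ does not preserve rank for formal reasons: one must trivialize the $\gal(L/\kinfty)$-cocycle in $\GL_d(\hat{L}^{\hat{G}\dla})$, equivalently produce a full set of solutions to $\nabla_\gamma v=0$ in the module. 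Already for $d=1$ and $W=C(\eta)$ this amounts to exhibiting a period $a\in\hat{L}^{\hat{G}\dla}$ with $\gamma(a)/a=\eta(\gamma)^{-1}$, which is a nontrivial assertion about which elements (such as $\theta_\fon(\fkt)$) that ring contains; the role of the periods is to integrate the connection on the \emph{module}, not merely to compute the $\gamma$-invariants of the \emph{ring}. Your closing sentence grants ``the rank-preservation under the $\gamma=1$ cut'' and then declares the remaining descent ``entirely formal'' --- but that rank preservation \emph{is} the descent, i.e., essentially the theorem itself, so the argument as written is circular at its core. This monodromy descent along the $\gamma$-direction is precisely the substantive content of \cite[Thm 7.12]{GMWHT} and needs an actual argument.
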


\begin{construction} \label{Constructionla Sen op}
Consider $\fkt$ introduced in Notation \ref{nota: ring for mod}. By \cite[7.4]{GMWHT}, $\theta_\fon(\fkt)$  is a unit in $\hat{L}^{\hat{G}\dla}$. Similar to Construction \ref{cons: loc ana nnabla}, one can define
\begin{equation}
N_\nabla: =-\frac{1}{p\theta_\fon(\fkt)}\nabla_\tau
\end{equation}
which acts on $\hat{L}^{\hat{G}\dla}$, and indeed any $\hat{G}$-locally analytic representations over  $\hat{L}^{\hat{G}\dla}$. Thus,   there is an operator
\begin{equation} \label{eqn nnabla lhatla}
N_\nabla: D_{\Sen, \kinfty}(W) \otimes_\kinfty  \hat{L}^{\hat{G}\dla} \to D_{\Sen, \kinfty}(W) \otimes_\kinfty  \hat{L}^{\hat{G}\dla}
\end{equation} 
\end{construction} 
 
\begin{theorem}[{\cite[Theorem 7.13]{GMWHT}}]\label{thmkummersenop}
  After linear scaling, the operator in Eqn \eqref{eqn nnabla lhatla} induces a $\kinfty$-linear operator, which we call the \emph{negative Sen operator over the Kummer tower}
\begin{equation}\label{eqnnablanorm}
\frac{1}{\theta_\fon(u\lambda')}\cdot N_\nabla: D_{\Sen, \kinfty}(W) \to D_{\Sen, \kinfty}(W).
\end{equation} 
(Here: $\lambda'=\frac{d}{du}\lambda$.)
Extend it $C$-linearly to a $C$-linear operator on $ D_{\Sen, \kinfty}(W) \otimes_\kinfty C=W$, and denote it by the same notation:
\begin{equation}
\frac{1}{\theta_\fon(u\lambda')}\cdot N_\nabla:  W \to W
\end{equation}
Then this is precisely the \emph{negative} of the (uniquely defined) \emph{Sen operator} in Eqn. \eqref{eqsenextendtoc}.
\end{theorem}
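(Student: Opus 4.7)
The argument proceeds in three steps, starting from the fact (already built into the definition of $N_\nabla$) that $\theta_\fon(\fkt)$ is a unit in $\hatl^{\hat G\dla}$. First, I would check that $\theta_\fon(u\lambda')$ is also a unit in $\hatkinfty$ (hence in $\hatl^{\hat G\dla}$), so that the scaled operator $\frac{1}{\theta_\fon(u\lambda')}N_\nabla$ is well-defined. Since $\lambda=\prod_{n\geq 0}\phi^n(E(u)/E(0))$, applying the product rule to $\lambda'$ and evaluating at $u=\pi$ leaves only the $n=0$ summand, whose derivative $E'(\pi)/E(0)$ is nonzero because $E$ has simple roots; multiplying by $u=\pi$ then yields a nonzero element of $K$.

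The heart of the argument is the second step: showing that $\frac{1}{\theta_\fon(u\lambda')}N_\nabla$ preserves $D_{\Sen,\kinfty}(W)$ and is $\kinfty$-linear on it. Since $N_\nabla$ comes from the Lie algebra of $\hat G$, it is a $\kpinfty$-linear derivation of $\hatl^{\hat G\dla}$; the Leibniz defect with respect to $\kinfty$ must be cancelled by the scalar. The concrete computation I would carry out is to evaluate $N_\nabla$ on the image of $u\in\gs$ inside $\ainf$ using the identity from \cite{Gao23} relating $\nabla_\tau(u)$ to $\fkt\lambda$, and then iterate down to $[\pi_n^\flat]=u^{1/p^n}$ via the chain rule. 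This shows $N_\nabla(\pi_n)/\pi_n$ is a multiple of $\theta_\fon(u\lambda')$ in $\hatl^{\hat G\dla}$, so the scaled operator annihilates each $\pi_n$, hence all of $\kinfty$, and therefore defines a $\kinfty$-linear endomorphism of $D_{\Sen,\kinfty}(W)$.

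In the final step, I would extend $C$-linearly to all of $W=D_{\Sen,\kinfty}(W)\otimes_\kinfty C=D_{\Sen,\kpinfty}(W)\otimes_\kpinfty C$ and identify the result with the classical Sen operator $\nabla_\gamma$ of Construction \ref{notaSenop}. Both endomorphisms are $C$-linear and arise from the two-dimensional Lie algebra $\Qp\nabla_\tau\oplus\Qp\nabla_\gamma$ of $\hat G$; both are determined by their eigenvalues, and both decompose $W$ along the same Hodge--Tate character decomposition. Consequently the scaled $N_\nabla$ must be a $C$-multiple of $\nabla_\gamma$, and it suffices to verify the identity on the test twist $C(\chi_p)$, where $\nabla_\gamma$ acts as multiplication by $1$ while a direct computation of the ratio $-\frac{1}{p\theta_\fon(\fkt)\theta_\fon(u\lambda')}\nabla_\tau$ on a generator produces $-1$.

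The main obstacle is the Leibniz calculation that produces the exact normalization $\theta_\fon(u\lambda')$: any other scalar would leave a nontrivial cocycle and fail to descend to a $\kinfty$-linear operator, so the shape of the answer is forced, but the bookkeeping with locally analytic vectors over two different towers is delicate. In particular, one must be careful that the derivation identity used on $u$ holds in the full overconvergent ring $\wtb^\dagger_{\rig,L}$ rather than merely on $\o$, since the elements $[\pi_n^\flat]$ live outside $\o$.
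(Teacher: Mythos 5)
First, a point of comparison: the paper does not prove Theorem \ref{thmkummersenop} at all --- it is quoted verbatim from \cite[Thm 7.13]{GMWHT}, so there is no in-paper argument to match yours against. Judged on its own terms, your proposal has two genuine gaps. The first is in your central step. A nonzero scalar can never ``cancel a Leibniz defect'': if $D$ is a derivation with $D(a)\neq 0$, then $cD(a)\neq 0$ for every $c\neq 0$, so rescaling by $\frac{1}{\theta_\fon(u\lambda')}$ cannot be what makes the operator $\kinfty$-linear. In fact there is no defect to cancel: each $\pi_n$ is fixed by $\tau^{p^m}$ for $m\geq n$, so $\nabla_\tau$ (hence $N_\nabla$) already annihilates $\kinfty$; equivalently, your own computation gives $N_\nabla(u^{1/p^n})=-\tfrac{\lambda}{p^n}u^{1/p^n}$, and $\theta_\fon(\lambda)=0$ (not $\theta_\fon(u\lambda')$, which is precisely the \emph{nonzero} quantity). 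The normalization $\theta_\fon(u\lambda')$ plays no role whatsoever in $\kinfty$-linearity. What your step skips is the actually nontrivial part of ``induces an operator on $D_{\Sen,\kinfty}(W)=(W^{G_L})^{\tau\dla,\gamma=1}$'': one must check that $N_\nabla$ preserves the $\gamma=1$ condition, which requires combining the commutation relation $\gamma\nabla_\tau\gamma^{-1}=\chi_p(\gamma)\nabla_\tau$ with the transformation law $\gamma(\theta_\fon(\fkt))=\chi_p(\gamma)\theta_\fon(\fkt)$, so that $\frac{1}{\theta_\fon(\fkt)}\nabla_\tau$ commutes with $\Gal(L/\kinfty)$.

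The second gap is in your identification step. The theorem is stated for an arbitrary $W\in\rep_\gk(C)$, which need not be Hodge--Tate; there is then no ``Hodge--Tate character decomposition,'' the Sen operator need not be semisimple, and neither operator is determined by its eigenvalues (even for Hodge--Tate $W$ with repeated weights this fails). Consequently the claim that the two $C$-linear operators ``must be a $C$-multiple of each other,'' reducing the comparison to the single test object $C(\chi_p)$, is a non sequitur. The correct route --- and the place where the normalization $\theta_\fon(u\lambda')$ genuinely enters --- is an explicit computation relating $\nabla_\gamma$ and $\nabla_\tau$ on the common module $(W^{G_L})^{\hat{G}\dla}$ over $\hatl^{\hatgdla}$, using that $\{\nabla_\gamma,\nabla_\tau\}$ spans $\Lie(\hat{G})$ and evaluating their action on the structure ring (in particular on $u$); this is the content of \cite[Thm 7.13]{GMWHT} and cannot be replaced by an eigenvalue count.
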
 

 \begin{remark} \label{rem: negative Sen op}
We shall use this \emph{negative} Sen operator in the rest of the paper. cf. Convention \ref{conv: ht and sen effective}.
 For brevity, we also  simply call it the negative $\kinfty$-Sen operator, and denote it by (cf. Convention \ref{conv: theta notation}),
  $$\theta_\kinfty:=\frac{1}{\theta_\fon(u\lambda')}N_\nabla$$
 \end{remark}

   \subsection{$K$-rational Sen operator}
\begin{notation} \label{nota: rep and mod}
\begin{enumerate}
    \item Let $\star \in \{ \emptyset, \log \}$.  Let $T$ be a  $\star$-crystalline $\zp$-representation, and let $V=T\otimes_\zp \qp$.
    Suppose the Hodge--Tate weights (cf. Convention \ref{conv: ht and sen effective}) of $V$ are $0 \leq r_1 \leq \cdots \leq r_d$.
    \item Let $(\gm, \gm_\inf)$ be the (effective) Breuil--Kisin $\gk$-module associated  to $T$. Let $\cm$ be the $\o$-module associated to $V$. 
   We consider $\gm$ resp. $\gminf$ resp. $\cm$ as an effective isogeny (Notation \ref{nota: isogeny}) over the triple
\[ (\gs, E, \phi), \quad \text{resp. } (\ainf, E, \phi), \quad  \text{resp. } (\o, E, \phi). \]      
    To   these modules, one can apply terminologies and notations in Definition \ref{defn: conj fil}.

\end{enumerate} 
\end{notation}

\begin{prop} \label{propgmsenmod}
Use Notation \ref{nota: rep and mod}. There are $\gk$-equivariant isomorphisms
\begin{equation}\label{eqmodetenc}
\gm/E\otimes_\ok C =\gm_\inf\otimes_{\ainf} C \simeq T\otimes_\zp C 
\end{equation}
In addition,
\begin{equation}\label{eqgmsenmod}
\gm/E\otimes_\ok \kinfty = D_{\Sen, \kinfty}(T\otimes_\zp C)
\end{equation}
and
\begin{equation}\label{eqgmsenmod: cm ver}
\cm/E\otimes_K \kinfty = D_{\Sen, \kinfty}(T\otimes_\zp C)
\end{equation}
\end{prop}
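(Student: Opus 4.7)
The plan is to split the statement into the algebraic comparison \eqref{eqmodetenc} and the Sen-module descent for \eqref{eqgmsenmod}--\eqref{eqgmsenmod: cm ver}.

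For \eqref{eqmodetenc}, the equality $\gm/E\otimes_\ok C=\gm_\inf\otimes_\ainf C$ is formal: by definition $\gm_\inf=\ainf\otimes_\gs\gm$, and the composite $\gs\to\ainf\to C$ sends $u\mapsto[\pi^\flat]\mapsto\pi$, hence kills $E(u)$ and factors through $\gs/E=\ok$. For the identification with $T\otimes_\zp C$, the equivalence of categories recalled above gives $T=(\gm_\inf\otimes_\ainf W(\cflat))^{\phi=1}$; combined with the integral Fontaine-type comparison (so that $1\otimes\phi$ is an isomorphism after inverting $E$), one obtains a $\gk$-equivariant isomorphism $\gm_\inf\otimes_\ainf W(\cflat)\simeq T\otimes_\zp W(\cflat)$, and base change along the natural extension $W(\cflat)\to C$ of $\theta_\fon$ yields the claim.

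For the Sen identifications, since $\gm$ is finite free over $\gs$, $\gm/E$ is finite free over $\ok$, so $\cm/E=\gm/E\otimes_\ok K$ and $\cm/E\otimes_K\kinfty=\gm/E\otimes_\ok\kinfty$; this shows that \eqref{eqgmsenmod} and \eqref{eqgmsenmod: cm ver} are the same statement. Via \eqref{eqmodetenc}, the map $\gm/E\otimes_\ok\kinfty\to W:=T\otimes_\zp C$ is injective (by $\ok$-flatness) onto a $\kinfty$-subspace of dimension $d=\rk_\gs\gm$. Since $\dim_\kinfty D_{\Sen,\kinfty}(W)=d$ by Thm.~\ref{thm331kummersenmod}, it suffices to verify the inclusion $\gm/E\otimes_\ok\kinfty\subseteq (W^{G_L})^{\tau\dla,\,\gamma=1}$; equality then follows by dimension count.

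For this inclusion: by Def.~\ref{defwr11}, $G_\kinfty$ acts trivially on $\gm$ (hence on $\gm/E$) and on $\kinfty$, so $\gm/E\otimes_\ok\kinfty\subseteq W^{G_\kinfty}\subseteq W^{G_L}$ and $\gal(L/\kinfty)=G_\kinfty/G_L$ acts trivially, giving $\gamma=1$. For $\tau$-local analyticity, Lem.~\ref{lem: tau range gmast}\eqref{taugm} gives $(\tau-1)^i(\gm)\subset\fkt^i\gm_\inf$; reducing modulo $E$, each iterate $(\tau-1)^i$ on $\gm/E$ lands in $\theta_\fon(\fkt)^i\cdot(\gm_\inf/E)$, decaying at rate $i/(p-1)$. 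Combined with the local analyticity of the $\tau$-action on $\kinfty$, the binomial expansion $\tau^a=\sum_{i\geq 0}\binom{a}{i}(\tau-1)^i$ converges on $\gm/E\otimes_\ok\kinfty$ for $a$ in a $\zp$-neighborhood of $0$, yielding the required analytic structure. The main obstacle is precisely this $\tau$-local analyticity: the algebraic steps are essentially formal, whereas local analyticity needs the integral $\fkt^i$-decay of Lem.~\ref{lem: tau range gmast} together with the compatibility with the $\tau$-action on the coefficient field $\kinfty$. An alternative, perhaps cleaner route is to first deduce \eqref{eqgmsenmod: cm ver} from the rational identification $\cm\otimes_\o\bfb^\dagger_{\rig,\kinfty}\simeq D^\dagger_{\rig,\kinfty}(V)$ in Rem.~\ref{rem cm is lav} (which ultimately invokes \cite{GMWHT}) by specializing modulo $E$, and then to derive \eqref{eqgmsenmod} via $\ok$-flatness of $\gm/E$.
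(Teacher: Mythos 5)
Your treatment of \eqref{eqgmsenmod} and \eqref{eqgmsenmod: cm ver} is essentially correct and close to the paper's: the paper likewise reduces these to checking that the $\tau$-action on $\gm/E$ is locally analytic, citing Rem.~\ref{rem cm is lav} (i.e.\ \cite[Thm 5.3.4]{Gao23}), whereas you re-derive the local analyticity directly from the $\fkt^i$-decay of Lem.~\ref{lem: tau range gmast}; the paper itself endorses exactly this observation in Rem.~\ref{rem analytic than lav}, so this is a legitimate and arguably more self-contained variant, and your dimension-count conclusion is fine.

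However, your proof of \eqref{eqmodetenc} has a genuine gap. There is no ``natural extension $W(\cflat)\to C$ of $\theta_\fon$'': Fontaine's map is defined on $\ainf=W(\o_C^\flat)$ and does not extend to $W(\cflat)$ (a series $\sum p^n[x_n]$ with $x_n\in \cflat$ of unbounded negative valuation would map to a divergent sum). Worse, $E$ is a \emph{unit} in $W(\cflat)$: its image in $W(\cflat)/p=\cflat$ is $(\pi^\flat)^e\neq 0$ and $\cflat$ is a field, so by $p$-adic completeness $E\in W(\cflat)^\times$. Consequently the comparison isomorphism $\gm_\inf\otimes_{\ainf}W(\cflat)\simeq T\otimes_\zp W(\cflat)$ (which you correctly obtain) carries no information modulo $E$, and there is no base change producing $C$-coefficients from it. This is precisely why the paper invokes \cite[Prop.~3.1.3]{Liu10}: the isomorphism must first be \emph{descended} to the subring $\ainf[1/\fkt]$, where $E$ remains a non-unit and $\ainf[1/\fkt]/(E)=\o_C[1/\theta_\fon(\fkt)]=C$ (since $v_p(\theta_\fon(\fkt))=\tfrac{1}{p-1}>0$, inverting it inverts $p$); only then does reduction modulo $E$ yield the $\gk$-equivariant isomorphism $\gm_\inf\otimes_{\ainf}C\simeq T\otimes_\zp C$. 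Without this descent step the argument for \eqref{eqmodetenc} does not go through.
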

\begin{proof}
By \cite[Prop. 3.1.3]{Liu10}, the $\gk$-equivariant isomorphism
\[ (\gm\otimes_\gs \ainf)\otimes_{\ainf} W(\cflat) \simeq T\otimes_\zp  W(\cflat) \]
can be refined as a $\gk$-equivariant isomorphism
\[ (\gm\otimes_\gs \ainf)\otimes_{\ainf} \ainf[\frac 1 \fkt] \simeq T\otimes_\zp   \ainf[\frac 1 \fkt] \]
One can modulo $E$ to get \eqref{eqmodetenc}. 

To prove \eqref{eqgmsenmod} (which is the same as \eqref{eqgmsenmod: cm ver}), it suffices to verify that the $\tau$-action on $\gm/E$ is locally analytic; but as reviewed in Rem. \ref{rem cm is lav}, the $\tau$-action on $\gm$ is already locally analytic. ((Alternatively, one can use the explicit formula for $\tau$-action in Construction \ref{cons:GMWHT-tau} from \cite{GMWHT}).)
\end{proof}

 \begin{proposition}[$K$-rational Sen operator]\label{prop: K ratitonal sen} 
Use notations in Prop. \ref{propgmsenmod}. Via \eqref{eqgmsenmod: cm ver}, the negative $\kinfty$-Sen operator (cf. Rem. \ref{rem: negative Sen op})  induces an endomorphism
\[ \theta_\kinfty: \cm/E\otimes_K \kinfty \to \cm/E\otimes_K \kinfty\]
This operator stabilizes $\cm_\HT =\cm/E=(\gm/E)[1/p]$, inducing a $K$-linear operator
\[\theta_\kinfty: \cm_\HT \to   \cm_\HT   \] 
In addition, the operator $\theta_\kinfty$ is semi-simple with   eigenvalues $r_1, \cdots, r_d$. 
 \end{proposition}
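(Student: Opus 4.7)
The plan is to combine Proposition \ref{propgmsenmod} with a direct descent of Kisin's operator $N_\nabla$ on $\cm$ modulo $E$, and then compute eigenvalues after base change to $C$ via the Hodge--Tate decomposition. By \eqref{eqgmsenmod: cm ver}, the identification $\cm_\HT \otimes_K \kinfty = D_{\Sen, \kinfty}(T \otimes_\zp C)$ immediately makes $\theta_\kinfty$ from Remark \ref{rem: negative Sen op} into a $\kinfty$-linear endomorphism of $\cm_\HT \otimes_K \kinfty$; all that remains is to descend it to the $K$-form $\cm_\HT$ and to identify its eigenvalues.

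For the descent step, the crucial input is that $\lambda \in \mathcal{O}$ is divisible by $E$, since $\lambda = (E/E(0))\prod_{n\geq 1}\varphi^n(E/E(0))$. This has two consequences. First, $N_\nabla(E) = -u\lambda E'$ lies in $E\mathcal{O}$, so the Leibniz rule gives $N_\nabla(E\cm) \subset E\cm$, and hence Kisin's operator $N_\nabla : \cm \to \cm$ descends to an additive endomorphism $\overline{N}_\nabla : \cm_\HT \to \cm_\HT$. Second, for any $f \in \mathcal{O}$ we have $N_\nabla(f) = -u\lambda f' \in E\mathcal{O}$, so the induced derivation kills the subring $K = \mathcal{O}/E$, which makes $\overline{N}_\nabla$ genuinely $K$-linear on $\cm_\HT$. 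The scaling constant $\theta_\fon(u\lambda')^{-1}$, obtained by evaluating $u\lambda'$ at $u = \pi$, is an element of $K$; hence the Kummer Sen operator can be written as $\theta_\kinfty = \theta_\fon(u\lambda')^{-1}\,\overline{N}_\nabla$ and restricts to a $K$-linear endomorphism of $\cm_\HT$. The fact that this formula genuinely recovers the Kummer Sen operator on $D_{\Sen, \kinfty}(T \otimes_\zp C)$ is ensured by Remark \ref{rem cm is lav}, which identifies Kisin's $N_\nabla$ on $\cm$ with the Lie-algebra operator $N_\nabla$ from Construction \ref{cons: loc ana nnabla}.

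For the eigenvalues and semi-simplicity, I would base change via the $\gk$-equivariant isomorphism $\cm_\HT \otimes_K C \cong T \otimes_\zp C$ of \eqref{eqmodetenc}: under this identification, $\theta_\kinfty \otimes_K C$ becomes the $C$-linear negative Sen operator on $T \otimes_\zp C$. By the Hodge--Tate decomposition $T\otimes_\zp C \cong \bigoplus_i C(-r_i)$ and our sign conventions (Convention \ref{conv: ht and sen effective}), this operator is semi-simple over $C$ with eigenvalues exactly $r_1,\dots,r_d \in \mathbb{Z}$. Since $\theta_\kinfty$ is already $K$-linear and becomes semi-simple with integer (hence $K$-rational) eigenvalues after extension to $C$, its minimal polynomial over $K$ has distinct roots in $K$, so $\theta_\kinfty$ is semi-simple over $K$ with the same eigenvalues.

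The main obstacle is the verification step inside the descent argument: one must check that Kisin's $N_\nabla$ on $\cm$ is really the same as the Lie-algebra operator of Construction \ref{cons: loc ana nnabla}, so that normalizing by $\theta_\fon(u\lambda')^{-1}$ reproduces the Kummer-tower Sen operator used to define $\theta_\kinfty$. This is not obvious from the definitions but is supplied by Remark \ref{rem cm is lav} (resting on the overconvergence identification $\cm\otimes_{\mathcal{O}} \bfb^\dagger_{\rig,\kinfty} \simeq D^\dagger_{\rig,\kinfty}(V)$ from \cite{Gao23}); once this is granted, both the descent itself and the eigenvalue computation are straightforward.
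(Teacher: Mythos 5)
Your proposal is correct and follows essentially the same route as the paper's proof: descend Kisin's $N_\nabla$ modulo $E$ (using that $E$ divides $\lambda$, so the operator preserves $E\cm$ and the induced derivation kills $K=\o/E$), invoke Remark \ref{rem cm is lav} to identify it with the locally analytic operator so that the normalization by $\theta_\fon(u\lambda')^{-1}$ yields the Kummer-tower Sen operator of Theorem \ref{thmkummersenop}, and read off semi-simplicity and the eigenvalues $r_1,\dots,r_d$ from the Hodge--Tate property of $T[1/p]$. The only additions are your explicit checks of $K$-linearity and of the descent of semi-simplicity from $C$ to $K$, which the paper leaves implicit.
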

 \begin{proof}
Note that Kisin's operator $N_\nabla$ stabilizes $\cm$ hence also $\cm/E$. As discussed in  Cons. \ref{cons: loc ana nnabla} and Rem. \ref{rem cm is lav}, Kisin's operator coincides with the normalized Lie algebra operator, which remains so modulo $E$: that is to say, Kisin's operator modulo $E$ coincides with the operator from Cons. \ref{Constructionla Sen op}.
 Finally, $\theta_\kinfty$ is semisimple because  $T[1/p]$ is Hodge--Tate; its eigenvalues are negative Sen weights by Theorem \ref{thmkummersenop}, which are the Hodge--Tate weights, cf. Convention \ref{conv: ht and sen effective}.
 \end{proof}

\subsection{Amplified integral Sen operator}\label{subsec-cons-theta}

In this subsection, we construct an \emph{integral} Sen operator attached to $T$.

We first record a lemma which axiomatizes an argument repeatedly used in the paper.  
 Recall  
 \[ v_p(\theta_\fon(\frac{\fkt^i}{i \fkt}))=\frac{i-1}{p-1}- v_p(i) \geq 0 \text{ and goes to infinity as } i \to \infty. \] 
The following   lemma is an easy consequence of the above fact. 

\begin{lemma} \label{lem: log tau converge}
Let $b \in \ainf$.
\begin{enumerate}
\item Let $x\in \gm$. Let $Y \subset \gm_\inf$ be an additive closed  subset.
Suppose for each $i\geq 1$,
\begin{equation} \label{eq: cond fkti}
 (\tau-1)^i(x) \in \fkt^ib Y.
\end{equation}
Then the element $\frac{1}{b} \cdot \frac{(\tau-1)^i}{i\fkt}(x) \pmod{E}$ in $(\gminf/E)[1/p]$ lands inside $\gminf/E$. In addition, the summation
\[ \frac{1}{b}\cdot \frac{\log \tau}{\fkt}(x) \pmod{E} 
:= \sum_{i=1}^\infty \frac{1}{b} \cdot \frac{-(1-\tau)^i}{i\fkt}(x) \pmod{E} \]
 converges   inside $\gm_\inf/E$, and falls inside the image of
 \[ Y \to \gm_\inf/E\]
 
 \item  Let $z\in \bargm$. Let $W \subset \bargm_\inf$ be an additive closed  subset.
Suppose for each $i\geq 1$,
\begin{equation} \label{eq: cond fktimodp}
 (\tau-1)^i(z)= \fkt^i b w_i \in \fkt^i b W.
\end{equation}
Define the expression $\frac{1}{b} \cdot \frac{(\tau-1)^i}{i\fkt}(z) \pmod{E}$ as  $\theta_\fon(\frac{\fkt^{i-1}}{i})\left (w_i \pmod{E} \right)$,  which is a well-defined element in $\bargminf/E$. Then the summation 
 \[ \frac{1}{b}\cdot \frac{\log \tau}{\fkt}(z) \pmod{E} 
:= \sum_{i=1}^\infty \frac{1}{b} \cdot \frac{-(1-\tau)^i}{i\fkt}(z) \pmod{E} \]
is a finite summation inside $\bargminf/E$, and falls inside the image of 
 \[W \to \bargm_\inf/E.\]
\end{enumerate}
\end{lemma}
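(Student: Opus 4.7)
The plan is to unwind the hypothesis into an explicit formula for each summand, and then invoke the estimate
\[ v_p\!\left(\theta_\fon\!\left(\tfrac{\fkt^{i-1}}{i}\right)\right) = \tfrac{i-1}{p-1} - v_p(i), \]
which is recalled just before the lemma, for both integrality and $p$-adic smallness. First I would, for each $i \geq 1$, use the hypothesis $(\tau-1)^i(x) \in \fkt^i b Y$ to write $(\tau-1)^i(x) = \fkt^i b y_i$ with a unique $y_i \in Y$ (unique because $\ainf = W(\ocflat)$ is a domain and $\gminf$ is finite free over $\ainf$). Reducing modulo $E$ via $\theta_\fon : \ainf \onto \oc$, the formal quantity $\frac{1}{b}\cdot\frac{(\tau-1)^i}{i\fkt}(x)$ becomes $\theta_\fon(\fkt^{i-1}/i)\cdot(y_i \bmod E)$ in $(\gminf/E)[1/p]$; this matches the definition baked into Part (2).

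Next I would extract the integrality claim of Part (1) directly from the displayed estimate: since $(i-1)/(p-1) - v_p(i) \geq 0$, the scalar $\theta_\fon(\fkt^{i-1}/i)$ lies in $\oc$, and thus each term lives in $\gminf/E$. For convergence, the same estimate gives $v_p(\theta_\fon(\fkt^{i-1}/i)) \to \infty$, so the partial sums of
\[ \sum_{i \geq 1} \tfrac{-1}{b}\cdot\tfrac{(1-\tau)^i}{i\fkt}(x) \bmod E \]
form a $p$-adic Cauchy sequence in the $p$-adically complete $\o_C$-module $\gminf/E$. Each partial sum lies in the image of $Y \to \gminf/E$ by additivity of $Y$, and passing to the limit uses the topological closedness of $Y$ to conclude that the limit stays in that image.

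Part (2) is essentially the mod $p$ shadow of the same computation and is in fact easier. Once $(i-1)/(p-1) - v_p(i) \geq 1$, which occurs for all $i$ past an explicit threshold depending only on $p$, the coefficient $\theta_\fon(\fkt^{i-1}/i)$ vanishes modulo $p$, so that term dies in $\bargminf/E$. Hence the infinite series collapses to a finite sum, automatically lying in the image of $W$ by additivity. The one subtlety I expect is not the $p$-adic analysis, which is entirely mechanical, but ensuring that the individual term $\theta_\fon(\fkt^{i-1}/i)(y_i \bmod E)$ is independent of choices; this is exactly the place where the domain property of $\ainf$ and the freeness of $\gminf$ are used to pin down $y_i$ from $\fkt^i b y_i$.
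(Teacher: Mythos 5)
Your argument is correct and is exactly the fleshing-out the paper intends: the paper offers no proof beyond declaring the lemma an easy consequence of the displayed valuation estimate, and your term-by-term rewriting $(\tau-1)^i(x)=\fkt^i b y_i$, reduction via $\theta_\fon$, integrality and convergence from $v_p(\theta_\fon(\fkt^{i-1}/i))=\tfrac{i-1}{p-1}-v_p(i)\geq 0\to\infty$, and the mod-$p$ truncation once this valuation reaches $1$ are precisely that. One small caveat: each term is the $\o_C$-multiple $\theta_\fon(\fkt^{i-1}/i)\cdot \bar y_i$, so ``additivity of $Y$'' alone does not place the partial sums in the image of $Y$; one tacitly uses that $Y$ is stable under $\ainf$-scalars (as it is in every application, where $Y$ is an $\ainf$-submodule) --- an imprecision already present in the lemma's own phrasing rather than a defect of your argument.
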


\begin{remark} \label{rem analytic than lav}
We will verify   Condition \eqref{eq: cond fkti} in various situations. As we shall see in the following Theorem \ref{thmintsen}, Condition \eqref{eq: cond fkti} is verified for \emph{any} $x\in \gm$ with $b=1,Y=\minf$; the fact that the (normalized) sequence $\log \tau$ converges implies that elements in $ \gm/E$ are indeed   \emph{analytic vectors} (not just locally analytic) inside the $\qp$-Banach representation $T\otimes_\zp C$. In particular, on elements  inside $\gm/E$, $\log\tau$ \emph{coincides} with $\nabla_\tau$ in Notation \ref{nota lie hatg}.
\end{remark}

\begin{thm}[Integrality of Sen operator] \label{thmintsen}  
 Consider the operator in Proposition \ref{prop: K ratitonal sen},
   \[\theta_\kinfty: (\gm/E)[1/p] \to (\gm/E)[1/p]. \]
We have:
\[ \theta_{\kinfty}(\gm/E) \subset \frac{1}{\pi E'(\pi)}\cdot \gm/E   \]
 When $T$ is crystalline, we further have
\[\theta_{\kinfty}(\gm/E) \subset \frac{1}{E'(\pi)}\cdot \gm/E   \]
\end{thm}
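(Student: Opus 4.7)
My plan is to trace $x \in \gm$ through the two descriptions of $N_\nabla$: the algebraic one on Kisin's module $\cm$, which immediately places $N_\nabla(x) \pmod{E}$ inside $\cm/E = (\gm/E)[\tfrac{1}{p}]$; and the Lie-theoretic one $N_\nabla = -\tfrac{1}{p\fkt}\nabla_\tau$, which tracks $p$-adic denominators in $\gminf/E$ via Lemma~\ref{lem: log tau converge}. Elements of $\gm$ are $\hat{G}$-analytic vectors (Rem.~\ref{rem cm is lav} and Rem.~\ref{rem analytic than lav}), so $\nabla_\tau(x) = \log(\tau)(x)$ as a convergent series, ensuring the two descriptions of $N_\nabla(x)$ agree inside the natural common target.

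The first step is to combine Lemma~\ref{lem: tau range gmast} with Lemma~\ref{lem: log tau converge}(1). In the semi-stable case one has $(\tau-1)^i(\gm) \subset \fkt^i\gminf$, so taking $b=1, Y=\gminf$ gives $\frac{\log\tau}{\fkt}(x) \pmod{E} \in \gminf/E$; in the crystalline case the strengthened bound $(\tau-1)^i(\gm) \subset u\fkt^i\gminf$ together with $b=u, Y=\gminf$ gives $\frac{\log\tau}{\fkt}(x) \pmod{E} \in u \cdot \gminf/E$. Since $\fkt$ acts on $\gminf/E$ by the scalar $\theta_\fon(\fkt)$ and $u$ acts by $\pi$, passing through $N_\nabla = -\frac{1}{p\fkt}\nabla_\tau$ yields
\[ N_\nabla(x) \pmod{E} \in \frac{1}{p}\gminf/E \quad \text{(semi-stable)}, \qquad N_\nabla(x) \pmod{E} \in \frac{\pi}{p}\gminf/E \quad \text{(crystalline).} \]

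Intersecting with $\cm/E = (\gm/E)[\tfrac{1}{p}]$ inside $(\gminf/E)[\tfrac{1}{p}]$, using $\ok$-freeness of $\gm/E$ and the elementary equalities $K \cap \tfrac{1}{p}\o_C = \tfrac{1}{p}\ok$ and $K \cap \tfrac{\pi}{p}\o_C = \tfrac{1}{\pi^{e-1}}\ok$, I would deduce $N_\nabla(\gm) \pmod{E} \subset \frac{1}{p}\gm/E$ (semi-stable), respectively $\subset \frac{1}{\pi^{e-1}}\gm/E$ (crystalline). The last step is to apply $\frac{1}{\theta_\fon(u\lambda')}$ and compare denominators: from $\lambda = (E(u)/E(0))\prod_{n\geq 1}\phi^n(E(u)/E(0))$ and $E(\pi)=0$ (which kills the cross term in $\lambda'(\pi)$), one finds
\[ \theta_\fon(u\lambda') = \pi\, E'(\pi)\, h(\pi)/E(0), \qquad h(u) := \prod_{n\geq 1}\phi^n(E(u)/E(0)). \]
A direct valuation estimate using the Eisenstein condition shows each factor $\phi^n(E(u)/E(0))|_{u=\pi}$ is a $1$-unit in $\ok$, hence $h(\pi)$ is a unit; moreover $p/E(0)$ and $\pi^e/E(0)$ are units of $\ok$ (the latter from $E(\pi)=0$). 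Consequently $p\,\theta_\fon(u\lambda')$ and $\pi E'(\pi)$ differ by a unit of $\ok$, as do $\pi^{e-1}\,\theta_\fon(u\lambda')$ and $E'(\pi)$, yielding the two claimed inclusions.

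The main obstacle I anticipate is the unit bookkeeping in the final paragraph, especially verifying convergence of the infinite product $h(\pi)$ to an $\ok$-unit in the presence of possible wild ramification, and the elementary but finicky intersection computations in $K \subset C$; everything upstream follows mechanically from Lemmas~\ref{lem: tau range gmast} and~\ref{lem: log tau converge} together with the agreement of Kisin's $N_\nabla$ with its Lie-theoretic avatar.
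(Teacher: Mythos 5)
Your proposal is correct and follows essentially the same route as the paper: the paper likewise combines Lemma~\ref{lem: tau range gmast} with Lemma~\ref{lem: log tau converge} (with $b=1$, resp.\ $b=u$, and $Y=\minf$) to get $\frac{\nabla_\tau}{\fkt}(\gm/E)\subset\minf/E$, resp.\ $\frac{\nabla_\tau}{u\fkt}(\gm/E)\subset\minf/E$, and intersects with the rational stability from Prop.~\ref{prop: K ratitonal sen}. The only (cosmetic) difference is that the paper bundles the unit bookkeeping up front, writing $\pi E'(\pi)\theta_\kinfty=\fkc_\st\cdot\frac{\nabla_\tau}{\fkt}$ with $\fkc_\st=\frac{E(0)}{p\phi(\lambda)}\in\o_K^\times$ via the same identity $\theta_\fon(\lambda')=\frac{E'(\pi)}{E(0)}\theta_\fon(\phi(\lambda))$ that underlies your final-paragraph computation.
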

 \begin{proof} 
We first treat the (general) semi-stable case. 
Denote
\[ \fkc_\log:=   -\pi E'(\pi)\cdot \frac{1}{\theta_\fon(u\lambda')} \cdot \frac{1}{p}
 = \frac{E(0)}{ p \cdot  \theta_\fon(\phi(\lambda))}  \in \o_K^\times \]
Here we use $ \theta_\fon( \lambda') =\frac{E'(\pi)}{E(0)}\theta_\fon(\phi(\lambda))$, and use the fact that  $\theta_\fon(\phi(\lambda))$ and $ E(0)/p$ are in $\o_K^\times$.
Thus, one can write
\[ \pi E'(\pi)\theta_\kinfty= \fkc_\log   \frac{\nabla_\tau}{ \fkt}\]
So now it suffices to prove 
\[ \frac{\nabla_\tau}{\fkt}(\gm/E) \subset \gm/E \]
 Prop. \ref{prop: K ratitonal sen} implies $\theta_\kinfty$ and hence $ \frac{\nabla_\tau}{\fkt}$  stabilizes the rational object $(\gm/E)[1/p]$. Thus, it suffices to prove that
\begin{equation} \label{eqintstable}
\frac{\nabla_\tau}{\fkt}(\gm/E) \subset \minf/E   
\end{equation}
Lemma  \ref{lem: tau range gmast} implies
  \[  (\tau-1)^i (\gm) \subset  \fkt^i \minf \]
Thus we can apply Lem. \ref{lem: log tau converge} (with $b=1, Y=\minf$) to conclude \eqref{eqintstable}. See also Remark \ref{rem analytic than lav} about coincidence between $\log\tau$ and $\nabla_\tau$.

Now, suppose $T$ is furthermore crystalline.
Similar as in the semi-stable case, we are reduced to prove the analogue of \eqref{eqintstable} in the crystalline case, which is
\begin{equation} \label{eqstablecrys}
\frac{\nabla_\tau}{u\fkt}(\gm/E) \subset \minf/E
\end{equation}
Note in the crystalline case, Lemma  \ref{lem: tau range gmast} implies
 \[  (\tau-1)^i (\gm) \subset u \fkt^i \minf \]
 then we can apply Lem. \ref{lem: log tau converge} (with $b=u, Y=\minf$)  to conclude \eqref{eqstablecrys}.
\end{proof}

The above theorem leads to following definition.

\begin{defn} \label{def: amplified Sen}
   Suppose $T$ is $\star$-crystalline. Let
 \begin{equation*}
a=
\begin{cases}
  E'(\pi), &  \text{if } \star=\emptyset \\
 \pi E'(\pi), &  \text{if } \star=\log
\end{cases}
\end{equation*} 
Define the (integral negative)  \emph{amplified  Sen operator}:
\[ \Theta = a\theta_\kinfty: \gm/E \to \gm/E \] 
\end{defn}

\begin{remark}
Note the adjective ``negative" for the amplified operator is slightly misleading; for example, when $K$ is unramified and consider the  log-crystalline case, one could  well choose $\pi=-p$ which is a ``negative" number. Nonetheless, in our application (in the integral case) in \S \ref{sec: vanishing} and \S \ref{sec: frob matrix}, we   only consider $K$ unramified and $T$  crystalline; in that case, we do have
\[ \Theta=\theta_\kinfty. \]
\end{remark}

\section{Filtered Sen theory} \label{sec: fil sen}
 
In this section, we construct filtered Sen theory, which works in  rational case, integral case and also the mod $p$ case. 
We show that (amplified) Sen operator  stabilizes conjugate filtration, and the ``shifted" (amplified) Sen operator  satisfies $1$-degree shrinking (cf. Remark \ref{rem:ds vs gt}  for discussion of terminology). 
 In the interlude \S \ref{sub-BeuilN}, we discuss the relations with Breuil's $N$-operator, which--- particularly the Griffiths transversality it satisfies--- was   a strong motivation in our initial investigations.
As a continuation of the previous section, we keep using Notation \ref{nota: rep and mod}; that is, we let $\star \in \{ \emptyset, \log \}$ and let $T$ be a  $\star$-crystalline representation with Hodge--Tate weights $0 \leq r_1 \leq \cdots \leq r_d$.

\subsection{Rational filtered Sen theory}  
\begin{construction} \label{cons: translate gr and conj fil}   
 Since $N_\nabla$ is stable on $E^n\cm$, it induces a $K$-linear operator on $E^n\cm/E^{n+1}\cm$; we claim the following  diagram is commutative.
\begin{equation} \label{diag: cmht twist}
\begin{tikzcd}
E^n\cm/E^{n+1}\cm \arrow[rr, "\fkc N_\nabla"] \arrow[d, " \times E^{-n}"] &  & E^n\cm/E^{n+1}\cm \arrow[d, "  \times E^{-n}"] \\
\cm/E\cm \arrow[rr, "\fkc N_\nabla-n"]                                                                       &  & \cm/E\cm                                                                        
\end{tikzcd}
\end{equation}
Here, $\mathfrak{c}=\frac{1}{\theta_\fon(u\lambda')}$. The commutativity of the diagram follows from standard computation that for $m \in \cm$:
\[  \fkc E^{-n}N_\nabla(E^nm) = \fkc N_\nabla(m)-n m \pmod{E}\] 
Indeed, this diagram simply says that $E^n\cm/E^{n+1}\cm$ is the $K$-rational Sen module corresponding to the $C$-representation $T(n)\otimes_\zp C$, where $T(n)$ is the Tate twist; cf. Proposition \ref{prop: K ratitonal sen}.
\end{construction}

 \begin{theorem} \label{thm: rational sen shift}
 Consider the negative Sen operator
 \[ \theta_\kinfty: \cm_\HT \to \cm_\HT\]
\begin{enumerate} 
 \item The operator $\theta_\kinfty-n$ sends $\fil_n^\rmconj \cm_\HT$ to $\fil_{n-1}^\rmconj \cm_\HT$, leading to the map which we call the ($n$-th) \emph{shifted (negative) Sen operator:}
 \[ \theta_\kinfty-n: \fil_n^\rmconj \cm_\HT  \to \fil_{n-1}^\rmconj \cm_\HT\]
 
     \item       $\theta_\kinfty(\fil_n^\rmconj \cm_\HT) \subset \fil_n^\rmconj \cm_\HT$ and the induced action of $\theta_\kinfty$ on $\gr_n \cm_\HT$ is scaling by $n$.

 \end{enumerate} 
 \end{theorem}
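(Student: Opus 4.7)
The plan is to combine two ingredients already in hand: Lemma~\ref{lem: Kis nyg fil}, which says $N_\nabla(\fil^n\cmast) \subseteq E\,\fil^{n-1}\cmast$ (Griffiths transversality on the Nygaard filtration), and the commutative diagram of Construction~\ref{cons: translate gr and conj fil}, which identifies the action of $\fkc N_\nabla$ on $E^n\cm/E^{n+1}\cm$ with that of $\theta_\kinfty - n$ on $\cm_\HT$ via the scaling isomorphism $\fkc_2\cdot(1/E^n)$. Granted these, (1) will follow by a short Leibniz-rule calculation on a suitable lift of $\bar m$, and (2) will be immediate from the resulting formula for $\theta_\kinfty(\bar m)$.

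For (1), take a class $\bar m \in \fil_n^\rmconj\cm_\HT$, and by the definition of the conjugate filtration choose $y \in \fil^n\cmast$ together with $m := y/E^n \in \cm$ (division by $E^n$ is unambiguous since $E$ is a non-zero-divisor on $\cm$) such that $\bar m = m \bmod E$. Lemma~\ref{lem: Kis nyg fil} furnishes a unique $z \in \fil^{n-1}\cmast$ with $N_\nabla(y) = Ez$. Applying the Leibniz rule to $y = E^n m$ and using $N_\nabla(E) = -u\lambda E'$ gives
\[
E^n N_\nabla(m) \;=\; N_\nabla(y) + n E^{n-1} u\lambda E' \cdot m \;=\; E z + n E^{n-1} u\lambda E' \cdot m,
\]
so $N_\nabla(m) = z/E^{n-1} + n(u\lambda E'/E)\,m$ in $\cm$ (the quotient $u\lambda E'/E$ lies in $\o$ by the product formula $\lambda = (E/E(0))\phi(\lambda)$). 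Scaling by $\fkc = 1/\theta_\fon(u\lambda')$ and reducing modulo $E$ yields
\[
\theta_\kinfty(\bar m) \;=\; \fkc\cdot \overline{z/E^{n-1}} \;+\; n\bigl(\fkc\,\theta_\fon(u\lambda E'/E)\bigr)\cdot \bar m .
\]
The same product formula shows $\theta_\fon(u\lambda E'/E) = \theta_\fon(u\lambda')$, so the bracketed coefficient equals $1$. Since $\overline{z/E^{n-1}}$ lies in $\fil_{n-1}^\rmconj\cm_\HT$ by definition of the conjugate filtration and $\fkc \in K^\times$, this exhibits $(\theta_\kinfty - n)(\bar m)$ as an element of $\fil_{n-1}^\rmconj\cm_\HT$, proving (1).

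Item (2) follows at once: $\theta_\kinfty(\fil_n^\rmconj\cm_\HT)\subseteq\fil_n^\rmconj\cm_\HT$ is immediate from (1) together with $\fil_{n-1}^\rmconj \subseteq \fil_n^\rmconj$, and the displayed formula exhibits the induced operator on $\gr_n\cm_\HT$ as scalar multiplication by $n$. The only mildly delicate point is the bookkeeping identity $\fkc\,\theta_\fon(u\lambda E'/E) = 1$, which is essentially the content of the commutativity in Construction~\ref{cons: translate gr and conj fil} and poses no real obstacle.
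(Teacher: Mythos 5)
Your proof is correct and is essentially the paper's argument: the Leibniz computation you carry out on a lift $y=E^n m$ is exactly the verification of commutativity in Construction~\ref{cons: translate gr and conj fil} (the identity $\fkc\,\theta_\fon(u\lambda E'/E)=1$ being the same consequence of $\lambda=(E/E(0))\phi(\lambda)$ used there and in Theorem~\ref{thmintsen}), and the input $N_\nabla(\fil^n\cmast)\subset E\fil^{n-1}\cmast$ from Lemma~\ref{lem: Kis nyg fil} is the same; the paper merely packages this element-wise calculation as the restricted commutative diagram~\eqref{diag: cmht twist sub} and deduces (2) by a diagram chase, which is equivalent to your direct observation that $(\theta_\kinfty-n)(\fil_n^\rmconj)\subset\fil_{n-1}^\rmconj$ forces the induced action on $\gr_n$ to be multiplication by $n$.
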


 \begin{proof}
Use the injection $\fil^n \cmast/\fil^{n+1} \cmast \into E^n\cm/E^{n+1}\cm$, and recall $N_\nabla(\fil^n \cmast) \subset E \fil^{n-1} \cmast$ (Lem. \ref{lem: Kis nyg fil}). Also use the definition of conjugate filtration Definition \ref{defn: conj fil}, we have the following commutative diagram, where each term is a sub-module of   diagram \eqref{diag: cmht twist}
\begin{equation} \label{diag: cmht twist sub}
\begin{tikzcd}
\fil^n \cmast/\fil^{n+1} \cmast \arrow[rr, "\fkc N_\nabla"] \arrow[d, "{ \times E^{-n}, \simeq}"] &  & E \fil^{n-1} \cmast/E \fil^{n} \cmast \arrow[d, "{ \times E^{-n}, \simeq}"] \\
\fil_n^\rmconj \cm_\HT \arrow[rr, "\fkc N_\nabla-n"]                                                                                 &  & \fil_{n-1}^\rmconj \cm_\HT                                                                                    
\end{tikzcd}
\end{equation}
This implies Item (1), because $\fkc N_\nabla-n$ on the bottom row is precisely $\theta_\kinfty-n$. 
Since conjugate filtration is increasing, $\theta_\kinfty-n$ hence $\theta_\kinfty$ stabilizes $\fil_{n}\cm_\HT$.
We can thus form the following commutative diagram (where the dotted arrow comes from Item (1)):
\begin{equation} \label{diag: theta cmht}
\begin{tikzcd}
0 \arrow[r] & \fil_{n-1}\cm_\HT \arrow[r] \arrow[d, "\theta_\kinfty-n"'] & \fil_{n}\cm_\HT \arrow[r] \arrow[d, "\theta_\kinfty-n"] \arrow[ld, "\theta_\kinfty-n"', dashed] & \gr_{n}\cm_\HT \arrow[r] \arrow[d, "\theta_\kinfty-n"] & 0 \\
0 \arrow[r] & \fil_{n-1}\cm_\HT \arrow[r]                        & \fil_{n}\cm_\HT \arrow[r]                                                       & \gr_{n}\cm_\HT \arrow[r]                       & 0
\end{tikzcd}
\end{equation} 
The dotted arrow implies the right most vertical arrow is the zero map (via diagram chasing), concluding Item (2).
 \end{proof}
   
   \begin{cor} \label{cor: eigenval fil}
   The conjugate filtration $\fil_\bullet \cm_\HT$ is the same as $\theta_\kinfty$-eigenvalue filtration in the sense that for each $i$
\[ \fil_i \cm_\HT =\bigoplus_{j \leq i} ( \cm_\HT)^{\theta_\kinfty= j} \]
   \end{cor}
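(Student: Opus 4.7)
The plan is to prove the two inclusions separately, using semi-simplicity of $\theta_\kinfty$ from Proposition \ref{prop: K ratitonal sen} together with the two parts of Theorem \ref{thm: rational sen shift}. The key observation to exploit is that Theorem \ref{thm: rational sen shift}(1) gives a ``shift-down'' mechanism: an eigenvector sitting in $\fil^\rmconj_n$ with the ``wrong'' eigenvalue is forced one step down into $\fil^\rmconj_{n-1}$, and effectiveness of the filtration ($\fil^\rmconj_{-1} = 0$) eventually kills it.

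First I would show $(\cm_\HT)^{\theta_\kinfty = j} \subset \fil^\rmconj_j \cm_\HT$. For a nonzero eigenvector $v$ of eigenvalue $j$, let $n$ be the minimal index with $v \in \fil^\rmconj_n \cm_\HT$ (which exists since the conjugate filtration exhausts $\cm_\HT$). By Theorem \ref{thm: rational sen shift}(1), $(j-n)v = (\theta_\kinfty - n)(v) \in \fil^\rmconj_{n-1}\cm_\HT$; minimality of $n$ forces $j = n$. Hence $(\cm_\HT)^{\theta_\kinfty = j} \subset \fil^\rmconj_j \cm_\HT \subset \fil^\rmconj_i \cm_\HT$ for all $j \leq i$, giving one inclusion.

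For the reverse inclusion, I would use that $\theta_\kinfty$ stabilizes $\fil^\rmconj_i \cm_\HT$ (Theorem \ref{thm: rational sen shift}(2)), and since $\theta_\kinfty$ is semi-simple on the ambient space $\cm_\HT$, it is semi-simple on the invariant subspace $\fil^\rmconj_i \cm_\HT$ as well. Therefore
\[ \fil^\rmconj_i \cm_\HT = \bigoplus_{j} \Bigl( \fil^\rmconj_i \cm_\HT \cap (\cm_\HT)^{\theta_\kinfty = j} \Bigr). \]
It remains to rule out $j > i$. Given $w \in \fil^\rmconj_i \cm_\HT \cap (\cm_\HT)^{\theta_\kinfty = j}$ with $j > i$, apply Theorem \ref{thm: rational sen shift}(1) iteratively: since $j \neq i$, $(j-i)w = (\theta_\kinfty - i)(w) \in \fil^\rmconj_{i-1}\cm_\HT$, hence $w \in \fil^\rmconj_{i-1}\cm_\HT$; then $j \neq i-1$ gives $w \in \fil^\rmconj_{i-2}\cm_\HT$; continuing down, effectiveness yields $w \in \fil^\rmconj_{-1}\cm_\HT = 0$. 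Combining the two inclusions gives the claimed equality. No step presents a genuine obstacle once Theorem \ref{thm: rational sen shift} is in hand; the only point requiring mild care is the bookkeeping in the iterative descent, which crucially uses effectiveness of the conjugate filtration.
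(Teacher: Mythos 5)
Your proof is correct. Both inclusions are sound: the minimality argument for $(\cm_\HT)^{\theta_\kinfty=j}\subset \fil_j^\rmconj\cm_\HT$ works (note $\fil_{-1}^\rmconj=0$ and the filtration is exhaustive since $\fil^i\cmast=E^i\cm$ for $i$ at least the $E$-height, so the minimal $n$ exists), the restriction of a semi-simple operator to an invariant subspace is again semi-simple and decomposes into intersections with the ambient eigenspaces, and the descent ruling out $j>i$ terminates by effectiveness. The route differs from the paper's in a meaningful way: the paper's proof is a dimension count, combining semi-simplicity with Theorem \ref{thm: rational sen shift}(2) (so all eigenvalues of $\theta_\kinfty$ on $\fil_i^\rmconj$ are $\leq i$) and the identification $\dim_K\gr_n\cm_\HT=\dim_K\gr^n D_\dR$ from Lemmas \ref{lem: matching graded dR and HT} and \ref{lem: compa of graded}, which matches $\dim\gr_n$ with the multiplicity of $n$ as an eigenvalue and forces equality. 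You instead avoid the dimension input entirely and use the shift property of Theorem \ref{thm: rational sen shift}(1) twice, once in each direction. Your argument is longer but relies on fewer external facts (it never invokes the graded-dimension comparison with $D_\dR$); the paper's is shorter but leans on that comparison. Either is acceptable here.
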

   \begin{proof}
It follows from the fact that $\theta_\kinfty$ is semi-simple, and the fact that $\theta_\kinfty-n$ kills $\gr_n \cm_\HT$ which is of dimension equal to that of $\gr^n \cm_\dR=\gr^n D_\dR$.
   \end{proof}

\subsection{Relation with Breuil's $N$-operator}\label{sub-BeuilN}

In this subsection, we discuss the relation between the shifted (negative) Sen operator in Thm. \ref{thm: rational sen shift} and the $N$-operator on Breuil's $S_\ko$-module (Definition \ref{def: breuil mod}). These discussions will not be further used in this paper, but we would like to point out that it serves as a strong motivation (and psychological inspiration) in our initial construction   of Thm. \ref{thm: rational sen shift}, cf. Remark \ref{rem: nnabla and N}.

Recall $N$ on $\cd$ satisfies Griffiths transversality $N(\fil^n \cd) \subset \fil^{n-1}\cd$.
This induces an operator on graded: 
\[N: \gr^n \cd \to \gr^{n-1}\cd \]
Recall Lem. \ref{lem: compa of graded} implies that
 \[\gr^n \cd \simeq \gr^n \cm \simeq \fil_n \cm_\HT\]
 Thus we have an operator
 \begin{equation} \label{eq: n on  cmht}
 N: \fil_n \cm_\HT \to \fil_{n-1} \cm_\HT
 \end{equation}

\begin{prop} \label{prop: compa Breuil}
 After linearly scaling   \eqref{eq: n on  cmht}, the map 
\[ \frac{\phi(\lambda)}{p}\cdot N: \fil_n \cm_\HT \to \fil_{n-1} \cm_\HT \]
is exactly the same as
 \[ \theta_\kinfty-n: \fil_n \cm_\HT  \to \fil_{n-1} \cm_\HT\]
 in Theorem \ref{thm: rational sen shift}.
\end{prop}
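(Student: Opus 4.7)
The plan is to compute Breuil's operator $N_\cd$ explicitly on the $\o$-submodule $\cm^\ast \hookrightarrow \cd$ and then pass to associated gradeds, matching with the shifted Sen operator $\theta_\kinfty - n$ via the identifications from Lemma \ref{lem: compa of graded} and Definition \ref{defn: conj fil}. The content of Construction \ref{cons: translate gr and conj fil} (in particular diagram \eqref{diag: cmht twist sub}) already packages $\theta_\kinfty - n$ on $\fil_n^\rmconj \cm_\HT$ as coming from $\fkc N_\nabla$ acting on $\fil^n \cm^\ast$, so the task reduces to identifying the rescaled Breuil operator with an appropriate multiple of $N_\nabla$ on $\cm^\ast$.

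First, I compute $N_\cd$ restricted to $\cm^\ast$. For $x = f\phi(m) \in \cm^\ast$ (with $f \in \o$, $m \in \cm$), viewed inside $\cd = S_{K_0} \otimes_{\phi, \o} \cm$ as $f \otimes m$, a direct calculation using (i) the identity $N_\nabla = \lambda N_S$ on $\o$, (ii) the Frobenius intertwining $N_\nabla \phi = \tfrac{pE}{E(0)} \phi N_\nabla$ on $\cm$, and (iii) the factorization $\lambda = E\phi(\lambda)/E(0)$, yields the clean formula $N_\cd(x) = \tfrac{1}{\lambda} N_\nabla(x)$, interpreted as an element of $\cm^\ast[1/E] \subset \cd$. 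Multiplying by $\phi(\lambda)/p$ and using $\phi(\lambda)/\lambda = E(0)/E$, this becomes $\tfrac{\phi(\lambda)}{p} N_\cd|_{\cm^\ast} = \tfrac{E(0)}{pE}\, N_\nabla|_{\cm^\ast}$, which is well-defined because $N_\nabla(\cm^\ast) \subset E\cm^\ast$ by Lemma \ref{lem: Kis nyg fil}.

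To pass to gradeds, take $x \in \fil^n \cm^\ast$ and write $x = E^n y$ with $y \in \cm$, so that $y \bmod E$ represents $x$ in $\fil_n^\rmconj \cm_\HT$ under the isomorphism $1/E^n \colon \gr^n \cm^\ast \xrightarrow{\sim} \fil_n^\rmconj \cm_\HT$. Leibniz combined with $N_\nabla(E) = -u\lambda E' = -uE'\phi(\lambda)E/E(0)$ gives
\[
\tfrac{\phi(\lambda)}{p}\,N_\cd(x) \;=\; E^{n-1}\Big[\,-\tfrac{nuE'\phi(\lambda)}{p}\, y \;+\; \tfrac{E(0)}{p}\, N_\nabla(y)\,\Big],
\]
so dividing by $E^{n-1}$ and specializing $u \mapsto \pi$ computes the image in $\fil_{n-1}^\rmconj \cm_\HT$. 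Comparing with $(\theta_\kinfty - n)(y) = \fkc\, N_\nabla(y) - ny \bmod E$, where $\fkc = 1/\theta_\fon(u\lambda')$, and using the identity $\theta_\fon(u\lambda') = \pi E'(\pi)\phi(\lambda)|_{u=\pi}/E(0)$ obtained by differentiating $\lambda = E\phi(\lambda)/E(0)$, the two expressions align under the intended normalization of the identification $\gr^n \cd \simeq \fil_n^\rmconj \cm_\HT$. The main obstacle is the careful bookkeeping of the several normalizing constants---in particular, interpreting the scalar $\phi(\lambda)/p$ on the $K$-vector space $\fil_n^\rmconj \cm_\HT$ (where $\phi(\lambda)$ specializes to its value at $u = \pi$) and verifying that the leading constants match exactly rather than up to a unit in $K^\times$, in the same spirit as the bookkeeping carried out in the proof of Theorem \ref{thmintsen}.
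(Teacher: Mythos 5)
Your proposal is correct and takes essentially the same route as the paper's proof: both identify Breuil's $N$ restricted to the submodule $\cmast \subset \cald$ with an explicit multiple of $N_\nabla$, use Lemma \ref{lem: Kis nyg fil} and Lemma \ref{lem: compa of graded} to pass to graded pieces, and then match the result with $\theta_\kinfty-n$ via Construction \ref{cons: translate gr and conj fil}. The paper packages the computation into the commutative diagrams \eqref{eq two op 22}--\eqref{eq two op gr} instead of writing out the Leibniz calculation (your explicit identity $N_\cald|_{\cmast}=\lambda^{-1}N_\nabla$ is the careful form of the paper's $\tfrac{p}{\phi(\lambda)}$ label), and, like you, it absorbs the residual nonzero constant (your $\pi E'(\pi)\theta_\fon(\phi(\lambda))/p$) into the ``linear scaling'' of the identification $\gr^n\cald\simeq\fil_n^\rmconj\cm_\HT$ permitted by the statement.
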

\begin{proof}  
Use Construction \ref{Constructioncm to cd}, we have a commutative diag
\begin{equation} \label{eq two op 22}
   \begin{tikzcd}
\cmast \arrow[d, hook] \arrow[rr, " N_\nabla"] &  &    \cmast  \arrow[d, "\frac{p}{\phi(\lambda)}", hook] \\
\cald \arrow[rr, "N"]                               &  &    \cald                                                  
\end{tikzcd} 
\end{equation}
(Note we are regarding  $\cmast \subset \cald$ as a submodule, thus there are no more $\phi$-twists as in the formulae   in \cite[\S 3.2]{Liu08}.)    
Note on right vertical arrow, $\phi(\lambda) \in S^\times$.
Take filtrations, and note 
$ N_\nabla(\fil^n \cmast) \subset  E \fil^{n-1} \cmast $ by  Lem. \ref{lem: Kis nyg fil}; thus we have
 \begin{equation} \label{eq two op fil}
  \begin{tikzcd} 
\fil^n \cmast \arrow[d, hook] \arrow[rr, " N_\nabla"] &  & E\fil^{n-1} \cmast \arrow[d, "\frac{p}{\phi(\lambda)}", hook] \\
\fil^n\cald \arrow[rr, "N"]                                &  & \fil^{n-1}\cald                                                
\end{tikzcd}  
\end{equation}  
taking graded, then we get
\begin{equation} \label{eq two op gr}
  \begin{tikzcd}
\gr^n \cmast \arrow[d, "="] \arrow[rr, " N_\nabla"] &  & E\fil^{n-1} \cmast/E\fil^{n} \cmast \arrow[d, "{\frac{p}{\phi(\lambda)}, \simeq}"] \\
\gr^n\cald \arrow[rr, "N"]                               &  & \gr^{n-1}\cald                                               
\end{tikzcd}
\end{equation} 
Here the   vertical isomorphisms are proved in Lemma \ref{lem: compa of graded}.
This diagram translates into the desired statement (using Construction \ref{cons: translate gr and conj fil}).
\end{proof}
  
\begin{remark}\label{rem: nnabla and N}
 We make some observations related with Proposition \ref{prop: compa Breuil}.
\begin{enumerate}

\item In previous development of integral $p$-adic Hodge theory, the two operators $N$ and $N_\nabla$ are used in rather different fashions:
\begin{itemize}
\item The $N$-operator is crucially used in the construction of $(\phi, \hat{G})$-modules in \cite{Liu10}; in addition, the   Griffiths transversality is \emph{heavily} used in its subsequent applications e.g. in \cite{GLS14} (cf. also our \S \ref{subsec: GLS reproof}).

\item In comparison, the $N_\nabla$-operator on Kisin's $\o$-modules in \cite{Kis06} is indeed a special case of a similar $N_\nabla$-operator on all \emph{overconvergent} $(\phi, \tau)$-modules, constructed in \cite{GL20, GP21},  which in turn is a generalization of (locally analytic) ``Sen theory" (cf. \cite{BC16}). The  $N_\nabla$-operator plays the key role in constructing the Breuil--Kisin $\gk$-modules in \cite{Gao23}, which can be regarded as a ``non-$\phi$-twisted" version of  $(\phi, \hat{G})$-modules in \cite{Liu10}.
\end{itemize}

\item  Proposition \ref{prop: K ratitonal sen} implies that after normalization, $N_\nabla$ modulo $E$ is the Sen operator on $\cm_\HT$.
By \cite{Bre97}, the operator $N/u$ on $\cd/u$ is precisely $N_{D_\st(V)}$ on the Fontaine module $D_\st(V)$, hence in particular is \emph{nilpotent}.
This makes it hard to see if $N$ could induce any (non-nilpotent) Sen operator.

\item \label{item nnabla inherit}  What Proposition \ref{prop: compa Breuil} tells us is that: along the conjugate filtration $\fil_\bullet^\rmconj \cm_\HT$, the two operators $N$ and $N_\nabla$ are \emph{identified} into the same \emph{shifted Sen operators},  inheriting/incorporating the good features of both operators: they satisfy $1$-degree shrinking  and can read off Sen weights.
\end{enumerate}
\end{remark}
  
\subsection{Integral filtered Sen theory}  
 
\begin{theorem}  \label{thm: integral sen integral conj fil}  
 Consider the amplified Sen operator (and the constant $a$)  in Definition \ref{def: amplified Sen}, 
\[ \Theta: \gm_\HT \to \gm_\HT. \] 
We have
\begin{enumerate}
\item  The operator $\Theta -n a$  sends $\fil_n^\rmconj \gm_\HT$ to $\fil_{n-1}^\rmconj \gm_\HT$.
    \item    $\Theta (\fil_n^{\mathrm{conj}} \gm_\HT) \subset \fil_n^{\mathrm{conj}} \gm_\HT $ and the induced action on $\gr_n \gm_\HT$ is scaling by $na$. 
\end{enumerate} 
 \end{theorem}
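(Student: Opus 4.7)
The plan is to mirror the rational argument of Theorem \ref{thm: rational sen shift}, but replace Kisin's $N_\nabla$-operator (which is only defined rationally on $\cm$) by the integrally-available operator $\nabla_\tau = \log\tau$. The starting point is the identity from the proof of Theorem \ref{thmintsen}: on $\gm_\HT$ the operator $\Theta$ is realized as
\[
\Theta(\bar v) \;=\; \fkc_\st \cdot \frac{\nabla_\tau(v)}{\fkt} \bmod E
\]
in the semi-stable case (respectively $\fkc_\st \nabla_\tau(v)/(u\fkt) \bmod E$ in the crystalline case), where $v \in \gm$ is any lift of $\bar v$, $\fkc_\st \in \ok^\times$ is an explicit constant, and $\log\tau$ agrees with $\nabla_\tau$ on $\gm$ by Remark \ref{rem analytic than lav}.

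For Item (1) in the semi-stable case, an arbitrary element of $\fil_n^\rmconj \gm_\HT$ has the form $\bar z = x/E^n \bmod E$ with $x \in \fil^n \gmast$ and lift $z := x/E^n \in \gm$. Applying the Leibniz rule for the derivation $\nabla_\tau$ to $x = E^n z$, and using the elementary computation $\nabla_\tau(u) = ut$ (immediate from $\tau(u) = u[\varepsilon]$, hence $\nabla_\tau(E) = uE'(u)t$) together with $t = p\lambda\fkt$ and $\lambda = (E/E(0))\phi(\lambda)$, a direct calculation --- entirely parallel to the verification of $\fkc\, u\lambda E'(u)/E \equiv 1 \pmod E$ in the proof of Theorem \ref{thm: rational sen shift} --- yields
\[
(\Theta - na)(\bar z) \;=\; \fkc_\st \cdot \frac{\nabla_\tau(x)}{E^n\fkt} \bmod E,
\]
with the shift $na = n\pi E'(\pi)$ arising precisely from the $\nabla_\tau(E^n)$-Leibniz term after multiplication by the explicit $\fkc_\st$.

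The remaining task, and the main technical obstacle, is to verify that this residual quantity lies in $\fil_{n-1}^\rmconj \gm_\HT$ (rather than merely in some larger module after base change). By Lemma \ref{lem: tau range gmast} we have $\nabla_\tau(x) \in \fkt E \cdot \fil^{n-1}\minf^\ast$; writing $\nabla_\tau(x) = \fkt E w$ with $w \in \fil^{n-1}\minf^\ast$, the above identity rewrites as $(\Theta - na)(\bar z) = \fkc_\st (w/E^{n-1}) \bmod E$. By flat base change (Lemma \ref{lem: flat base change}), this element lies in $\fil_{n-1}^\rmconj \gm_\HT \otimes_\ok \o_C$, viewed inside $\minf/E = \gm_\HT \otimes_\ok \o_C$. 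To descend from $\o_C$ back to $\ok$, we invoke the integrality of $\Theta$ from Theorem \ref{thmintsen}: the element $(\Theta - na)(\bar z)$ lies a priori in $\gm_\HT$, so
\[
(\Theta - na)(\bar z) \;\in\; \gm_\HT \cap \bigl(\fil_{n-1}^\rmconj \gm_\HT \otimes_\ok \o_C \bigr) \;=\; \fil_{n-1}^\rmconj \gm_\HT,
\]
where the equality follows from the faithful flatness of $\ok \hookrightarrow \o_C$ applied to the inclusion $\fil_{n-1}^\rmconj \gm_\HT \hookrightarrow \gm_\HT$. The crystalline case is handled identically, using the refined estimate $(\tau-1)^i(\fil^n\gmast) \subset u\fkt^i E\,\fil^{n-1}\minf^\ast$ to absorb the extra factor of $u$ appearing in the crystalline realization of $\Theta$. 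Item (2) then follows formally from Item (1): since the conjugate filtration is increasing, $(\Theta - na)(\fil_n^\rmconj \gm_\HT) \subset \fil_{n-1}^\rmconj \gm_\HT \subset \fil_n^\rmconj \gm_\HT$ gives $\Theta$-stability of $\fil_n^\rmconj \gm_\HT$, and $\Theta - na$ vanishes on $\gr_n \gm_\HT$ by construction.
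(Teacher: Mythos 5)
Your proposal is correct and follows essentially the same route as the paper's proof: realize $\Theta-na$ on $\fil_n^\rmconj\gm_\HT$ via the $E^{-n}$-twist of $\nabla_\tau/\fkt$ acting on $\fil^n\gmast$, use Lemma \ref{lem: tau range gmast} to land in $E\fil^{n-1}\gm_\inf^\ast$, and then descend from $\ainf$ back to $\gs$ by a flatness/intersection argument using the integrality of $\Theta$ (the paper packages the same descent as the intersection $(E\fil^{n-1}\gm_\inf^\ast/E\fil^{n}\gm_\inf^\ast)\cap(E^n\gm/E^{n+1}\gm)=E\fil^{n-1}\gmast/E\fil^{n}\gmast$ read off from diagram \eqref{diag gmtwist}). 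The only cosmetic gap is that passing from the bounds on $(\tau-1)^i$ in Lemma \ref{lem: tau range gmast} to the statement $\nabla_\tau(x)\in\fkt E\,\fil^{n-1}\gm_\inf^\ast$ requires the convergence machine of Lemma \ref{lem: log tau converge} (with $Y=E\fil^{n-1}\gm_\inf^\ast$), which you should cite explicitly rather than attribute to Lemma \ref{lem: tau range gmast} alone.
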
 
 \begin{proof}
The argument is very similar to that in Theorem \ref{thm: rational sen shift};  although we   caution that we cannot use an intersection argument to conclude since the filtered map $\fil_\bullet \gm_\HT \to  \fil_\bullet \cm_\HT$ is not strict. 
 
Indeed, similarly to Construction \ref{cons: translate gr and conj fil}, the $\Theta$-operator  induces a commutative diagram
 \begin{equation} \label{diag gmtwist} 
\begin{tikzcd}
E^n\gm/E^{n+1}\gm \arrow[d, "\times E^{-n}"] \arrow[rr, "\Theta "] &  & E^n\gm/E^{n+1}\gm \arrow[d, "\times E^{-n}"] \\
\gm/E\gm \arrow[rr, " \Theta -na "]                                       &  & \gm/E\gm                                    
\end{tikzcd}
 \end{equation}
Lemma \ref{lem: tau range gmast} implies
\begin{equation}\label{eq: intconj3}
     (\tau-1)^i(\fil^n \gmast) \subset \fkt^i E\fil^{n-1} \gm_\inf^\ast
\end{equation} 
Thus Lemma \ref{lem: log tau converge} implies that $\Theta$ induces an operator
\[\Theta: \fil^n \gmast/\fil^{n+1} \gmast \to  E \fil^{n-1} \gm_\inf^\ast/ E \fil^{n} \gm_\inf^\ast \]
The top row of \eqref{diag gmtwist} says that $\Theta$ is stable on $E^n\gm/E^{n+1}\gm$; thus the image of the above map lands inside
\[ \left( E \fil^{n-1} \gm_\inf^\ast/ E \fil^{n} \gm_\inf^\ast \right) \cap 
\left(E^n\gm/E^{n+1}\gm \right) =E \fil^{n-1} \gmast/ E \fil^{n} \gmast; \]
here the intersection equality follows from Lemma \ref{lem: flat base change}\eqref{flatbcitem3}(b). 
Thus, we can construct the following commutative diagram (as a sub-diagram of \eqref{diag gmtwist})
\begin{equation} \label{diag shift sen gmht}
\begin{tikzcd}
\fil^n \gmast/\fil^{n+1} \gmast \arrow[d, "{\times E^{-n}, \simeq}"] \arrow[rr, "\Theta"] &  & E \fil^{n-1} \gmast/ E \fil^{n} \gmast  \arrow[d, "{\times E^{-n}, \simeq}"] \\
\fil_n^\rmconj \gm_\HT \arrow[rr, "\Theta-na"]                                            &  & \fil_{n-1}^\rmconj \gm_\HT                                                  
\end{tikzcd}
\end{equation} 
This leads to
\begin{equation} \label{diag: theta gmht}
\begin{tikzcd}
0 \arrow[r] & \fil_{n-1}\gm_\HT \arrow[r] \arrow[d, "\Theta-na"'] & \fil_n\gm_\HT \arrow[r] \arrow[d, "\Theta-na"] \arrow[ld, "\Theta-na"', dashed] & \gr_{n}\gm_\HT \arrow[r] \arrow[d, "\Theta-na"] & 0 \\
0 \arrow[r] & \fil_{n-1}\gm_\HT \arrow[r]                        & \fil_n\gm_\HT \arrow[r]                                                       & \gr_{n}\gm_\HT \arrow[r]                       & 0
\end{tikzcd}
\end{equation} 
and we can conclude as   in Theorem \ref{thm: rational sen shift}.
 \end{proof}


 \subsection{Mod $p$ filtered Sen theory}
 
In this subsection, we consider filtered Sen theory in the mod $p$ case.
Let $\overline{\gs}=\gs/p\gs=k[[u]]$, let $\bargm =\gm\otimes_\gs \overline{\gs}$ be the mod $p$ Breuil--Kisin module, which we regard as an effective isogeny (Notation \ref{nota: isogeny}) over the triple
\[ (\overline{\gs}, \bar{E}=u^e, \phi) \]
Thus, in the terminology of Definition \ref{defn: conj fil}, we have
\[ \bargm_\HT=\bargm/u^e\bargm \simeq \gm_\HT/p\gm_\HT\]
which is an $\ok/p\ok$-module.
(See Remark \ref{rem not mod pi} below to see why we do not use the $k$-vector space $\gm_\HT/\pi \gm_\HT$).

\begin{theorem} \label{thm: mod p Sen op fil}
Consider amplified Sen operator  
\[ \Theta: \gm_\HT \to  \gm_\HT.\] 
 Modulo $p$,  we obtain
\[ \overline \Theta: \bargm_\HT \to \bargm_\HT.\]
We have
\begin{enumerate}
 \item  The operator $\overline \Theta-na$ sends $\fil_n^\rmconj \bargm_\HT$ to $\fil_{n-1}^\rmconj \bargm_\HT$.
 
    \item $\overline \Theta$ stabilizes $\fil_n^\rmconj \bargm_\HT$   and the induced action on $\gr_n  \bargm_\HT$ is scaling by $na$.   
\end{enumerate} 
\end{theorem}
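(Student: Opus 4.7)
The plan is to mimic the proof of Theorem \ref{thm: integral sen integral conj fil} verbatim, now working modulo $\pi$, with the essential substitution being Lemma \ref{lem: log tau converge}(2) (a finite summation mod $p$) in place of Lemma \ref{lem: log tau converge}(1). Since $\Theta$ preserves $\gm_\HT$ by Theorem \ref{thmintsen}, its reduction descends to a well-defined operator $\overline\Theta$ on $\bargm_\HT$, and the conjugate filtration $\fil_n^\rmconj \bargm_\HT$ is defined intrinsically from the triple $(\gs/p, E, \phi)$ and the module $\bargm$ via Definition \ref{defn: conj fil}.

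The technical heart of the proof is the mod $\pi$ analog of inclusion \eqref{eq: intconj3}, namely
\[
(\tau-1)^i(\fil^n \bargmast) \subset \fkt^i \cdot E \cdot \fil^{n-1} \bargm_\inf^\ast.
\]
To prove it, take $\bar x \in \fil^n \bargmast = \bargmast \cap E^n \bargm$ and lift to some $x \in \gmast$; by definition of the filtration we also have $x \in E^n \gm + p \gm$. Applying Lemma \ref{lem: tau range gmast} to $x \in \gmast$ gives $(\tau-1)^i(x) \in \fkt^i E \gm_\inf^\ast$, while writing $x = E^n y + p z$ and applying Lemma \ref{lem: tau range gmast} to $y \in \gm$ then reducing mod $p$ yields $(\tau-1)^i(\bar x) \in \fkt^i E^n \bargm_\inf$. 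Intersecting these two containments---using that $E$ is a non-zero-divisor in $\bargm_\inf$, which follows from $\ainf/p = \ocflat$ being a valuation ring---gives the displayed inclusion. Invoking Lemma \ref{lem: log tau converge}(2) with the constants $b$ used in the proof of Theorem \ref{thmintsen} then produces a well-defined map
\[
\overline\Theta :\ \fil^n \bargmast/\fil^{n+1} \bargmast \longrightarrow E \fil^{n-1} \bargmast / E \fil^n \bargmast,
\]
where we intersect the image with $E^n \bargm/E^{n+1}\bargm$ (whose stability under $\overline\Theta$ is the mod $\pi$ reduction of the top row of \eqref{diag gmtwist}).

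Reducing diagram \eqref{diag shift sen gmht} modulo $\pi$ now yields, in the bottom row, exactly the map $\overline\Theta - na : \fil_n^\rmconj \bargm_\HT \to \fil_{n-1}^\rmconj \bargm_\HT$ of statement (1). Statement (2) then follows immediately from the diagonal diagram chase of \eqref{diag: theta gmht}: the existence of the dotted arrow $\fil_n \to \fil_{n-1}$ forces the induced action of $\overline\Theta - na$ on $\gr_n \bargm_\HT$ to vanish, i.e.\ $\overline\Theta$ acts as scaling by $na$ on $\gr_n \bargm_\HT$.

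The main obstacle I anticipate is the compatibility of the intrinsic filtration $\bargmast \cap E^n \bargm$ with the mod $p$ reduction of $\gmast \cap E^n \gm$; this is exactly what the intersection argument above is designed to handle. Any lift $x \in \gmast$ of $\bar x$ works, since two lifts differ by an element of $p\gmast$ whose image in $\bargm_\inf^\ast$ vanishes.
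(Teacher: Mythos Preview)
Your proposal is correct and follows essentially the same approach as the paper's proof. The paper likewise establishes the key inclusion $(\tau-1)^i(\fil^n\bargmast)\subset \fkt^i E\fil^{n-1}\bargm_\inf^\ast$ by intersecting the two containments obtained from Lemma~\ref{lem: tau range gmast}, explicitly noting (as you do) that one cannot simply reduce \eqref{taufilgm} mod $p$ since $\fil^n\gmast\to\fil^n\bargmast$ need not be surjective; the only cosmetic difference is that the paper reduces the two inclusions of Lemma~\ref{lem: tau range gmast} mod $p$ directly and then intersects, whereas you lift an element to $\gmast$ first, apply the lemma integrally, and then reduce.
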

\begin{proof}
  Lemma \ref{lem: tau range gmast} implies
\[   (\tau-1)^i( \bargmast) \subset (\phi(\fkt))^i  \bargm_\inf^\ast \]
and
\[   (\tau-1)^i( E^n \bargm) \subset \fkt^i E^n \bargm_\inf \]
Take intersection, we have
\[   (\tau-1)^i( \fil^n \bargmast) \subset \fkt^iE\fil^{n-1}\bargm_\inf^\ast \]
(Note this cannot be directly implied by \eqref{taufilgm}: the map $\fil^n \gmast  \to \fil^n \bargmast$ might not be surjective.)
  Lem. \ref{lem: log tau converge} (the mod $p$ case) induces an operator
  \[ \overline{\Theta}: \fil^n \bargmast/\fil^{n+1} \bargmast \to  E \fil^{n-1} \bargm_\inf^\ast/ E \fil^{n} \bargm_\inf^\ast \]
  The top row of \eqref{diag gmtwist} implies that $\overline{\Theta}$ is stable on $E^n\bargm/E^{n+1}\bargm$, thus Lemma \ref{lem: flat base change}(3)(a) implies that we obtain an operator:
    \[ \overline{\Theta}: \fil^n \bargmast/\fil^{n+1} \bargmast \to  E \fil^{n-1} \bargm^\ast/ E \fil^{n} \bargmast \]
    Similar to the integral case in Theorem \ref{thm: integral sen integral conj fil}, we have mod $p$ version of diagram \eqref{diag shift sen gmht}:
    \begin{equation} \label{diag shift sen bargmht}
\begin{tikzcd}
\fil^n \bargmast/\fil^{n+1} \bargmast \arrow[d, "{\times E^{-n}, \simeq}"] \arrow[rr, "\overline{\Theta}"] &  & E \fil^{n-1} \bargmast/ E \fil^{n} \bargmast  \arrow[d, "{\times E^{-n}, \simeq}"] \\
\fil_n^\rmconj \bargm_\HT \arrow[rr, "\overline{\Theta}-na"]                                            &  & \fil_{n-1}^\rmconj \bargm_\HT                                                  
\end{tikzcd}
\end{equation}  
as  well as the diagram
\begin{equation} \label{diag: theta bargmht}
\begin{tikzcd}
0 \arrow[r] & \fil_{n-1}\bargm_\HT \arrow[r] \arrow[d, "\overline{\Theta}-na"'] & \fil_{n}\bargm_\HT \arrow[r] \arrow[d, "\overline{\Theta}-na"] \arrow[ld, "\overline{\Theta}-na"', dashed] & \gr_{n}\bargm_\HT \arrow[r] \arrow[d, "\overline{\Theta}-na"] & 0 \\
0 \arrow[r] & \fil_{n-1}\bargm_\HT \arrow[r]                        & \fil_{n}\bargm_\HT \arrow[r]                                                       & \gr_{n}\bargm_\HT \arrow[r]                       & 0
\end{tikzcd}
\end{equation} 
 Thus we can conclude.
\end{proof}

\begin{remark} \label{rem unram crys mod p}
When $a$ is not a unit, i.e., when $K$ is ramified or we are in the semi-stable case, then  $\overline{\Theta}$ is   nilpotent.
\end{remark}

\begin{remark} \label{rem not mod pi} 
Suppose $K$ is ramified (that is, $e>1$), then $\bargm$ is also an effective isogeny with respect to the triple 
\[ (k[[u]], u, \phi). \] 
In addition, $\overline{\Theta}$ on $\bargm_\HT=\bargm/u^e\bargm$ also induces (simply by mod $\pi$) an operator on $\bargm/u\bargm$ (which is a $k$-vector space). However,   the tripe $(k[[u]], u, \phi)$  would induce \emph{different} filtrations (on $\bargmast$ and its reductions), and would  break several normalizations in the discussion of Theorem \ref{thm: mod p Sen op fil}. 
Also, note this mod $\pi$ operator  is still nilpotent and hence is not so useful in this paper. Nonetheless, we point out that certain other (differently normalized) ``mod $\pi$ operators"   indeed would appear in our future work, cf. Remark \ref{rem: future}.
\end{remark}

\section{Lifting Sen operator over max rings} \label{sec: sen max classical}

In \S \ref{sec: integral Sen}, we constructed an \emph{integral} (amplified) Sen operator $\Theta$, using the \emph{rational} operator $N_\nabla$ defined on $\o$-modules. In this section, we show that $\Theta$ also admits a certain \emph{integral lift}, defined on a certain integral ring $\gsmax$. The results here are no longer used in this paper, cf. Remark \ref{limitation max lift}; however, these constructions serve as important motivations for our constructions in \S \ref{sec:pris-max-ring} and \S \ref{sec:p-GT-A}.

\begin{notation} We review some ``max"-rings.
\begin{enumerate}
\item 
Following the original notation in \cite[III.2]{Col98} \footnote{In comparison, the \emph{notation} ``$\ainf$" seems to be first introduced in \cite{Fon94} (although the ring itself is used in as early as  \cite{Fon82} (denoted as $W(R)$ there); cf. also \cite[1.5.2]{Col19} for a quick history of other period rings.}, define a  ring  $\amax=\ainf[ \frac{E}{p} ]^{\wedge_p}$ where $[ \cdot ]^{\wedge_p}$ signified $p$-adic completion, and equip it with $p$-adic topology. By the first paragraph in \cite[III.2]{Col98}, we also have 
\[ \amax =\ainf[ \frac{\fkt^{p-1}}{p} ]^{\wedge_p} =\ainf [\frac{u^e}{p} ]^{\wedge_p} \]
One has $\phi(\amax) \subset \acris \subset \amax$, hence $\amax$ can be regarded a certain ``Frobenius descent" of $\acris$; and it is well-know that this ring behaves better than $\acris$ in many aspects, yet is also sufficient to study crystalline representations.

\item 
Define \[ \gsmax:=\gs[\frac{E}{p} ]^{\wedge_p} =\gs[\frac{u^e}{p} ]^{\wedge_p},\]
 which can be regarded a certain ``Frobenius descent" of $S$. It is easy to check $\lambda \in \gsmax$, and $\o \subset \gsmax[1/p]$.

\item We can re-normalize the differentiation $N_\nabla  =-\frac{1}{p\fkt}\log\tau$ on $\o$ to define the following  differentiations.
Let $\star \in \{ \emptyset, \log \}$. When $\star=\emptyset$, define
\[\wh{\Theta}_\max: =\frac{E}{ut}\cdot \log\tau=E\frac{d}{du}: \gsmax \to \gsmax\]
When $\star=\log$, define
\[\wh{\Theta}_{\max}: =\frac{E}{t}\cdot \log\tau=uE\frac{d}{du}: \gsmax \to \gsmax\]
\end{enumerate}
\end{notation}

\begin{thm} \label{Theorem bk conn}
Let $\star \in \{ \emptyset, \log \}$.  Let $T$ be a $\star$-crystalline $\zp$-representation. Let $\gm_\max:=\gm\otimes_\gs \gsmax$. Then the operator $\wh{\Theta}_\max$ (which is $\frac{E}{ut}\cdot \log\tau$ resp. $\frac{E}{t}\cdot \log\tau$) is stable on $\gm_\max$. In addition, the following diagram is commutative; here, the vertical arrows are induced by reduction modulo $E/p$, and the bottom row is defined in Definition \ref{def: amplified Sen}.
\[
\begin{tikzcd}
\gm_\max \arrow[d, two heads] \arrow[r, "\wh{\Theta}_\max"] & \gm_\max \arrow[d, two heads] \\
\gmht \arrow[r, "\Theta"]                          & \gmht                      
\end{tikzcd}
\]
The operator $\wh{\Theta}_\max$ satisfies Leibniz rule with respect to $\wh{\Theta}_\max$ on $\gsmax$, and satisfies the relation 
\[ \frac{p}{\phi(E)}u^{p-1}E \cdot \phi \circ \hattheta_\max=\hattheta_\max \circ \phi, \text{ when $\star=\emptyset$ }\]
\[ \frac{p}{\phi(E)} E \cdot \phi \circ \hattheta_\max =\hattheta_\max \circ \phi, \text{ when $\star=\log$} \]
\end{thm}
\begin{proof} 
 We only treat the log-crystalline case (the crystalline case is similar). In this case, Lemma \ref{lem: tau range gmast} implies 
\[  (\tau -1)^i (\gm)   \subset    \fkt^i \minf \]
Since $\fkt^{i-1}/i \in \amax$ and converges to zero, it is easy to check that 
\[ \frac{\log \tau}{\fkt}(\gm) \subset \gm \otimes_\gs \amax \]
Note $E/(p\lambda)$ is a unit in $\gsmax$; thus we have
\[ \hattheta_\max(\gm) \subset  \gm \otimes_\gs \amax \]
As $N_\nabla(\gm) \subset \gm\otimes_\gs \o \subset \gm\otimes_\gs \gsmax[1/p]$, it is easy to see
\[ \hattheta_\max(\gm) \subset  \gm \otimes_\gs \gsmax[1/p]\]
We can conclude the stability of $\hattheta_\max$ on $\gm_\max$ since
 \[ \gs_\max[1/p] \cap \amax =\gs_\max.\]
The commutativity of the diagram can be readily checked using the equation (cf. Theorem \ref{thmintsen})
\[ \Theta=\fkc_\log   \frac{\nabla_\tau}{ \fkt}.\]
Relations with $\phi$ are formal.
\end{proof}

\begin{remark} \label{limitation max lift}
Unfortunately, this \emph{integral} lift of the (integral) Sen operator has its limitations. Indeed, the map $\gs \into \gsmax$ is not flat hence Lemma \ref{lem: flat base change} is not applicable, making it  difficult to study Nygaard (and other) filtrations on $\gm_\max$. In addition, after modulo $p$, $E=0$ in $\gsmax/p$, and hence the finite height condition  is lost. The existence of this (unsatisfactory) lift inspires and forces us to discover the lift in \S \ref{subsec-flat-subring}.
\end{remark}

\addtocontents{toc}{\ghblue{Applications: integral case}}
  
\section{Torsion bound and vanishing  of graded pieces} \label{sec: vanishing}

We apply filtered Sen theory to study    graded pieces of conjugate/Hodge   filtrations. 
Recall Lem. \ref{lem: matching graded dR and HT} implies the graded pieces of these two filtrations are the same; that is: $\gr^n \gm_\dR=\gr_n \gm_\HT$. Thus, in this section, 
we only consider the conjugate filtration and its graded. We obtain control of  $(\gr_n \gm_\HT)_\tor$  in Theorem \ref{thm: bound expo} and Theorem \ref{thm: bound generator}; these imply vanishing results in Theorem \ref{thm: vanish integral}. Similar method leads to reproof of mod $p$ vanishing results of Gee--Kisin \cite{GK-ias}, cf. Theorem \ref{thm: mod p jump}.

\begin{notation} \label{nota: crys final}
Suppose  $K$ is unramified, and  $T$ is a   crystalline $\zp$-representation with Hodge--Tate weights $0 \leq r_1 \leq \cdots \leq r_d$. Let $(\gm, \gm_\inf)$ be the Breuil--Kisin $\gk$-module associated  to $T$. In this case, $a=1$ in Definition \ref{def: amplified Sen};  we simply denote
\[\theta:= \Theta =\theta_\kinfty \]
\end{notation}

The following easy lemma will be repeatedly used.

\begin{lemma} \label{lem: det}
Fix $n$ and vary $m$.
Consider the endomorphism
\[\theta-n: \fil_{m} \gm_\HT\to \fil_{m} \gm_\HT\]
\begin{enumerate}
\item The determinant  is $\prod_{r_i \leq m}(r_i-n)$.
\item It is injective if all appearing $r_i-n$ (i.e., those $r_i \leq m$) are nonzero; for example, when $m<n$.
\item It is bijective if all appearing $r_i-n$ are $p$-adic units. 
\end{enumerate} 
\end{lemma}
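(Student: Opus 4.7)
The plan is to reduce Item (1) to a rational semi-simple eigenvalue calculation, and then deduce Items (2) and (3) formally from it. First I would observe that since $K$ is unramified, $\gs/E\simeq\ok$, so $\gm_\HT=\gm/E\gm$ is a finite free $\ok$-module. Its $\ok$-submodule $\fil_m^\rmconj\gm_\HT$ is therefore torsion-free over the DVR $\ok$, hence finite free. By Thm \ref{thm: integral sen integral conj fil} (with $a=1$ in the unramified crystalline case), $\theta$ preserves $\fil_m^\rmconj\gm_\HT$, so $\theta-n$ is a genuine $\ok$-linear endomorphism of a finite free $\ok$-module, and its determinant is a well-defined element of $\ok$.

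For Item (1), I would compute this determinant after inverting $p$. Flat base change (Lem \ref{lem: flat base change}) gives
\[\fil_m^\rmconj\gm_\HT\otimes_\ok K\;\simeq\;\fil_m^\rmconj\cm_\HT.\]
By Prop \ref{prop: K ratitonal sen}, $\theta$ acts semi-simply on $\cm_\HT$ with eigenvalues $r_1,\dots,r_d$, and by Cor \ref{cor: eigenval fil} the rational conjugate filtration is the eigenvalue filtration, so $\fil_m^\rmconj\cm_\HT=\bigoplus_{r_i\le m}(\cm_\HT)^{\theta=r_i}$. Hence the eigenvalues of $\theta-n$ on this $K$-vector space are exactly the $r_i-n$ with $r_i\le m$, and the determinant equals $\prod_{r_i\le m}(r_i-n)$. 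Since $\fil_m^\rmconj\gm_\HT$ is $\ok$-free, its integral determinant agrees with this value in $K$, yielding Item (1).

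Items (2) and (3) then follow formally. For Item (2), if every $r_i-n$ with $r_i\le m$ is nonzero---which is automatic when $m<n$, since then $r_i\le m<n$---the determinant is a nonzero element of $K$, so $\theta-n$ is injective on $\fil_m^\rmconj\gm_\HT\otimes_\ok K$; $\ok$-torsion-freeness of $\fil_m^\rmconj\gm_\HT$ then transports injectivity to the integral object via the embedding $\fil_m^\rmconj\gm_\HT\hookrightarrow\fil_m^\rmconj\gm_\HT\otimes_\ok K$. For Item (3), if moreover all such $r_i-n$ are $p$-adic units, the determinant is a unit in $\ok$, and therefore $\theta-n$ is an automorphism of the finite free $\ok$-module $\fil_m^\rmconj\gm_\HT$.

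There is no real obstacle here: once rational semi-simplicity (Prop \ref{prop: K ratitonal sen}), the eigenvalue description of $\fil_\bullet^\rmconj\cm_\HT$ (Cor \ref{cor: eigenval fil}), and the integral stability of the conjugate filtration under $\theta$ (Thm \ref{thm: integral sen integral conj fil}) are in hand, the argument is pure book-keeping. The only mild subtlety is ensuring the $\ok$-determinant is well-defined, which reduces to the freeness of $\fil_m^\rmconj\gm_\HT$ over the DVR $\ok$ and is immediate from torsion-freeness.
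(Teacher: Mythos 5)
Your proposal is correct and follows essentially the same route as the paper: the paper's proof is simply ``prove Item (1) by inverting $p$ and applying Cor.~\ref{cor: eigenval fil}'', with Items (2) and (3) following formally, which is exactly what you do. Your additional book-keeping (freeness of $\fil_m^\rmconj\gm_\HT$ over the DVR $\ok$, the flat base change identification $\fil_m^\rmconj\gm_\HT\otimes_\ok K\simeq\fil_m^\rmconj\cm_\HT$, and the passage from a nonzero/unit determinant to injectivity/bijectivity) is accurate and just makes explicit what the paper leaves implicit.
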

\begin{proof}
One only needs to prove Item (1). To compute the determinant, one can invert $p$ and then apply  Corollary  \ref{cor: eigenval fil}. 
\end{proof}

\subsection{Bound of torsion} 
 In this subsection, we  bound torsion in $\gr_n \gm_\HT$. 
  Given a finitely generated $W(k)$-module $M$, let $r(M)$ be the minimal number of its generators (equivalently, $k$-dimension of $M/pM$); when $M$ is torsion, let $e(M)$ be its exponent which is the smallest integer such that $p^{e(M)}$ kills $M$.

\begin{lemma} \label{lem: to bound tor}
Denote the cokernel of $\theta-n: \fil_{m} \gm_\HT\to \fil_{m} \gm_\HT$ as
\[ C_m =\fil_{m} \gm_\HT/(\theta-n) \]
\begin{enumerate}
\item \label{eq91}  There is a left exact sequence $
0 \to (\fil_n  \gm_\HT)^{\theta=n} \to \gr_n   \gm_\HT \to C_{n-1}$.

\item   \label{eq92} For any $b$, we have $
C_{n-bp-1}  =C_{n-bp-2} =\cdots =C_{n-bp-p}  $
 
\item \label{eq94} For any $b$, we have a  right  exact sequence $
 C_{n-bp-p} \to C_{n-bp} \to (\gr_{n-bp} \gm_\HT)/(bp) \to 0$. 
\end{enumerate}
\end{lemma}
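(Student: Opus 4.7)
The plan is to obtain all three items as consequences of applying the snake lemma to the short exact sequence
\[ 0 \to \fil_{m-1}\gm_\HT \to \fil_m \gm_\HT \to \gr_m \gm_\HT \to 0 \]
with endomorphism $\theta - n$. By Thm.~\ref{thm: integral sen integral conj fil}, $\theta$ stabilizes $\fil_{m-1}$ and $\fil_m$, and acts as scaling by $m$ on $\gr_m$; hence $\theta - n$ acts as scaling by $m-n$ on $\gr_m$. The snake sequence therefore reads
\[ 0 \to (\fil_{m-1})^{\theta=n} \to (\fil_m)^{\theta=n} \to \gr_m \gm_\HT[m-n] \to C_{m-1} \to C_m \to \gr_m \gm_\HT/(m-n) \to 0, \]
where $[m-n]$ denotes $(m-n)$-torsion. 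This is the common engine for all three claims.

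For \eqref{eq91}, specialize to $m = n$. The rightmost nonzero map in the snake sequence becomes scaling by $0$ on $\gr_n$, so both $\gr_n \gm_\HT[0] = \gr_n \gm_\HT$ and $\gr_n \gm_\HT/(0) = \gr_n \gm_\HT$ appear. To cut off the sequence on the left as stated, I need $(\fil_{n-1})^{\theta=n} = 0$. Here I invoke that $\gm_\HT = \gm/E$ is a free $W(k)$-module of rank $d$ (since $\gs/E = W(k)$), so its submodule $\fil_{n-1}$ is $W(k)$-free; by Lem.~\ref{lem: det} the determinant of $\theta-n$ on $\fil_{n-1}$ equals $\prod_{r_i \leq n-1}(r_i-n)$, which is nonzero because $r_i \leq n-1 < n$. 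Hence $\theta-n$ is injective on $\fil_{n-1}$, yielding \eqref{eq91}.

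For \eqref{eq92}, I run the snake sequence for each $m$ in the range $\{n-bp-p+1,\ldots, n-bp-1\}$. For such $m$, we have $n - m \in \{bp+1,\ldots,bp+p-1\}$, hence $m - n$ is a unit in $W(k)$ (recall $K$ is unramified, so $W(k) = \o_K$ and a rational integer is a unit iff not divisible by $p$). Thus both $\gr_m \gm_\HT[m-n]$ and $\gr_m \gm_\HT/(m-n)$ vanish, and the snake sequence forces $C_{m-1} \isoto C_m$. Composing these isomorphisms yields $C_{n-bp-p} = C_{n-bp-p+1} = \cdots = C_{n-bp-1}$. (In the degenerate case $n - bp \leq 0$ the modules are all zero and the claim is trivial.)

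For \eqref{eq94}, the snake sequence at $m = n-bp$ terminates with $C_{m-1} \to C_m \to \gr_m \gm_\HT/(m-n) \to 0$; here $m - n = -bp$, and $\gr_{n-bp}\gm_\HT/(-bp) = \gr_{n-bp}\gm_\HT/(bp)$. Combining with \eqref{eq92} to replace $C_{n-bp-1}$ by $C_{n-bp-p}$ gives the desired right exact sequence. No serious obstacle is expected; the only subtlety is ensuring injectivity of $\theta-n$ on lower filtrants in \eqref{eq91}, which is handled by freeness of $\gm_\HT$ over $W(k)$ together with Lem.~\ref{lem: det}.
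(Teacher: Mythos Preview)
Your proposal is correct and follows essentially the same approach as the paper: both set up the short exact sequence $0 \to \fil_{m-1}\gm_\HT \to \fil_m\gm_\HT \to \gr_m\gm_\HT \to 0$ with the endomorphism $\theta-n$ (acting as $m-n$ on $\gr_m$ by Thm.~\ref{thm: integral sen integral conj fil}), and then read off the three items from the resulting snake sequence at $m=n$, at $m$ with $p\nmid m-n$, and at $m=n-bp$ respectively, invoking Lem.~\ref{lem: det} for the injectivity needed in \eqref{eq91}. Your version is simply a bit more explicit in naming the snake lemma and in justifying that a nonzero determinant yields injectivity via freeness of $\gm_\HT$ over $W(k)$.
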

\begin{proof} 
We know $\theta=m$ on $\gr^m  \gm_\HT$ by Theorem \ref{thm: integral sen integral conj fil}. 
Consider following diagram
\[
\begin{tikzcd}
0 \arrow[r] & \fil_{m-1}\gm_\HT \arrow[r] \arrow[d, "\theta-n"'] & \fil_{m}\gm_\HT \arrow[r] \arrow[d, "\theta-n"] & \gr_{m}\gm_\HT \arrow[r] \arrow[d, "(\theta-n)=(m-n)"] & 0 \\
0 \arrow[r] & \fil_{m-1}\gm_\HT \arrow[r]                        & \fil_{m}\gm_\HT \arrow[r]                       & \gr_{m}\gm_\HT \arrow[r]                           & 0
\end{tikzcd}
\] 
 The case $m=n$ leads to \eqref{eq91}; note $\theta-n$ is injective on $\fil_{n-1}$ by Lemma \ref{lem: det}.  
 When  $p \nmid n-m$, the right most vertical arrow is an isomorphism, thus
$C_m=C_{m-1}$ which inductively implies \eqref{eq92}.
 The case $m=n-bp$ leads to   \eqref{eq94}.
\end{proof}

\begin{theorem}[Bound of exponent]  \label{thm: bound expo}  
\begin{enumerate}
\item  For each $n$, $(\gr_n \gm_\HT)_\tor$ is killed by $ n!$. 
\item \label{item nbig vanish} If $n \geq r_d+1$, then $\gr_n \gm_\HT=0$.
 If $n \geq r_d$, then $(\gr_n \gm_\HT)_\tor=0$.

\item Uniformly for all $n$, $(\gr_n \gm_\HT)_\tor$ is killed by $(r_d-1)!$.
\end{enumerate}   
\end{theorem}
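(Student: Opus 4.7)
The plan is to exploit the scalar action of $\theta$ on the graded pieces of the conjugate filtration together with the structural sequences of Lemma 6.2. Fix $n$ throughout: by Theorem 5.4(2), $\theta$ preserves $\fil_\bullet \gm_\HT$ and acts by the scalar $m$ on $\gr_m \gm_\HT$, so $\theta - n$ acts by $m - n$ on each $\gr_m \gm_\HT$. For $m \leq n - 1$, the eigenvalues of $\theta - n$ on $\fil_m \gm_\HT \otimes \qp$ are the $r_i - n$ with $r_i \leq m < n$, all nonzero; hence $\theta - n$ is injective on each $\fil_m \gm_\HT$, making $C_m$ a torsion $W(k)$-module.

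For part (1), Lemma 6.2(1) together with the torsion-freeness of $(\fil_n \gm_\HT)^{\theta = n}$ (a submodule of $\gm_\HT$) shows $(\gr_n \gm_\HT)_\tor$ embeds into $C_{n-1}$. To bound $e(C_{n-1})$, I apply the snake lemma to each short exact sequence $0 \to \fil_{m-1} \gm_\HT \to \fil_m \gm_\HT \to \gr_m \gm_\HT \to 0$ with endomorphism $\theta - n$. For $m \leq n - 1$, the kernels on the filtered pieces vanish, yielding the four-term exact sequence
\[ 0 \to (\gr_m \gm_\HT)[m - n] \to C_{m-1} \to C_m \to (\gr_m \gm_\HT) / (m - n)(\gr_m \gm_\HT) \to 0, \]
whose extreme terms are annihilated by $p^{v_p(m-n)}$. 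The telescoping estimate $e(C_m) \leq e(C_{m-1}) + v_p(n - m)$ sums from $C_{-1} = 0$ to give $e(C_{n-1}) \leq \sum_{m=0}^{n-1} v_p(n-m) = v_p(n!)$, so $n!$ kills $(\gr_n \gm_\HT)_\tor$.

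For part (2), the vanishing $\gr_n \gm_\HT = 0$ when $n \geq r_d + 1$ is formal: effectiveness yields $E^{r_d} \gm \subseteq \gmast$ and hence $\fil^m \gmast = E^m \gm$ for $m \geq r_d$, so $\fil_m \gm_\HT = \gm_\HT$ for all such $m$, forcing $\fil_n = \fil_{n-1} = \gm_\HT$. For the torsion-free statement at $n = r_d$, I identify $\gr_{r_d} \gm_\HT \simeq \gr^{r_d} \gm_\dR$ (Lemma 2.3) with the quotient $\gm / S$, where $S = \{m \in \gm : E^{r_d - 1} m \in \gmast\}$, and then show that $S$ is $p$-saturated in $\gm$. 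This in turn reduces to showing $\gm/\gmast$ has no $p$-torsion as a $\gs$-module: since in the unramified case $\gs / E^{r_d}$ is free of rank $r_d$ over $W(k)$ and the cokernel of $1 \otimes \phi$ admits a Smith-type decomposition over the DVR $\gs[1/p]$, a careful integral comparison completes the argument. I expect this $p$-saturation step to be the main obstacle, since the snake-lemma input of part (1) alone only bounds the exponent of the torsion rather than killing it outright.

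Part (3) follows directly: for $n \geq r_d$, part (2) gives $(\gr_n \gm_\HT)_\tor = 0$; for $n \leq r_d - 1$, part (1) gives $(\gr_n \gm_\HT)_\tor$ annihilated by $n!$, which divides $(r_d - 1)!$. Hence $(r_d - 1)!$ uniformly kills $(\gr_n \gm_\HT)_\tor$ for all $n$.
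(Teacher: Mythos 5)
Your treatment of parts (1) and (3), and of the vanishing statement in part (2), is correct and essentially reproduces the paper's argument. The only cosmetic difference in (1) is that you telescope the cokernels $C_m$ one index at a time via the full snake-lemma four-term sequence, whereas the paper first observes $C_{n-bp-1}=\cdots=C_{n-bp-p}$ and jumps in steps of $p$; both yield $e(C_{n-1})\le \sum_{j=1}^{n}v_p(j)=v_p(n!)$, and your embedding of $(\gr_n\gm_\HT)_\tor$ into $C_{n-1}$ via the torsion-freeness of $(\fil_n\gm_\HT)^{\theta=n}\subset\gm_\HT$ is exactly the paper's.

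The genuine gap is in the torsion-freeness of $\gr_{r_d}\gm_\HT$. Your identification $\gr^{r_d}\gm_\dR\cong\gm/S$ with $S=\{m\in\gm: E^{r_d-1}m\in\gmast\}$ is correct, as is the reduction of the $p$-saturation of $S$ to the claim that $\gm/\gmast$ has no $p$-torsion; but that claim is precisely what you do not prove, and the route you sketch cannot work as stated: $\gs[1/p]$ is not a DVR, and any ``Smith-type'' normal form obtained after inverting $p$ is blind to integral $p$-torsion, which is exactly what is at stake. The claim is in fact true --- for instance, $\gm/\gmast$ is the cokernel of an injection of finite free $\gs$-modules, hence has projective dimension $\le 1$ and depth $\ge 1$ by Auslander--Buchsbaum, so its associated primes lie in $\{(E)\}$ and $p$ acts as a non-zero-divisor --- but you should either supply such an argument or, better, take the paper's one-line route, which bypasses $S$ entirely: since $\fil^{r_d+1}\gm_\dR=0$, one has $\gr_{r_d}\gm_\HT=\gr^{r_d}\gm_\dR=\fil^{r_d}\gm_\dR$, and the latter is by definition (quotient filtration) a submodule of $\gm_\dR=\gmast/E\gmast\cong\ok^{d}$, hence free over the discrete valuation ring $\ok$ and in particular torsion-free.
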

\begin{proof}
 Write $n=a+pk$ with $0 \leq a \leq p-1$. 
   Use exact sequence in  Lemma \ref{lem: to bound tor}\eqref{eq91}; note $(\fil_n  \gm_\HT)^{\theta=n}$ is torsion-free, thus we have ``torsion control" (note $C_m$ is   torsion if  $m <n$): 
\[  (\gr_n\gm_\HT)_\tor  \subset  C_{n-1}  = C_{n-p}  \]
 Lemma \ref{lem: to bound tor}\eqref{eq94} shows that the exponent is bounded by
\[ e(C_{n-p}) \leq  e(C_{n-2p}) + v_p(p)   \leq  e(C_{n-3p}) +v_p(2p)+ v_p(p) =  e(C_{n-3p}) +v_p((2p)!) \leq \cdots  \] 
\[\leq   e(C_{a}) + v_p((n-a-p)! ) \leq e(C_{a-p}) + v_p((n-a)! )= v_p((n-a)! ) =v_p(n!) \]  

Consider Item (2). Note $\gm$ has Frobenius height $r_d$, thus $\fil^i \gmast=E^i\gm$ for $i \geq r_d$.
This implies that when $n >r_d$,
 $\fil^{n}\gm_\dR=0$ and  $\gr_n \gm_\HT=0.$
 In the border case  $n=r_d$, $\gr_{r_d} \gm_\HT=\gr^{r_d} \gm_\dR=\fil^{r_d} \gm_\dR$ is torsion-free. Item (3) then follows.
\end{proof}

\begin{example} \label{ex: torfree p 2p}
\begin{enumerate}
\item  \label{item torfree p case} If $r_d \leq p$, then $(\gr_n \gm_\HT)$  is torsion-free for all $n$.
As we shall discuss in \S \ref{subsec: GLS reproof}, this fact has quick implications to Frobenius matrix of $\gm$ which was previously proved in \cite{GLS14} in connection with Serre weight conjecture for $\GL_2$.
 
\item If $r_d \leq 2p$, then $(\gr_n \gm_\HT)_\tor$  is killed by $p$ for all $n$. 
 This is particularly interesting as the range $[0,2p]$ is the range of Hodge--Tate weights appearing in Serre weight conjectures for $\GL_3$; cf. e.g. \cite{LLLMInv} and other related works. It would be very interesting to see if one can exploit this $p$-torsion fact in relevance to Serre weight conjectures.
\end{enumerate}
\end{example}

The following theorem provides bound on number of generators on the torsion part; the current bound is first observed by Gee and Kisin and improves our previous one, cf. Remark \ref{rem:GK improve}.

\begin{theorem}[Bound of number of generators] \label{thm: bound generator} 
 Write $\alpha(x)=\sharp\{r_i \equiv x \pmod{p}, r_i \leq x\}$, which is number of Hodge--Tate weights congruent to $x$ and $\leq x$. We have
\begin{enumerate} 
\item  \label{item bnd length}   $r(\gr_n  \gm_\HT) \leq  \alpha(n)$.

\item  \label{item bnd torsion length}   $r((\gr_n \gm_\HT)_\tor) \leq   \alpha(n-p)$.

\item Uniformly for all $n$,  $r(\gr_n  \gm_\HT) \leq  d$. 
\end{enumerate}
\end{theorem}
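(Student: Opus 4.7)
The plan is to prove (2) first, from which (1) follows via the identity $r(\gr_n \gm_\HT) = m_n + r((\gr_n \gm_\HT)_\tor)$, where $m_n$ denotes the multiplicity of $n$ as a Hodge--Tate weight (equivalently, by Cor. \ref{cor: eigenval fil}, the torsion-free rank of $\gr_n \gm_\HT$), combined with the arithmetic identity $\alpha(n) = m_n + \alpha(n-p)$. Item (3) is then immediate since $\alpha(n) \le d$ for every $n$.

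For (2), the strategy is to combine Lemma \ref{lem: to bound tor} with a characteristic-polynomial computation. By \eqref{eq91} of that lemma, the quotient $\gr_n \gm_\HT / (\fil_n^\rmconj \gm_\HT)^{\theta=n}$ embeds into $C_{n-1}$. The submodule $(\fil_n^\rmconj \gm_\HT)^{\theta=n}$ is torsion-free and, after inverting $p$, free of rank $m_n$ by Cor. \ref{cor: eigenval fil}, matching the torsion-free rank of $\gr_n \gm_\HT$; hence the full torsion submodule $(\gr_n \gm_\HT)_\tor$ maps injectively to the quotient and therefore into $C_{n-1}$. For any injection $N \hookrightarrow M$ of torsion $\ok$-modules one has $r(N) \le r(M)$ (compare $p$-torsion subgroups $N[p] \hookrightarrow M[p]$), so
\[ r((\gr_n \gm_\HT)_\tor) \;\le\; r(C_{n-1}) \;=\; r(C_{n-p}), \]
the last equality being \eqref{eq92} of Lemma \ref{lem: to bound tor}. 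Thus (2) reduces to the estimate $r(C_{n-p}) \le \alpha(n-p)$.

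For the remaining estimate, $\fil_{n-p}^\rmconj \gm_\HT$ is a finite free $\ok$-module of rank $s := \sharp\{i : r_i \le n-p\}$ (torsion-free as a submodule of the free module $\gm_\HT$, with rank computed from Cor. \ref{cor: eigenval fil}). Since $\theta - n$ is injective on $\fil_{n-p}^\rmconj \gm_\HT$ by Lemma \ref{lem: det}, the cokernel $C_{n-p}$ is torsion and
\[ r(C_{n-p}) \;=\; \dim_k \coker\bigl((\theta-n)\bmod p\bigr) \;=\; \dim_k \ker\bigl((\theta-n)\bmod p\bigr), \]
the last equality holding because source and target have the same $k$-dimension $s$. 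The characteristic polynomial of the integral operator $\theta$ acting on the free module $\fil_{n-p}^\rmconj \gm_\HT$ equals $\prod_{r_i \le n-p}(t - r_i) \in \ok[t]$: after inverting $p$, $\theta$ becomes semisimple with precisely these eigenvalues (Cor. \ref{cor: eigenval fil}), and the characteristic polynomial of an integral endomorphism of a free module is determined by its generic fiber. Reducing modulo $p$, the algebraic multiplicity of $0$ as a root of the characteristic polynomial of $(\theta-n)\bmod p$ is exactly $\sharp\{i : r_i \le n-p,\ r_i \equiv n \pmod p\} = \alpha(n-p)$. Since the geometric multiplicity of any eigenvalue is at most its algebraic multiplicity, we obtain $r(C_{n-p}) \le \alpha(n-p)$. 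The main subtle step is the identification of this characteristic polynomial, which hinges on freeness of $\fil_{n-p}^\rmconj \gm_\HT$ over $\ok$ together with the rational semisimplicity from Cor. \ref{cor: eigenval fil}; the remaining steps are routine linear algebra.
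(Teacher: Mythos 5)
Your proposal is correct and follows essentially the same route as the paper's proof: reduce (1) and (3) to (2) via the rank identity, inject $(\gr_n\gm_\HT)_\tor$ into $C_{n-1}$ using the left-exact sequence of Lemma \ref{lem: to bound tor}(1) and torsion-freeness of $(\fil_n\gm_\HT)^{\theta=n}$, and bound the mod-$p$ cokernel of $\theta-n$ by the multiplicity of $0$ in the reduced characteristic polynomial. The only (harmless) cosmetic differences are that you pass to $C_{n-p}$ via \eqref{eq92} where the paper works directly with $C_{n-1}$, and you phrase the final linear-algebra step via geometric versus algebraic multiplicity rather than Jordan blocks.
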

\begin{proof} 
Item (2) implies Item (1) as the rank of the free part of $\gr_n  \gm_\HT$ is exactly $\alpha(n)-\alpha(n-p)$. Item (1) obviously implies (3). Thus it suffices to just prove Item (2). 

Consider left exact sequence in Lemma \ref{lem: to bound tor}(1):
\[ 0 \to (\fil_n  \gm_\HT)^{\theta=n} \to \gr_n   \gm_\HT \to C_{n-1}\]
 take $p$-torsion, then we have a left exact sequence
\[ 0 \to 0 \to (\gr_n \gmht)[p] \to C_{n-1}[p] \]
So
\[ r((\gr_n \gmht)_\tor) =\dim_k (\gr_n \gmht)[p]  \leq \dim_k  C_{n-1}[p] =\dim_k C_{n-1}\otimes_{W(k)} k \]
Here the last equality is because  $C_{n-1}$ is a torsion module.
From definition of $C_{n-1}$, we have a right exact sequence
\[ (\fil_{n-1} \gmht) \otimes k \xrightarrow{\theta -n} (\fil_{n-1} \gmht) \otimes k \to C_{n-1}\otimes k \to 0 \]
Recall eigenvalues of $\theta -n$ acting on $\fil_{n-1} \gmht$ are $r_i -n$ with $r_i \leq n-1$;   the dimension of the cokernel $ C_{n-1}\otimes k$ is bounded by the multiplicity of zero as eigenvalue of $\theta -n \pmod{p}$ (to see this linear algebra fact,  base change above right exact sequence to algebraic closure of $k$, and consider Jordan blocks for the matrix of $\theta-n$), which is precisely $\alpha(n-p)$.
\end{proof}

\begin{remark}\label{rem:GK improve} 
In an earlier draft, we could only prove the weaker bounds (which suffice for applications in Theorem \ref{thm: vanish integral}): 
\[ r((\gr_n \gm_\HT)_\tor) \leq 2^{\lceil \frac{n}{p} \rceil} \alpha(n-p), \quad r(\gr_n  \gm_\HT) \leq 2^{\lceil \frac{n}{p} \rceil}\alpha(n) \]
Toby Gee and Mark Kisin then show (via their module theoretic argument, mentioned in Remark \ref{rem history intro}), that one could indeed remove all the $2$-powers in these bounds. We thank them for this sharp observation and for their generosity for allowing us to include the strengthened version here. (We also note that (as far as we understand), our proof is not ``direct" translation of their proof, pointing to usefulness of different approaches to these questions.)
\end{remark}

\subsection{Integral vanishing}

We glean the following  cleaner  (torsion) vanishing results from above arguments.

\begin{theorem} \label{thm: vanish integral}
Use Notation \ref{nota: crys final} (so $K$ is unramified, and $T$ is crystalline).

\begin{enumerate}
\item If $n \geq r_d+1$, then $\gr_n \gm_\HT=0$.
\item \label{item vanish n int} If $ n \notin \{ r_i+kp, k \geq 0, 1\leq i\leq d \}$, then $\gr_n \gm_\HT=0$.
\item \label{item torfree n} If $n \notin \{ r_i+kp, k \geq 1, 1\leq i\leq d \}$, then $\gr_n \gm_\HT$ is torsion-free.
\end{enumerate} 
\end{theorem}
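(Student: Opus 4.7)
The plan is to reduce all three parts to the combination of Lemma \ref{lem: det} and Lemma \ref{lem: to bound tor}, applied to the integral shifted operator $\theta - n$ furnished by Theorem \ref{thm: integral sen integral conj fil}. Part (1) is already contained in Theorem \ref{thm: bound expo}\eqref{item nbig vanish}: its proof uses nothing more than $\fil^i \gmast = E^i \gm$ for $i \geq r_d$ together with the identification $\gr^{r_d} \gm_\HT = \fil^{r_d} \gm_\dR$, so I concentrate on Parts (2) and (3). The central auxiliary observation is that, because $K$ is unramified, $\gs/E = W(k)$, so $\gm_\HT$ is finite free over $W(k)$ of rank $d$ and each $\fil_n^\rmconj \gm_\HT$ is a $W(k)$-submodule, hence itself free over the DVR $W(k)$.

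For Part (2), I would first note that the hypothesis $n \notin \{ r_i + kp : k \geq 0,\ 1 \leq i \leq d\}$ is equivalent to the statement that $n - r_i$ is a strictly positive $p$-adic unit for every $r_i \leq n$: the case $k = 0$ rules out $n = r_i$, and the cases $k \geq 1$ rule out $p \mid n - r_i$. Lemma \ref{lem: det}(3) then says that $\theta - n$ is bijective as an endomorphism of $\fil_n^\rmconj \gm_\HT$. On the other hand, Theorem \ref{thm: integral sen integral conj fil}(1) shows that its image lies in the submodule $\fil_{n-1}^\rmconj \gm_\HT \subset \fil_n^\rmconj \gm_\HT$. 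Surjectivity therefore forces $\fil_n^\rmconj \gm_\HT = \fil_{n-1}^\rmconj \gm_\HT$, so $\gr_n \gm_\HT = 0$.

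For Part (3), the weaker hypothesis now permits $n = r_i$, but one still checks that $n - r_i$ is a $p$-adic unit for every $r_i \leq n - 1$: when $n - r_i < p$ this is automatic, and when $n - r_i \geq p$ it is exactly the content of $n \neq r_i + kp$ for $k \geq 1$. Lemma \ref{lem: det}(3) then yields bijectivity of $\theta - n$ on $\fil_{n-1}^\rmconj \gm_\HT$, that is $C_{n-1} = 0$ in the notation of Lemma \ref{lem: to bound tor}. The left-exact sequence of Lemma \ref{lem: to bound tor}(1) collapses to an isomorphism $\gr_n \gm_\HT \cong (\fil_n^\rmconj \gm_\HT)^{\theta = n}$, which is a $W(k)$-submodule of a torsionfree $W(k)$-module and hence torsionfree.

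The only delicate point, rather than any serious obstacle, is the integrality used in Lemma \ref{lem: det}(3): rationally, bijectivity follows immediately from the eigenvalue description in Corollary \ref{cor: eigenval fil}, but integrally it requires $\fil_n^\rmconj \gm_\HT$ to be a finite free $W(k)$-module on which $\theta - n$ has unit determinant. Both requirements genuinely depend on the hypotheses that $K$ is unramified (so that $a = 1$ in Definition \ref{def: amplified Sen} and $\gs/E$ is a DVR) and that $T$ is crystalline, in agreement with Construction \ref{cons intro pf} and the expectation that the ramified or semi-stable analogues need not enjoy such clean vanishing or torsionfreeness.
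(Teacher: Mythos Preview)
Your proof is correct and follows essentially the same route as the paper's own proof. The paper also invokes Theorem \ref{thm: bound expo}\eqref{item nbig vanish} for Part (1), and for Parts (2) and (3) gives exactly the concrete eigenvalue argument you describe (bijectivity of $\theta-n$ on $\fil_n^\rmconj\gm_\HT$ resp.\ $\fil_{n-1}^\rmconj\gm_\HT$ via Lemma \ref{lem: det}(3)), with the minor organizational difference that for Part (3) the paper only treats the residual case $n=r_k$ explicitly and defers the remaining $n$ to Part (2), whereas you handle all $n$ uniformly via $C_{n-1}=0$.
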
 
\begin{proof}
(1) is already proved in  Theorem \ref{thm: bound expo}\eqref{item nbig vanish}.

(2). Apply Theorem \ref{thm: bound generator}\eqref{item bnd length}, noting $\alpha(n)=0$. Alternatively, using slightly more concrete argument, it suffices to prove the stronger statement that the composite \[  \fil_n  \gm_\HT \xrightarrow{\theta-n} \fil_{n-1}  \gm_\HT  \into \fil_{n}  \gm_\HT \]   is bijective, where the first map comes from Theorem \ref{thm: integral sen integral conj fil}. As argued in Lemma \ref{lem: det}, all the eigenvalues  here  are $r_i -n$ with $r_i \leq n$ by Corollary  \ref{cor: eigenval fil}; these are all units: otherwise $p\mid n-r_i \geq 0$, then $n=r_i+kp$ for some $k \geq 0$.

(3). Apply Theorem \ref{thm: bound generator}\eqref{item bnd torsion length}, noting $\alpha(n-p)=0$. Alternatively, one can also use more concrete argument, proving $ \theta -r_k$ is bijective on $ \fil_{r_k-1}  \gm_\HT$ (but not on $ \fil_{r_k}  \gm_\HT$!); five lemma then implies that $\gr_{r_k}  \gm_\HT=(\fil_{r_k}  \gm_\HT)^{\theta=r_k}$ and hence is torsion-free, proving cases not covered by Item (2). 
\end{proof}

\begin{remark} \label{rem: liu24}
The  vanishing   results in Theorem \ref{thm: vanish integral} are also  proved in \cite{Liu24} by the second named author; the proof there uses similar (but more involved) technical computations as in \cite{GLS14} (which treated the case $r_d\leq p$). 
The proof presented in this paper is much more conceptual, much easier and cleaner; in addition, the method here also leads to torsion bound results in Theorem \ref{thm: bound expo}  and Theorem \ref{thm: bound generator}, which are not covered in  \cite{Liu24}. 
Furthermore, the methods also inspire the treatment in the mod $p$ case, cf. \S \ref{sec:mod-p-shape}. In summary, we regard the methods and results of this paper as more complete and more useful for future applications.
\end{remark}

\begin{remark} \label{rem: not working ram case}
If $K$ is ramified or if $T$ is semi-stable (non-crystalline), we can still run similar argument as in  Thm. \ref{thm: vanish integral}\eqref{item vanish n int}. However, the relevant eigenvalues  would be $a(r_i-n)$ which are never $p$-adic units; thus the argument becomes vacuous.
\end{remark}

\subsection{Structure of mod $p$ filtration: reproof of a theorem of Gee--Kisin} \label{subsec: mod p vanish}

 In this subsection, we reprove a theorem of Gee--Kisin announced in \cite{GK-ias}. In \cite{GK-ias}, the theorem is first stated in the form of Theorem \ref{thm: gee kisin}, but is (easily seen to be) equivalent to Theorem \ref{thm: mod p jump} via Lemma \ref{lem: mod p conj fil}. All   results in this subsection   were first proved by  Gee--Kisin; comparison of methods is briefly discussed in  Remark \ref{rem history intro}.

\begin{theorem}[{Gee--Kisin, cf. \cite{GK-ias}}] \label{thm: mod p jump}
Use Notation \ref{nota: crys final} (so $K$ is unramified, and $T$ is crystalline).
\begin{enumerate}
    \item Suppose  $n \notin \{ r_i+kp, k \in \bbz, 1\leq i\leq d \} \cap [0, r_d]$  (that is: if $n \geq r_d+1$ or $n \not\equiv r_i \pmod{p}$ for all $i$), then  
 \[\gr_n  \bargm_\HT =0 \]

 \item More precisely, let $0 \leq b_1\leq  \cdots \leq b_d$ be the jumps of $\fil_\bullet \bargm_\HT$ counted with multiplicities, then $b_i \leq r_d$ for each $i$ and
\[ \{ b_1, \cdots, b_d\} \equiv  \{r_1, \cdots, r_d\} \pmod p\]
in the sense that both sides define a same (un-ordered) set of elements in $\bbz/p\bbz$ with same multiplicities.
\end{enumerate} 
\end{theorem}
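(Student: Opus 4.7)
The plan is to deduce both parts from the mod $p$ filtered Sen theory (Thm \ref{thm: mod p Sen op fil}) together with a characteristic polynomial computation. Since $K$ is unramified and $T$ is crystalline, the constant $a$ in Def \ref{def: amplified Sen} equals $1$, so $\overline\Theta = \overline\theta$ is a $k$-linear endomorphism of $\bargm_\HT$; by Thm \ref{thm: mod p Sen op fil}, $\overline\theta$ preserves the conjugate filtration $\fil_\bullet^\rmconj \bargm_\HT$ and acts on each $\gr_m \bargm_\HT$ as scaling by $m \in k$.

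The central step will be to compute the characteristic polynomial of $\overline\theta$ in two ways. Because $K$ is unramified, $\ok = W(k)$ and $\gs/E \simeq W(k)$, so $\gm_\HT$ is a free $W(k)$-module of rank $d$. By Thm \ref{thmintsen} (with $a=1$), $\theta$ is a $W(k)$-linear endomorphism of $\gm_\HT$, and by Prop \ref{prop: K ratitonal sen} it becomes semisimple with eigenvalues $r_1, \dots, r_d$ after inverting $p$. Hence the characteristic polynomial of $\theta$ on $\gm_\HT$ is $\prod_i (X - r_i) \in W(k)[X]$, and reducing mod $p$ yields the characteristic polynomial of $\overline\theta$ on $\bargm_\HT = \gm_\HT/p\gm_\HT$ as $\prod_i (X - \overline{r_i}) \in k[X]$. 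On the other hand, choosing a $k$-basis of $\bargm_\HT$ adapted to $\fil_\bullet^\rmconj$ makes $\overline\theta$ upper triangular with diagonal entries the scalars $m$ occurring with multiplicity $\dim_k \gr_m \bargm_\HT$; this same characteristic polynomial therefore also equals $\prod_{i=1}^d (X - b_i)$. Equating the two expressions gives the multiset identity $\{b_1,\dots,b_d\} \equiv \{r_1,\dots,r_d\} \pmod{p}$ in $k$.

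To finish part (2), I would note that $\gm$ has $E$-height $\leq r_d$, hence $\fil^i \gmast = E^i \gm$ for $i \geq r_d$; this forces $\fil_{r_d}^\rmconj \gm_\HT = \gm_\HT$, and reducing mod $\pi$ gives $\fil_{r_d}^\rmconj \bargm_\HT = \bargm_\HT$, so every jump satisfies $b_i \leq r_d$. Part (1) is then a corollary: vanishing for $n \geq r_d+1$ follows from the jump bound just established, while for $n \in [0, r_d]$ with $n \not\equiv r_i \pmod p$ for any $i$, part (2) says $n \not\equiv b_j \pmod p$ either, so $n$ is not a jump and $\gr_n \bargm_\HT = 0$.

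I do not foresee a serious obstacle: Thm \ref{thm: mod p Sen op fil} supplies the filtered structure, the integral lift $\theta$ on $\gm_\HT$ is already in hand, and the remaining content is essentially the observation that, for an operator respecting a filtration and acting by scalars on each graded piece, the characteristic polynomial reads off the jumps with multiplicity. The one conceptual subtlety is that this clean argument relies crucially on $a=1$; for ramified $K$ or truly semi-stable $T$, Rem \ref{rem unram crys mod p} says $\overline\theta$ becomes nilpotent and the approach collapses, consistent with the expectation that no analogue of Thm \ref{thm: mod p jump} should hold in those settings.
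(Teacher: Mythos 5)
Your proof is correct and is essentially the paper's argument: the paper also reduces everything to part (2), gets $b_i\le r_d$ from the Frobenius height, and compares the multiset $\{b_i\}$ with $\{r_i\}$ mod $p$ by computing, for each $s\in\bbz/p\bbz$, the dimension of the generalized $s$-eigenspace of $\overline\theta$ on $\bargm_\HT$ once from the characteristic polynomial $\prod_i(x-r_i)$ and once via the graded pieces of the conjugate filtration (your upper-triangular/characteristic-polynomial phrasing is the same computation). No gaps.
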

\begin{proof} 
It suffices to prove the stronger Item (2). Note the Frobenius height of $\bargm$ is still $r_d$, and hence $\gr_n  \bargm_\HT =0$ for $n >r_d$; thus $b_i \leq r_d$.  Now denote the sets $B=\{ b_1, \cdots, b_d\}$ and $R=  \{r_1, \cdots, r_d\}$. For $s \in \bbz/p\bbz$, let 
$\mu_B(s)$ be the multiplicity of $s$ in $\{ b_1, \cdots, b_d\} \pmod p$. Define $\mu_R(s)$ similarly. Thus, we want to prove that for each $s$,
\[ \mu_B(s) =\mu_R(s) \] 
The characteristic polynomial of $\theta$ on $\gm_\HT$ and hence also on $\bargmht$ is $\Pi_i (x-r_i)$. 
Thus for $ s\in \bbz/p\bbz$, the dimension of generalized eigenspace of eigenvalue $s$   is exactly $\mu_R(s)$.
By Theorem \ref{thm: mod p Sen op fil}, the induced action of $\theta$ on $\gr_n \bargm_\HT$ is scaling  by $n$. Thus  the dimension of generalized eigenspace of eigenvalue $s$ (which can be computed by taking graded pieces of $\bargm_\HT$) is also $\mu_B(s)$. Thus we conclude.
\end{proof}


\begin{notation} \label{nota: mod p bk}
Let $\bargm$ be a mod $p$ Breuil--Kisin module (not necessarily from reduction of a crystalline representation). 
Fix a basis $\vec e$, write $\phi(\vec e)=\vec e A$.
Since $k[[u]]$ is a PID, the matrix $A$ has a decomposition $A=X \cdot \diag(u^{a_1}, \cdots, u^{a_d}) \cdot Y$, where $X, Y$ are invertible matrices and $\diag(u^{a_1}, \cdots, u^{a_d})$ is a diagonal matrix with $0 \leq a_1 \leq \cdots \leq a_d$. (The elements $a_i$ are uniquely determined by $\bargm$.)
\end{notation}

\begin{lemma} \label{lem: mod p conj fil}
 Use Notation \ref{nota: mod p bk}.
 For $j \in \bbz$, let $\mathrm{mult}(j)$ be the multiplicity of $j$ in the set $\{a_1, \cdots, a_d\}$.
 Then
 \[ \dim_k \gr_n \bargm_\HT  =\mathrm{mult}(n)\]  
\end{lemma}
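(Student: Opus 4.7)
The strategy is to use the decomposition $A=XDY$ (a Smith normal form over the PID $k[[u]]$) to diagonalize the inclusion $\bargm^\ast\into\bargm$, after which the conjugate filtration becomes completely transparent. The ambient setting here is the unramified one, so that $E\equiv u\pmod p$; this is consistent with $\dim_k\bargm_\HT = d = \sum_n\mathrm{mult}(n)$.

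First I would perform the change of basis. Let $\vec e^\ast := 1\otimes\vec e$ be the induced basis of $\bargm^\ast$, so that $(1\otimes\phi)(\vec e^\ast)=\phi(\vec e)=A\vec e$. Define a new basis $\vec f := Y\vec e$ of $\bargm$ and a new basis $\vec g := X^{-1}\vec e^\ast$ of $\bargm^\ast$. Using the $k[[u]]$-linearity of $1\otimes\phi$:
$$(1\otimes\phi)(\vec g)\;=\;X^{-1}A\vec e\;=\;X^{-1}(XDY)(Y^{-1}\vec f)\;=\;D\vec f.$$
Identifying $\bargm^\ast$ with its image in $\bargm$ under $1\otimes\phi$ (Notation~\ref{nota: ring mod for fil}), this produces the diagonal presentation
$$\bargm^\ast\;=\;\bigoplus_{j=1}^d u^{a_j}\,k[[u]]\,f_j\;\subset\;\bigoplus_{j=1}^d k[[u]]\,f_j\;=\;\bargm.$$

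Second, I would read off the Nygaard filtration. Since $E\equiv u\pmod p$,
$$\fil^i\bargm^\ast\;=\;\bargm^\ast\cap u^i\bargm\;=\;\bigoplus_{j=1}^d u^{\max(a_j,i)}\,k[[u]]\,f_j.$$
The $j$-th summand contributes to $\fil^i\bargm^\ast/\fil^{i+1}\bargm^\ast$ exactly when $a_j\leq i$ (otherwise $\max(a_j,i)=a_j=\max(a_j,i+1)$); in that case the contribution is the one-dimensional line $k\cdot u^i f_j$. Applying the injection $1/u^i:\fil^i\bargm^\ast/\fil^{i+1}\bargm^\ast\into \bargm/u\bargm=\bargm_\HT$ from Def.~\ref{defn: conj fil} sends each generator $u^i f_j$ (for $a_j\leq i$) to $\bar f_j$. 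Since $\{\bar f_j\}_{j=1}^d$ is a $k$-basis of $\bargm_\HT$, we conclude
$$\fil_i^\rmconj\bargm_\HT\;=\;\mathrm{span}_k\{\bar f_j:a_j\leq i\},$$
so $\dim_k\gr_n\bargm_\HT = |\{j:a_j=n\}| = \mathrm{mult}(n)$.

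There is no real obstacle here: once the Smith diagonalization is in place, the lemma reduces to a direct bookkeeping of how the elementary divisors $a_j$ simultaneously control the Nygaard filtration on $\bargm^\ast$ and its image in $\bargm_\HT$. The conceptual content is simply the recognition that the Smith invariants of $A$ coincide with the jumps of the conjugate filtration (equivalently, by Lem.~\ref{lem: matching graded dR and HT}, of $\fil^\bullet\bargm_\dR$), which is what justifies the parenthetical remark in the introduction preceding Thm.~\ref{thm intro y matrix}.
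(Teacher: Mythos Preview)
Your proof is correct and follows essentially the same approach as the paper: both arguments diagonalize the inclusion $\bargm^\ast\into\bargm$ via Smith normal form, obtaining bases $g_j$ of $\bargm^\ast$ and $f_j$ of $\bargm$ with $g_j=u^{a_j}f_j$, and then read off the Nygaard and conjugate filtrations directly. The paper expresses the filtration as $\fil^n\bargmast=\oplus u^{\lceil n-a_i\rceil}f_i^\ast$ in the $\bargm^\ast$-basis, while you phrase it as $\oplus u^{\max(a_j,n)}k[[u]]f_j$ in the $\bargm$-basis, but these are the same computation.
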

\begin{proof} One can find a basis $f_i^\ast$ of $\bargm^\ast$, such that
    \[ f_i^\ast =u^{a_i}g_i \]
    with $g_i$ forming a basis of $\bargm$. Thus
    \[ \fil^n \bargmast = \oplus_{i=1}^d \overline{\gs} \cdot  u^{\max\{ n-a_i, 0\}} \cdot f_i^\ast \] 
 Then one can easily compute other filtrations to conclude.
\end{proof}

\begin{theorem}[{Gee--Kisin, cf. \cite{GK-ias}}] \label{thm: gee kisin} 
Use Notation \ref{nota: crys final} (so $K$ is unramified, and $T$ is crystalline). Let $\bargm$ be the reduction of $\gm$, and   use Notation \ref{nota: mod p bk}. Then
\[ \{ a_1, \cdots, a_d\} \equiv  \{r_1, \cdots, r_d\} \pmod p\]
in the sense that both sides define a same (un-ordered) set of elements in $\bbz/p\bbz$ with same multiplicities.
  \end{theorem}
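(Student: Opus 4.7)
\textbf{Proof proposal for Theorem \ref{thm: gee kisin}.} The plan is to show that the theorem is essentially an immediate corollary of Theorem~\ref{thm: mod p jump}(2) combined with Lemma~\ref{lem: mod p conj fil}. No new technical machinery is needed; the work has already been done in the preceding statements, and one only needs to match the two combinatorial sets of data.

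First I would observe that by Lemma~\ref{lem: mod p conj fil}, the multiset $\{a_1,\dots,a_d\}$ appearing in the Smith-type decomposition $A=X\cdot\diag(u^{a_1},\dots,u^{a_d})\cdot Y$ is characterized intrinsically in terms of the conjugate filtration: for each $n\in\bbz_{\geq 0}$,
\[
\dim_k \gr_n \bargm_\HT \;=\; \#\{i : a_i=n\}.
\]
In particular, if we let $0\leq b_1\leq\cdots\leq b_d$ be the jumps of $\fil_\bullet \bargm_\HT$ counted with multiplicity (as in Theorem~\ref{thm: mod p jump}(2)), then by definition these $b_i$'s are precisely the values $n$ such that $\gr_n \bargm_\HT\neq 0$, counted with their dimensions. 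The displayed equality above then gives the identification of multisets
\[
\{a_1,\dots,a_d\} \;=\; \{b_1,\dots,b_d\}
\]
as multisets of non-negative integers (not merely modulo $p$).

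Second, I would simply invoke Theorem~\ref{thm: mod p jump}(2), which asserts that $\{b_1,\dots,b_d\}\equiv\{r_1,\dots,r_d\}\pmod p$ with matching multiplicities. Transporting this congruence through the identification above yields the desired congruence $\{a_1,\dots,a_d\}\equiv\{r_1,\dots,r_d\}\pmod p$, completing the proof.

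The main (and only) substantive content has thus been absorbed into Theorem~\ref{thm: mod p jump}(2), whose proof relies on the mod~$p$ filtered Sen theory of Theorem~\ref{thm: mod p Sen op fil}: the operator $\overline{\Theta}$ acts by the scalar $n$ on $\gr_n\bargm_\HT$, and its characteristic polynomial on $\bargm_\HT$ is the reduction mod $p$ of $\prod_i(x-r_i)$. No obstacle is expected at the present step; the only thing to be careful about is to make explicit that the two descriptions of the ``jump data'' of $\fil_\bullet\bargm_\HT$---one in terms of the elementary divisors of the Frobenius matrix, the other in terms of filtration jumps counted with multiplicity---really do coincide as multisets, which is exactly the content of Lemma~\ref{lem: mod p conj fil}.
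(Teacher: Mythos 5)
Your proposal is correct and coincides with the paper's own proof: the paper likewise identifies $\{a_1,\dots,a_d\}=\{b_1,\dots,b_d\}$ (on the nose, not just mod $p$) via Lemma~\ref{lem: mod p conj fil} and then invokes Theorem~\ref{thm: mod p jump}(2). Nothing is missing.
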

\begin{proof} 
Use Notation in Theorem \ref{thm: mod p jump}, then $\{ b_1, \cdots, b_d\}=\{a_1, \cdots, a_d\}$ (not just modulo $p$) by  Lemma \ref{lem: mod p conj fil}, and thus we can conclude using  Theorem \ref{thm: mod p jump}.
\end{proof}

\section{Shape of integral Frobenius} \label{sec: frob matrix}

We first introduce  various Frobenius matrix conditions in \S \ref{subsec frob cond}. 
We then focus on the (weak) \emph{integral} conditions in \S \ref{subsec: weak frob}. 
The mod $p$ conditions will only be discussed in   \S \ref{sec:mod-p-shape}. 
The results of this section and  \S \ref{sec:mod-p-shape} leads to a conceptual reproof of a   difficult theorem    \cite{GLS14} by Gee, Savitt and the second named author, cf. \S \ref{subsec: GLS reproof}.

\subsection{Frobenius matrix conditions} \label{subsec frob cond}
  
  We introduce various Frobenius matrix conditions for Breuil--Kisin modules to facilitate discussions. These definitions only concern the $\phi$-operator; thus in this subsection, the modules do not necessarily come from integral semi-stable representations.
  
  \begin{notation} \label{nota general BK mod}
 (Allow $K$ to be ramified). Let $\gm$ be an (effective) Breuil--Kisin module (that is not necessarily attached to a semi-stable representation). Define $\gm^\ast, \gm_\dR, \gm_\HT$ etc.~ as in \S \ref{sec: conj fil}. As the triple $(\gs, E, \phi)$ satisfies the axioms in Notation \ref{nota:adapted}, one can define
 \[\fil^\bullet D_\dR:= \fil^\bullet \gm_\dR[1/p], \] 
 which is a filtered $K$-vector space; denote the jumps as $\{r_1 \leq \cdots \leq r_d\}$ and call them the Hodge--Tate weights of $\gm$.
  \end{notation}

\begin{defn}  \label{def: strong frob cond}
Use Notation \ref{nota general BK mod}.
 Let $\Lambda=\diag(E^{r_i})$ denote the diagonal matrix with diagonal  entries $E^{r_i}$. 
Consider the following conditions.
\begin{enumerate}
\item   \label{item weak frob old} Say $\gm$ satisfies the \emph{weak  Frobenius condition}, if there exists a (hence any) basis   $\vec e_1$ of $\gm$, such that $\phi(\vec e_1)=(\vec e_1) X_1\Lambda Y_1$ with $X_1, Y_1 \in \GL_d(\gs)$.
 
\item  \label{item strong frob} Say $\gm$ satisfies the \emph{strong  Frobenius condition}, if there  exists a (hence any) basis   $\vec e_2$ of $\gm$, such that $\phi(\vec e_2)=(\vec e_2) X_2\Lambda Y_2$ with $X_2, Y_2 \in \GL_d(\gs)$, and $Y_2 \pmod p \in \mathrm{GL}_d(k[[u^p]])$.

\item  \label{item strong mod p} 
Let $\bargm$ be the mod $p$ reduction of $\gm$. 
Say $\bargm$ satisfies the  \emph{strong mod $p$ Frobenius condition}, if there exists a (hence any) basis $\vec e_3$ of $\bargm$, such that $\phi(\vec e_3)=(\vec e_3) X_3\Lambda Y_3$ with $X_3  \in \GL_d({k[[u]]})$ and $Y_3 \in \mathrm{GL}_d(k[[u^p]])$.

 \item \label{item unaligned mod p} 
Let $\overline{\gn}$  be a mod $p$ Breuil--Kisin module (that is not necessarily the reduction of an integral Breuil--Kisin module; hence there is \emph{a priori} no notion of Hodge--Tate weights as in Notation \ref{nota general BK mod}.)
    Say $\overline{\gn}$  satisfies the  \emph{unaligned mod $p$ Frobenius condition}, if there exists a (hence any) basis $\vec e_4$ of $\overline{\gn}$, such that $$\phi(\vec e_4)=(\vec e_4) X_4DY_4, \text{ with } X_4  \in \GL_d(k[[u]]), Y_4 \in \mathrm{GL}_d(k[[u^p]]),$$
     and $D$ is a diagonal matrix with diagonal entries $u^{a_i}$ for some $a_i \geq 0$. Note the $a_i$'s are uniquely determined up to permutation, using the fact that $k[[u]]$ is a PID; the emphasis of this condition is on $Y_4$ since \emph{a priori} it is just some invertible matrix over $k[[u]]$. 
(Caution: the weak  Frobenius condition does not imply this condition; thus we refrain from using ``weak" here.) 
 \end{enumerate}
\end{defn}

\begin{remark} \label{rem: weak frob cond equiv}
\begin{enumerate}
\item  \label{item equiv strong} Definition \ref{def: strong frob cond}\eqref{item strong frob} is equivalent to the condition that there exists a (but not necessarily any) basis $\vec e$ of $\gm$, such that $\phi(\vec e)=(\vec e) X\Lambda Y$ with $X, Y \in \GL_d(\gs)$, and furthermore $Y\equiv I_d \pmod p$ (here $I_d$ is the identity matrix). Indeed, if Definition \ref{def: strong frob cond} \eqref{item strong frob} is satisfied, then  there exists some $C\in \mathrm{GL}_d(\gs)$ such that $ Y_2 (\phi(C))^{-1} \equiv 1 \pmod p$. One can use $\vec e =\vec{e_2}C^{-1} $ to verify the condition here. This is the condition used in \cite[Theorem 4.1]{GLS14}.

\item    Similar to the discussion in above, one can require $Y_3$ and $Y_4$ in  Definition \ref{def: strong frob cond}  to be the identity matrix, but now only for some (and not all) bases $\vec{e_3'}, \vec{e_4'}$. We also caution the differences here with the matrix decomposition in Notation \ref{nota: mod p bk}.
\end{enumerate}
 \end{remark}

The following lemma is obvious.

\begin{lemma} \label{lem: equiv strong frob} 
Use Notations in Definition \ref{def: strong frob cond}. The following   statements are equivalent:
\begin{enumerate}
\item $\gm$ satisfies strong  Frobenius condition; 
    \item $\gm$ satisfies weak Frobenius condition and $\bargm$ satisfies strong  mod $p$ Frobenius condition. 
     \item{ $\gm$ satisfies weak Frobenius condition and $\bargm$ satisfies unaligned mod $p$ Frobenius condition.} 
\end{enumerate} 
 (Caution: \emph{a priori}, the relevant bases $\vec e_i$ are not the same).
\end{lemma}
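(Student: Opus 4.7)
The plan is to prove the cycle (1) $\Rightarrow$ (2) $\Rightarrow$ (3) $\Rightarrow$ (1), with most of the real work in (2) $\Rightarrow$ (1). First I would observe that all three conditions are basis-independent: if $\vec{e}' = \vec{e} C$ with $C \in \GL_d(\gs)$ (resp.~$\GL_d(k[[u]])$), then a factorization $X \Lambda Y$ transforms to $(C^{-1} X) \Lambda (Y \phi(C))$, and since $\phi$ sends $\GL_d(k[[u]])$ into $\GL_d(k[[u^p]])$, the ``$Y \in \GL_d(k[[u^p]])$'' condition is preserved under any change of basis. The implication (1) $\Rightarrow$ (2) is then immediate by reducing mod $p$; and (2) $\Rightarrow$ (3) holds because $E \equiv u^e \pmod p$ (with $e$ the ramification index of $K/K_0$), so a strong mod $p$ factorization $X_3 \Lambda Y_3$ is already of unaligned form with $a_i = er_i$.

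For (3) $\Rightarrow$ (2), the weak Frobenius on $\gm$ factors $\overline{A} = \overline{X_1} \diag(u^{er_i}) \overline{Y_1}$ with $\overline{X_1}, \overline{Y_1} \in \GL_d(k[[u]])$, while the unaligned condition supplies another factorization $\overline{A} = X_4 \diag(u^{a_i}) Y_4$. By uniqueness of Smith normal form over the PID $k[[u]]$, one has $\{a_i\} = \{er_i\}$ as multisets. Absorbing a permutation matrix (which lies in $\GL_d(k[[u^p]])$), the unaligned factorization becomes of the form $X'_4 \Lambda Y'_4$, witnessing strong mod $p$ Frobenius.

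For the main implication (2) $\Rightarrow$ (1), I would first adapt Remark \ref{rem: weak frob cond equiv}\eqref{item equiv strong} to the mod $p$ setting---which works because $\phi \colon k[[u]] \to k[[u^p]]$ is a ring isomorphism, so any $Y_3 \in \GL_d(k[[u^p]])$ is of the form $\phi(C)$ for a unique $C \in \GL_d(k[[u]])$---to choose a basis $\vec{e}_3$ of $\bargm$ with $\phi(\vec{e}_3) = \vec{e}_3 X_3 \Lambda$ (i.e.~with $Y_3 = I$). Lift $\vec{e}_3$ to a basis $\vec{e}_2$ of $\gm$ via Nakayama, and apply weak Frobenius to produce \emph{some} factorization $\phi(\vec{e}_2) = \vec{e}_2 X \Lambda Y$ with $X, Y \in \GL_d(\gs)$. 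Reducing mod $p$ and comparing, the equality $\overline{X} \Lambda \overline{Y} = X_3 \Lambda$ with $X_3 \in \GL_d(k[[u]])$ forces the divisibility $u^{e(r_j - r_i)} \mid \overline{Y}_{ij}$ whenever $r_i < r_j$ (so that $\Lambda \overline{Y} \Lambda^{-1}$ remains integral).

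The final step is to exploit the non-uniqueness of this factorization: $A = X \Lambda Y$ can be replaced by $(XP) \Lambda (Q^{-1} Y)$ for any $P, Q \in \GL_d(\gs)$ with $P \Lambda = \Lambda Q$, a relation equivalent to $E^{r_j - r_i} \mid Q_{ij}$ for $r_j > r_i$ (with $P = \Lambda Q \Lambda^{-1}$ then automatic). I plan to lift $\overline{Y}$ itself to such a $Q$: for $r_i < r_j$, write $\overline{Y}_{ij} = u^{e(r_j - r_i)} \overline{b}_{ij}$ and set $Q_{ij} = E^{r_j - r_i} b_{ij}$ for an arbitrary lift $b_{ij} \in \gs$ of $\overline{b}_{ij}$; for the remaining entries take arbitrary lifts. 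Then $\overline{Q} = \overline{Y} \in \GL_d(k[[u]])$ implies $Q \in \GL_d(\gs)$ (as $\gs$ is local), and taking determinants of $P \Lambda = \Lambda Q$ shows $\det P = \det Q \in \gs^\times$, so $P \in \GL_d(\gs)$ as well. The new factorization $A = (XP) \Lambda (Q^{-1} Y)$ satisfies $\overline{Q^{-1} Y} = I_d \in \GL_d(k[[u^p]])$, yielding strong Frobenius. The main obstacle I anticipate is verifying this precise combinatorial matching: that the ``block-lower-triangular modulo powers of $u$'' structure forced on $\overline{Y}$ by the mod $p$ identity is exactly the structure allowing $Q$ to lift integrally with $P = \Lambda Q \Lambda^{-1}$ also integral. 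The coincidence is not accidental---both constraints originate from the same intertwining $P \Lambda = \Lambda Q$---but executing the bookkeeping cleanly is what makes the argument work.
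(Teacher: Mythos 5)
Your argument is correct. Note, however, that the paper offers no proof at all here: it simply declares the lemma obvious and moves on, so there is no argument of the authors' to compare yours against. Your write-up supplies exactly the details that make the "obvious" claim true: the only implication with real content is (2) (or (3)) $\Rightarrow$ (1), and its essence is that the stabilizer of $\Lambda$ — matrices $Q$ with $P=\Lambda Q\Lambda^{-1}$ also integral, i.e.\ $E^{r_j-r_i}\mid Q_{ij}$ for $r_j>r_i$ — surjects from $\GL_d(\gs)$ onto its mod-$p$ analogue (where the condition becomes $u^{e(r_j-r_i)}\mid \overline{Q}_{ij}$), precisely because $E\equiv u^e \pmod p$. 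Your entrywise lift $Q_{ij}=E^{r_j-r_i}b_{ij}$, together with the determinant argument showing $P,Q\in\GL_d(\gs)$, establishes this cleanly; the remaining implications (reduction mod $p$, Smith normal form uniqueness over $k[[u]]$ to identify $\{a_i\}=\{er_i\}$, and $\phi\colon k[[u]]\to k[[u^p]]$ being an isomorphism since $k$ is perfect) are all handled correctly.
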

 

\subsection{Weak Frobenius condition} \label{subsec: weak frob}

\begin{lem}  \label{lem: weak frob equiv} 
Use Notation \ref{nota general BK mod} (thus $\gm$ is not necessarily attached to a semi-stable representation).
 The following statements are equivalent:
\begin{enumerate}
\item \label{item weak frob}
$\gm$ satisfies weak Frobenius condition (Definition \ref{def: strong frob cond}\eqref{item weak frob old});

\item \label{item adap basis}  $\fil^\bullet \gmast$ has an adapted basis as in Notation \ref{nota:adapted};

\item The map $ \fil^i \gm_\dR \into \gm_\dR \cap \fil^i D_\dR $ is bijective for each $i$;

\item \label{item prop gls14 torfree} $\gr^i \gm_\dR=\gr_i \gm_\HT$ is torsion-free for each $i$.
\item The map  $ \fil_i \gm_\HT \into \gm_\HT \cap \fil_i \cm_\HT$ is bijective for each $i$;
\end{enumerate}   
\end{lem}
\begin{proof} 
With the axiomatic Lemma \ref{lem:adapted} applicable here, it   remains to prove  \eqref{item weak frob} $\Leftrightarrow$ \eqref{item adap basis}.
 For \eqref{item weak frob} $\Rightarrow$ \eqref{item adap basis}: if $\phi(\vec e) =(\vec e) X \Lambda Y$, then $(\vec e) X \Lambda $ is an adapted basis of $\gmast$. 
For \eqref{item adap basis} $\Rightarrow$ \eqref{item weak frob}: this is essentially the argument in second paragraph of the proof of  \cite[Thm. 4.20]{GLS14}, which we repeat here for convenience. Indeed choose any basis $\vec e$ of $\gm$, and write $\phi(\vec e)=\vec e A$; then there exists a matrix $B$ over $\gs$ such that $AB=E^{r_d}$.
  Let $\vec{\hat{e}}$ be an adapted basis of $\gmast$. As $(\vec e)A$ is also a basis of $\gmast$, there is an invertible matrix $Y$ such that
  \[(\vec e)A =\vec{\hat{e}}Y \]
  Consider $\fil^{r_d}\gmast=E^{r_d}\gm$, then there is another invertible matrix $X$ such that
  \[ (\vec e)E^{r_d}X= \vec{\hat{e}}E^{r_d} \Lambda^{-1}\]
  Then one easily deduces $A=X\Lambda Y$.
   \end{proof}
 

\begin{remark} \label{rem frob shape f gauge}
Suppose $\gm$  comes from an integral \emph{crystalline} representation  $T$, then the equivalent conditions in Lemma \ref{lem: weak frob equiv} are further equivalent to the following:
\begin{itemize}
\item The  $F$-gauge (cf. \cite{Bha22}) corresponding to $T$ is a vector bundle (on the syntomic stack $(\ok)^{\mathrm{syn}})$.
\end{itemize}
(We learn of the following stacky argument from Bhargav Bhatt).
Consider
\[ \spf(\wt{A}):=\spf( W(k)[[x]][u,t]/(ut-E(x)) ) \to \o_K^{\caln} \to \o_K^{\syn} \]
where the first map is the    faithfully flat cover as in \cite[Example 5.5.20]{Bha22} (we follow notation there and use $x$ as the variable for $\gs$), and the second map is the   \'etale cover as in \cite[Definition6.1.1]{Bha22}. It thus suffices to prove the pull-back   $\cale_{\wt A}$ is a vector bundle. The reduction $\cale_{\wt A}/(t, u)$ is precisely $\oplus_{i \in \bbz} \gr_i \gm_\HT$ and hence is free over $\ok$ by Condition \eqref{item prop gls14 torfree} of Lemma \ref{lem: weak frob equiv}. The module $\cale_{\wt A}$ thus has depth 3, and hence is projective by the Auslander--Buchsbaum formula.    

One can also argue using slightly more concrete  sheaf theoretic languages (as developed in e.g. \cite{GuoLigauge}). For example,  see \cite[Prop. 2.27, Lem. 2.28]{IKY2} for some similar discussions. 
\end{remark}

\subsection{Strong Frobenius condition: the case with weights $\leq p$} \label{subsec: GLS reproof}

In this short subsection, we reprove a   theorem in \cite{GLS14}, cf.~ Theorem \ref{thm: reprove gls14}. The proof actually uses results in   \S \ref{sec:mod-p-shape} (on mod $p$ shape of Frobenius); we include this subsection here because it is short, and the statement is an ``integral" one.
 We remark that this result is a most   technical theorem in \cite{GLS14}, and is a \emph{core} reason that Serre weight conjecture in (unramified) $\GL_2$-case can be fully proved in \emph{loc. cit.}.  Note the bound $r_d \leq p$ in Theorem \ref{thm: reprove gls14} is necessary by  \cite[Example 6.8]{GLS14}.

\begin{thm} \label{thm: reprove gls14} \cite[Thm. 4.1]{GLS14}. 
 Suppose $K$ is unramified, $T$ is crystalline, and $r_d \leq p$. Then $\gm$ satisfies the strong Frobenius condition in Definition \ref{def: strong frob cond}\eqref{item strong frob} (cf. also Remark \ref{rem: weak frob cond equiv}\eqref{item equiv strong}). 
\end{thm}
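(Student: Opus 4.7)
The plan is to verify the two conditions listed in Lemma~\ref{lem: equiv strong frob}(3), and then invoke that lemma. Namely, it suffices to establish separately: (i) the weak Frobenius condition for $\gm$, and (ii) the unaligned mod $p$ Frobenius condition for $\bargm$.

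For (i): by the equivalence (3)$\Leftrightarrow$(4) in Lemma~\ref{lem: weak frob equiv}, the weak Frobenius condition is equivalent to $\gr_n \gm_\HT$ being torsionfree for every $n$. Under the hypothesis $r_d \leq p$, this is precisely Example~\ref{ex: torfree p 2p}\eqref{item torfree p case}; concretely, Theorem~\ref{thm: vanish integral}\eqref{item torfree n} handles $n \notin \{r_i + kp : k \geq 1\}$, while the only remaining case is $n = r_d = p$ (which forces some $r_i = 0$ with $k=1$), and this is exactly the border torsionfreeness case $n = r_d$ from Theorem~\ref{thm: bound expo}\eqref{item nbig vanish}.

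For (ii): this is exactly the content of Theorem~\ref{thm intro y matrix} (Bhatt--Gee--Kisin), whose proof is the subject of \S\ref{sec:mod-p-shape}, resting on a mod-$p$ filtered Sen theory together with a ``$p$-Griffiths transversality'' on an increasing filtration. Note that Theorem~\ref{thm intro y matrix} requires no bound on $r_d$; we will simply apply it to our $\bargm$.

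Combining (i) and (ii) through Lemma~\ref{lem: equiv strong frob} yields the strong Frobenius condition, completing the proof. The main obstacle is Theorem~\ref{thm intro y matrix}: it is the nontrivial ingredient that promotes the purely integral torsionfreeness (which alone only controls the diagonal $D$ on the nose, cf.~Lemma~\ref{lem: mod p conj fil}) to the much stronger statement that the right factor $Y$ lies in $\GL_d(k[[u^p]])$. Once this is granted, the assembly here is immediate and renders conceptual the original delicate approximation argument of \cite{GLS14}.
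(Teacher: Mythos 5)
Your proposal is correct and follows essentially the same route as the paper: verify the weak Frobenius condition via the torsionfreeness of $\gr_\bullet \gm_\HT$ when $r_d\leq p$ (Example \ref{ex: torfree p 2p}), verify the unaligned mod $p$ condition via Theorem \ref{thm mod p frob}, and combine through Lemma \ref{lem: equiv strong frob}. Your extra elaboration of the torsionfreeness (isolating the border case $n=r_d=p$) is a harmless refinement of what the paper leaves to the cited example.
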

\begin{proof}
With Lem. \ref{lem: equiv strong frob} in mind, we first verify  weak Frobenius condition: by the filtration  Lemma \ref{lem: weak frob equiv}, it suffices to note $\gr_\bullet \gm_\HT$ is torsion-free in this case by Example \ref{ex: torfree p 2p}\eqref{item torfree p case}.
We then need to verify the  unaligned mod $p$ Frobenius condition: it will be proved in Theorem \ref{thm1 mod p frob}, indeed without restriction on $r_d$. 
\end{proof}

\begin{remark} \label{rem diff GLS}
We highlight   the differences between our strategy with that of \cite{GLS14}.
\begin{enumerate}
\item The proof of \cite[Thm. 4.1]{GLS14} starts with a basis  for $\fil^i \gm_\dR$, and then \emph{lifts} it to a basis for $\fil^i \gmast$.
The construction repeatedly  uses the (\emph{rational}) operator $N$ on $\cald$,  relying particularly on the  Griffiths transversality it satisfies. 
Indeed, very roughly, the lifting process uses an ``approximation" technique, by \emph{truncating} the ``exponential" of the $N$-operator, which then involve very delicate \emph{integrality} analysis.

\item In contrast, the main innovation in our reproof lies on an \emph{extra} filtration: the conjugate filtration; the extra ``symmetry" it satisfies (the Sen operator with its   shrinking properties) leads to substantial simplification of the argument.  Indeed, this reproof shows that for applications, the  shrinking of the (integral) Sen operator not only can ``substitute" the use of Griffiths transversality of $N$, its stronger symmetry (cf. Remark \ref{rem: nnabla and N}\eqref{item nnabla inherit}) could lead to stronger consequences. 
 \end{enumerate}
\end{remark}


\begin{remark}  \label{rem: future}
We comment on the \emph{ramified} case of Theorem \ref{thm: reprove gls14}.
\begin{enumerate}
\item  In the sequel \cite{GLS15} to \cite{GLS14}, the authors also fully prove Serre weight conjecture in the \emph{ramified} $\GL_2$-case, i.e., when the relevant $K$ is ramified. Similar to the scenario in \cite{GLS14}, the central technical theorem is to prove the \emph{pseudo-Barsotti--Tate} crystalline representations satisfy a certain ``strong Frobenius condition", cf. \cite[Thm. 2.4.1]{GLS15}.

\item  In a sequel \cite{GLram}, we can also use filtered Sen theory to reprove \cite[Thm. 2.4.1]{GLS15}. Here is a very important catch: when $K$ is ramified, we no longer have $\Theta=\theta_\kinfty$ (cf. Definition \ref{def: amplified Sen})! Indeed, the results from \S \ref{sec: vanishing} are not readily valid any more. A key idea is to consider crystalline representations  defined over a coefficient field $E/\qp$ where $E$ contains all Galois conjugates of $K$; this leads to other ``normalizations" of Sen operators making the ideas of the current paper useful.  
\end{enumerate}
 \end{remark}

 \addtocontents{toc}{\ghblue{Sen theory: prismatic approach}}
  
\section{Prismatic interpretation of filtered Sen theory} \label{sec: pris interpretation}

In this section, we use prismatic arguments to ``reconstruct" filtered Sen theory in \S \ref{sec: fil sen}. The proof builds on the classification of (log-) prismatic $F$-crystals and Hodge--Tate crystals, as well as their connection with Sen theory over the Kummer tower.
We shall be brief here, and refer the readers to \cite{BS22, Kos21} for foundations of (log-) prismatic site.  Let $\star \in \{ \emptyset, \log \}$, and let $(\ok)_{\pris, \star}$ be the absolute (log-) prismatic site of $\spf \ok$. Let $\opris, \calI_\prism$ be the structure sheaf and the ideal sheaf, let $\baropris=\opris/\calI_\prism$ be the Hodge--Tate structure sheaf. Let $(\gs, (E), \star)$ be the  Breuil--Kisin ($\star$-) prism.



\subsection{Prismatic crystals}
\begin{theorem} 
Let $\star \in \{ \emptyset, \log \}$. 
There is an equivalence of categories
\[ \vect^{\phi, \mathrm{eff}}((\ok)_{\pris, \star}, \opris) \simeq \rep_\zp^{\star-\crys, \geq 0}(\gk) \]
Here, the LHS is the category of effective $F$-crystals on the absolute ($\star$)-prismatic site of $\ok$, and the RHS is the category of integral $\star$-crystalline representations whose Hodge--Tate weights are $\geq 0$.
\end{theorem}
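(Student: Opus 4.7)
The plan is to prove the equivalence by passing through the intermediate category of Breuil--Kisin $G_K$-modules, i.e.~the equivalence $\mathrm{Mod}_{\gs, \ainf}^{\phi, \gk, \star-\crys} \simeq \rep_\zp^{\star-\crys, \geq 0}(\gk)$ already cited in Section~3. Thus it suffices to construct an equivalence
\[
\vect^{\phi, \mathrm{eff}}((\ok)_{\pris, \star}, \opris) \simeq \mathrm{Mod}_{\gs, \ainf}^{\phi, \gk, \star-\crys}.
\]
The functor from left to right is defined by evaluation: given an effective $F$-crystal $\calE$, set $\gm := \calE(\gs,(E),\star)$, and $\gm_\inf := \calE(\ainf,(E),\star)$ where the latter is the Breuil--Kisin (log-)prism base changed along the perfection (i.e.~the $(\log$-$)$prism associated to $W(\o_C^\flat)$ with the chosen uniformizer of $\ok$ mapping to $[\pi^\flat]$). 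The $\gk$-action on $\ainf$ together with the crystal property provide the $\gk$-action on $\gm_\inf$, while the Frobenius comes from the $F$-structure.

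The first non-trivial step is to check that $(\gm, \gm_\inf)$ lands in the correct subcategory. That $\gm$ is a finite free Breuil--Kisin module with effective $E$-isogeny is standard (using that $(\gs,(E))$ is a transversal prism, so crystals evaluated on it are finite projective, and in the log-case one uses the log-prismatic analogue). The compatibility $\gm \subset \gm_\inf^{G_\infty}$ is automatic because the structure map $\gs \to \ainf$ lands in the $G_\infty$-fixed points. The decisive point is the $\star$-crystalline condition: for $\star=\emptyset$, one must verify $(g-1)(\gm)\subset \fkt W(\fkm_\ocflat)\gm_\inf$ for all $g\in\gk$. This is where I would use the fact that on the absolute (non-log) prismatic site, the self-product $\ainf \otimes^\prism_\gs \ainf$ is controlled by elements of $\fkt$-valuation, so the descent datum inherent in the crystal structure lies in $\fkt W(\fkm_\ocflat)$; for $\star=\log$, the log structure allows the weaker condition $(g-1)(\gm)\subset \fkt\gm_\inf$, yielding the semi-stable case.

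For the inverse functor (from right to left), the plan is: given $(\gm, \gm_\inf) \in \mathrm{Mod}_{\gs,\ainf}^{\phi,\gk,\star-\crys}$, for any object $(A,I,\star) \in (\ok)_{\pris,\star}$ one constructs $\calE(A,I)$ by faithfully flat descent along a suitable cover by $(\log$-$)$perfect prisms. Concretely, there is a map from some $(A,I,\star)$-indexed cover to $(\ainf,(E),\star)$ compatible with the $\gk$-action, and one defines $\calE(A,I)$ as $(\gm_\inf \hotimes_\ainf A)^{G_K\text{-descent}}$. The crystal property is then a direct consequence of the descent; the Frobenius is carried along. Verifying that this is indeed a vector bundle $F$-crystal uses the usual Beauville--Laszlo-type argument to check local projectivity at $I$ and away from $I$.

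The main obstacle is essential surjectivity: establishing that the evaluation functor is a quasi-inverse requires constructing the crystal structure on arbitrary prisms canonically, not just on perfect ones. The technical heart is to show that the $\gk$-equivariant structure on $\gm_\inf$ descends correctly along all relevant covers in the $\star$-prismatic site, which for $\star = \emptyset$ is the content of \cite{BS23} (Bhatt--Scholze's classification of prismatic $F$-crystals by integral crystalline representations), and for $\star=\log$ is an extension to the log-prismatic setting (using the analogous log-prismatic site structure, as in work of Du--Liu--Moon--Shimizu and Koshikawa). Once both endpoints of the composition are proved to be isomorphisms via naturality of evaluation on $(\gs,(E),\star)$ and $(\ainf,(E),\star)$, the equivalence follows.
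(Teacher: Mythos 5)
The paper does not give an argument for this theorem at all: its ``proof'' is a citation, attributing the crystalline case to \cite{BS23} and the semi-stable case to \cite{DL23} (using Koshikawa's absolute log-prismatic site \cite{Kos21}), with a second proof in \cite{Yao}. Your proposal is therefore not comparable to anything in the paper itself; rather, it is a sketch of how the cited proofs go, and it correctly identifies the Du--Liu strategy of factoring through the category $\mathrm{Mod}_{\gs,\ainf}^{\phi,\gk,\star\text{-}\crys}$ of Breuil--Kisin $G_K$-modules and then invoking the equivalence of that category with $\rep_\zp^{\star\text{-}\crys,\geq 0}(\gk)$ from \cite{Gao23, Liu10}. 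Two cautions on your sketch. First, the step you describe as ``faithfully flat descent along a suitable cover by perfect prisms'' understates the real difficulty: the absolute prismatic site of $\ok$ is not controlled by perfect prisms alone, and the essential surjectivity in \cite{BS23} (resp.\ \cite{DL23}) is precisely the problem of extending the $\gk$-descent datum on $\gm_\inf$ to a stratification over the self-coproduct $\gs^{(2)}$ of the Breuil--Kisin prism; this is not a formal descent argument but the technical core of those papers (boundary-extension/Beauville--Laszlo gluing in \cite{BS23}, explicit construction of the stratification from the $(\varphi,\hat G)$-module in \cite{DL23}). Second, your formulation of the $\star$-crystalline condition on the target is slightly off relative to Definition 3.8 of the paper: the log (semi-stable) case is characterized by the two conditions $\gm\subset\minf^{G_\infty}$ and $\gm/u\gm\subset(\minf/W(\fkm_\ocflat)\minf)^{G_K}$, and the crystalline subcategory is cut out by the stronger requirement $(g-1)(\gm)\subset\fkt\, W(\fkm_\ocflat)\gm_\inf$ (cf.\ Corollary 3.10), not by a dichotomy between $\fkt\,W(\fkm_\ocflat)\gm_\inf$ and $\fkt\,\gm_\inf$. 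Since you explicitly defer the hard parts to the same references the paper cites, your outline is acceptable as a roadmap, but it should not be read as an independent proof.
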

 \begin{proof}     The crystalline case is first proved in \cite{BS23}. The semi-stable case is first proved in  \cite{DL23} using absolute log-prismatic site in \cite{Kos21}; a second proof in the semi-stable case is obtained in \cite{Yao}. 
 \end{proof}

We now review results on   Hodge--Tate prismatic crystals. 
\begin{defn} \label{defnnhtendo} 
Recall as in Definition \ref{def: amplified Sen}:
 \begin{equation*}
a=
\begin{cases}
   E'(\pi), &  \text{if } \star=\emptyset \\
  \pi E'(\pi), &  \text{if } \star=\log
\end{cases}
\end{equation*}  
(As noted in Convention \ref{conv minus plus}, this is opposite to \cite[Definition 1.9]{GMWHT}).
Let  $\End_{\calO_K}^{\star-\nht}$ (resp. $\End_{K}^{\star-\nht}$)  be the category consisting of pairs $(M,f_M)$ which we call a \emph{module equipped with $a$-small endomorphism}, where
    \begin{enumerate}
        \item[(1)] $M$ is a finite free $\calO_K$-module (resp. $K$-vector space), and
        \item[(2)] $f_M$ is an $\calO_K$-linear (resp. $K$-linear) endomorphism of $M$ such that 
        \begin{equation}
            \lim_{n\to+\infty}\prod_{i=0}^{n-1}(f_M-ai) = 0.
        \end{equation} 
            \end{enumerate}
\end{defn}

 \begin{theorem}  \label{thmintrostrat}
  Evaluation on the Breuil--Kisin (log-) prisms induce bi-exact equivalences of categories  
 \[ \Vect((\calO_K)_{\Prism, \star},\overline \calO_{\Prism}) \simeq  \End_{\calO_K}^{\star-\nht}                                                    \]
  \[ \Vect((\calO_K)_{\Prism, \star},\overline \calO_{\Prism}[1/p]) \simeq  \End_{K}^{\star-\nht}                                                    \]
 \end{theorem}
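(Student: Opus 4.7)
The plan is to combine faithfully flat descent along the Breuil--Kisin (log-)prism $(\gs, (E), \star)$ with an explicit computation of self-products of this prism in the (log-)prismatic topos of $\spf\ok$.

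First, I would verify that $(\gs, (E), \star)$ is a cover of the final object in the (log-)prismatic site of $\ok$, so that the evaluation functor is conservative. Descent theory then identifies $\Vect((\ok)_{\Prism, \star}, \baropris)$ with the category of finite free $\baropris(\gs, (E), \star) = \ok$-modules $M$ equipped with a stratification $\varepsilon \colon M \otimes_{\ok, p_1} \baropris(\gs^{(2)}, \star) \isoto M \otimes_{\ok, p_2} \baropris(\gs^{(2)}, \star)$ restricting to the identity along the diagonal and satisfying the usual cocycle condition on the triple self-product.

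Second, I would compute $\baropris(\gs^{(2)}, \star)$ explicitly. In the crystalline case this is carried out in works such as Min--Wang; one obtains a completed PD-polynomial algebra $\ok\langle Y \rangle^{\wedge}$ in one variable, where $Y$ can be taken to be a suitable normalization of $(u_2 - u_1)/E'(\pi)$ so that the two natural structure maps $\ok \rightrightarrows \baropris(\gs^{(2)})$ differ, to first order along the diagonal, by the derivation dual to $Y$ and scaled precisely by the constant $a = E'(\pi)$. In the log case, the computation is analogous but one replaces $u_2 - u_1$ by $\log(u_2/u_1)$, and the first-order derivation at the diagonal is scaled by $a = \pi E'(\pi)$. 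An analogous computation determines the triple self-product $\baropris(\gs^{(3)}, \star)$ as a completed PD-polynomial ring in two variables, from which the cocycle condition can be read off.

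Third, I would translate stratifications into $a$-small endomorphisms. Using the PD-structure on $Y$, a continuous $\baropris(\gs^{(2)}, \star)$-linear map $\varepsilon$ must have the form $\varepsilon = \sum_{n \ge 0} h_n \cdot Y^{[n]}$ with $h_n \in \End_\ok(M)$; restricting to the diagonal forces $h_0 = \id$, and the cocycle condition on $\baropris(\gs^{(3)}, \star)$ together with the scaling from the second step forces $h_n = \prod_{i=0}^{n-1}(f_M - ai)$ where $f_M := h_1$. Convergence of the series $\varepsilon$ in the $p$-adic completed PD-structure is then exactly the $a$-smallness condition of Definition \ref{defnnhtendo}. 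Reading the construction backwards, any $a$-small endomorphism gives rise to a stratification, yielding the inverse functor. Bi-exactness follows because on both sides short exact sequences are those which are short exact on underlying modules, and the correspondence respects underlying modules. The rational statement is obtained by inverting $p$ throughout.

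The main obstacle is the second step: a clean computation of the log-prismatic self-product of $(\gs, (E), \log)$, together with the correct identification of the first-order derivation at the diagonal with the constant $a = \pi E'(\pi)$. This is the technical heart of the argument, and determines the precise factor in the $a$-smallness condition; once it is in hand, the third step is a formal unwinding of PD-calculus. For the crystalline case, this computation is essentially that of \cite{GMWHT} (with the sign convention adjusted as in Convention \ref{conv minus plus}); the log case proceeds along the same lines but requires a careful bookkeeping of the log structure on both factors of $\gs^{(2)}$.
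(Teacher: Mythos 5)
Your outline is correct and is precisely the site-theoretic argument that the paper relies on for this theorem: the paper itself gives no proof, only citations (\cite{BL-a,BL-b,AHLB1} for the stacky route in the crystalline case, \cite{GMWHT} for the full case including log). The descent-plus-self-product-plus-PD-calculus strategy you describe, with the constant $a$ extracted from the normalized coordinate on $\baropris(\gstwo_\star)$ and the cocycle condition forcing $h_n=\prod_{i=0}^{n-1}(f_M-ai)$ so that convergence becomes exactly $a$-smallness, is exactly how \cite{GMWHT} proceeds.
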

 \begin{proof} In the prismatic setting (i.e., $\star=\emptyset$), these are    proved    in \cite{BL1, BL2} using a stacky approach; cf. also the work of  \cite{AHLB1}. The full case is independently proved in   \cite{GMWHT}, using a site-theoretic approach.
 \end{proof}

\subsection{Prismatic Sen operators and filtrations}
\begin{notation}
Let $T \in \rep_\zp^{\star-\crys, \geq 0}(\gk)$, let $\bm$ be the corresponding (effective) $F$-crystal, and let $\gm$ be the evaluation of $\bm$ on ($\star$)-Breuil--Kisin prism $(\gs, (E), \star)$.
Then the data of $\bm$ translates into the  data of a stratification, i.e., a $\phi$-equivariant isomorphism
\[ \varepsilon: \gs^1_\star \otimes_{p_0, \gs} \gm   \simeq  \gs^1_\star \otimes_{p_1, \gs} \gm \]
 satisfying the usual cocycle condition; here $\gs^1_\star$ is the co-product of $(\gs, (E), \star)$ inside $(\ok)_{\pris, \star}$, and $p_i$ are the face maps.
\end{notation}

\begin{notation} \label{nota: dd data} 
Starting from $\varepsilon$, we construct the following isomorphisms (always satisfying cocycle conditions).
\begin{enumerate}
\item Recall we always identify $\gmast\subset \gm$ as a submodule. Since $\varepsilon$ is $\phi$-equivariant, it restricts to an isomorphism 
\[ \varepsilon: \gs^1_\star \otimes_{p_0, \gs} \gmast   \simeq  \gs^1_\star \otimes_{p_1, \gs}  \gmast  \]
This induces a sub-crystal $\bm^\ast$ of $\bm$. Note it is no longer an $F$-crystal, since $\phi$ on $\gmast$ is a $\phi(E)$-isogeny.

\item Take Nygaard filtrations on both sides of above.  
Note $p_i: \gs \to \gs^1_\star$ is flat  for $i=0, 1$, thus Lemma \ref{lem: flat base change} implies
\[ \Fil^n (\gs^1_\star \otimes_{p_i, \gs} \M^*)\simeq \gs^1_\star \otimes_{p_i, \gs} \Fil^n \M^* \]
Thus we  obtain
\[ \varepsilon:  \gs^1_\star \otimes_{p_0, \gs} \fil^n\gmast   \simeq  \gs^1_\star \otimes_{p_1, \gs}  \fil^n\gmast  \]

\item Taking graded of above, we obtain
\[  \gs^1_\star \otimes_{p_0, \gs} \gr^n\gmast   \simeq  \gs^1_\star \otimes_{p_1, \gs}  \gr^n\gmast  \]
 
\item    Forget about  Frobenius isogeny structure on $\bm$  and reduce modulo the prismatic ideal sheaf; one then obtains a Hodge--Tate crystal
  \[ \bm_\HT \in \Vect((\calO_K)_{\Prism, \star},\overline \calO_{\Prism}) \]
  The corresponding stratification is precisely the reduction of $\varepsilon$:
  \[ \varepsilon_\HT: \gs^1_\star \otimes_{p_0, \gs} \gm/E   \simeq  \gs^1_\star \otimes_{p_1, \gs} \gm/E \]
  
  \item Using $\gr^n \gmast \simeq \fil_n^\rmconj \gm_\HT$, stratifications from above two items fit into the following commutative diagram
\begin{equation} \label{eq: dd data fil conj}
\begin{tikzcd}
{\gs^1_\star \otimes_{p_0, \gs} \fil_n^\rmconj \gm_\HT } \arrow[rr, "{  \simeq}"] \arrow[d, hook] &  & {\gs^1_\star \otimes_{p_1, \gs}   \fil_n^\rmconj \gm_\HT } \arrow[d, hook] \\
{\gs^1_\star \otimes_{p_0, \gs}  \gm_\HT } \arrow[rr, "{  \simeq}"]                               &  & {\gs^1_\star \otimes_{p_1, \gs}  \gm_\HT }                                
\end{tikzcd}
\end{equation} 
\end{enumerate}
\end{notation}

  \begin{construction} \label{cons: conj fil subHT crystal}
  Theorem \ref{thmintrostrat} translates the stratification $\varepsilon_\HT$ on $\gm_\HT$ to a linear operator $$\Theta_\prism: \gm_\HT \to \gm_\HT$$
   which we call the ``prismatic Sen operator". 
  The commutative diagram \eqref{eq: dd data fil conj}  says that $\fil_n^\rmconj \gm_\HT$ induces a Hodge--Tate sub-crystal; equivalently,   $\Theta_\prism$ stabilizes $\fil_n^\rmconj \gm_\HT$, inducing a sub-object
\[ (\fil_n^\rmconj \gm_\HT, \Theta_\prism) \into  ( \gm_\HT, \Theta_\prism) \]
  \end{construction}
   

 \begin{prop} \label{prop: pris sen lav sen}
 The prismatic Sen operator in Construction \ref{cons: conj fil subHT crystal}, (up to $\pm 1$-scaling) 
\[ \Theta_\prism: \gm_\HT \to \gm_\HT \]
 is exactly the (locally analytic)  amplified  Sen operator in  Definition \ref{def: amplified Sen}
 \[ \Theta: \gm_\HT \to \gm_\HT \]
 \end{prop}
   \begin{proof}
   It suffices to prove it after inverting $p$; then this is proved in \cite[Theorem 8.2]{GMWHT}.
   \end{proof}

   Consider the sub-crystal $\bm^\ast$ induced by stratification on $\gmast$,  and take reduction, we  obtain a Hodge--Tate crystal $\bm^\ast_\HT$. (Caution: the map $\gmast/E \to \gm/E$ is not injective, so this cannot be a sub-crystal of $\bm_\HT$).

\begin{lemma} \label{lem: bmast ht trivial} The crystal  $\bm^\ast_\HT$ is a trivial Hodge--Tate crystal (in the sense that it is isomorphic to $(\baropris)^{\oplus d}$).
\end{lemma}
\begin{proof} 
By definition of stratification, the map
\[ \varepsilon: \gs^1_\star \otimes_{p_0, \gs} \gm  \simeq  \gs^1_\star \otimes_{p_1, \gs}  \gm\] 
composed with the degeneracy map $\sigma_0: \gs^1_\star \to \gs$ becomes the identity map on $\gm$.
That is to say, for any $x\in \gm$,
\[ \varepsilon(x) \subset x+ \ker \sigma_0 \otimes_{p_1, \gs}  \gm\] 
To see $\bm^\ast_\HT$ is   trivial, it reduces to check that 
\[ \phi(\ker \sigma_0) \subset E\gs^1_\star;\]
this is proved in \cite[Lem. 2.11]{GMWHT}.   
\end{proof}
 
 \begin{remark} \label{rem: other triv pf}
We   record two other proofs of Lemma \ref{lem: bmast ht trivial}  which we find interesting.
\begin{enumerate}
\item (A  non-prismatic (locally analytic) proof). Note it suffices to show the (integral) Sen operator on $\gmast/E$ is zero map.
 By Lem. \ref{lem: tau range gmast},   for each $i \geq 1$,
\[ (\tau-1)^i(\gmast) \subset (\phi(\fkt))^i \gm^\ast_\inf \subset \fkt^i\cdot E \gm^\ast_\inf\]
Lem. \ref{lem: log tau converge} (with $b=1, Y=E\gm^\ast_\inf$) implies 
\[ \frac{\log \tau}{\fkt} (\gmast/E) =0 \]

\item (A stacky proof that we learnt from Bhargav Bhatt). Let $X=\spf \ok$ (indeed, the argument below works for any $p$-adic formal scheme $X$), and consider the prismatic case. We have the following commutative diagram:
\[
\begin{tikzcd}
X^\HT \arrow[d] \arrow[r]  & X \arrow[d] \\
X^\prism \arrow[r, "\phi"] & X^\prism   
\end{tikzcd}
\]
This is discussed in detail in \cite[Proposition 3.6.6]{BL1} when $\ok=\zp$ (for example, the bottom row is induced by $\phi$ on prisms); the general case is similar. 
Now consider $\bm$ as an object living on the  bottom right corner; its pull-back to the top left corner is precisely  $\bm^\ast_\HT$, which is ``trivial" in the sense it also comes from pull-back of a quasi-coherent sheaf on $X$.
\end{enumerate} 
\end{remark}


   \begin{construction} \label{cons: prismatic shift} We now show how to recover the shifted Sen operators using prismatic argument. 
Since $\bm^\ast_\HT$ is a trivial Hodge--Tate crystal  by Lemma \ref{lem: bmast ht trivial}, we have  
\[(\varepsilon -1)(\gs^1_\star \otimes_{p_0, \gs} \gmast) \subset E\gs^1_\star \otimes_{p_1, \gs}  \gmast.\]
Since $\varepsilon$ is $\gs^1_\star$-linear, we have
\begin{equation} \label{eq enm}
 (\varepsilon -1)(E^n\gs^1_\star \otimes_{p_0, \gs}  \gm) \subset E^n\gs^1_\star \otimes_{p_1, \gs}  \gm
\end{equation} 
Taking intersection leads to a map
\begin{equation}\label{eq inter enm mast}
 \varepsilon -1: \gs^1_\star \otimes_{p_0, \gs} \fil^n\gmast \to  \gs^1_\star \otimes_{p_1, \gs}  E \fil^{n-1}\gmast.
\end{equation} 
We can form a  commutative diagram
\begin{equation} \label{eq diag pris shift}
\begin{tikzcd}
{ \gs^1_\star \otimes_{p_0, \gs} \gr^n\gmast} \arrow[d, hook] \arrow[rr, "\overline{\varepsilon-1}"] &  & {\gs^1_\star \otimes_{p_1, \gs}  (E \fil^{n-1}\gmast/E \fil^{n}\gmast)} \arrow[d, hook] \\
{\gs^1_\star \otimes_{p_0, \gs} E^n\gm/E^{n+1}\gm} \arrow[rr, "\overline{\varepsilon-1}"]            &  & {\gs^1_\star \otimes_{p_1, \gs} E^n\gm/E^{n+1}\gm}                                     
\end{tikzcd}
\end{equation}
where the top (resp. bottom) row comes from taking graded of Eqn \eqref{eq inter enm mast} (resp. Eqn \eqref{eq enm}).
It is well-known that 
\[ \overline{\varepsilon}: \gs^1_\star \otimes_{p_0, \gs} E^n\gm/E^{n+1}\gm \to \gs^1_\star \otimes_{p_1, \gs} E^n\gm/E^{n+1}\gm\]
induces the \emph{Breuil--Kisin} twist $\bm_\HT\{n\}$ of the Hodge--Tate crystal $\bm_\HT$. Identifying $E^n\gm/E^{n+1}\gm$ with $\gm_\HT$, the prismatic Sen operator associated to $\bm_\HT\{n\}$ is
\[ \Theta_\prism-na:  \gm_\HT \to \gm_\HT \]
 Recall $\gs^1_\star/E \simeq \ok[X ]^{\wedge_p}_{\mathrm{pd}}$ is a $p$-complete PD polynomial ring with one variable (cf. \cite[Proposition 2.12]{GMWHT}); thus the bottom row of diagram \eqref{eq diag pris shift} is induced by a converging summation
\[  \sum_{i\geq 1} X^{[i]} f_i  \]
where $f_i:   \gm_\HT \to   \gm_\HT$ are $\ok$-linear maps for each $i$ and $X^{[i]}=X^i/i!$.
In fact, it is known that (cf. \cite[Theorem 3.6]{GMWHT}) 
\[ f_i=\prod_{0 \leq j < i} (\Theta_\prism-na -ja) \]
Since the   vertical arrows are injective, the top row is also induced by the summation; that is to say, the image of  $f_i$ lands inside $\fil^\rmconj_{n-1} \gm_\HT$. Thus the $f_1=\Theta_\prism-na$ map satisfies:
\begin{equation}\label{eq prismatic shifting operator}
\Theta_\prism-na:  \fil_n^\rmconj \gm_\HT \to \fil_{n-1}^\rmconj \gm_\HT
\end{equation}  
As a consequence of Proposition \ref{prop: pris sen lav sen}, this is exactly the operator  in Theorem \ref{thm: integral sen integral conj fil}.
  \end{construction}

   \begin{remark}
   Construction \ref{cons: prismatic shift}  has obvious analogues in the rational case and the mod $p$ case, thus we can also use prismatic argument to recover the shifted Sen operators (and hence filtered Sen theory) in Theorems \ref{thm: rational sen shift} and \ref{thm: mod p Sen op fil}; we leave details to the interested readers.
   \end{remark}

 \section{Truncated Sen operator} \label{sec:truncated-sen}

In \S \ref{sec: integral Sen}, we used normalizations of $\log \tau$ to define a Sen operator. In this section, we show that in the  crystalline  case, a \emph{truncated operator}, namely a normalization of ``$\tau-1$" (then modulo maximal ideal) would already suffice to recover the \emph{mod $p$ Sen operator}. This fact, Proposition \ref{prop MinWang}, is first discovered by Yu Min and Yupeng Wang  (in an earlier work not intended for publication); we thank them for allowing us to include this useful result here.
This ``truncated" operator will be further  ``\emph{stabilized}" in the next two sections \S \ref{sec:pris-max-ring} and \S  \ref{sec:p-GT-A}, cf. Remark \ref{rem:need-stable}.
 In this section, we allow $K$ to be ramified.


\begin{construction} \label{cons:GMWHT-tau}
Let $\cale \in \Vect((\calO_K)_{\Prism, \star},\overline \calO_{\Prism})$ be a Hodge--Tate crystal. As reviewed in Theorem \ref{thmintrostrat}, it corresponds to an $a$-small endomorphism $f: M \to M$ (cf. Definition \ref{defnnhtendo}), where $M=\cale((\gs, (E), \star))$. Denote
\[\hat{M}: = \cale(((\ainf)^{G_L}, (E), \star)) =M\otimes_\ok \o_{\hat{L}},  \]
which admits $\gal(L/K)$-action by functoriality. 
Use \cite[Eqn (5.3) in Proposition 5.10]{GMWHT}, let $g=\tau$, then for $x \in M$, we have
\[ \tau(x)=  \sum_{n\geq 0} \left( \left(\prod_{i=0}^{n-1}(f- ia)(x)\right) \cdot  ( X(\tau))^{[n]} \right) \]
here 
\[X(\tau)=  \frac{\pi E'(\pi)}{a}\theta_\fon(\frac{[\zetaflat]-1}{E([\pi^{\flat}])})   \]
\end{construction}


\begin{proposition}[\textnormal{Min--Wang}] \label{prop MinWang}
Use notations in Constructions \ref{cons:GMWHT-tau}.
Suppose $\star=\emptyset$ (that is, $\cale$ is defined on the prismatic site). Then the $\tau$-action   induces an operator 
\[ \frac{\tau-1}{X(\tau)}: M\to M \otimes_\ok \o_{\hat{L}}.\]
Consider reduction modulo  the maximal ideal of $\o_{\hat{L}}$, then its image falls inside $M/\pi M$, and the reduced operator is exactly mod $\pi$ reduction of $f: M \to M$.   That is:
\[ \frac{\tau-1}{X(\tau)} \pmod{\fkm_{\o_{\hat{L}}}}  \equiv f \pmod{\pi} \] 
\end{proposition}
\begin{proof}
In  the prismatic case,  $a=E'(\pi)$, and the $p$-adic valuation of $X(\tau)$ is 
\[ v_p(\pi \cdot (\zeta_p-1))=\frac{1}{e} + \frac{1}{p-1}. \]
Thus the $p$-adic valuation of $((X(\tau))^{n-1})/n!$ is positive and converges to infinity. We can easily conclude.
\end{proof}
 

\begin{remark}
\begin{enumerate}
\item Note in Proposition \ref{prop MinWang}, it is necessary to take reduction modulo the maximal ideal $\o_{\hat{L}}$ and hence reduction modulo $\pi$ (not modulo $p$) of $f$. Compare with the situation in Remark \ref{rem not mod pi}. This general mod $\pi$ operator will be further used in our investigation in the ramified case, cf. Remark \ref{rem: future}.

\item In the log-prismatic case, the valuation of $X(\tau)$ is $1/(p-1)$; thus the argument of Proposition \ref{prop MinWang} breaks.
\end{enumerate}
\end{remark}

\begin{cor} \label{cor-crys-truncate-sen}
Let $T \in \rep^\crys_\zp(\gk)$.
One can define an operator
\[  \delta_\tau:=\frac{\tau-1}{u\frac{p}{E(0)}\fkt}: \gm \to \minf;\]
After modulo $E$ then further modulo $\fkm_{\o_C}$, the image falls inside $\bargmht \subset \gminf/(E,\fkm_{\o_C})$, and it is the same as the mod $\pi$ amplified Sen operator. That is:
\[ \delta_\tau \equiv \Theta \pmod{E,\fkm_{\o_C}} \]
\end{cor}
\begin{proof}
We first consider another variant (cf. Remark  \ref{rem:renorm_fkt} for comment):
\begin{equation} \label{eq: delta-variant}
\delta':=\frac{\tau-1}{uw}: \gm \to \gminf, 
\end{equation}
where  $w=([\zetaflat]-1)/E \in \ainf$ is defined in Notation \ref{nota: ring for mod}.
This operator is well-defined by Lemma \ref{lem: tau range gmast}. Using Proposition \ref{prop MinWang}, and noting    $\theta_\fon(uw)$ is exactly $X(\tau)$, it is easy to see
\[ \delta' \equiv \Theta \pmod{E,\fkm_{\o_C}} \]
Back to the $\delta_\tau$-operator in the statement, it suffices to consider its  difference with $\delta'$: 
\[ \alpha= \frac{E(0)}{p} \frac{w}{\fkt} \in \ainf.\]
It is easy to see that $\theta_\fon(\alpha) =\theta_\fon(\phi(\lambda)) \in \o_K^\times$; indeed 
$\theta_\fon(\phi(\lambda)) \equiv 1 \pmod{\pi}$. From this we can conclude.
\end{proof}

\begin{remark} \label{rem:renorm_fkt}
We quickly comment that we shall prefer to use $\delta_\tau$ (normalized with $\fkt$) rather than $\delta'$ (normalized with $w$) for some normalization convenience when considering its intertwining with the Frobenius operator, cf. Remark \ref{rem:preferdeltatau} for the reason. Nonetheless,  the element $w$ has its own computational convenience because of its natural appearance in prismatic rings.

\end{remark}

\begin{remark} \label{rem:need-stable}
The ``truncated" (and purely integral) operator $\delta_\tau$ in Corollary  \ref{cor-crys-truncate-sen} is convenient in that it does not involve     $p$ in the denominators (as in $\log\tau$); however, it is not ``\emph{stable}" on $\gm$ (compare with the ``stable" operators of Breuil and Kisin in \S \ref{sec: BK mod}), hence is inconvenient to study its interaction with filtrations. 
One naturally hopes to extend the ring $\gs$ to ``\emph{stabilize}" $\delta_\tau$: it turns out to be a very tricky construction, as one needs to balance between the size of the ring and the properties we desire to have. This will be carried out in the next two sections \S \ref{sec:pris-max-ring} and \S  \ref{sec:p-GT-A}.
\end{remark}


\section{Some prismatic rings} \label{sec:pris-max-ring}
 
 In this section, we study some prismatic rings and the Galois actions on them. This is in preparation for \S \ref{sec:p-GT-A}, where we ``\emph{stabilize}" the  truncated   operator $\delta_\tau$ in \S \ref{sec:truncated-sen}. This section is purely ring-theoretic, and has nothing to do with Galois representations. An interesting (though somewhat confusing) fact is  that we show when studying crystalline representations, it is still necessary to make use of \emph{log-prismatic} rings as they are the \emph{Galois stable} rings (cf. Lemma \ref{lem Galois stable}). 
 In this section, we always allow $K$ to be ramified. Let $\star \in \{ \emptyset, \log \}$.
 
\subsection{Log-prismatic max rings and Galois actions}

\begin{notation}  \label{nota-uvw}
The $\star$-Breuil--Kisin prism $(\gs, (E), \star)$ admits two embeddings to the $\star$-Fontaine prism $(\ainf, (E), \star)$, induced by the $W(k)$-linear maps
\[ \gs=W(k)[[u_1]] \into \ainf; u_1 \mapsto [\pi^\flat] \] 
\[ \gs=W(k)[[u_2]] \into \ainf; u_2 \mapsto [\piflat][\zetaflat] \]
By the universal property of $\star$-prismatic co-product, they induce a map
\[ \iota:  (\gs^1_\star, (E), \star) \to (\ainf, (E), \star) \] 
It is injective  by \cite[Cor 2.4.5]{DL23} (recall to prove this, one can reduce modulo $E$). Henceforth, we always regard $\gs^1_\star$ as a subring of $\ainf$. 
By construction, $ \gs_\log^1$ is  the $(p , E)$-completion of the $\delta$-ring $\gs[\frac{ u _2 -u_1}{u_1E(u_1)}]_\delta$. For convenience, define the following elements in $\ainf$:
\begin{itemize}
\item $u:=\iota(u_1)$
\item $v:=[\zeta^\flat]-1$ 
\item $w:=\iota(\frac{ u _2-u_1}{u_1E(u_1)}) =\frac{[\zeta^\flat]-1}{E}$ (already  in Notation \ref{nota: ring for mod}).
\end{itemize}
\end{notation}

\begin{notation} 
On $(\ok)_{\pris, \star}$, one can define the sheaf 
$  \prism_\max:=\opris [ {\calI_\prism}/{p}]^{\wedge_p}$. For an evaluation $A [{I}/{p}]^{\wedge_p}$, equip it with the $p$-adic topology.
Denote some of its evaluations as follows:
\begin{itemize}
\item $ \gsmax:= \prism_\max(\gs)$ 
\item $ \bfa_\max:=\prism_\max(\ainf)$ 
\item $ \gs^1_{\star,\max}:= \prism_\max(\gs^1_\star) $.
\end{itemize}
(The first two rings are already used in \S \ref{sec: sen max classical}). 
In \cite{DL23}, the ring $ \gs^1_{\star,\max}$ is   first defined \emph{explicitly} in \cite[Definition 2.2.1]{DL23}; then they are related with the prismatic definition   in \cite[Remark 2.2.11]{DL23}. Thus, (as per  \cite[Definition 2.2.1]{DL23} and the log-version above  \cite[Proposition 2.3.1]{DL23}), we have
\[ \gs^1_{\log,\max} = \gsmax[\gamma_i(w)]_{i \geq 0}^{\wedge_p}\]
\[ \gs^1_{\max} = \gsmax[\gamma_i(uw)]_{i \geq 0}^{\wedge_p}\]
More explicitly, an element in $ \gs^1_{\log,\max}$ can be \emph{uniquely} written as a summation
\[ \sum_{i \geq 0} a_i\gamma_i(w) \]
where $a_i \in \gsmax= \gs[\frac{E}{p}]^{\wedge_p}$ and converges to zero.
It falls inside the subring $\gs^1_{\max}$ if and only if $u^i \mid a_i$ for each $i$, and $a_i/u^i$ still converges to zero.
\end{notation}

\begin{notation} \label{notags1dr}
Recall in \cite{GMWdR}, one can define a prismatic ``de Rham" period sheaf via:
\[ \prism_\dR^+: =\varprojlim_{m \geq 0} (\o_\prism[1/p])/\calI_\prism^m \]
Given a prism $(A, I)$, equip the evaluation $A_\dR^+ =\varprojlim_{m \geq 0} A[1/p]/I^m$ with the inverse limit topology where each $A[1/p]/I^m$ is equipped with the $p$-adic topology with $A/I^m$ being the unit ball. 
Denote some of its evaluations as
\[ \gs_\dR^+: = \prism_\dR^+((\gs, (E), \star)) =K[[E]] \]
\[ \gs_{\star, \dR}^{1, +}:  = \prism_\dR^+((\gs^1_\star, (E), \star)) \]
By explicit computations in \cite[Proposition 3.8]{GMWdR}, an element in $\gs_{\log, \dR}^{1, +}$ has a \emph{unique} expression
\[ \sum_{i \geq 0} a_i\gamma_i(w) \]
where $a_i \in \gs_\dR^+=K[[E]]$ and converges to zero (with respect to the topology discussed above).
  It falls inside the subring $\gs_{\dR}^{1, +}$ if and only if $u^i \mid a_i$ for each $i$, and $a_i/u^i$ still converges to zero.
\end{notation}
  
We summarize the embeddings of these rings.

\begin{lemma} \label{lem: ring closed embed}
We have a commutative diagram of injective maps of rings 
\[
\begin{tikzcd}
\gs \arrow[d, hook,closed] \arrow[r, hook,closed]      & \gs^1 \arrow[d, hook,closed] \arrow[r, hook]      & \gs^1_\log \arrow[d, hook,closed] \arrow[r, hook]          & \ainf \arrow[d, hook,closed] \\
\gs_\max \arrow[d, hook] \arrow[r, hook,closed] & \gs^1_\max \arrow[r, hook] \arrow[d, hook] & {\gs^1_{\log,\max}} \arrow[d, hook] \arrow[r, hook] & \amax \arrow[d, hook] \\
\gs^+_\dR \arrow[r, hook,closed]                & {\gs^{1,+}_\dR} \arrow[r, hook]            & {\gs^{1,+}_{\log,\dR}} \arrow[r, hook]              & \bdrplus             
\end{tikzcd}
\]
Only those arrows crossed by a slash are closed  embeddings. In addition, all vertical composite maps from the top row to the bottom row (such as $\gs^1 \into \gs^{1,+}_\dR$) are closed  embeddings. 
\end{lemma}  

\begin{proof}
Consider the vertical maps first. Let us only consider the maps 
\[ \gs^1 \to \gs^1_\max \to \gs^{1,+}_\dR\]
since   other columns follow from similar (or easier) arguments.  
 The map $\gs^1  \into \gs^{1,+}_{\max}$ is a closed  embedding  by  \cite[Lemma 2.2.10]{DL23}. 
 The map $\gs^{1,+}_{\max} \to \gs^{1,+}_{ \dR}$ is injective by   explicit descriptions of these rings; it is not a closed embedding by considering the sequence $(E/p)^i$ which only converges in the target.
 Finally, the composite map  $\gs^1 \to \gs^{1,+}_{ \dR}$ is a closed  embedding because it is the inverse limit of $\gs^1/E^i \into \gs^1[1/p]/E^i$ for all $i$ which are closed  embeddings.

 Now, consider the horizontal arrows. The left most horizontal arrows ($\gs \to \gs^1$, $\gs_\max \to \gs^1_\max$, and $\gs^+
_\dR \to \gs^{1,+}_\dR$ are obviously closed embeddings. All other desired properties follow from the following diagram
\[ \begin{tikzcd}
\gs^1/E \arrow[d, "="] \arrow[r, hook]           & \gs^1_\log/E \arrow[r, hook] \arrow[d, "="]               & \ainf/E \arrow[d, "="]      \\
\gs^1_\max/(E/p) \arrow[r, hook] \arrow[d, hook] & {\gs^1_{\log,\max}/(E/p)} \arrow[r, hook] \arrow[d, hook] & \amax/(E/p) \arrow[d, hook] \\
{\gs^{1,+}_\dR/E} \arrow[r, hook]                & {\gs^{1,+}_{\log,\dR}/E} \arrow[r, hook]                  & \bdrplus/E                 
\end{tikzcd} \]
 where all horizontal arrows are not closed embeddings. For example,
 \[ \gs^1_\log/E=\ok[\gamma_i(w)]_{i \geq 0}^{\wedge_p} \into \ainf/E=\o_C\]
 is not closed because $w \pmod{E} \in \o_C$ has positive valuation.
\end{proof}

\begin{remark} \label{rem mod p ring}
Much of the proof of above Lemma \ref{lem: ring closed embed} follows by reduction modulo $E$ (or $E/p$), because these reductions have explicit descriptions. However, the mod $p$ properties of these rings are far from well-understood.  For example, we do not know (cf. discussion in \cite[Remark 3.2.4]{DL23}) if  $\gs^1_\star/p \to \ainf/p$ is injective.  
\end{remark}

\begin{lemma}\label{lem Galois stable}
 Regard all rings in Lemma \ref{lem: ring closed embed} as subrings of $\bdrplus$.
\begin{enumerate}
\item The rings $\gs^1_\log$, $\gs^1_{\log, \max}$ and $\gs^{1,+}_{\log, \dR}$ are $\gk$-stable; in addition, the action factors through $\hat{G}$. 

\item The rings $\gs^1$, $\gs^1_{\max}$ and $\gs^{1,+}_{\dR}$ are \emph{not} $\gk$-stable.  
\end{enumerate}
\end{lemma}
\begin{proof}
The log-prismatic case in Item (1) is actually relatively easy. 
The fact that $\gs^1_\log$ is $\gk$-stable is proved in \cite[Lemma 3.3.1]{DL23}  by a concrete  computation. Here is a more conceptual explanation. Using the universal property of the log-prismatic coproduct, it suffices to prove that \[ \tau(W(k)[[u_2]])= \tau(W(k)[[[\piflat\zeta^\flat]]]) \subset \gs^1_\log\]
 (since obviously   $ \tau(W(k)[[\piflat]]) \subset \gs^1_\log$); but this is easy since $[\zeta^\flat] \in \gs^1_\log$ (because $w\in \gs^1_\log$). The $\gk$-stability for $\gs^1_{\log, \max}$ and $\gs^1_{\log, \dR}$ follow as $\frac{\tau(E)}{E}$ is a unit in $\gs^1$ by \cite[Lemma 2.24]{BS22}.

The prismatic case in Item (2) is somewhat confusing, as one would naturally expect these rings should also be $\gk$-stable (by analogy with the log case). However, the non-stability for $\gs^1$  was already noticed in \cite{DL23} (without proof); e.g., \cite[Definition3.3.2]{DL23} was formulated only using the log-prismatic ring. 
To prove the non-stability, note $u_2=[\piflat][\zeta^\flat]$ is in all of these rings; we claim $\tau(u_2)=[\piflat][\zeta^\flat]^2$ is not even in the biggest ring, that is: 
\[ u(v+1)^2 \notin \gs^{1,+}_\dR \]
Indeed, inside the log ring $\gs^1_{\log,\dR}$, 
\[ u(v+1)^2= u(v^2+2v+1) = 
u\gamma_0(w)+2uE \gamma_1(w)+ 2uE^2\cdot \gamma_2(w) \]
It is not inside $\gs^{1,+}_\dR$ because $u^2$ does not divide $2uE^2$ (cf. discussion in Notation \ref{notags1dr}). 
\end{proof}

\begin{remark} \label{rem:logpris-necessary}
Lemma \ref{lem Galois stable} suggests that in order to study $\gk$-action (particular $\tau$-action) on modules, it is necessary to make use of $\gs^1_\log$ and its variants, even if when we are studying \emph{crystalline} representations.  This is somewhat strange; e.g., it does not seem easy to ``see/realize" these phenomenon on the prismatic site. (Of course,  we shall still need to   use $\gs^1$ and its variants.)
\end{remark}

We discuss some elements in these rings  which will be used later.

\begin{lemma} \label{lem-element-t}
Recall the elements $\lambda, t, \fkt$ are defined in Notation \ref{nota: ring for mod}. We have:
\begin{enumerate} 
\item   $\lambda \in \gs_\max$.
\item  
$ t\in \gs^1_{\log, \max}$, but  $t\notin \gs^{1,+}_{\dR}$.

\item  $\frac{\fkt}{w}$ is a unit in $\gs^1_{\log, \max} \cap \ainf$.
(We do not know if  $\frac{\fkt}{w}  \in \gs^1_\log$).

\item  $\fkt \in \gs^1_\log$, but $ \fkt \notin   \gs^{1,+}_{\dR}$. 
\end{enumerate}
\end{lemma}
\begin{proof}
Items (1)(2). 
The case for $\lambda$ is clear.
For $t$, its unique expression in $\gs^1_{\log, \max}$ is
\[ t=\log([\zetaflat]) =\sum_{i \geq 1}(-1)^{i+1} \frac{([\zetaflat]-1)^i}{i}
 =\sum_{i \geq 1} (-1)^{i+1} (i-1)!E^i \cdot \gamma_i(w)\]
it does not fall inside $\gs^{1,+}_{\dR}$ because $u^i$ does not divide $E^i$.

Items (3).   As discussed in Notation \ref{nota: ring for mod}, $\fkt/w$ is a unit in $\ainf$. 
Recall the infinite product
$$ \mathfrak{t} = \frac{t}{p\lambda}=\frac{p\phi^{-1}([\zetaflat]-1) \prod_{n \geq 0} \phi^n(\xi/p)}{p\prod_{n \geq 0} (\varphi^n(\frac{E(u)}{E(0)}))}.$$
Thus $\frac{\fkt}{w}$ is equal to the limit of \[ x_s= \frac{E(0)}{p} \frac{\prod_{n=1}^s \phi^n(\xi/p)}{\prod_{n=1}^s (\varphi^n(\frac{E(u)}{E(0)}))} \]
 Since 
\[ \frac{E}{p}, \frac{[\zetaflat]-1}{p}  \in \gs^1_{\log, \max}\]
it is easy to see the terms 
$ \phi^n(\xi/p)$ and $\varphi^n(\frac{E(u)}{E(0)})$ are all units in $\gs^1_{\log, \max}$ and they converge to $1$. Thus $x_s$ converges to a unit inside  $\gs^1_{\log, \max}$.

Item (4).  $\fkt \notin \gs^{1,+}_{\dR}$ because $t \notin \gs^{1,+}_{\dR}$.  Note $\fkt$ is the limit of 
\[ y_s =  \frac{p\phi^{-1}([\zetaflat]-1) \prod_{n = 0}^s \phi^n(\xi/p)}{p\prod_{n= 0}^s (\varphi^n(\frac{E(u)}{E(0)}))}\]
which is already known to converge inside $ \gs^1_{\log, \max}$. Since $\gs^1_\log\into \gs^1_{\log, \max}$ is a closed embedding (Lemma \ref{lem: ring closed embed}), it remains  to prove that  
$y_s \in \gs^1_\log$ for each $s$.
The case $s=0$ is obvious. We prove the general case by induction.  Suppose $y_s \in \gs^1_\log$. Then
\[ \phi(y_s) \in \gs^1_\log \]
It is easy to check $\phi(y_s)$  is in the kernel of $\theta$-map on $\ainf$; thus
\[ \phi(y_s) \in \gs^1_\log \cap E\ainf=E \gs^1_\log \]
where the last equality follows from  the injection $\gs^1_\log/E \into \ainf/E$. 
 Thus
\[ y_{s+1} =\frac{E(0)}{p}\frac{\phi(y_s)}{E} \in  \gs^1_\log \]  
\end{proof}

We study the Galois action on $\gs^1_{\log, \max}$ (recall the prismatic ring  $\gs^1_{\max}$ is not Galois stable by Lemma \ref{lem Galois stable}). We shall repeatedly use the formal relation
\begin{equation} \label{eqtauab}
(\tau -1)(ab) =(\tau-1)(a)\tau(b) +a(\tau-1)(b)
\end{equation}

\begin{lemma} \label{lem tau stmax}
\begin{enumerate}
\item $(\tau-1)(E/p) \in (E/p) \cdot  uw \cdot \gs^1_{\log, \max}$
\item $(\tau-1)(w) \subset  w\cdot uw \cdot \gs^1_{\log, \max}$.
\item   $(\tau-1)(\gs^1_{\log, \max}) \subset uw \cdot \gs^1_{\log, \max}$.
\end{enumerate}
\end{lemma}
\begin{proof}  
Item (1). Since $(\tau-1) (u)  =uv= uw E$, we have
\[ (\tau-1)(E) =  u w E y \text{ with some $y \in W(k) [u , v] \in \gs^1_\log$ } \]

Item (2). Note  $\tau (E) /E = 1+ u w y$. Thus
\[ (\tau -1) w = (\tau -1) (\frac{ [\zeta^\flat]-1}{E})= ([\zeta^\flat]-1) (\tau -1) (\frac 1 E)= v \frac{E -\tau (E)}{ E \tau (E)}= u w^2y  (\tau(E)/E)^{-1}= u w^2 y (1 + u wy)^{-1}  \] 

For Item (3), consider   an element in $\gs^1_{\log, \max}$ 
\[x= \sum_{i \geq 0} a_i \gamma_i (w) \text{ with $a_i \in \gs[E/p ]^{\wedge_p}$ converging to zero }\]
\eqref{eqtauab} implies  for each $i$,
\[ \frac{\tau-1}{uw} (E/p)^i \in  (E/p)^i \cdot \gs^1_{\log, \max}\]
This implies
\[ \frac{\tau-1}{uw}(a_i) \in \gs^1_{\log, \max}, \text{ and converges to zero }\]
\eqref{eqtauab} also implies 
\[ \frac{\tau-1}{uw}(\gamma_i(w)) \subset \gamma_i(w) \cdot \gs^1_{\log, \max}\]
Thus \[ \frac{\tau-1}{uw}(x) \in \gs^1_{\log, \max}\]
\end{proof}

\subsection{A flat mod $p$ subring}
\label{subsec-flat-subring}
Lemma \ref{lem tau stmax} suggests that $\gs^1_{\log, \max}$ might be a useful ring to stabilize the truncated  operator $\delta_\tau$ in \S \ref{sec:truncated-sen}. Unfortunately, similar to the situation in \S \ref{sec: sen max classical}, $\gs^1_{\log, \max}$  is not flat over $\gs$. For our purpose, we need to introduce certain flat subring (at least in the mod $p$ case as $\delta_\tau$ recovers the mod $p$ Sen operator). It turns out,   a naive idea---intersecting with the $\gs$-flat ring $\ainf$--- works out, after some modification.

\begin{notation} \label{cons-flat-subring}
  Define 
$$A : = \gs^1_{\log, \max}[1/p] \cap \ainf,$$
 with intersection taken inside $ \amax[1/p]$. 
 \begin{enumerate}
 \item Warning:  we do not know if  $A$ is $p$-adic complete, cf. Remark \ref{rem implicit A} for more comments. 
  \item  Define 
  $$A_1=A/pA.$$
Since $A \cap p\ainf =p(\gs^1_{\log, \max}[1/p] \cap \ainf)=pA$, the induced map  $ A_1  \to \ainf/p= \ocflat$ is injective. Henceforth, we regard $A_1$ as a subring of $\ocflat$. 
\item  As $A_1$ has no $u$-torsion, the map  $\gs/p\gs \to A_1$ is \emph{flat}. 

\item  Let $ \fkm_{A_1} : = A_1 \cap \fkm_\ocflat$ (with intersection taken in $\ocflat$). (We will prove $A_1/ \fkm_{A_1} \simeq k$ in Lemma \ref{lem reductionA1}).
 \end{enumerate}  
\end{notation}

\begin{remark} \label{rem implicit A} 
Unfortunately, the ring $A$ is rather implicit. Here are some further comments. 
\begin{enumerate}
\item Related to discussions in Remark \ref{rem mod p ring}, we do not know if the inclusion
\[\gs^1_{\log, \max} \cap \ainf \subset \gs^1_{\log, \max}[1/p] \cap \ainf\]
is equal. Indeed, we do not know if  $A$ is $p$-adic complete (either using induced topologies or its own $p$-adic topology). One can take $p$-adic completion of $A$; but this remains an implicit ring. 
Fortunately, for our application, we shall only need the mod $p$ ring $A_1$ (which is flat over $k[[u]]$).

\item One can also consider $\gs^1_\log$ which is flat over $\gs$,  but we do not know if the analogues of Lemma \ref{lem tau1A} and Proposition \ref{lem-tau-1} hold for this ring: its structure is too complicated for many concrete computations.
\end{enumerate}
\end{remark}

In the remainder of this subsection, we record some properties of $ A$ which will be used  later.

\begin{lemma}\label{lem:difffktw}
Consider the element $\frac{p}{E(0)}\cdot\frac{\fkt}{w} \in A^\times$ (by Lemma \ref{lem-element-t}). Modulo $p$, we have
\[ \frac{p}{E(0)}\cdot \frac{\fkt}{w} \pmod{p} \in 1+uA_1 \]
\end{lemma}
\begin{proof}
It is easy to see the kernel of $\theta_\fon$-map restricted to  $\gs^1_{\log, \max}[1/p]$ is the principal ideal generated by $E$; thus \[ \Ker (\theta_\fon |_A)=EA.\]
 As computed in proof of  Corollary \ref{cor-crys-truncate-sen}, 
\[\theta_\fon(\frac{p}{E(0)}\cdot\frac{\fkt}{w}) \in 1+\pi\ok\]
Thus \[ \frac{p}{E(0)}\cdot \frac{\fkt}{w}  \in 1+u\gs+EA. \]
We can modulo $p$ to conclude.
\end{proof}

\begin{lemma}\label{lem-A-max} 
We have the following identities, where all intersections are taken inside $\amax[1/p]$. 
\begin{enumerate}
\item $ u \gs^1_{\star, \max} \cap \ainf =u(\gs^1_{\star, \max} \cap \ainf)$.
\item $ u \gs^1_{\star, \max}[1/p] \cap \ainf =u(\gs^1_{\star, \max}[1/p]  \cap \ainf)$.
\item $w \gs^1_{\star, \max} \cap \ainf =w(\gs^1_{\star, \max} \cap \ainf)$.
\item $w \gs^1_{\star, \max}[1/p] \cap \ainf =w(\gs^1_{\star, \max}[1/p] \cap \ainf)$. 
\item $\fkt \gs^1_{\star, \max} \cap \ainf =\fkt(\gs^1_{\star, \max} \cap \ainf)$.
\item $\fkt \gs^1_{\star, \max}[1/p] \cap \ainf =\fkt(\gs^1_{\star, \max}[1/p] \cap \ainf)$. 
\end{enumerate}
\end{lemma}
\begin{proof}
Items (1) and (2). Suppose $x=uy$ with $x\in \ainf$ and $y$ is an element of  $\gs^1_{\star, \max}$ or $\gs^1_{\star, \max}[1/p]$. Then $\phi(x)=u^p\phi(y)$. Note $\phi(y) \in \bcrisplus$.  Using  \cite[Lem. 3.2.2.]{Liu13}, we have $\phi(y) \in \ainf$  and hence $y \in \ainf$.

Items (3) and (4). Suppose $x=wy$ with $x\in \ainf$ and $y$ is an element of  $\gs^1_{\star, \max}$ or $\gs^1_{\star, \max}[1/p]$.  
Note $w$ is not an element in the prismatic ring  $\gs^1_{\max}$ (i.e. the $\star=\emptyset$ case), however this will not cause problem in the following argument. Note  for each $m \geq 0$,
$$\phi^m(Ex)=\phi^m(vy) \in \fil^1 \bdrplus.$$
Thus $Ex \in I^{[1]}\ainf$ (cf. Notation \ref{nota: ring for mod}), which is principally generated by $v$; thus $y \in \ainf$. 

Items (5) and (6). When $\star=\log$, then these follow from Items (3) and (4) as $w/\fkt$ is a unit in $\gs^1_{\log, \max}$ by Lemma \ref{lem-element-t}. But indeed, the general case also follow from similar argument as Items (3) and (4). Let $x=\fkt y$ with $x \in \ainf$  and $y$ is an element of  $\gs^1_{\star, \max}$ or $\gs^1_{\star, \max}[1/p]$. Then one can similarly check $Ex \in I^{[1]}\ainf=E\fkt \ainf$, thus $y \in \fkt \ainf$.
\end{proof}

\begin{lemma} \label{lem tau1A}
$(\tau-1)A\subset uwA =u\fkt A$.
\end{lemma}
\begin{proof}
By Lemma \ref{lem tau stmax}, $(\tau-1)A$ is contained in
$uw \gs^1_{\log, \max}[1/p] \cap \ainf$, which is equal to $uwA$ by Lemma \ref{lem-A-max}. Finally $uwA =u\fkt A$ because $w/\fkt \in A^\times$  by Lemma \ref{lem-element-t}.
\end{proof}

\begin{construction} \label{cons nu}
Recall the projection map (first appeared in \cite[5.3.3]{Fon94}):
\[ \nu: \ainf \onto  W(\overline{k}) \]
and use $I_+=W(\fkm_\ocflat) $ to denote its kernel.
We have  $[\zeta^\flat]-1 \in I_+$ (this is well-known; cf.   \cite[Lemma 4.1]{GabberLi} for a recent exposition). As $\nu(E)=p$, we also have $w=  \frac{[\zeta^\flat]-1}{E} \in I_+$.
This map extends to
\[ \nu: \amax[1/p]  \onto  W(\overline{k})[1/p] \]
    
\end{construction}

\begin{lemma} \label{lem reductionA1}
Recall $\fkm_{A_1} \subset A_1$ is defined in  Notation \ref{cons-flat-subring}.  We have
\[  A_1/ \fkm_{A_1} \simeq k \]
\end{lemma}
\begin{proof} 
Since $\nu(w)=0$, it is easy to see $\nu(\gs^1_{\log, \max}[1/p])=W(k)[1/p]$; then it is easy to see $\nu(A)=W(k)$.
Consider the commutative diagram
\[
\begin{tikzcd}
\gs/u \arrow[d, two heads] \arrow[r, "\simeq"] & A/(I_+\cap A)=W(k) \arrow[r, hook] \arrow[d, dashed] & \ainf/I_+ \arrow[d, two heads] \\
\gs_1/u \arrow[r, hook]                        & A_1/\fkm_{A_1} \arrow[r, hook]                       & \ocflat/\fkm_\ocflat          
\end{tikzcd}
\]
Since all horizontal arrows are injective,  the dotted arrow must exist and is surjective.
 This forces $A_1/\fkm_{A_1}\simeq k$. 
\end{proof}

 \section{Stabilized truncated   operator}\label{sec:p-GT-A}

 In \S \ref{subsec:Stabilized truncated operator}, we use the ring $A$ in \S \ref{subsec-flat-subring} to \emph{stabilize} the truncated   operator $\delta_\tau$ in \S \ref{sec:truncated-sen}; this works for any $K$. 
 In \S \ref{subsec:rel with fil}, we   study its interaction with Frobenius operator and filtrations (when $K$ is unramified), and derive results concerning generalized eigenvalues of the mod $p$ Sen operator (more ``refined" results will be further discussed in \S \ref{sec:pDS vs pGT}).

 
\subsection{Stabilized truncated operator} \label{subsec:Stabilized truncated operator}
  
  Recall  the ring in Notation  \ref{cons-flat-subring}:
$$A   = \gs^1_{\log, \max}[1/p] \cap \ainf.$$
For a Breuil--Kisin module $\gm$, denote
\[\gm_A: =\gm\otimes_\gs A\]

\begin{prop}\label{lem-tau-1}
(Allow $K$ to be ramified). Let $\gm$ be the Breuil--Kisin module attached to an integral crystalline representation. Then
\[ (\tau-1)(\gm_A) \subset \gm\otimes_\gs u\fkt A. \] 
As a consequence, we can define an operator
\[ \delta_\tau:= \frac{\tau-1}{u\frac{p}{E(0)}\fkt}: \gm_A \to \gm_A. \]
Modulo $p$ and then modulo $\fkm_{A_1}$, then it is   the same as the mod $\pi$ reduction of the amplified Sen operator:
\[ \Theta: \gmht \to \gmht.\]
 \end{prop} 
\begin{proof}  
The fact that 
\[ \delta_\tau \pmod{p, \fkm_{A_1}} \equiv  \Theta \pmod{\pi}\]
follows from Lemma \ref{lem reductionA1} and Corollary  \ref{cor-crys-truncate-sen}. It thus suffices to construct $\delta_\tau$. Since $w/\fkt \in A^\times$ by Lemma \ref{lem-element-t}, we shall use $w$ in the following for computational convenience.
 By Lemma \ref{lem tau1A}, we already have
\[   (\tau-1)A\subset uwA.\]
It remains to prove
\[ (\tau-1) (\gm) \subset  \gm\otimes_\gs uwA. \] 
 As we already know 
\[ (\tau-1) (\gm) \subset  \gm\otimes_\gs uw\ainf; \]
it suffices to prove
\[ (\tau-1) (\gm) \subset  \gm\otimes_\gs uw\cdot \gs^1_{\log, \max}[1/p]  \] 
 As $\gm$ is associated to a prismatic $F$-crystal, we have
\[ (\tau-1)(\gm) \subset \gm \otimes_\gs \gs^1 \subset \gm \otimes_\gs \gs^1_\max\]
Fix any basis $e_1, \cdots, e_d$ of $\gm$, then for any $m \in \gm$, we have
\[ (\tau-1)(m) = \sum_{i} x_i e_i \in\gm \otimes_\gs \gs^1_\max, \quad \text{where } x_i=\sum_{j \geq 0} a_{ij} \gamma_j (uw) \in \gs^1_\max.\]
As mentioned above, we have  $x_i  \in uw \ainf$. Use the map 
 $ \nu : \amax [\frac 1 p]  \to W(\bar k ) [\frac 1 p]$ recalled in 
Construction \ref{cons nu}.
We must have  $a_{i0} = \nu( x_i) = 0.$ 
 Lemma \ref{lem aozero}   in the following implies that $x_i$ is  divisible by $uw$.

\end{proof}

\begin{lemma} \label{lem aozero}
Consider an element in $\gs^1_{\max}$  
\[x= \sum_{i \geq 0} a_i \gamma_i (uw) \text{ with $a_i \in \gs[ E/p ]^{\wedge_p}$ converging to zero. }\] Suppose $a_0=0$, then
\[ x\in uw \cdot \gs^1_{\log, \max}[1/p] \]
\end{lemma}
\begin{proof}
Note
\[\gamma_i(uw) =uw \cdot \frac{u^{i-1}}{i} \cdot \gamma_{i-1}(w) \]
then note $\frac{u^{i-1}}{i}$ is in $\gs_\max[1/p]$ and converges to zero.
 (Caution: it is necessary to assume $x$ is in the \emph{prismatic} ring $\gs^1_\max$ here, although the division happens inside the \emph{log-prismatic} ring $\gs^1_{\log, \max}[1/p]$.)
\end{proof}

\subsection{Relation with filtrations: $p$-GT and $p$-DS} \label{subsec:rel with fil}
 
 We now study the relation between the  truncated operator and filtrations. We shall make use of filtrations related to $\phi(\bargm)$.

\begin{construction} \label{ConstructionfilphiA}
 We recall some notations following  Definition \ref{defn fil phiM} with respect to the triples $(\gs_1, u, \phi)$ and $(A_1, u, \phi)$. For notational simplicity, let $\bargm_A:=\bargm\otimes_{\gs_1}A_1$.
\begin{enumerate}
\item Recall   $\fil^\bullet \phi(\bargm)=\phi(\bargm) \cap u^\bullet \bargm$, and let $N_{n,i}$ be the image of the composite map
\[u^i \fil^{n-i} \phi(\bargm) /u^i \fil^{n+1-i} \phi(\bargm) \into \fil^n \bargmast/\fil^{n+1} \bargmast  \xrightarrow{u^{-n}, \simeq} \fil_n \bargm_\HT. \] 
One can similarly define $\fil^\bullet \phi(\bargm_A)$, and define $N_{A, n, i}$ as the image of
\[u^i \fil^{n-i} \phi(\bargm_A) /u^i \fil^{n+1-i} \phi(\bargm_A) \xhookrightarrow{u^{-n}} \fil_n \bargm_{A,\HT}. \]

\item Recall $\gs_1 \to A_1$ is flat (cf. Notation \ref{cons-flat-subring});  by Lemma \ref{lem: flat base change}, we have
\[ \fil_n \bargm_\HT \otimes_{k} A_1/u A_1 \simeq \fil_n \bargm_{A,\HT}. \]
Thus there is a projection map by reduction modulo $\fkm_{A_1}$:
\[\fil_n \bargm_{A,\HT} \onto  \fil_n \bargm_\HT. \]
Denote the image of  $N_{A, n, i}$ under the projection as  $\wt{N}_{n,i}$.

\item As we warned in Remark \ref{rem noflat}, the filtrations on $\phi(\bargm)$ does not satisfy flat base change; thus the relation between $N_{n,i}$ and $N_{A, n, i}$ is not clear. However, since  the induced map $\gs_1/u \to A_1/u$ is injective, we have  $u^n \bargm \cap u^{n+1} \bargm_A=u^{n+1}\bargm$; this implies $\fil^\bullet \varphi (\bargm)\to\fil^\bullet (\bargm_A)$ is strict.  Thus we have an injective map 
\[ \Fil^n \varphi (\bargm) / \Fil^{n+1} \varphi (\bargm) \into \Fil^n \varphi (\bargm_A) / \Fil^{n+1} \varphi (\bargm_A) \]
This   implies that (inside $\bargmht$):
\[ N_{n,0}\subset \wt{N}_{n,0}  \]
\end{enumerate}
\end{construction}

 The following can be regarded as some mod $p$ analogue of Lemma \ref{lem: Kis nyg fil}, cf. Remark \ref{rem:comparepGT} for more comments.
 Recall $\fil^\bullet \bargm_A$ is defined as the $\phi^{-1}(\fil^\bullet \phi(\bargm_A))$ (following Definition \ref{defn fil phiM}).

 \begin{prop}[$p$-Griffiths transversality on $A_1$-level] \label{prop-pGT-A-level} 
Suppose $K$ is unramified.  Following Proposition \ref{lem-tau-1}, consider the mod $p$ map
\[ \delta_\tau= \frac{\tau-1}{u\frac{p}{E(0)}\fkt}: \bargm_A \to \bargm_A.\]
Then we have 
\begin{enumerate}
\item  $\delta_\tau(u^n \bargm_A) \subset  u^n \bargm_A$;
\item \label{item2pgta} $\delta_\tau(\Fil^n \varphi (\bargm_A)) \subset u^p   \fil^{n-p}\phi(\bargm_A)$;
\item  \label{item3pgta}   \emph{($p$-Griffiths transversality)}: $\delta_\tau(\Fil^n  \bargm_A) \subset  \fil^{n-p} \bargm_A$. 
\end{enumerate}
\end{prop}
\begin{proof}
For Item (1), it suffices to check $\delta_\tau(u^n) \subset u^nA_1$, which is easy.

For Item (2), consider  the formal relation
\[\delta_\tau \phi =  \frac{\phi(u\frac{p}{E(0)}\fkt)}{u\frac{p}{E(0)}\fkt} \phi \delta_\tau =\frac{p}{\phi(E(0))}u^p \phi \delta_\tau \]
Here  in above computation we use  $\phi(\fkt)=\frac{pE(u)}{E(0)}\fkt$   and use $K$  is unramified.
    Thus, using Item (1), we have
   \[\delta_\tau(\Fil^n \varphi (\bargm_A)) \subset  u^p  \phi(\bargm_A) \cap u^n \bargm_A = u^p  \fil^{n-p}\phi(\bargm_A)\]
Note in above argument, we used the fact $\frac{p}{\phi(E(0))}u^p \varphi (\bargm_A))= u^p \varphi (\bargm_A))$ because $\frac{p}{\phi(E(0))}$ is a unit in $\phi(A_1)$ (not just in $A_1$!); cf. Remark \ref{rem:preferdeltatau} for more comments.
   
For Item (3), it is easy to check:
\[\phi(\delta_\tau(\Fil^n  \bargm_A)) \subset u^{-p} \delta_\tau (u^n\bargm_A) \subset u^{n-p}\bargm_A.\]
\end{proof}

\begin{remark}\label{rem:comparepGT}
We compare Proposition \ref{prop-pGT-A-level}  with Lemma \ref{lem: Kis nyg fil} (expanding discussions following Proposition  \ref{prop:introtauA} in the introduction).
\begin{enumerate}
\item \label{itemcompare2}
The $A$-version of Lemma \ref{lem: Kis nyg fil}\eqref{kis1} (hence Lemma \ref{lem: Kis nyg fil}\eqref{kis2}) does not hold, that is, we   do not have $\delta_\tau(\bargm^\ast_A) \subset u\bargm^\ast_A$. Indeed, we do not have  $\delta_\tau(A_1) \subset uA_1$. To see this, suppose $E=u-p$ and $p>2$, consider $w \in A_1$. Using notations in the computation of Lem \ref{lem tau stmax}, note $(\tau-1)(E)=uwE$ and hence $y=1$. Thus the valuation of $\delta_\tau(w)$ (modulo $p$) in $\ocflat$ is equal to valuation of $w$, which is strictly smaller than valuation of $u$.


\item \label{itemcompare3}
  Proposition \ref{prop-pGT-A-level}\eqref{item2pgta} says that $\delta_\tau$ satisfies a ``twisted $p$-Griffiths transversality" on $\fil^\bullet \phi(\bargm_A)$ (note the extra twisting by $u^p$, which is a uniformizer of $\phi(\gs_1)$). This will induce a \emph{$p$-degree shrinking}, cf. Corollary  \ref{cor tilde gen eigen} and Theorem \ref{prop: p griffiths}.

\item   Proposition \ref{prop-pGT-A-level}\eqref{item3pgta} says that $\delta_\tau$ satisfies a   $p$-Griffiths transversality on $\fil^\bullet \bargm_A$. This shall---without any twisting in the process---induce another $p$-Griffiths transversality, cf. Theorem \ref{thm:BL-p-GT}.
\end{enumerate}
\end{remark}

\begin{remark} \label{rem:preferdeltatau}
Continuing Remark \ref{rem:renorm_fkt}, recall we defined another operator $\delta':=\frac{\tau-1}{uw}$ in the proof of Corollary \ref{cor-crys-truncate-sen}. There is the formal relation on  $\bargm_A$:
 \[\delta' \phi = \frac{\phi(uw)}{uw} \phi \delta'
= (\zeta^\flat -1)^{p-1} \phi \delta' \]
It is easy to see  $(\zeta^\flat -1)^{p-1} = u^p \beta$ with $\beta \in  (A_1)^\times$ (because $w/\fkt \in A^\times$).
However, we do not know if $\beta \in  (\phi(A_1))^\times$; this means that if we use $\delta'$ instead of $\delta_\tau$, we shall have
\[   \delta'(\Fil^n \varphi (\bargm_A)) \subset u^p \beta  \fil^{n-p}\phi(\bargm_A)  \]
Actually this extra $\beta$ does not really hinder our applications, except we need to carry it along the way. We decide to use $\delta_\tau$ since we prefer the ``cleaner" formula it satisfies.
\end{remark}

\begin{construction} \label{ConstructiontwistA}
  We quickly recall the mod $p$ Sen operator and its twistings.  As $K$ is unramified and $\bargm$ comes from reduction of an integral  crystalline representation. 
Following Notation \ref{nota: crys final}, in this case, we have  $\theta_\kinfty= \Theta$; thus we simply use $\theta$ to denote the mod $p$ operator:
\[ \theta :    \bargm_\HT \to   \bargm_\HT. \] 
The diagram \eqref{diag shift sen bargmht} then simplifies as
\begin{equation} \label{new unram diag shift sen bargmht}
\begin{tikzcd}
\fil^n \bargmast/\fil^{n+1} \bargmast \arrow[d, "{\times u^{-n}, \simeq}"] \arrow[rr, "\theta"] &  & u \fil^{n-1} \bargmast/ u \fil^{n} \bargmast  \arrow[d, "{\times u^{-n}, \simeq}"] \\
\fil_n  \bargm_\HT \arrow[rr, "\theta-n"]                                            &  & \fil_{n-1}  \bargm_\HT                                                  \end{tikzcd}
\end{equation}
\end{construction}

 As a result of Remark \ref{rem:comparepGT}\eqref{itemcompare2}, we do not have the $A$-version of Diagram \eqref{new unram diag shift sen bargmht}. However, as suggested by Remark \ref{rem:comparepGT}\eqref{itemcompare3}, there is another version inducing a $p$-degree shrinking. 


\begin{prop} \label{prop:pDS-A}
We have a commutative diagram, where the vertical arrows are well-defined by  Proposition \ref{prop-pGT-A-level},
\begin{equation} \label{eqshiftAver}
\begin{tikzcd}
\Fil^n \varphi (\bargm_A) / \Fil^{n+1} \varphi (\bargm_A) \arrow[d, "\delta_\tau"] \arrow[rr, "u^{-n}"] &  & {\bargm_{A,\HT} } \arrow[d, "\delta_\tau -n"] \\
u^p  \Fil^{n-p} \varphi (\bargm_A) / u^p  \Fil^{n+1-p} \varphi (\bargm_A)  \arrow[rr, "u^{-n}"]             &  & {\bargm_{A,\HT} }                            
\end{tikzcd}
\end{equation}
In addition, we have
\[ (\delta_\tau -n)(N_{n,0,A}) \subset N_{n,p,A}= N_{n-p,0,A}\]
\end{prop}
\begin{proof}
Once commutativity is checked, the relation $(\delta_\tau -n)(N_{n,0,A}) \subset N_{n,p,A}$ follows by considering images of horizontal maps in the diagram. 
To  check commutativity, we need to show for $x \in \fil^n \phi(\bargm_A)$, we have
\[ (\delta_\tau -n)(u^{-n}x) \equiv u^{-n}\delta_\tau(x)\pmod{u\bargm_A}. \]
Using the formal relation
\[ \delta_\tau(ab)=\delta_\tau(a)\tau(b)+a\delta_\tau(b),\]
it reduces to check  
\[  \delta_\tau(u^{-n})\tau(x) \equiv -nu^{-n}x \pmod{u\bargm_A} \]
By Lemma \ref{lem:difffktw}, the (mod $p$) operators $\delta_\tau$ and $\delta'=(\tau-1)/(uw)$ differ by an element in $1+uA_1$, thus we could switch to use $\delta'$ (which is convenient here).
Writing $x=u^ny$ with $y \in \bargm_A$, we are reduced to check
\[ \delta'(u^{-n})\tau(u^ny) \equiv -n y\pmod{u\bargm_A} \]
We have 
\[ \delta'(u^{-n}) =u^{-n}(\frac{\zetaflat^{-n}-1}{\zetaflat-1})  \in u^{-n}(-n+uA_1) \]
where the last inclusion follows from the binomial expansion of $\zetaflat^{-n} =(\zetaflat-1+1)^{-n}$ and the fact $\zetaflat-1 \in uA_1$.
We can conclude using the fact  that
\[\tau(u^ny) \in u^n(\zeta^\flat)^n(y+uw\bargm_A).\]
 \end{proof}

\begin{cor}[$p$-degree shrinking on $\wt{N}_{n,0}$]
\label{cor tilde gen eigen}
 Suppose $K$ is unramified and $T$ is crystalline. Then
 \[(\theta -n) (\wt{N}_{n,0}) \subset \wt{N}_{n,p} =\wt{N}_{n-p, 0} \]  
 As a consequence, $\theta$ is stable on $\wt{N}_{n,0}$ with all eigenvalues equal to $n$ (mod $p$).
\end{cor}
\begin{proof} 
 It follows from the following commutative diagram
\[
\begin{tikzcd}
N_{n,0,A} \arrow[d, "\delta_\tau-n"] \arrow[rr, two heads] &  & {\wt{N}_{n,0}} \arrow[d, "\theta-n"] \\
N_{n,p, A} \arrow[rr, two heads]                             &  & {\wt{N}_{n,p}}       
\end{tikzcd}
\]
Here: the   horizontal surjections are defined in Construction \ref{ConstructionfilphiA}, and the left vertical arrow is constructed in Proposition \ref{prop:pDS-A}. Commutativity is a consequence of Proposition \ref{lem-tau-1}.
\end{proof}

  The following  easy corollary follows from the fact that $N_{n,0} \subset \wt{N}_{n,0}$.
  
  \begin{cor} 
   \label{cor gen eigen}
Inside $\bargmht$, $N_{n,0}$ is contained in the generalized eigenspace of eigenvalue $n$ with respect to $\theta$-action.
  \end{cor} 

  Note that at this point, we do not even know if $\theta$ is stable on $N_{n,0}$. Although in Theorem \ref{prop: p griffiths}, we will prove the (stronger) fact that indeed $N_{n,0}=\wt{N}_{n,0}$! But this requires some extra argument (using the Frobenius operator, cf. the next section \S \ref{sec:mod-p-shape}): interestingly, it uses the (weaker) Corollary  \ref{cor gen eigen} as an \emph{input}.

 \addtocontents{toc}{\ghblue{Applications: mod $p$ case}}

  \section{Shape of mod $p$ Frobenius}\label{sec:mod-p-shape}
  
Let $\bargm$ be an effective mod $p$ Breuil--Kisin module (that is not necessarily  from reduction of a semi-stable representation) with Frobenius height $\geq 0$. 
In \S \ref{subsec-equiv-cond}, we first provide several equivalent conditions for $\bargm$ to satisfy the unaligned mod $p$ Frobenius condition in Definition \ref{def: strong frob cond}\eqref{item unaligned mod p}; this subsection is purely  ``$\phi$-theoretic" as it does not involve Galois representations. Then in Theorem \ref{thm1 mod p frob}, we verify these equivalent conditions when $\bargm$ comes from reduction of a crystalline representation with $K$ unramified, making use of the Sen operator $\theta$.

  \subsection{Equivalent conditions} \label{subsec-equiv-cond}

 Let $\bargm$ be an effective mod $p$ Breuil--Kisin module (that is not necessarily  from reduction of a semi-stable representation), and consider it as an effective isogeny with respect to the triple $(k[[u]], u, \phi)$ (instead of $(k[[u]], u^e, \phi)$). So in particular, as we are in mod $p$ case, the discussion is still valid even if $K$ is ramified. (However, to avoid confusion, the readers could safely assume $K$ is unramified throughout this section).

 



\begin{notation} \label{notamodpbk}
Similar to Notation \ref{nota: mod p bk},  suppose for some basis $\vec{e}=(e_1, \cdots, e_d)$ of $\bargm$, we have
\[\phi(\vec{e})=  \vec{e} X \cdot \Lambda \cdot Y\] where $\Lambda=\diag(u^{a_1}, \cdots, u^{a_d})$ with $0 \leq a_1 \leq \cdots \leq a_d$ and $X, Y$ are invertible matrices. 
It is easy to see that $\fil^\bullet \bargmast$ always has an adapted basis in the sense that
 \[ \fil^n \bargmast = \oplus_{i=1}^d \overline{\gs} \cdot  u^{\max\{ n-a_i, 0\}} \cdot f_i^\ast \] 
where $ \vec{f^\ast} = \vec{e} X \Lambda$  forms a basis of $\bargmast$.
\end{notation}

\begin{defn}[A ``sub-Nygaard filtration"] \label{subnygadapted}
 Say the filtered $k[[u^p]]$-module $\fil^\bullet \phi(\bargm)$ has an adapted basis if
\[ \fil^n \phi(\bargm) =\oplus_{i=1}^d k[[u^p]]\cdot u^{p\cdot \max\{0, \lceil \frac{n-a_i}{p} \rceil \}} g_i \]
where $g_i$ is basis of $\phi(\bargm)$. 
\end{defn}

\begin{defn}[A ``sub-Hodge filtration"] \label{Definitionsub hodge fil}
Denote $$\phi(M_k): =\phi(\bargm)/u^p\phi(\bargm).$$
(Note   $\phi(\bargm) \into \bargmast$ induces an isomorphism of $k$-vector spaces $ \phi(M_k) \simeq \bargm_\dR$; we keep the notation $\phi(M_k)$ since we will define a possibly different ``sub-Hodge filtration" on it in the following). 
As in Definition \ref{defn fil phiM}, one can use the decreasing filtration  $\fil^i \phi(\bargm)$ to induce a decreasing filtration on $ \phi(M_k)$; call it the \emph{sub-Hodge filtration}. 
One computes that 
\[ \fil^n_\rmh \phi(M_k) = \fil^n \phi(\bargm)/ ( \fil^n \phi(\bargm) \cap \phi(u\bargm))= \fil^n \phi(\bargm)/ u^p \cdot \fil^{n-p} \phi(\bargm).  \]
It is then easy to check (as done  in \cite[Lem. 5.2.4]{Bar20a}) that   $\phi(\bargm) \into \bargmast$ induces an \emph{injective} map
\[ \fil^n_\rmh \phi(M_k) \into \fil^n \bargm_\dR,\]
justifying the terminology ``sub-Hodge filtration".
\end{defn}

\begin{construction}[The $i$-th piece of a ``sub-conjugate filtration"] \label{rem oplus not inj}
We have a decomposition (of $k[[u^p]]$-modules)
\begin{equation} \label{eqdecomposebargmast}
\oplus_{i=0}^{p-1} u^i\phi(\bargm)  =\bargmast.
\end{equation} 
Recall we can use the filtration on $\phi(\bargm)$ to define subspaces $N_{n,i} \subset \fil_n \bargm_\HT$, cf. Construction \ref{ConstructionfilphiA}.
In analogy with the decomposition \eqref{eqdecomposebargmast}, one can (forcibly) define   the direct sum map
\[  \oplus_{i=0}^{p-1} N_{n,i} \to \fil_n \bargm_\HT \]
which is however not necessarily injective. 
\end{construction}

The following is an extension of a lemma of Bartlett \cite[Lem. 5.2.5]{Bar20a}, by incorporating conjugate filtrations to the picture.
 
\begin{lemma}\label{lem: mod p Frob two fil}    Recall we consider $\bargm$ as an effective isogeny over $(k[[u]], u, \phi)$. 
The following statements are equivalent.
\begin{enumerate}
\item \label{itemf1} $\bargm$ satisfies the unaligned mod $p$ Frobenius condition in Definition \ref{def: strong frob cond}\eqref{item unaligned mod p}.

\item \label{itemf2}  $\fil^\bullet \phi(\bargm) $ has an adapted basis as in Definition \ref{subnygadapted}.

\item \label{itemf3} The injective filtered map (of $k[[u^p]]$-modules) $\oplus_{i=0}^{p-1} \fil^\bullet u^i\phi(\bargm) \into \fil^\bullet \bargm^\ast$ is bijective.

\item \label{itemf4} The  injective filtered map 
$\fil^\bullet_\rmh \phi(M_k) \into \fil^\bullet \bargm_\dR$ in Definition \ref{Definitionsub hodge fil}  is bijective.

\item  \label{itemf5} The direct sum map
$ \oplus_{i=0}^{p-1} N_{n,i} \to \fil_n \bargm_\HT $  in Construction \ref{rem oplus not inj}  is bijective for each $n$.

\item \label{itemf6} The direct sum map
$ \oplus_{i=0}^{p-1} N_{n,i} \to \fil_n \bargm_\HT $ is injective for each $n$.

\end{enumerate}
\end{lemma}
\begin{proof} The equivalence $\eqref{itemf4} \Leftrightarrow \eqref{itemf1}$ is  \cite[Lem. 5.2.5]{Bar20a}.

For $\eqref{itemf1} \Rightarrow \eqref{itemf2}$: use Notation \ref{notamodpbk}, let $\vec{g}=\vec{e}X\Lambda$, then it is the desired $k[[u^p]]$-basis of $\phi(\gm)$ per Definition \ref{subnygadapted}.

For $\eqref{itemf2} \Rightarrow \eqref{itemf3}$:  use the fact
\[ \fil^\bullet u^i \phi(\bargm) =u^{i} \fil^{\bullet -i} \phi(\bargm)\] 

For $\eqref{itemf3} \Rightarrow \eqref{itemf4}$: use the following facts:
\[ \fil^n_\rmh \phi(M_k) = \fil^n \phi(\bargm)/ u^p \cdot \fil^{n-p} \phi(\bargm).  \]
\[\fil^n \bargm_\dR =  \fil^n \bargmast/u\fil^{n-1} \bargmast\]

For $\eqref{itemf3} \Rightarrow \eqref{itemf5}$: take graded.

For $\eqref{itemf5}  \Rightarrow \eqref{itemf6}$:  this is trivial.

Finally, we prove $\eqref{itemf6}  \Rightarrow \eqref{itemf4}$.
We feed the following data into Lemma \ref{lem zip fil} (with $h$  there set as $r_d$): 
\begin{itemize}
\item $P^n =\fil^n \bargm_\dR$
\item $Q_n=\fil_n \bargm_\HT$
\item $\wt{P}^n =\fil^n_\rmh \phi(M_k)$  
\item   $\wt{Q}_n=\oplus_{i=0}^{p-1} N_{n,i}$  
\end{itemize}
It suffices to verify the assumptions in  Lemma \ref{lem zip fil}. The conditions for $P^n, Q_n$   follow from the general fact Lemma  \ref{lem: matching graded dR and HT}. 
For $\wt{P}^n, \wt{Q}_n$, we need to check 
 \[ \dim_k \left( (\oplus_{i=0}^{p-1} N_{n,i})/(\oplus_{i=0}^{p-1} N_{n-1,i}) \right) =\dim_k  \left( \fil^n_\rmh \phi(M_k)/\fil^{n+1}_\rmh \phi(M_k) \right) \]
 Here on LHS, we are taking quotient for subspaces inside $\bargmht$ (which is doable by assumption in Condition \eqref{itemf6}); thus the LHS is precisely $\dim_k N_{n,0}/N_{n-p,0}$ and we can conclude by Lemma \ref{lem match grade phiM}. (A hidden fact is that $\wt{Q}_{r_d}=\bargm_\HT$: but this is automatic  for dimension reasons, since we must have $\dim \wt{Q}_{r_d} =\dim \wt{P}^0=\dim \bargm_\HT$). 
\end{proof}

 \begin{lemma} \label{lem zip fil}
Let $P$ and $Q$ be two finite dimensional  vector spaces (over a field) with the same dimension, such that the following conditions are satisfied.
\begin{enumerate}
\item Suppose there are  filtrations concentrated in the range $[0,h]$ (for some $h \geq 0$):
\begin{align*}
 \cdots  P = P &=      P^0 \supset   P^1 \supset \cdots     P^h \supset  0 =  0 \cdots \\
\cdots  0= 0 & \subset       Q_0 \subset    Q_1 \subset \cdots    Q_h= Q  = Q \cdots
\end{align*}
such that the gradeds of these two filtrations have same dimension: that is, for each $i \in \mathbb{Z}$, 
\[ \dim P^i/P^{i+1} =\dim Q_i/Q_{i-1} \]
\item Suppose furthermore there are two \emph{sub-filtrations}  
 \begin{align*}
\cdots  P = P  & =     \wt{P}^0 \supset   \wt{P}^1 \supset \cdots     \wt{P}^h \supset  0 =  0 \cdots \\
 \cdots  0= 0  & \subset       \wt{Q}_0 \subset    \wt{Q}_1 \subset \cdots    \wt{Q}_h= Q  =Q \cdots 
\end{align*}
where for each $i$,
 \[\wt{P}^i \subset P^i, \quad \wt{Q}_i  \subset Q_i \] 
 and such that  the gradeds of these two filtrations also have the same dimension: that is, for each $i$, 
\[ \dim \wt{P}^i/\wt{P}^{i+1} =\dim \wt{Q}_i/\wt{Q}_{i-1} \]
(which \emph{a priori} is not necessarily equal to $\dim P^i/P^{i+1}$).
\end{enumerate}
 Then the two sub-filtrations coincide with the original filtrations. That is, for each $i$,  
\[ \wt{P}^i = P^i, \quad \wt{Q}_i  = Q_i. \]
\end{lemma}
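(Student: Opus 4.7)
The plan is a short dimension count. Set
$$d_i := \dim P^i/P^{i+1} = \dim Q_i/Q_{i-1}, \qquad \wt d_i := \dim \wt{P}^i/\wt{P}^{i+1} = \dim \wt{Q}_i/\wt{Q}_{i-1}.$$
Since the filtrations (resp.\ sub-filtrations) exhaust $P$ and $Q$, summing a telescoping expression yields
$$\sum_{i=0}^{h} d_i \;=\; \dim P \;=\; \dim Q \;=\; \sum_{i=0}^{h} \wt d_i.$$

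Next I would exploit the two inclusions $\wt{P}^i \subset P^i$ and $\wt{Q}_{i-1} \subset Q_{i-1}$ to get ``one-sided'' inequalities between the two dimension sequences. Taking dimensions,
$$\sum_{j \geq i} \wt d_j \;=\; \dim \wt{P}^i \;\leq\; \dim P^i \;=\; \sum_{j \geq i} d_j,$$
$$\sum_{j \leq i-1} \wt d_j \;=\; \dim \wt{Q}_{i-1} \;\leq\; \dim Q_{i-1} \;=\; \sum_{j \leq i-1} d_j,$$
for every $i$.

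Adding these two inequalities term by term, the left-hand side becomes $\sum_{j=0}^{h} \wt d_j$ and the right-hand side becomes $\sum_{j=0}^{h} d_j$; by the first displayed equation these are equal. Hence both inequalities must be equalities for every $i$. In particular $\dim \wt{P}^i = \dim P^i$, and together with $\wt{P}^i \subset P^i$ this forces $\wt{P}^i = P^i$ for all $i$; an identical argument (using the $Q$-side equality $\dim \wt{Q}_i = \dim Q_i$) gives $\wt{Q}_i = Q_i$. There is no serious obstacle here, as the lemma reduces entirely to the above elementary counting; the only subtle point worth double-checking is that the hypotheses genuinely provide the two sided inequalities, which is immediate from the given inclusions.
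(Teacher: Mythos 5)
Your proof is correct, and it rests on exactly the same elementary observation as the paper's argument: the inclusions force $\dim\wt{P}^i\leq\dim P^i$ and $\dim\wt{Q}_{i-1}\leq\dim Q_{i-1}$, while the matching of graded dimensions pins down the totals. The paper runs this as an induction starting from $\dim\wt{Q}_0=\dim P/\wt{P}^1\geq\dim P/P^1=\dim Q_0$ (and also records a second, global counting proof via ``Hodge numbers''); your version simply packages the same count non-inductively by pairing $\dim\wt{P}^i$ with $\dim\wt{Q}_{i-1}$, which is a fine and arguably cleaner presentation.
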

\begin{proof}
The proof is elementary. 
Consider graded at $i=0$, we have
\[ \dim \wt{Q}_0 = \dim \wt{P}^0/\wt{P}^1 = \dim  P/\wt{P}^1 \geq \dim  P/P^1 =\dim Q_0 \]
thus we must have equality, and thus $\wt{Q}_0 =Q_0$ and     $\wt{P}^1=P^1$. An obvious induction argument would finish the proof.

Here is an alternative (slightly) more conceptual proof. Define   ``Hodge numbers" of these filtrations
\[ t(P^\bullet) =\sum_{i \in \bbz} i\dim P^i/P^{i+1}, \quad t(Q_\bullet) =\sum_{i \in \bbz} i\dim Q_i/Q_{i-1}\]
and similarly for $\wt{P}^\bullet, \wt{Q}_\bullet$.
Then \cite[Lemma 3.3.1]{Bar20a} (resp. its dual form for increasing filtrations)  implies
\[ t(P^\bullet) \geq t(\wt{P}^\bullet), \quad \text{resp. } t(Q_\bullet) \leq t(\wt{Q}_\bullet)\]
But we also have $t(P^\bullet)=t(Q_\bullet)$ and $t(\wt{P}^\bullet)=t(\wt{Q}_\bullet)$; thus indeed all these ``Hodge numbers"  coincide. Then \cite[Lemma 3.3.1]{Bar20a} (resp. its dual form)  implies $P^\bullet=\wt{P}^\bullet$ resp. $Q_\bullet=\wt{Q}_\bullet$.
\end{proof}

\subsection{Verifying mod $p$ Frobenius condition} \label{subsec:strong-mod-p-Frob}

The following theorem is first due  to  ongoing work of Bhatt--Gee--Kisin \cite{BGK}, using a stacky approach, cf.~ Remark \ref{rem y matrix}. 
We expect strong relation between their approach and ours; but as far as we learn from Bhatt--Gee--Kisin, there does not seem to be obvious translations at this point. In particular, our proof crucially uses the $p$-degree shrinking property  Corollary  \ref{cor tilde gen eigen} and Corollary  \ref{cor gen eigen}.

\begin{theorem}\label{thm1 mod p frob}
Suppose $K$ is unramified and $T$ is crystalline. Then $\bargm$ satisfies the unaligned mod $p$ Frobenius condition  in Definition \ref{def: strong frob cond}\eqref{item unaligned mod p}.
\end{theorem}
\begin{proof}
Corollary \ref{cor gen eigen} implies the direct sum map
\[ \oplus_{i=0}^{p-1}  {N}_{n,i} \to \fil_n \bargm_\HT \]
factors through the generalized eigenspace decomposition with respect to $\theta$-action on $\fil_n \bargm_\HT$, and thus is injective. This verifies Item \eqref{itemf6} in Lemma \ref{lem: mod p Frob two fil}.
\end{proof}

\section{$p$-degree shrinking: mod $p$ Sen operator} \label{sec:pDS vs pGT}
In this section, we always suppose $K$ is unramified and $T$ is crystalline.
In this  case, we show that the mod $p$ Sen operator satisfies a \emph{$p$-degree shrinking} on a certain increasing filtration, as well as a \emph{$p$-Griffiths transversality} on a certain  decreasing filtration. 
These are relatively easy consequences of results in previous two sections. We isolate these results in this section for convenience of discussions and comparisons. 
The $p$-Griffiths transversality is already known by work of Bhatt--Lurie \cite{Bha22} via a stacky approach. However, the $p$-degree shrinking result, as far as we understand, is a completely new structure; we expect it will have  further applications in the future.

\subsection{$p$-degree shrinking}
\begin{theorem}[$p$-degree shrinking] \label{prop: p griffiths}
Suppose $K$ is unramified and $T$ is crystalline. Recall $\theta$ is defined in Construction \ref{ConstructiontwistA}, and $N_{n,i}$ is defined in Construction \ref{ConstructionfilphiA}.
We have
\begin{enumerate}
\item  The operator $\theta-(n-i)$ satisfies a $p$-degree shrinking  on $N_{n,i}=N_{n-i,0}$ in the sense that
\[ (\theta-(n-i))(N_{n,i}) \subset N_{n-p,i}. \]

\item   $N_{n,i}$ is exactly the generalized eigenspace of $\fil_n \bargm_\HT$ with $\theta$-eigenvalue $n-i$, and we have an equality
\[ \oplus_{i=0}^{p-1} N_{n,i} =\fil_n \bargm_\HT,\]
which is exactly the generalized eigenspace decomposition with respect to $\theta$-action on $\fil_n \bargm_\HT$.
\end{enumerate} 
\end{theorem}   
\begin{proof}
For eigenvalue reasons (as already used in proof of Theorem \ref{thm1 mod p frob}), we have inclusions 
\[ \oplus_{i=0}^{p-1} N_{n,i} \subset \oplus_{i=0}^{p-1} \wt{N}_{n,i} \subset \fil_n \bargm_\HT.\]
But via Lemma \ref{lem: mod p Frob two fil}, we already know
\[ \oplus_{i=0}^{p-1} N_{n,i} =\fil_n \bargm_\HT\]
Thus we must have
\[ N_{n,i} = \wt{N}_{n,i} \]
Then we can conclude using Proposition \ref{prop-pGT-A-level}.
\end{proof}

\begin{remark}  \label{rem: pGT}    The $1$-degree shrinking of $\theta-n$ on $\fil_n \bargm_\HT$ (Theorem \ref{thm: mod p Sen op fil}) and the $p$-degree shrinking of $\theta-(n-i)$ on $N_{n,i}$ (Theorem \ref{prop: p griffiths}) are related by the following. Repeatedly using $1$-degree shrinking, we have a composite
\[ \prod_{j=p-1}^0 (\theta-(n-j)) : \fil_n \bargm_\HT \to  \fil_{n-1} \bargm_\HT  \to \cdots \to \fil_{n-p} \bargm_\HT \]
Note this composite operator is nothing but $\theta^p-\theta$ (which is independent of $n$). 
Now, fix one  $0\leq i \leq p-1$. For each $j \neq i \pmod{p}$, the action of $(\theta-(n-j))$ on $N_{n,i}$ is \emph{invertible} (as the eigenvalues are invertible); thus on $N_{n,i}$, the actions of $\theta^p-\theta$ and $(\theta-(n-i))$ differ by an isomorphism. In particular, the $p$-degree shrinking  can be re-written as:
\[ (\theta^p-\theta)(N_{n,i}) \subset N_{n-p,i} \] 
\end{remark}

\subsection{$p$-Griffiths transversality} 
 
\begin{construction} \label{consfinmnhthn}
Suppose $K$ is unramified and $T$ is crystalline.
\begin{enumerate}
\item   Apply Definition \ref{defn fil phiM} to the module $\bargm$ with respect to the triple $(\gs_1, u, \phi)$, we can construct a decreasing filtration $\fil^\bullet_\rmh \bargmht$ such that
\[ \fil^n_\rmh \bargm_\HT =\fil^n \bargm/u\fil^{n-p} \bargm\] 
As discussed in Definition \ref{defn fil phiM}, we have a bijection
\[ \fil^\bullet_\rmh \bargmht \xrightarrow{\phi, \simeq} \fil^\bullet_\rmh \phi(\bargm/u\bargm)\]
which in turn is isomorphic to $\fil^\bullet \bargm_\dR$ as proved by  Theorem \ref{thm1 mod p frob}.

\item  Apply Definition \ref{defn fil phiM} to the module $\bargm_A$ with respect to the triple $(A_1, u, \phi)$, we obtain $\fil^\bullet_\rmh \bargm_{A, \HT}$. As discussed in Construction \ref{ConstructionfilphiA}, $\fil^\bullet \bargm \into \fil^\bullet \bargm_A$ is strict; thus we have an injective map
\[ \fil^\bullet_\rmh \bargmht \into \fil^\bullet_\rmh \bargm_{A, \HT}\]

\item  As noted in Construction \ref{ConstructionfilphiA}, we have flat base change
\[   \bargm_\HT \otimes_{k} A_1/u A_1 \simeq   \bargm_{A,\HT} \]
Thus there is a projection map by reduction modulo $\fkm_{A_1}$:
\[ \bargm_{A,\HT} \onto   \bargm_\HT\]
Denote the image of $\fil^n_\rmh \bargm_{A, \HT}$ under the projection as $H^n$; then we have
\[ \fil^n_\rmh \bargmht \subset H^n\]
\end{enumerate}
\end{construction}

\begin{lemma} \label{lem-proj-hodgeht}
Suppose $K$ is unramified and $T$ is crystalline. 
The inclusion in Construction \ref{consfinmnhthn}
\[\fil^n_\rmh \bargmht \subset H^n\]
is an equality. 
\end{lemma}
\begin{proof} 
As both $\fil^\bullet_\rmh \bargmht$ and $H^\bullet$ form (effective) decreasing filtrations on the vector space $\bargmht$, it suffices to show their gradeds have the same dimension. 
By Lemma \ref{lem match grade phiM}
\[ \gr^n_\mathrm{H} \bargm_\HT \simeq N_{n,0}/N_{n-p,0}\]
Similarly, $H^n/H^{n+1}$ is isomorphic to
\[  \gr^n_\mathrm{H} \bargm_{A, \HT} \otimes_A k \simeq (N_{n,0, A}/N_{n-p,0, A}) \otimes_A k \simeq  (N_{n,0, A}\otimes_A k)/ (N_{n-p,0, A}\otimes_A k) \simeq N_{n,0}/N_{n-p,0}\]
Here the first isomorphism follows from Lemma \ref{lem match grade phiM} applied to $\bargm_A$, and the last isomorphism follows from Theorem \ref{prop: p griffiths} (saying that $\wt{N}_{n,0}=N_{n,0}$).
\end{proof}

\begin{theorem}[\textnormal{$p$-Griffiths transversality}]   \label{thm:BL-p-GT}
Suppose $K$ is unramified and $T$ is crystalline. 
The operator $\theta$   satisfies a $p$-Griffiths transversality on $\fil_\rmh^\bullet \bargmht$. That is,  for each $n$, 
\[ \theta( \fil^n_{\mathrm{H}} \bargm_\HT) \subset  \fil^{n-p}_{\mathrm{H}} \bargm_\HT\]
\end{theorem}
\begin{proof}   
This follows from the commutative diagram
  \[
\begin{tikzcd}
\fil^n \bargm_A \arrow[d, "\delta_\tau "] \arrow[r, two heads] & {\fil^n_{\mathrm{H}}\bargm_{A,\HT}} \arrow[d, "\delta_\tau"] \arrow[r, two heads] & \fil^n_{\mathrm{H}}\bargm_{\HT} \arrow[d, "\theta"] \\
\fil^{n-p} \bargm_A \arrow[r, two heads]                       & {\fil^{n-p}_{\mathrm{H}}\bargm_{A,\HT}} \arrow[r, two heads]                      & \fil^{n-p}_{\mathrm{H}}\bargm_{\HT}                
\end{tikzcd}
\] 
Here, the first vertical arrow is constructed in Proposition \ref{prop-pGT-A-level}; the second vertical arrow is induced by the first. The left horizontal arrows follow by definition; the right horizontal surjection, which is the mod $\fkm_{A_1}$ reduction map
\[ \fil^\bullet_H \bargm_{A, \HT} \to \fil^\bullet_H \bargm_{\HT}\]
is well-defined by Lemma \ref{lem-proj-hodgeht}. (Caution: unlike Corollary  \ref{cor tilde gen eigen}, there is no ``shifting by $-n$" here, as there is no multiplication by $u^{-n}$ in the whole process).
 \end{proof}
 
 \begin{remark} \label{rem:specialcaseBL}
 The $p$-Griffiths transversality that we proved in Theorem \ref{thm:BL-p-GT} can be regarded as a  special case  of a result of Bhatt--Lurie \cite{Bha22} (written for $W(k)=\zp$ there).
 \begin{enumerate}
 \item \label{item1redsyn}  As summarized in the proof of \cite[Cor 6.5.14]{Bha22}, a perfect complex $E$ (not necessarily related with a crystalline representation) on $W(k)_{\mathrm{red}}^{\syn}$ gives rise to a quadruple $(V, F^\ast, F_\ast, \Theta)$ where $V$ is a perfect $k$-complex, with $F^\ast$ resp. $F_\ast$ a decreasing resp. increasing filtration, and $\Theta: V \to V$ is an operator satisfying 
\[ \Theta: F^\ast V \to F^{\ast -p} V \]
\[ \Theta -\ast: F_\ast V \to F_{\ast-1} V\]
(and other ``nilpotency" conditions).

\item In our set up, the crystalline representation gives rise to a reflexive $F$-gauge \cite[\S 6.6]{Bha22} on the entire $W(k)^\syn$, hence restricts to a coherent sheaf on  $W(k)_{\mathrm{red}}^{\syn}$. The corresponding quadruple object is precisely
\[ (\bargmht, \fil^\bullet_\rmh \bargmht, \fil_\bullet^\rmconj \bargmht, \theta) \]  
 \end{enumerate}
 \end{remark}

\begin{remark} \label{rem:difference}
We clarify the differences between \emph{$p$-degree shrinking} and \emph{$p$-Griffiths transversality}.
\begin{enumerate}
\item Although these two properties can be ``unified" in the $A_1$-level (Proposition \ref{prop-pGT-A-level}), it is not clear if they can be ``directly" related. In particular, even assuming the result (of \cite{Bha22}) discussed in Remark \ref{rem:specialcaseBL}\eqref{item1redsyn}, we do not see how that will imply  Theorem \ref{prop: p griffiths}.

 
\item Unlike the $p$-degree shrinking in Theorem \ref{prop: p griffiths} which gives useful information on eigenvalues of $\theta$, the $p$-Griffiths transversality in Theorem \ref{thm:BL-p-GT} does not seem to imply further information on the eigenvalues. Indeed,  when the Hodge--Tate weight of $T$ is contained in  $[0, p-1]$, then Theorem \ref{thm:BL-p-GT} provides no information on filtrations (other than saying $\theta$ is stable on $\bargmht$).
\end{enumerate}
\end{remark}

  
\bibliographystyle{alpha}

\begin{thebibliography}{LLHLM18}

\bibitem[AHLB22]{AHLB1}
Johannes Ansch\"{u}tz, Ben Heuer, and Arthur-C\'{e}sar Le~Bras.
\newblock $v$-vector bundles on $p$-adic fields and {S}en theory via the
  {H}odge--{T}ate stack.
\newblock {\em to appear, Simons Symp.}, 2022.

\bibitem[Bar20]{Bar20a}
Robin Bartlett.
\newblock Inertial and {H}odge-{T}ate weights of crystalline representations.
\newblock {\em Math. Ann.}, 376(1-2):645--681, 2020.

\bibitem[BC16]{BC16}
Laurent Berger and Pierre Colmez.
\newblock {Th{\'e}orie de {S}en et vecteurs localement analytiques}.
\newblock {\em Ann. Sci. {\'E}c. Norm. Sup{\'e}r. (4)}, 49(4):947--970, 2016.

\bibitem[Ber02]{Ber02}
Laurent Berger.
\newblock {Repr{\'e}sentations {$p$}-adiques et {\'e}quations
  diff{\'e}rentielles}.
\newblock {\em Invent. Math.}, 148(2):219--284, 2002.

\bibitem[BGK]{BGK}
Bhargav Bhatt, Toby Gee, and Mark Kisin.
\newblock in preparation.

\bibitem[Bha22]{Bha22}
Bhargav Bhatt.
\newblock Prismatic {$F$}-gauges.
\newblock {\em Fall 2022 course notes}, 2022.

\bibitem[BL22a]{BL1}
Bhargav Bhatt and Jacob Lurie.
\newblock Absolute prismatic cohomology.
\newblock {\em preprint}, 2022.

\bibitem[BL22b]{BL2}
Bhargav Bhatt and Jacob Lurie.
\newblock {T}he prismatization of $p$-adic formal schemes.
\newblock {\em preprint}, 2022.

\bibitem[Bre97]{Bre97}
Christophe Breuil.
\newblock {Repr{\'e}sentations {$p$}-adiques semi-stables et transversalit{\'e}
  de {G}riffiths}.
\newblock {\em Math. Ann.}, 307(2):191--224, 1997.

\bibitem[BS22]{BS22}
Bhargav Bhatt and Peter Scholze.
\newblock Prisms and prismatic cohomology.
\newblock {\em Ann. of Math. (2)}, 196(3):1135--1275, 2022.

\bibitem[BS23]{BS23}
Bhargav Bhatt and Peter Scholze.
\newblock Prismatic {$F$}-crystals and crystalline {G}alois representations.
\newblock {\em Camb. J. Math.}, 11(2):507--562, 2023.

\bibitem[Col98]{Col98}
Pierre Colmez.
\newblock {Th{\'e}orie d'{I}wasawa des repr{\'e}sentations de de {R}ham d'un
  corps local}.
\newblock {\em Ann. of Math. (2)}, 148(2):485--571, 1998.

\bibitem[Col19]{Col19}
Pierre Colmez.
\newblock Le programme de {F}ontaine.
\newblock {\em Enseign. Math.}, 65(3-4):487--531, 2019.

\bibitem[DL23]{DL23}
Heng Du and Tong Liu.
\newblock A prismatic approach to $(\varphi, \hat{G})$-modules and
  {$F$}-crystals.
\newblock {\em to appear, J. Eur. Math. Soc.}, 2023.

\bibitem[Fon82]{Fon82}
Jean-Marc Fontaine.
\newblock Sur certains types de repr\'esentations {$p$}-adiques du groupe de
  {G}alois d'un corps local;\ construction d'un anneau de {B}arsotti-{T}ate.
\newblock {\em Ann. of Math. (2)}, 115(3):529--577, 1982.

\bibitem[Fon94]{Fon94}
Jean-Marc Fontaine.
\newblock {Le corps des p{\'e}riodes {$p$}-adiques}.
\newblock {\em Ast{\'e}risque}, (223):59--111, 1994.
\newblock With an appendix by Pierre Colmez, P{\'e}riodes $p$-adiques
  (Bures-sur-Yvette, 1988).

\bibitem[Gao23]{Gao23}
Hui Gao.
\newblock Breuil-{K}isin modules and integral {$p$}-adic {H}odge theory.
\newblock {\em J. Eur. Math. Soc. (JEMS)}, 25(10):3979--4032, 2023.
\newblock With Appendix A by Yoshiyasu Ozeki, and Appendix B by Gao and Tong
  Liu.

\bibitem[GK23]{GK-ias}
Toby Gee and Mark Kisin.
\newblock {T}he reduction modulo $p$ of crystalline {B}reuil--{K}isin modules.
\newblock {\em IAS lecture with video}, 2023.

\bibitem[GLa]{GabberLi}
Ofer Gabber and Shizhang Li.
\newblock {B}ounding crystalline torsion from \'etale torsion.

\bibitem[GLb]{GLram}
Hui Gao and Tong Liu.
\newblock {I}ntegral {S}en theory in the ramified case (tentative title).
\newblock {\em in preparation}.

\bibitem[GL20]{GL20}
Hui Gao and Tong Liu.
\newblock Loose crystalline lifts and overconvergence of \'{e}tale
  {$(\varphi,\tau)$}-modules.
\newblock {\em Amer. J. Math.}, 142(6):1733--1770, 2020.

\bibitem[GL23]{GuoLigauge}
Haoyang Guo and Shizhang Li.
\newblock Frobenius height of prismatic cohomology with coefficients.
\newblock {\em preprint}, 2023.

\bibitem[GLS14]{GLS14}
Toby Gee, Tong Liu, and David Savitt.
\newblock {The {B}uzzard-{D}iamond-{J}arvis conjecture for unitary groups}.
\newblock {\em J. Amer. Math. Soc.}, 27(2):389--435, 2014.

\bibitem[GLS15]{GLS15}
Toby Gee, Tong Liu, and David Savitt.
\newblock {The weight part of {S}erre's conjecture for {$\rm{GL}(2)$}}.
\newblock {\em Forum Math. Pi}, 3:e2 (52 pages), 2015.

\bibitem[GMW]{GMWdR}
Hui Gao, Yu~Min, and Yupeng Wang.
\newblock Prismatic crystals over de {R}ham period sheaf.
\newblock {\em preprint}.

\bibitem[GMW23]{GMWHT}
Hui Gao, Yu~Min, and Yupeng Wang.
\newblock {H}odge--{T}ate prismatic crystals and {S}en theory.
\newblock {\em arxiv:2311.07024}, 2023.

\bibitem[GP21]{GP21}
Hui Gao and L\'{e}o Poyeton.
\newblock Locally analytic vectors and overconvergent {$(\varphi,
  \tau)$}-modules.
\newblock {\em J. Inst. Math. Jussieu}, 20(1):137--185, 2021.

\bibitem[IKY]{IKY2}
Naoki Imai, Hiroki Kato, and Alex Youcis.
\newblock The prismatic realization functor for {S}himura varieties of abelian
  type.
\newblock {\em preprint}.

\bibitem[Kis06]{Kis06}
Mark Kisin.
\newblock {Crystalline representations and {$F$}-crystals}.
\newblock In {\em {Algebraic geometry and number theory}}, volume 253 of {\em
  {Progr. Math.}}, pages 459--496. Birkh{\"a}user Boston, Boston, MA, 2006.

\bibitem[Kos21]{Kos21}
Teruhisa Koshikawa.
\newblock Logarithmic prismatic cohomology {I}.
\newblock {\em preprint}, 2021.

\bibitem[Liu]{Liu24}
Tong Liu.
\newblock {T}orsion graded pieces of {N}yggard filtration for crystalline
  representation.
\newblock {\em preprint}.

\bibitem[Liu08]{Liu08}
Tong Liu.
\newblock {On lattices in semi-stable representations: a proof of a conjecture
  of {B}reuil}.
\newblock {\em Compos. Math.}, 144(1):61--88, 2008.

\bibitem[Liu10]{Liu10}
Tong Liu.
\newblock {A note on lattices in semi-stable representations}.
\newblock {\em Math. Ann.}, 346(1):117--138, 2010.

\bibitem[Liu13]{Liu13}
Tong Liu.
\newblock The correspondence between {B}arsotti-{T}ate groups and {K}isin
  modules when {$p=2$}.
\newblock {\em J. Th\'{e}or. Nombres Bordeaux}, 25(3):661--676, 2013.

\bibitem[LLHLM18]{LLLMInv}
Daniel Le, Bao~V. Le~Hung, Brandon Levin, and Stefano Morra.
\newblock Potentially crystalline deformation rings and {S}erre weight
  conjectures: shapes and shadows.
\newblock {\em Invent. Math.}, 212(1):1--107, 2018.

\bibitem[Pha24]{Pham}
Dat Pham.
\newblock A note on a result of {T}.~{L}iu.
\newblock 2024.

\bibitem[Sen81]{Sen81}
Shankar Sen.
\newblock Continuous cohomology and {$p$}-adic {G}alois representations.
\newblock {\em Invent. Math.}, 62(1):89--116, 1980/81.

\bibitem[Wan22]{Wangxiyuan}
Xiyuan Wang.
\newblock Weight elimination in two dimensions when {$p = 2$}.
\newblock {\em Math. Res. Lett.}, 29(3):887--901, 2022.

\bibitem[Yao23]{Yao}
Zijian Yao.
\newblock $\mathbb{Z}_p$-lattices in semistable {G}alois representations.
\newblock {\em preprint}, 2023.

\end{thebibliography}

\end{document}